\begin{document}

\ensubject{fdsfd}

\ArticleType{ARTICLES}
\Year{2023}
\Month{January}%
\Vol{66}
\No{1}
\BeginPage{1} %
\DOI{10.1007/s11425-016-5135-4}
\ReceiveDate{February 8, 2023}
\AcceptDate{May 5, 2023}
\OnlineDate{  , 2023}
\newcommand{\Y}{\mathbin{\mathsf Y}})
\title[]{The unitary subgroups of group algebras of a class of  finite $2$-groups with derived subgroup of order $2$ }
{The unitary subgroups of group algebras of a class of  finite $2$-groups with derived subgroup of order $2$ }

\author[1]{Yulei Wang}{{yulwang@haut.edu.cn}}
\author[2,$\ast$]{Heguo Liu}{{ghliu@hainanu.edu.cn}}

\AuthorMark{Yulei Wang}

\AuthorCitation{Yulei Wang, Heguo Liu}

\address[1]{Department of Mathematics, Henan University of Technology,
Zhengzhou {\rm 450001}, China}
\address[2]{Department of Mathematics, Hainan University, Haikou {\rm 570228}, China}

\abstract{Let $p$ be a prime and $F$ be a finite field of characteristic $p$. Suppose that  $FG$ is the group algebra of the finite $p$-group $G$ over the field $F$. Let $V(FG)$ denote the  group of normalized units in $FG$
and let  $V_*(FG)$ denote the unitary subgroup  of $V(FG)$.
If $p$ is odd, then the order of $V_*(FG)$ is $|F|^{(|G|-1)/2}$.
However, the case when $p=2$ still is open.
In this paper, the order of $V_*(FG)$ is computed when
$G$ is  a nonabelian  $2$-group given by a
central extension of the form
$$1\longrightarrow \mathbb{Z}_{2^n}\times \mathbb{Z}_{2^m} \longrightarrow G \longrightarrow \mathbb{Z}_2\times
\cdots\times \mathbb{Z}_2 \longrightarrow 1$$ and $G'\cong
\mathbb{Z}_2$, $n, m\geq 1$.  Further, a conjecture is confirmed, namely,
the order of $V_*(FG)$ can be divisible by $|F|^{\frac{1}{2}(|G|+|\Omega_1(G)|)-1}$, where $\Omega_1(G)=\{g\in G\ |\ g^2=1\}$.}

\keywords{normalized unit, unitary subgroup, inner abelian  $p$-group, central extension}

\MSC{20C05,20D15}

\maketitle

\section{Introduction}
In this paper, $p$ always is a  prime, $F$ is a finite field of characteristic $p$ and $G$ is a finite $p$-group.

For an integral ring $Z$, let $U(Z)$  and  $U(ZG)$ be the multiplicative group of $Z$ and the integral group ring $ZG$, respectively.
 Suppose that $f$ is a homomorphism of the group $G$ into $U(Z)$, we define an anti-automorphism of
the ring $ZG$:
  $x^f=\sum\limits_{g\in G}\alpha_gf(g)g^{-1}\in ZG$, where $x=\sum\limits_{g\in G}\alpha_gg\in ZG$.
An element $u\in U(ZG)$ is said to be $f$-unitary if the inverse element $u^{-1}$ coincides with the element $u^f$ or $-u^f$. Obviously, all $f$-unitary elements of
the group $U(ZG)$ form a subgroup, which is denoted by $U_f(ZG)$. The interest in $U_f(ZG)$ arouse from algebraic topology and unitary $K$-theory in \cite{Artamonov, Novikov}.
The study and description about $U_f(ZG)$ in certain cases is known as Novikov's problem.

In particular, $f$ is trivial, namely,  $f(g)=1$ for all $g\in G$. At this time, we denote this trivial homomorphism $f$ by $*$.
Let $V(FG)$ be the group of normalized units in $FG$, namely
$$V(FG)=\left\{\sum\limits_{g\in G}\alpha_gg\in FG \ |\ \sum\limits_{g\in G}\alpha_g=1\right\}.$$
Obviously, $V(FG)$ is a subgroup of $U(FG)$. An element $x\in V(FG)$ is called unitary (also called unitary normalized unit) if $x^*=x^{-1}$.
We denote by $V_*(FG)$ the subgroup of all unitary elements of $V(FG)$.
The problem of the description of invariants of $V_*(FG)$ was raised by  Novikov,
and  Serre has showed that there is a relation between the self-dual normal basis of the finite Galois extension $L$ over $F$ with Galois group $G$ and the unitary subgroup of the group algebra $FG$ in \cite{Serre},
where $char(F)=2$.

The order of $V_*(FG)$ is equal to $|F|^{(|G|-1)/2}$ when $char(F)>2$ in \cite{BovdiR}. Thus we only consider how to compute
order of $V_*(FG)$ when $p=2$. In fact, it is particularly challenging to compute the order of $V_*(FG)$ when $p=2$ as Balogh put in \cite{Balogh}. At this time, there is an interesting conjecture in \cite{BovdiSa}, namely,  the order of $V_*(FG)$ is divisible by $F^{\frac{1}{2}(|G|+|\Omega_1(G)|)-1}$,
where $\Omega_1(G)=\{g\in G\ |\ g^2=1\}$.

For  finite abelian $p$-groups, the relative results of  $V_*(FG)$ have been obtained in \cite{BaloghL, BaloghB, BovdiSa, BovdiSz}.
However, when $G$ is nonabelian, a few facts about $V_*(FG)$ can be known.  For several special
 classes of groups, some results can only be obtained, see \cite{BaloghB, BaloghCG, CreedonG, Gildea, KaurK,WL}. In particular, the orders of
unitary subgroups of group algebras of a class of extraspecial $2$-groups, a dihedral group, a quaternion group have been determined in \cite{BovdiR}.
In \cite{Balogh},  Balogh gave the order of unitary subgroups of group algebras of finite $2$-groups which satisfies special conditions.
 Blackburn determined the isomorphism types of groups of prime power order with derived subgroup of prime order in \cite{Blackburn},
but we need further determine the more accurate structure in order to compute the unitary subgroups of group algebras of  the class of $2$-groups.
 In \cite{WL}, we studied the unitary normalized units of  a nonabelian  $2$-group with a derived subgroup of order $2$, which is given by a
central extension of a cyclic group by an elementary abelian $2$-group.
Now we will consider a nonabelian  $2$-group $G$ given by a
central extension of the form
$$1\longrightarrow \mathbb{Z}_{2^n}\times  \mathbb{Z}_{2^m}\longrightarrow G \longrightarrow \mathbb{Z}_2\times
\cdots\times \mathbb{Z}_2 \longrightarrow 1$$ and $G'\cong
\mathbb{Z}_2$, $m, n\geq 1$.

First, we will give some fundamental conclusions in the second section. And then we give the accurate
structure of the above group $G$ according to the inner abelian finite $2$-groups and central product in the third section. In the forth section, we will determine the unitary subgroups of the group algebra $FG$, where $char(F)=2$. Moreover, a conjecture about the order of $V_*(FG)$  is confirmed.

\section{Preliminaries}

To avoid confusion, we will explain some notations.
Let $G[2^i]$ denote the subgroup $\langle g\in G \ |\  g^{2^i}=1\rangle$ and let $G^{2^i}$ denote the subgroup $\langle g^{2^i} \ |\ g\in G\rangle$. Let $G^{(k)}$  denote the direct product of $k$ groups $G's$.
For $x=\sum\limits_{g\in G}\alpha_gg\in FG$, the support $\{g\in G\ |\ \alpha_g\neq 0\}$ of $x$ is denoted by $supp(x)$.
For any subset $S$ of $G$, we define $\widehat{S}:=\sum\limits_{g\in S}g$.
For an element $c$ of $G$,  we denote by $\Omega_c(G)$ the set $\{g\in G\ |\ g^2=c\}$.
We denote by $d(G)$ the number of the minimal generated elements of finite $p$-group $G$.
The combinatorial number is denoted by $\binom{n}{i}$
, namely, $\binom{n}{i}=\frac{n!}{(n-i)!i!}$.
Let $$\gamma_1(k):= \binom{k}{0}3^{k}+ \binom{k}{2}3^{k-2}+\cdots+ \binom{k}{l}3^{k-l}=2^{2k-1}+2^{k-1},$$ where $l=k$ if $k$ is even
and $l=k-1$ if $k$ is odd. Let $$\gamma_2(k):=  \binom{k}{1}3^{k-1}+ \binom{k}{3}3^{k-3}+\cdots+ \binom{k}{l}3^{k-l}=2^{2k-1}-2^{k-1},$$ where $l=k-1$ if $k$ is even
and $l=k$ if $k$ is odd.
Other notations used are standard (as in \cite{Knus, Robinson}).

\begin{definition}
A group $G$ is a central product of the normal subgroups $H$ and $K$ if $G=HK$, $[H,K]=1$, and  denoted
by $H\Y K$. The central product of $k$ groups $H's$ is denoted by  $H^{\Y k}$.
\end{definition}

\begin{definition}
A finite $p$-group $G$ is called inner abelian  if  $G$ is nonabelian and every proper subgroup of $G$ is abelian.
\end{definition}

\begin{lemma}[\cite{Bovdi Sa}] \label{AU}
 Let $G$ be a finite abelian $2$-group and $F$ a finite field of characteristic $2$. Then
$$|V_{*}(FG)|=|G^2[2]|\cdot |F|^{\frac{|G|+|\Omega_1(G)|}{2}-1}.$$
\end{lemma}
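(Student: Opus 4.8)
The plan is to realize $V_*(FG)$ as the kernel of an explicit homomorphism and then split the order computation into an easy dimension count and a genuinely cohomological piece. Since $G$ is abelian, $FG$ is commutative and the $F$-linear extension of $g\mapsto g^{-1}$ is an involution satisfying $(xy)^*=x^*y^*$ and $x^{**}=x$. Hence the norm map $\theta\colon V(FG)\to V(FG)$, $\theta(x)=xx^*$, is a group homomorphism with $\ker\theta=V_*(FG)$. As $FG$ is a local ring whose maximal ideal is the augmentation ideal, $|V(FG)|=|F|^{|G|-1}$, so
$$|V_*(FG)|=\frac{|F|^{|G|-1}}{|\mathrm{Im}\,\theta|}.$$
Because $\theta(x)^*=\theta(x)$, the image lies in the subgroup $\mathcal S=\{x\in V(FG):x^*=x\}$ of symmetric normalized units. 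I would first compute $|\mathcal S|$ directly: the symmetric elements of $FG$ have the $F$-basis given by the sums $g+g^{-1}$ over the inversion-pairs together with the elements $g\in\Omega_1(G)$, hence form a subspace of dimension $\tfrac12(|G|+|\Omega_1(G)|)$, and intersecting with the hyperplane of augmentation $1$ yields $|\mathcal S|=|F|^{\frac12(|G|+|\Omega_1(G)|)-1}$. With this in hand the theorem becomes equivalent to the single statement that the norm cokernel satisfies $[\mathcal S:\mathrm{Im}\,\theta]=|G^2[2]|\cdot|F|^{|\Omega_1(G)|-1}$.

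To attack the cokernel I would induct on $|G|$ via the Frobenius endomorphism $\sigma(x)=x^2$. Since $\mathrm{char}\,F=2$ and $G$ is abelian, $\sigma$ is an endomorphism of $V(FG)$ with $\sigma(x)=\sum_g\alpha_g^2g^2\in F[G^2]$; comparing orders shows $\sigma(V(FG))=V(F[G^2])$, while $\ker\sigma=\{x:x^2=1\}$ has order $|F|^{|G|-|G^2|}$. Restricting to unitary elements gives $\sigma(V_*(FG))\subseteq V_*(F[G^2])$ and $\ker\bigl(\sigma|_{V_*(FG)}\bigr)=V_*(FG)\cap\mathcal S$, the group of symmetric square roots of $1$, whose order I would again obtain by linear algebra. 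It is precisely here that $G^2[2]=\Omega_1(G^2)$ enters: a coset $\{g:g^2=h\}$ with $1\neq h=h^{-1}$ contains no inversion-fixed point, so after imposing symmetry it contributes no condition. Feeding in the inductive value of $|V_*(F[G^2])|$ — using that $\Omega_1(G^2)=G^2[2]$ and that $G^2$ is a strictly smaller abelian $2$-group unless $G$ is elementary abelian — should telescope to the claimed formula, with the base case $G^2=1$ (where $*$ is trivial and $V_*(FG)=\ker\sigma$) being immediate.

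\textbf{The main obstacle.} The hard part will be that $\sigma\colon V_*(FG)\to V_*(F[G^2])$ is in general \emph{not} surjective, so one cannot simply multiply $|\ker(\sigma|_{V_*(FG)})|$ by $|V_*(F[G^2])|$; the image $\mathrm{Im}\,\sigma|_{V_*(FG)}=V_*(FG)^2$ must be identified exactly, equivalently one must pin down $\mathrm{Im}\,\theta$ inside $\mathcal S$, and this is what forces an honest integer factor $|G^2[2]|$ rather than a pure power of $|F|$. A convenient device to control this is the Herbrand quotient of $\langle*\rangle$ acting on the finite module $V(FG)$: finiteness forces the two Tate cohomology groups $\widehat H^{0}=\mathcal S/\mathrm{Im}\,\theta$ and $\widehat H^{-1}=V_*(FG)/\{x(x^*)^{-1}:x\in V(FG)\}$ to have equal order, which lets me trade the unknown cokernel for the kernel data computed above. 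Executing this bookkeeping — decomposing $FG$ as an $F[\langle*\rangle]$-permutation module (free summands from the inversion-pairs, trivial summands from $\Omega_1(G)$) and tracking how the squaring map links successive layers of the filtration $1+I\supseteq 1+I^2\supseteq\cdots$ — is the step I expect to be most delicate, and is exactly where the precise value $|G^2[2]|\cdot|F|^{|\Omega_1(G)|-1}$ of the cokernel must be extracted.
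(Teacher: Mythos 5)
The paper does not actually prove this lemma; it is quoted from Bovdi--Sakach (reference [BovdiSa] in the bibliography), so there is no internal proof to compare you against, and your proposal has to stand on its own. Its first half does: since $FG$ is commutative, $\theta(x)=xx^*$ is a homomorphism of $V(FG)$ with kernel $V_*(FG)$; $|V(FG)|=|F|^{|G|-1}$ because $FG$ is local; the symmetric normalized units form a set of size $|F|^{\frac12(|G|+|\Omega_1(G)|)-1}$; and the lemma is indeed equivalent to $[\mathcal S:\mathrm{Im}\,\theta]=|G^2[2]|\cdot|F|^{|\Omega_1(G)|-1}$. All of this is correct.

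The genuine gap is that this index is never computed, and the one concrete device you propose for it cannot work. For a finite module the equality $|\widehat H^{0}|=|\widehat H^{-1}|$ is a formal consequence of the isomorphism theorems: writing everything out, $|\widehat H^{0}|=|\mathcal S|\cdot|V_*(FG)|/|V(FG)|$ and $|\widehat H^{-1}|=|V_*(FG)|\cdot|\mathcal S|/|V(FG)|$ are literally the same expression, so "trading the unknown cokernel for the kernel data" just hands you back the unknown $|V_*(FG)|$ — the Herbrand quotient is circular here, not a lever. The Frobenius induction is a reasonable strategy (and your observation that the fiber over $1\neq h\in G^2[2]$ imposes no condition on symmetric square roots of $1$ is correct and is exactly where $|G^2[2]|$ enters), but the decisive step — determining the image of $\sigma$ restricted to $V_*(FG)$, i.e., exactly which unitary units of $F[G^2]$ are squares of unitary units of $FG$ — is only asserted to "telescope." Without it the induction does not close: already for $G=\mathbb{Z}_4$ the answer $2|F|^{2}$ shows that a non-$|F|$-power index must be extracted at precisely this step, which requires both an explicit construction of enough unitary norms (to bound the index above) and an obstruction argument (to bound it below). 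That is the actual content of the Bovdi--Sakach theorem, and it is absent from the proposal.
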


\begin{lemma}[\cite{Bovdi R}] \label{DQ}

 Let $F$ be a finite field of characteristic $2$.

 (i) If $G$ is a dihedral group of order $2^{n+1}$, then $|V_*(FG)|=|F|^{3\cdot 2^{n-1}}$.

 (ii) If $G$ is a quaternion group of order $2^{n+1}$, then $|V_*(FG)|=4|F|^{2^{n}}$.

\end{lemma}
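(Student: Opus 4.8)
The plan is to reduce the computation of $|V_*(FG)|$ to a counting problem over a commutative group algebra and then feed the answer into Lemma~\ref{AU}. First I would observe that in characteristic $2$ the normalization condition is automatic: if $xx^*=1$ and $\varepsilon$ denotes the augmentation, then $\varepsilon(x)^2=\varepsilon(x)\varepsilon(x^*)=1$ forces $\varepsilon(x)=1$. Hence $V_*(FG)=\{x\in FG: xx^*=1\}$, and it suffices to count solutions of $xx^*=1$ in $FG$.

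Both groups have a cyclic subgroup $A=\langle a\rangle$ of order $2^n$ and index $2$; fix $b\notin A$ with $b^{-1}ab=a^{-1}$, so that $b^2=1$ (dihedral) or $b^2=z:=a^{2^{n-1}}$ (quaternion). Put $R=FA\cong F[t]/(t^{2^n})$, a local ring with $t=a-1$ and maximal ideal $(t)$, let $\bar{\phantom{x}}$ be the conjugation on $R$ induced by $a\mapsto a^{-1}$, and set $N(\gamma)=\gamma\bar\gamma$. Every element of $FG$ is $x=\alpha+\beta b$ with $\alpha,\beta\in R$, and a direct computation using $b\bar\gamma=\gamma b$ and $b\gamma b=\bar\gamma$ (resp. $=z\bar\gamma$) gives
\[
xx^* = N(\alpha)+N(\beta) + \big(\alpha\beta+\beta\alpha\big)b \qquad(\text{dihedral}),
\]
\[
xx^* = N(\alpha)+N(\beta) + (1+z)\,\alpha\beta\, b \qquad(\text{quaternion}).
\]
Since $R$ is commutative, the dihedral cross term $\alpha\beta+\beta\alpha$ vanishes in characteristic $2$, so the unitary condition collapses to the single equation $N(\alpha)+N(\beta)=1$ in $R$. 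In the quaternion case $1+z=t^{2^{n-1}}$ does not vanish, yielding the coupled system $N(\alpha)+N(\beta)=1$ together with $t^{2^{n-1}}\alpha\beta=0$; I expect this extra annihilator condition to be exactly what produces the additional factor $4$.

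To count solutions I would fix $\beta$ and count $\alpha$ with $N(\alpha)=1+N(\beta)$, exploiting that $N$ is multiplicative. If $c=1+N(\beta)$ is a unit then every $\alpha$ in its fibre is a unit, and the fibre is either empty or a coset of $\ker(N|_{R^\times})=\{u\in R^\times: u\bar u=1\}$. This kernel is precisely the unitary subgroup $V_*(FA)$ of the cyclic group algebra, whose order $2\,|F|^{2^{n-1}}$ is supplied by Lemma~\ref{AU} (here $|\Omega_1(A)|=2$ and $|A^2[2]|=2$). Summing these fibre sizes over $\beta\in R$, and then correcting for the values $1+N(\beta)$ that fail to lie in $N(R)$, should produce $|F|^{3\cdot 2^{n-1}}$ in the dihedral case; in the quaternion case the same sum is further cut down by $t^{2^{n-1}}\alpha\beta=0$, which I expect to trade part of the continuous count for the arithmetic factor $4$.

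The main obstacle will be the precise image-and-fibre analysis of the quadratic norm map $N:R\to R^{+}:=\{\gamma:\bar\gamma=\gamma\}$ on the local ring $R$ in characteristic $2$. Indeed, the naive estimate ``every fibre is full'' already overcounts the dihedral answer by a factor of $2$ (it gives $|R|\cdot 2|F|^{2^{n-1}}=2\,|F|^{3\cdot 2^{n-1}}$), so the delicate point is to determine exactly which elements $1+N(\beta)$ lie in $N(R)$ and what the fibre sizes are over the non-units in $R^{+}$. The relevant tool is the polarization identity $N(\gamma+\delta)=N(\gamma)+N(\delta)+T(\gamma\bar\delta)$ with trace $T(\mu)=\mu+\bar\mu$, and controlling $N(R)$ and $T(R)$ inside $R^{+}$ is the crux; for the quaternion group one must simultaneously track the coupling $t^{2^{n-1}}\alpha\beta=0$, which is what accounts for the discrepancy between the two stated formulas.
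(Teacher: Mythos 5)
The paper does not actually prove this lemma: it is imported as a black box from Bovdi--Rosa, and the machinery the paper builds later (Lemma~\ref{S2}) could not be used here anyway, since for $n\geq 3$ the derived subgroup of the dihedral or generalized quaternion group of order $2^{n+1}$ is $\langle a^2\rangle\cong\mathbb{Z}_{2^{n-1}}$, not $\mathbb{Z}_2$. So your argument has to stand on its own, and its algebraic setup does check out: $\varepsilon(x)^2=\varepsilon(xx^*)=1$ forces $\varepsilon(x)=1$ in characteristic $2$; writing $x=\alpha+\beta b$ over $R=FA$ one indeed gets $xx^*=N(\alpha)+N(\beta)$ for the dihedral group (the cross term $2\alpha\beta b$ dies) and $xx^*=N(\alpha)+N(\beta)+(1+z)\alpha\beta b$ with $1+z=t^{2^{n-1}}$ for the quaternion group; and $\{u\in R^\times: u\bar u=1\}=V_*(FA)$ has order $2|F|^{2^{n-1}}$ by Lemma~\ref{AU}. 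This is a genuinely different and in principle viable route: a direct norm-equation count over the index-two cyclic subgroup, rather than a reduction modulo $G'$.

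The gap is that the count itself --- which is the entire content of the lemma --- is never performed. You need $\#\{(\alpha,\beta)\in R^2: N(\alpha)+N(\beta)=1\}=|F|^{3\cdot 2^{n-1}}$, but your fibre argument controls neither the image of $N$ nor the fibres over non-units of $R^{+}$: the fibre of $N$ over a unit $c$ is a coset of $\ker(N|_{R^\times})$ \emph{only if it is nonempty}, and deciding when $1+N(\beta)\in N(R^\times)$ requires knowing the index of $N(R^\times)$ in the relevant subgroup of $(R^{+})^\times$, which you do not compute; over non-units the fibres are not torsors at all (the fibre over $0$ is $\{\alpha:\alpha\bar\alpha=0\}$, an annihilator-type set whose size is not $2|F|^{2^{n-1}}$), so even your implicit upper bound $2|F|^{3\cdot 2^{n-1}}$ is not justified, and you explicitly concede you cannot yet locate the missing factor of $2$. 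In the quaternion case the situation is worse: the factor $4$ is pure conjecture, with no analysis of how the coupling $t^{2^{n-1}}\alpha\beta=0$ interacts with the norm equation. To close the gap you would have to carry out the image-and-fibre analysis you name as the ``crux'' --- e.g.\ filter $R$ by powers of $t$, use $N(1+\mu)=1+T(\mu)+N(\mu)$ to determine $|N(R^\times)|$, $|T(R)|$ and the fibre sizes exactly, and only then sum over $\beta$. As it stands the proposal is a correct reduction followed by an unproved counting claim, so it does not establish either exponent or either constant.
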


\begin{lemma}[\cite {Balogh}] \label{CCU}
Let $F$ be a finite field of characteristic $2$. Let $G=K\times E$, where $K=\langle a,b\ |\ a^4=b^4=1, [a,b]=a^2 \rangle$
and $E$ be a finite elementary abelian $2$-group. Then $|V_{*}(FG)|=4 |F|^{\frac{|G|+|\Omega_1(G)|}{2}-1}$.

\end{lemma}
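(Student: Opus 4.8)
The plan is to filter $FG$ by the square-zero ideal coming from the derived subgroup and to reduce the unitary condition in two stages to the abelian quotient, where Lemma~\ref{AU} applies. Write $z=a^{2}$, so that $G'=\langle z\rangle\cong\mathbb{Z}_{2}$ is central with $z^{2}=1$; since $\mathrm{char}\,F=2$ we have $(1+z)^{2}=0$, and hence $I:=(1+z)FG$ is a central ideal with $I^{2}=0$ and $FG/I\cong F\overline{G}$, where $\overline{G}=G/G'\cong\mathbb{Z}_{2}\times\mathbb{Z}_{4}\times E$ is abelian. I would first record the reduction $|V(FG)|=|F|^{|G|-1}$ and the short exact sequence $1\to 1+I\to V(FG)\to V(F\overline{G})\to 1$ given by the natural map $\pi$. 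Because $z$ acts as the identity on $I$ and all commutators of $G$ lie in $\langle z\rangle$, the two-sided $FG$-action on $I$ factors through $F\overline{G}$; thus $I\cong F\overline{G}$ is a free rank-one $F\overline{G}$-module on which $*$ corresponds to the canonical involution $\overline{g}\mapsto\overline{g}^{-1}$ of $F\overline{G}$.

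Next I would treat the radical part. Since $I^{2}=0$, for $u\in I$ one has $(1+u)(1+u)^{*}=1+(u+u^{*})$, so $1+u$ is unitary precisely when $u$ is symmetric; hence $V_{*}(FG)\cap(1+I)=1+I_{\mathrm{sym}}$ with $I_{\mathrm{sym}}=\{u\in I\mid u^{*}=u\}$. Transporting along $I\cong F\overline{G}$ and counting symmetric elements gives $\dim_{F}I_{\mathrm{sym}}=\tfrac12\bigl(|\overline{G}|+|\Omega_{1}(\overline{G})|\bigr)$. Computing $|\Omega_{1}(\overline{G})|$ directly from the presentation of $K$ (and, for the final exponent, $|\Omega_{1}(G)|=4|E|$, so that $\tfrac12(|G|+|\Omega_{1}(G)|)-1=10|E|-1$) yields $|V_{*}(FG)\cap(1+I)|=|F|^{6|E|}$.

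It remains to understand the image of $V_{*}(FG)$ in $V_{*}(F\overline{G})$, whose order is delivered by Lemma~\ref{AU} together with $\overline{G}^{2}[2]\cong\mathbb{Z}_{2}$. For $\overline{x}\in V_{*}(F\overline{G})$ and any lift $x$, the element $xx^{*}-1$ lies in $I_{\mathrm{sym}}$; using $\overline{x}^{*}=\overline{x}^{-1}$ and the module identity $xvx^{*}=v$ for $v\in I$, one checks that replacing $x$ by $x(1+u)$ alters $xx^{*}-1$ exactly by $u+u^{*}$. Consequently $\overline{x}\mapsto[\,xx^{*}-1\,]$ is a well-defined homomorphism $\omega\colon V_{*}(F\overline{G})\to I_{\mathrm{sym}}/\{u+u^{*}\mid u\in I\}$, and $\overline{x}$ lifts to a unitary unit of $FG$ if and only if $\omega(\overline{x})=0$. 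Therefore $|V_{*}(FG)|=|F|^{6|E|}\cdot|V_{*}(F\overline{G})|/|\mathrm{im}\,\omega|$, and the whole statement reduces to evaluating $|\mathrm{im}\,\omega|$.

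The main obstacle is precisely this last count. Dimension considerations only place $\mathrm{im}\,\omega$ inside a group of order $|F|^{4|E|}$, whereas the asserted value forces $|\mathrm{im}\,\omega|=|F|^{2|E|}/2$; the surplus factor $2$, which combines with $|\overline{G}^{2}[2]|=2$ to produce the constant $4$ in the theorem, is the genuinely delicate point. I would pin it down by fixing an explicit transversal $s\colon\overline{G}\to G$ with $s(1)=1$ and the associated cocycle $\epsilon\colon\overline{G}\times\overline{G}\to\langle z\rangle$ recording when $s(\overline{g})s(\overline{h})^{-1}$ acquires a factor $z$; a short computation then collapses the ``diagonal'' contribution to $1$ and identifies $\omega(\overline{x})=\Bigl[\sum_{\overline{g},\overline{h}}\beta_{\overline{g}}\beta_{\overline{h}}\,\epsilon(\overline{g},\overline{h})\,(1+z)\,s(\overline{g}\,\overline{h}^{-1})\Bigr]$, a quadratic expression in the coefficients of $\overline{x}$. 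I would then compute its $\mathbb{F}_{2}$-rank on an explicit generating set of $V_{*}(F\overline{G})$ supplied by the proof of Lemma~\ref{AU}, the key being that the image is controlled by the squares $\beta_{\overline{g}}^{2}$ and thus falls one dimension short of the naive count. As consistency checks I would verify the case $E=1$ (that is, $G=K$ of order $16$, giving $4|F|^{9}$), and compare with the quaternion value in Lemma~\ref{DQ}, since $K$ contains quaternion-type sections whose unitary behaviour should match along the filtration.
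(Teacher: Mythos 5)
The first thing to note is that the paper contains no proof of this statement: it is imported verbatim from \cite{Balogh}, and the paper's own machinery around it (Lemmas \ref{S2} and \ref{GINS}) runs in the opposite direction, using the asserted value of $|V_*(FK)|$ to extract $\Theta(K)=\tfrac12|F|^{2}$. So you are supplying a proof where the paper supplies none, and you cannot borrow the paper's $\Theta$-computations to finish, on pain of circularity.

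Your reduction is correct as far as it goes, and the bookkeeping checks out: $I=(1+z)FG$ is central with $I^{2}=0$; $V_*(FG)\cap(1+I)=1+I_{\mathrm{sym}}$ has order $|F|^{6|E|}$; Lemma \ref{AU} gives $|V_*(F\overline{G})|=2|F|^{6|E|-1}$; the obstruction map $\omega$ is a well-defined homomorphism and $\overline{x}$ lifts to a unitary unit exactly when $\omega(\overline{x})=0$. This correctly reduces the lemma to the single equality $|\mathrm{im}\,\omega|=\tfrac12|F|^{2|E|}$. But that equality \emph{is} the lemma --- it is precisely where the constant $4$ (as opposed to $2$, or to $2|F|^{j}$ for some $j>0$) comes from --- and you do not prove it; you only describe how you ``would'' compute the $\mathbb{F}_2$-rank of the quadratic expression attached to the cocycle $\epsilon$. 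Two concrete warnings about that remaining step. First, $\tfrac12|F|^{2|E|}$ is not an $F$-subspace of the target, so ``falls one dimension short of the naive count'' cannot be meant over $F$: the index-$2$ defect must come from an Artin--Schreier-type image $\{\beta+\beta^{2}\mid\beta\in F\}$, which has index $2$ in $F$ (compare the count ``the number of $\alpha$ is $\tfrac12|F|$'' in the proof of Lemma \ref{GINS2}), and exhibiting this requires an explicit generating set of $V_*(F\overline{G})$ together with an explicit evaluation of $\epsilon$ on it --- none of which is carried out. Second, your formula $\omega(\overline{x})=\bigl[\sum_{\overline{g},\overline{h}}\beta_{\overline{g}}\beta_{\overline{h}}\,\epsilon(\overline{g},\overline{h})(1+z)s(\overline{g}\overline{h}^{-1})\bigr]$ still has to be reduced modulo $\{u+u^{*}\mid u\in I\}$, a subgroup of dimension $2|E|$, before any rank count is meaningful, and that reduction interacts with the symmetry $\epsilon(\overline{g},\overline{h})$ versus $\epsilon(\overline{h},\overline{g})$ in a way you have not addressed. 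As written, your argument establishes $|V_*(FG)|=2|F|^{12|E|-1}/|\mathrm{im}\,\omega|$ with $\mathrm{im}\,\omega$ contained in a group of order $|F|^{4|E|}$; the claimed conclusion is consistent with this but not derived from it.
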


In \cite{KaurK},  Kaur and Khan studied the unit group $U(F(G\times A))$ and
unitary unit group  $U_{*}(F(G\times A))$ of  the group algebra $F(G\times A)$
of the direct product of an arbitrary finite group $G$ and a finite elementary abelian $2$-group $A$ over a field $F$ with characteristic $2$.
Similarly, we may obtain the results of $V_{*}(F(G\times A))$.

\begin{lemma} \label{UDP}

Let $F$ be a finite field of characteristic $2$. Suppose that $G$ is a finite $2$-group and  $A$ is an elementary abelian $2$-group of order $2^k$.
Then the unitary normalized unit subgroup $V_{*}(F(G\times A))$ is semidirect product of the group $W^*$
and the unitary normalized unit subgroup $V_*(FG)$,
where $W^*=(\cdots((A^*_k\rtimes A^*_{k-1})\rtimes A^*_{k-2}\rtimes\cdots)\rtimes A^*_1$ such that each $A^*_i$
is an elementary abelian $2$-group of order $|F|^{2^{i-2}(|G|+|\Omega_1(G)|)}$ and the order of
$V_{*}(F(G\times A))$ is $|V_{*}(FG)|\cdot|F|^{\frac{1}{2}(|G|+|\Omega_1(G)|)(|A|-1)}$.
Further, if $|V_{*}(FG)|=l|F|^{\frac{1}{2}(|G|+|\Omega_1(G)|)-1}$ for some nature number $l$, then
$|V_{*}(F(G\times A))|=l|F|^{\frac{1}{2}(|G\times A|+|\Omega_1(G\times A)|)-1}$.

\end{lemma}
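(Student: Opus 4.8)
The plan is to follow the template of Kaur and Khan \cite{KaurK}, adapted to the unitary \emph{normalized} units, proceeding by induction on $k=d(A)$ and peeling off one cyclic factor $\langle a\rangle$ of $A$ at a time. Writing $A=B\times\langle a\rangle$ with $B$ elementary abelian of rank $k-1$ and setting $H=G\times B$, the essential case is $k=1$, namely understanding $V_*(F(H\times\langle a\rangle))$ for an arbitrary finite $2$-group $H$.

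The key algebraic observation is that, in characteristic $2$, the element $t=a+1$ is central, is fixed by the involution $*$ (since $a^*=a^{-1}=a$), and satisfies $t^2=a^2+1=0$. Hence $F(H\times\langle a\rangle)\cong FH[t]/(t^2)$, the dual numbers over $FH$, and every element is uniquely $x+yt$ with $x,y\in FH$. First I would record the elementary facts: for the augmentation $\varepsilon$ one has $\varepsilon(x+yt)=\varepsilon(x)$, so normalization means $\varepsilon(x)=1$; moreover $x+yt$ is a unit iff $x$ is (as $t$ is nilpotent), with $(x+yt)^{-1}=x^{-1}+x^{-1}yx^{-1}t$; and $(x+yt)^*=x^*+y^*t$ because $t$ is central and $*$-fixed.

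Then I would impose $(x+yt)^{-1}=(x+yt)^*$ and compare the coefficients of $1$ and $t$. The constant part gives $x^{-1}=x^*$, i.e. $x\in V_*(FH)$, and the $t$-part gives the single linear constraint $y^*=x^{-1}yx^{-1}$. The projection $\pi:x+yt\mapsto x$ is thus a homomorphism onto $V_*(FH)$, split by $x\mapsto x+0\cdot t$, so $V_*(F(H\times\langle a\rangle))=\ker\pi\rtimes V_*(FH)$. The kernel is $\{1+yt: y^*=y\}$, which under $(1+y_1t)(1+y_2t)=1+(y_1+y_2)t$ is exactly the additive group of $*$-symmetric elements of $FH$; counting through the pairing $h\leftrightarrow h^{-1}$ shows it is elementary abelian of order $|F|^{(|H|+|\Omega_1(H)|)/2}$. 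Taking $H=G\times B$ and using $|\Omega_1(G\times B)|=|\Omega_1(G)|\cdot|B|$ identifies this kernel with $A_k^*$ of the stated order $|F|^{2^{k-2}(|G|+|\Omega_1(G)|)}$.

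Iterating the splitting down the chain $A\supset B\supset\cdots\supset 1$ assembles $W^*$ as the asserted nested semidirect product and yields $V_*(F(G\times A))=W^*\rtimes V_*(FG)$. The order is then bookkeeping: since $\sum_{i=1}^{k}2^{i-2}=(2^k-1)/2=(|A|-1)/2$, we get $|W^*|=|F|^{\frac12(|G|+|\Omega_1(G)|)(|A|-1)}$, hence the order formula; substituting $|V_*(FG)|=l|F|^{\frac12(|G|+|\Omega_1(G)|)-1}$ together with $|G\times A|=|G||A|$ and $|\Omega_1(G\times A)|=|\Omega_1(G)||A|$ collapses the exponent to $\frac12(|G\times A|+|\Omega_1(G\times A)|)-1$, giving the divisibility statement. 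I expect the one genuinely delicate point to be checking that the constraint $y^*=x^{-1}yx^{-1}$ is compatible across the splitting, so that the successive kernels are each elementary abelian and nest correctly into the iterated semidirect product $W^*$; once the symmetric-element dimension $(|H|+|\Omega_1(H)|)/2$ is established, the order count itself is routine.
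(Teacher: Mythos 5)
Your proposal is correct and follows essentially the same route as the paper: peel off one $\mathbb{Z}_2$ factor at a time via the projection $F(H\times\langle a\rangle)\to FH$ (whose kernel is the ideal generated by $a-1$, i.e.\ your $t$), identify the kernel of the induced map on unitary units as the elementary abelian group of $*$-symmetric elements of order $|F|^{(|H|+|\Omega_1(H)|)/2}$, split, and iterate. The dual-numbers packaging $FH[t]/(t^2)$ is only a presentational variant of the paper's argument, and your final bookkeeping matches.
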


\begin{proof}
Let $A=\langle a_1\rangle\times\langle a_2\rangle\times\cdots\times\langle a_k\rangle$.
Write $G_k:=G\times A=G_{k-1}\times \langle a_k\rangle$ and $G_0:=G$,
where $G_{k-1}=G\times\langle a_1\rangle\times\cdots\times\langle a_{k-1}\rangle$.
Let $\theta$ be the projection of  $G_k$ onto $G_{k-1}$. Assume that $\theta':FG_{k}\rightarrow FG_{k-1}$
is an algebra homomorphism over $F$ which is a linear extension of the projection $\theta$.
Obviously, the kernel $\mathrm{Ker}\theta'$ of homomorphism is an ideal of $FG_k$ generated by  $a_k-1$.
Under the map $\theta'$, the image of a unitary normalized unit in $V(FG_k)$ is a unitary normalized unit in
$V(FG_{k-1})$. Further, since $\theta$ is an epimorphism fixing $G_{k-1}$, the restricted map $\theta'|_{V_{*}(F(G_k))}$
is an epimorphism from $V_{*}(FG_k)$ onto $V_{*}(FG_{k-1})$, and its kernel $A^{*}_k=(1+\mathrm{Ker}\theta')\bigcap V_{*}(FG_k)$
is an elementary abelian group. From this, for arbitrary $x\in A^*_k$, we have $x^*=x$. Suppose that $x=1+\sum\limits_{g\in G_{k-1}}\alpha_gg(a_k-1)$,
then $$x=1+\sum\limits_{g\in \Omega_1(G_{k-1})}\alpha_gg(a_k-1)+\sum\limits_{g\in I}\alpha_g(g+g^{-1})(a_k-1),$$
where $I$ is a subset of $G_{k-1}\setminus \Omega_1(G_{k-1})$ such that if $g\in I$ then $g^{-1}\notin I$. Therefore,
the cardinality of $I$ is $2^{k-2}(|G|-|\Omega_1(G)|)$. From this, the order of  $A^*_k$ is $|F|^{2^{k-2}(|G|+|\Omega_1(G)|)}$.

Since there is an inclusion map $i:V_*(FG_{k-1})\rightarrow V_*(FG_k)$, we have $V_*(FG_k)=A^*_k\rtimes V_*(FG_{k-1})$.
Assume that $A^*_i=(1+\mathrm{Ker}\theta')\bigcap V_{*}(FG_i)$, then by induction one can prove the lemma.

If $|V_{*}(FG)|=l|F|^{\frac{1}{2}(|G|+|\Omega_1(G)|-1)}$ for some nature number $l$, then
\begin{center}
$\begin{aligned}
|V_{*}(F(G\times A))|&=l|F|^{\frac{1}{2}(|G|+|\Omega_1(G)|)-1}|F|^{\frac{1}{2}(|G|+|\Omega_1(G)|)(|A|-1)}\\
&=l|F|^{\frac{1}{2}(|G||A|+|\Omega_1(G)||A|)-1}\\
&=l|F|^{\frac{1}{2}(|G\times A|+|\Omega_1(G\times A)|)-1}.
\end{aligned}$
\end{center}
\end{proof}

Note that the central product of $Q_8$ and $Q_8$ is isomorphic to the central product of $D_8$ and $Q_8$. From this,
the structure of an extraspecial $p$-group is as follows.

\begin{lemma}[\cite{Robinson}] \label{EP2}
An extraspecial $p$-group is a central product of $n$ nonabelian subgroups of order $p^3$ and has order $p^{2n+1}$.
If $p=2$, $G$ is a central product of $D_8's$ or a central product of $D_8's$ and a single $Q_8$.
\end{lemma}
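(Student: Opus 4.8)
The plan is to recover the central-product decomposition from the symplectic geometry that the commutator map imposes on $G/Z(G)$. Recall that ``$G$ extraspecial'' means $Z(G)=G'=\Phi(G)\cong \mathbb{Z}_p$; fix a generator $z$ of this center $Z:=Z(G)$. Since $\Phi(G)=Z$, the quotient $\bar G:=G/Z$ is elementary abelian, so I may regard it as a vector space over $\mathbb{F}_p$. The first step is to define the pairing $\langle \bar x,\bar y\rangle := [x,y]$, identifying $Z\cong \mathbb{F}_p$ by $z^i\leftrightarrow i$. Because $G'=Z$ is central of order $p$, the commutator identities $[x_1x_2,y]=[x_1,y][x_2,y]$ and $[x,y_1y_2]=[x,y_1][x,y_2]$ hold and the value is unchanged after multiplying $x$ or $y$ by a central element; hence the pairing is a well-defined $\mathbb{F}_p$-bilinear form, alternating since $[x,x]=1$. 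Its radical is $\{\bar x : [x,G]=1\}=Z(G)/Z=0$, so the form is non-degenerate.

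A non-degenerate alternating form exists only on an even-dimensional space and admits a symplectic basis; this gives $\dim_{\mathbb{F}_p}\bar G=2n$, hence $|G|=|Z|\cdot|\bar G|=p^{2n+1}$, settling the order. Next I lift a symplectic basis $\{\bar x_1,\bar y_1,\dots,\bar x_n,\bar y_n\}$ (characterized by $\langle \bar x_i,\bar y_j\rangle=\delta_{ij}$ and $\langle \bar x_i,\bar x_j\rangle=\langle \bar y_i,\bar y_j\rangle=0$) to elements $x_i,y_i\in G$ and put $H_i:=\langle x_i,y_i\rangle$. Since $[x,y]$ depends only on $\bar x,\bar y$, we get $[x_i,y_j]=z^{\delta_{ij}}$ and all remaining cross commutators trivial. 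Each $H_i$ then contains the nontrivial element $[x_i,y_i]=z$, so $H_i$ is nonabelian, and as $\bar H_i$ is $2$-dimensional, $|H_i|=p^3$. The vanishing of the cross terms gives $[H_i,H_j]=1$ for $i\neq j$, so distinct factors commute elementwise. Finally $G=H_1\cdots H_n$ because the images $\bar H_i$ span $\bar G$ and $z\in H_1$, while $H_i\cap\prod_{j\neq i}H_j=Z$ (the symplectic basis vectors being linearly independent). Therefore $G=H_1\Y H_2\Y\cdots\Y H_n$, a central product of $n$ nonabelian groups of order $p^3$, proving the first assertion.

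For the case $p=2$, the nonabelian groups of order $8$ are exactly $D_8$ and $Q_8$, so each factor $H_i$ is isomorphic to $D_8$ or $Q_8$ and $G$ is a central product of copies of these. It remains to bound the number of quaternion factors. Here I invoke the isomorphism $Q_8\Y Q_8\cong D_8\Y D_8$ recorded in the remark preceding the statement: whenever two quaternion factors occur, I replace the pair by two dihedral factors, which does not change the isomorphism type of $G$. Iterating removes quaternion factors two at a time, leaving either none or exactly one; hence $G$ is a central product of $D_8$'s, or a central product of $D_8$'s together with a single $Q_8$.

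I expect the main obstacle to be the careful bookkeeping in the linear-algebra-to-group translation: verifying that the commutator form is genuinely well defined and non-degenerate, and that the lifted factors $H_i$ really generate $G$ with pairwise trivial commutation and intersection equal to $Z$, so that the product is an internal central product. The identity $Q_8\Y Q_8\cong D_8\Y D_8$ is the other delicate point; although it is quoted, the conceptual reason is that the squaring map $\bar x\mapsto x^2\in Z$ defines a quadratic form over $\mathbb{F}_2$ polarizing to the commutator form, and the two extraspecial $2$-groups of a given order correspond to the two equivalence classes (plus/minus type) of such forms, with $Q_8\Y Q_8$ falling in the same (plus) class as $D_8\Y D_8$.
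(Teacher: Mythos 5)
The paper does not prove this lemma at all---it is imported from Robinson---so there is no internal proof to compare against. Your argument is correct and is the standard one; indeed it is precisely the symplectic-space technique the paper itself re-derives for its own class of groups in Lemma \ref{STG}: the commutator pairing on the central quotient, a symplectic basis, and the lifted hyperbolic planes $H_i=\langle x_i,y_i\rangle$ as the nonabelian factors of order $p^3$. All the verifications you flag as bookkeeping (well-definedness and non-degeneracy of the form, $[H_i,H_j]=1$, $H_i\cap\prod_{j\neq i}H_j=Z$, generation) go through as you describe.

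One point needs a correction of attribution rather than of mathematics. The identity you use, $Q_8\Y Q_8\cong D_8\Y D_8$, is true, and your quadratic-form/Arf-invariant sketch is the right conceptual reason for it; but it is \emph{not} what the remark immediately preceding the lemma says. That remark asserts $Q_8\Y Q_8\cong D_8\Y Q_8$, which is false: by the paper's own Lemma \ref{OQD} the equation $g^2=1$ has $2\gamma_1(2)=20$ solutions in $D_8\Y D_8$ (and a direct count gives the same number in $Q_8\Y Q_8$) but only $2\gamma_2(2)=12$ in $Q_8\Y D_8$, so these are the two distinct extraspecial groups of order $32$. The correct identity does appear later, in the proof of Theorem \ref{ST1}. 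So your pairwise cancellation of quaternion factors is sound, but you should justify $Q_8\Y Q_8\cong D_8\Y D_8$ yourself (your polarization argument, or an explicit isomorphism) rather than lean on that remark; it is also worth stating explicitly that substituting $D_8\Y D_8$ for $Q_8\Y Q_8$ inside the larger central product is legitimate because the amalgamated central subgroup is $\mathbb{Z}_2$, so any isomorphism of the two pieces automatically matches the identified centers.
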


For an inner abelian finite $p$-group, we have the following results.

\begin{lemma}[\cite{XuQ}] \label{INA}
Let $G$ be a finite $p$-group, then the following properties are equivalent:

(i) $G$ is an inner abelian group;

(ii) $d(G)=2$ and $|G'|=p$.

(iii) $d(G)=2$ and $\zeta G=Frat (G)$.

\end{lemma}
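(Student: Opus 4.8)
The plan is to prove the implications $(i)\Rightarrow(ii)\Rightarrow(iii)\Rightarrow(i)$. The key device throughout is the elementary abelian quotient $\overline{G}=G/\mathrm{Frat}(G)$, viewed as a vector space over $\mathbb{F}_p$ of dimension $d(G)$, together with the observation that whenever $G'\subseteq\zeta G$ (nilpotency class at most $2$) the commutator descends to a well-defined alternating bilinear form $\beta\colon\overline{G}\times\overline{G}\to G'$, $\beta(\bar x,\bar y)=[x,y]$, which is bi-multiplicative because $[xy,z]=[x,z][y,z]$ in class $2$. For $(i)\Rightarrow(ii)$, I would first note that since $G$ is nonabelian it has at least two maximal subgroups and these generate $G$; each maximal subgroup is proper, hence abelian, hence centralizes $\mathrm{Frat}(G)=\bigcap M_i$. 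As the $M_i$ generate $G$, this forces $\mathrm{Frat}(G)\subseteq\zeta G$, so $G'\subseteq\mathrm{Frat}(G)\subseteq\zeta G$ and $G$ has class $2$; in particular $\beta$ is defined and is nonzero because $G$ is nonabelian.

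Next I would translate inner abelianness into a statement about $\beta$. Since every proper subgroup lies in a maximal one and subgroups of abelian groups are abelian, $G$ is inner abelian precisely when every maximal subgroup is abelian; as maximal subgroups correspond to the hyperplanes of $\overline{G}$ and contain $G'$, this says exactly that $\beta$ vanishes identically on every hyperplane of $\overline{G}$. If $\dim\overline{G}=d(G)\ge 3$, pick $u,v$ with $\beta(u,v)\ne 0$; since $\langle u,v\rangle$ has dimension $2\le d(G)-1$ it lies in some hyperplane on which $\beta$ does not vanish, a contradiction. Hence $d(G)\le 2$, and $d(G)\ge 2$ because $G$ is nonabelian, so $d(G)=2$. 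Writing $G=\langle g_1,g_2\rangle$, bilinearity gives $G'=\langle[g_1,g_2]\rangle$, and since $g_2^{\,p}\in G^p\subseteq\mathrm{Frat}(G)\subseteq\zeta G$ the class-$2$ identity yields $[g_1,g_2]^p=[g_1,g_2^{\,p}]=1$, whence $|G'|=p$.

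For $(ii)\Rightarrow(iii)$, the normal subgroup $G'$ of order $p$ is central (a $p$-group acts trivially on a normal subgroup of order $p$), so again $G$ has class $2$ and $\beta$ is defined, now nondegenerate as a form on $G/\zeta G$ by the very definition of the centre. Because the values of $\beta$ lie in $G'\cong\mathbb{Z}_p$, for any $x$ we have $\beta(\bar x^{\,p},\bar y)=\beta(\bar x,\bar y)^p=1$ for all $y$, so $\bar x^{\,p}$ lies in the radical, i.e. $x^p\in\zeta G$; thus $G^p\subseteq\zeta G$ and therefore $\mathrm{Frat}(G)=G'G^p\subseteq\zeta G$. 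Finally $\zeta G/\mathrm{Frat}(G)$ is a subgroup of $\overline{G}\cong(\mathbb{Z}_p)^2$; it cannot have order $p^2$ (else $G$ is abelian) nor order $p$ (else $G/\zeta G$ is cyclic and $G$ is abelian), so it is trivial and $\zeta G=\mathrm{Frat}(G)$.

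The implication $(iii)\Rightarrow(i)$ is then quick: any proper subgroup lies in a maximal subgroup $M$, which contains $\mathrm{Frat}(G)=\zeta G$, so $M/\zeta G$ is a subgroup of order $p$ of $G/\zeta G=\overline{G}\cong(\mathbb{Z}_p)^2$, hence cyclic; since $\zeta G\subseteq\zeta M$ the quotient $M/\zeta M$ is also cyclic and $M$ is abelian, and therefore so is every proper subgroup, while $G$ itself is nonabelian because $\zeta G=\mathrm{Frat}(G)\ne G$. I expect the main obstacle to be the first direction: one must be careful that ``all proper subgroups abelian'' really is equivalent to the clean linear-algebra condition that $\beta$ kills every hyperplane, which in turn rests on first securing $\mathrm{Frat}(G)\subseteq\zeta G$ so that $\beta$ is well defined and bilinear. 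Once class $2$ is in hand, every remaining step is routine linear algebra over $\mathbb{F}_p$.
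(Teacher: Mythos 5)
Your proof is correct. Note that the paper itself offers no argument for this lemma: it is quoted verbatim from the textbook of Xu and Qu (reference [XuQ]) and used as a black box, so there is no in-paper proof to compare against. Your cycle $(i)\Rightarrow(ii)\Rightarrow(iii)\Rightarrow(i)$ via the alternating form $\beta$ on the Frattini quotient is the standard R\'edei--Miller--Moreno argument, and each step checks out: the preliminary reduction $\mathrm{Frat}(G)\subseteq\zeta G$ (two abelian maximal subgroups generate $G$ and each centralizes their intersection) is exactly what makes $\beta$ well defined on $G/\mathrm{Frat}(G)$ and makes ``$M$ abelian'' equivalent to ``$\beta$ vanishes on the corresponding hyperplane''; the hyperplane argument then forces $d(G)=2$, and $G'=\langle[g_1,g_2]\rangle$ with $[g_1,g_2]^p=[g_1,g_2^{\,p}]=1$ gives $|G'|=p$. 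In $(ii)\Rightarrow(iii)$ the centrality of the order-$p$ normal subgroup $G'$, the inclusion $G^p\subseteq\zeta G$ from $[x^p,y]=[x,y]^p=1$, and the exclusion of the two possible nontrivial sizes of $\zeta G/\mathrm{Frat}(G)$ (each of which would force $G$ abelian) are all sound, as is the quick $(iii)\Rightarrow(i)$ via $M/\zeta M$ cyclic. No gaps.
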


\begin{lemma}[\cite{XuQ}] \label{INA1}
Let $G$ be an inner abelian finite $p$-group, then $G$ is one of the following types:

(i) $Q_8$, the quaternion group;

(ii) $M_p(n,m)=\langle a,b\ |\ a^{p^n}=b^{p^m}=1,a^b=a^{1+p^{n-1}}\rangle$,  $n\geq 2, m\geq 1$;

(iii) $M_p(n,m,1)=\langle a,b,c\ |\ a^{p^n}=b^{p^m}=c^p=1,[a,b]=c,[c,a]=[c,b]=1\rangle$,  $n\geq  m\geq 1$.

\end{lemma}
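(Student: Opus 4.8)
The final statement to prove is Lemma \ref{INA1}, the classification of inner abelian finite $p$-groups into the three types $Q_8$, $M_p(n,m)$, and $M_p(n,m,1)$. This is a classical result (attributed to \cite{XuQ}), so my plan is to reconstruct the standard structural argument rather than invent something new. The key input is Lemma \ref{INA}, specifically the characterization that $G$ is inner abelian if and only if $d(G)=2$ and $|G'|=p$. So throughout I may assume $G$ is generated by exactly two elements, its commutator subgroup $G'$ has order $p$, and $G'\leq \zeta(G)=\mathrm{Frat}(G)$.

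Let me sketch the plan. First I would fix a minimal generating set $\{a,b\}$, so that $G=\langle a,b\rangle$ and $G'=\langle c\rangle$ with $c=[a,b]$ of order $p$ and $c$ central. Since $G'$ is central of order $p$, the group has nilpotency class exactly $2$, so the standard commutator identities simplify: $[a^i,b^j]=c^{ij}$ and the map sending $(x,y)$ to $[x,y]$ is bilinear modulo higher terms. The first main step is to pin down the orders of $a$ and $b$ and the order of $G$. Because $G'=\mathrm{Frat}(G)$ has order $p$, we get $|G:\mathrm{Frat}(G)|=p^2$, and one analyzes how $a^p$ and $b^p$ sit inside the center. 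The dichotomy that produces the three types comes from asking whether $c$ lies in $\langle a\rangle$, in $\langle b\rangle$, or in neither: if $c$ is a power of $a$ (equivalently $c=a^{p^{n-1}}$ after adjusting generators) then $a^b=a c = a^{1+p^{n-1}}$ and we land in type $M_p(n,m)$; if $c$ is genuinely a new generator not expressible through $a$ or $b$, the group has an extra factor and we obtain type $M_p(n,m,1)$ with the separate central generator $c$.

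The second main step is the separation of the prime $2$ from odd primes, which is where the quaternion group $Q_8$ enters as an exception. For $p$ odd one can always choose generators so that the relations reduce cleanly to the metacyclic presentation $M_p(n,m)$ or the presentation $M_p(n,m,1)$; the regularity of $p$-groups for odd $p$ (so that $(xy)^p=x^p y^p$ modulo the commutator contribution, and $p$th powers behave additively) ensures no exotic cases arise. For $p=2$ the squaring map interacts with the commutator in a way that can force $a^2=b^2=c$, and when $|G|=8$ this produces $Q_8$, which does not fit the $M_2(n,m)$ pattern with $n\geq 2$. I would treat the $p=2$ small case by hand, checking directly which groups of order $8$ with derived subgroup of order $2$ are inner abelian, and verifying that $Q_8$ is the lone additional type.

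\textbf{The main obstacle.} The hard part will be organizing the case division cleanly so that the presentations emerge with the correct ranges $n\geq 2,m\geq 1$ in type (ii) and $n\geq m\geq 1$ in type (iii), and showing that the chosen generators can always be normalized to satisfy exactly the stated relations with no residual freedom. Specifically, proving that one may replace $a,b$ by suitable products with central elements so that $a^b=a^{1+p^{n-1}}$ holds on the nose (rather than up to an ambiguous central factor) requires a careful change-of-generators argument exploiting that $c\in\zeta(G)$ has order $p$. The subtlety is ensuring these replacements do not disturb the orders of the generators, and confirming that the resulting presentations are non-redundant and pairwise non-isomorphic, which is what justifies calling this a complete classification.
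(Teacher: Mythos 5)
First, a point of comparison: the paper does not prove Lemma \ref{INA1} at all --- it is quoted as a known classification from the reference \cite{XuQ}, so there is no in-paper argument to measure your proposal against. On its own terms, your outline follows the standard route (R\'edei's classification of minimal nonabelian $p$-groups): reduce via Lemma \ref{INA} to $d(G)=2$, $|G'|=\langle c\rangle$ of order $p$ with $c$ central, note that $G$ has class $2$ so $[a^i,b^j]=c^{ij}$ and $(xy)^p=x^py^p[y,x]^{\binom{p}{2}}$, and then separate the metacyclic case (giving $M_p(n,m)$ and, at $p=2$, the exception $Q_8$) from the non-metacyclic case (giving $M_p(n,m,1)$). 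That skeleton is the right one.

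However, what you have written is a plan rather than a proof, and the gap is precisely the step you yourself label ``the main obstacle'' and then do not carry out. After choosing $a,b$ whose images generate $G/G'\cong\mathbb{Z}_{p^n}\times\mathbb{Z}_{p^m}$, all one knows is $a^{p^n},\,b^{p^m}\in\langle c\rangle$; the entire content of the lemma is the normalization showing that these central defects can be removed --- by replacing $a,b$ with products such as $ab^k$ or with elements adjusted inside $\mathrm{Frat}(G)$ --- so that $a^{p^n}=b^{p^m}=1$ holds exactly, the relation $a^b=a^{1+p^{n-1}}$ (or the independent central generator $c$) emerges, and the ranges $n\geq 2$, $m\geq 1$ resp.\ $n\geq m\geq 1$ are attained. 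This is where the identity $(xy)^p=x^py^p[y,x]^{\binom{p}{2}}$ must actually be exploited: for odd $p$ one has $p\mid\binom{p}{2}$, so $p$-th powers are multiplicative on the class-$2$ group and the adjustment always succeeds, whereas for $p=2$ the obstruction $a^2=b^2=c$ can survive, and one must prove it survives only when $|G|=8$, which is exactly the claim that $Q_8$ is the unique extra type. None of this is executed. In addition, your proposed trichotomy ``$c\in\langle a\rangle$, $c\in\langle b\rangle$, or neither'' depends on the chosen generators and is not shown to be well defined; the invariant form of the dichotomy is whether $G'\leq G^p$ (equivalently, whether $G$ is metacyclic), and identifying your case split with this condition is part of the missing work. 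Until the normalization argument is written out, the classification has not been established.
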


\section{ Isomorphism types of  the group}

Let $G$ be a nonabelian  $2$-group given by a
central extension of the form
$$1\longrightarrow N \longrightarrow G \longrightarrow \mathbb{Z}_2\times
\cdots\times \mathbb{Z}_2 \longrightarrow 1$$ and $G'=\langle c\rangle\cong
\mathbb{Z}_2$, $N\cong \mathbb{Z}_{2^n}\times \mathbb{Z}_{2^m}$,   $n, m\geq 1$.
Without loss of generality, we  suppose $n\geq m\geq1$  in the following sections.

 Note that $G'\leq\mathrm{Frat}\ G\leq N\leq \zeta G$.
Hence $G/\zeta G$ is an elementary abelian $2$-group. Further, we have the following lemma.

\begin{lemma} \label{STG}
 (1) For any two elements
$\bar{x}=x\zeta G$ and $\bar{y}=y\zeta G$ of $G/\zeta G$, write
$[x,y]=c^{r}$ ($0\leq r\leq 1$) and  $f(\bar{x},\bar{y})=r$, then
$G/\zeta G$ becomes a nondegenerate symplectic space over a field $F$ with  $2$ elements.

(2) $G=E\Y\zeta G$, where $E$ is  a central product of  some inner abelian groups.
Furthermore, these inner abelian groups have the isomorphism classes:
$Q_8$, $M_2(u,v)$ and $M_2(w,1,1)$, where $u,v,w\leq n+1$.

\end{lemma}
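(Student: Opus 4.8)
The plan is to handle the two parts separately: Part (1) is a direct verification that $f$ is a nondegenerate alternating bilinear form, and Part (2) extracts the central product decomposition from a symplectic basis of $G/\zeta G$, classifying the resulting blocks with Lemmas \ref{INA} and \ref{INA1}.

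For Part (1), I would first note that since $G'=\langle c\rangle$ has order $2$ and $G'\leq\zeta G$, every commutator $[x,y]$ lies in $\{1,c\}$, so $f(\bar x,\bar y)=r$ with $[x,y]=c^{r}$ is a well-defined map into $\mathbb{F}_2$ once independence of the representatives is checked. If $z\in\zeta G$, then $[xz,y]=[x,y]^{z}[z,y]=[x,y]$ because $z$ is central, and likewise $[x,yz]=[x,y]$, so $f$ descends to $G/\zeta G$. Bilinearity comes from the identity $[xy,z]=[x,z]^{y}[y,z]$: as $[x,z]\in G'\leq\zeta G$ we have $[x,z]^{y}=[x,z]$, hence $[xy,z]=[x,z][y,z]$, which in exponents of $c$ reads $f(\overline{xy},\bar z)=f(\bar x,\bar z)+f(\bar y,\bar z)$, and additivity in the second slot is analogous. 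It is alternating since $[x,x]=1$. Finally, the radical of $f$ consists of those $\bar x$ with $[x,y]=1$ for all $y\in G$, i.e. with $x\in\zeta G$, so the radical is trivial and $f$ is nondegenerate. Thus $G/\zeta G$ is a nondegenerate symplectic $\mathbb{F}_2$-space.

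For Part (2), nondegeneracy forces $\dim_{\mathbb{F}_2}(G/\zeta G)=2k$ to be even and supplies a symplectic basis $\bar x_1,\bar y_1,\dots,\bar x_k,\bar y_k$ with $f(\bar x_i,\bar y_i)=1$ and every other pairing zero. I would lift these to $x_i,y_i\in G$ and set $H_i=\langle x_i,y_i\rangle$. Then $[x_i,y_i]=c\neq1$, while $f(\bar x_i,\bar x_j)=f(\bar x_i,\bar y_j)=f(\bar y_i,\bar y_j)=0$ for $i\neq j$ translates into $[H_i,H_j]=1$. Hence $E:=H_1H_2\cdots H_k$ is the central product $H_1\Y\cdots\Y H_k$, and since the $\bar x_i,\bar y_i$ span $G/\zeta G$ we get $G=E\,\zeta G$; as $\zeta G$ is central, $G=E\Y\zeta G$. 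Each $H_i$ is two-generated and nonabelian (because $[x_i,y_i]=c\neq1$), so $d(H_i)=2$, while $H_i'=\langle[x_i,y_i]\rangle=\langle c\rangle$ has order $2$; by Lemma \ref{INA} this makes $H_i$ inner abelian, and Lemma \ref{INA1} restricts its type to $Q_8$, $M_2(u,v)$, or $M_2(w,1,1)$. For the exponent bound, I would use that $G/N$ is elementary abelian, so $g^{2}\in N$ for every $g\in G$, and $N\cong\mathbb{Z}_{2^{n}}\times\mathbb{Z}_{2^{m}}$ has exponent $2^{n}$; thus every element of $G$ has order dividing $2^{n+1}$. Since $u,v,w$ record the orders of generators of subgroups of $G$, each is at most $n+1$.

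The hard part will not be any single computation but the passage from the abstract symplectic data to the concrete central product: one must check that the chosen lifts genuinely assemble into pairwise commuting inner abelian blocks and that no interaction between distinct blocks survives modulo the center, so that $E$ really is the internal central product of the $H_i$ and $G=E\Y\zeta G$. Once the symplectic basis and the hyperbolic-plane decomposition of $G/\zeta G$ are fixed this is routine, and the remaining content is organizational, namely matching each plane with a factor $H_i$, invoking Lemmas \ref{INA} and \ref{INA1} for the classification, and applying the elementary estimate that bounds $u,v,w$ by $n+1$.
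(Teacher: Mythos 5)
Your proposal is correct and follows essentially the same route as the paper: part (1) by direct verification that $f$ is a well-defined, alternating, bilinear form with trivial radical, and part (2) by choosing a symplectic basis of $G/\zeta G$, forming the two-generated subgroups $\langle x_i,y_i\rangle$, invoking Lemmas \ref{INA} and \ref{INA1}, and bounding $u,v,w\leq n+1$ via $G_i^2\leq N$. The extra details you supply (well-definedness of $f$ on cosets, the commutator identity for bilinearity, and the explicit exponent argument) are all consistent with, and slightly more explicit than, the paper's treatment.
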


\begin{proof}
(1) Obviously  $f$ is well-defined. For  $x$, $y$, $x_i$, $y_i\in
G$, $i=1,2$, we have $[x_1x_2,y]=[x_1,y][x_2,y]$ and
$[x,y_1y_2]=[x,y_1][x,y_2]$, thus $f$ is bilinear. Since $[x,x]=1$
and $[x,y]=[y,x]^{-1}$, $f(\bar{x},\bar{x})=0$ and
$f(\bar{x},\bar{y})=-f(\bar{y},\bar{x})$, so $G/\zeta G$ is a
symplectic
 space over $F$. If $f(\bar{x},\bar{y})=0$ for all $y\in
G$, then $[x,y]=1$, thus $x\in \zeta G$. It follows that the
symplectic space $G/\zeta G$ is nondegenerate.

(2) From (1), we may assume that the dimension of symplectic
space $G/\zeta G$ is $2k$, and
$\{\bar{x}_1,\ldots,\bar{x}_k,\bar{y}_1,\ldots,\\ \bar{y}_k\}$ is a
basis of $G/\zeta G$,  where $\bar{x}_i=x_i\zeta G$ and
$\bar{y}_i=y_i\zeta G$ for $i=1,\ 2,\ \ldots,\ k$, satisfying:
 $f(\bar{x}_i,\bar{y}_i)=1$, that is, $[x_i,y_i]=c$;
For $i\neq j$,
$f(\bar{x}_i,\bar{x}_j)=f(\bar{y}_i,\bar{y}_j)=f(\bar{x}_i,\bar{y}_j)=0$,
that is, $[x_i,x_j]=[y_i,y_j]=[x_i,y_j]=1$.

Let $G_i:=\langle x_i,y_i\rangle$, then for $i\neq j$, we
have that $[G_i,G_j]=1$. Obviously $d( G_i)=2$  and $|G_i'|=2$, also by Lemma \ref{INA},
we have $G_i$ is inner abelian. Note that $G_i^2\leq N$.  According to Lemma \ref{INA1},
the isomorphism classes of $G_i$ are $Q_8$, $M_2(u,v)$ and $M_2(w,1,1)$, where $u,v,w\leq n+1$.

\end{proof}

Let $r:=d(\zeta G)$. Suppose that
\begin{align}\label{3.1}
\zeta G=\langle z_1\rangle\times\langle z_2\rangle\times\cdots\times\langle z_r\rangle\cong\mathbb{Z}_{2^{n_1}}\times \mathbb{Z}_{2^{n_2}}\times\cdots\times\mathbb{Z}_{2^{n_r}},
 n_1\geq n_2\geq\cdots\geq n_r\geq 1.
\end{align}
Since  $(\zeta G)^2\leq N$, we have $n_i=1$ for $3\leq i\leq r$ and $\langle z_1^2\rangle\times\langle z_2^2\rangle=\mathrm{Frat \zeta G}\leq N$.
If $n_2\geq 2$, then
\begin{align}\label{3.2}
N\bigcap \left(\langle z_3\rangle\times\cdots\times\langle z_r\rangle\right)=1,
\end{align}
otherwise, $|N[2]|\geq 2^3$, a contradiction.
If $n_2=1$,  then  $m$ must be $1$ by $N\leq \zeta G$.  At this time,  by adjusting the parameters $z_1,z_2,\ldots,z_r$, we similarly may obtain
\eqref{3.2}. Further we may obtain the following lemma.

\begin{lemma} \label{SCG}
(1) $n\leq n_1\leq n+1$ and $m\leq n_2\leq m+1$.

(2) The isomorphism classes of  $\zeta G$ are as follows:

 (i) $A_1\cong\mathbb{Z}_{2^{n}}\times \mathbb{Z}_{2^{m}}\times\underbrace{\mathbb{Z}_2\times\cdots\times\mathbb{Z}_2}_{r-2}$.
 In this case, $N=\langle z_1\rangle\times\langle z_2\rangle $.

(ii) $A_2\cong\mathbb{Z}_{2^{n+1}}\times \mathbb{Z}_{2^{m}}\times\underbrace{\mathbb{Z}_2\times\cdots\times\mathbb{Z}_2}_{r-2}$.
 In this case, $N=\langle z_1^2\rangle\times\langle z_2\rangle $.

(iii) $A_3\cong\mathbb{Z}_{2^{n}}\times \mathbb{Z}_{2^{m+1}}\times\underbrace{\mathbb{Z}_2\times\cdots\times\mathbb{Z}_2}_{r-2}$.
 In this case, $N=\langle z_1\rangle\times\langle z_2^2\rangle $.

(iv) $A_4\cong\mathbb{Z}_{2^{n+1}}\times \mathbb{Z}_{2^{m+1}}\times\underbrace{\mathbb{Z}_2\times\cdots\times\mathbb{Z}_2}_{r-2}$.
 In this case, $N=\langle z_1^2\rangle\times\langle z_2^2\rangle $.
\end{lemma}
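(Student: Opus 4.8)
The plan is to obtain both parts from one structural principle: the dominance of invariant factors under inclusion of finite abelian $2$-groups. Concretely, I would first record the elementary fact that if $B\le A$ are finite abelian $2$-groups with invariant factors $2^{b_1}\ge 2^{b_2}\ge\cdots$ and $2^{a_1}\ge 2^{a_2}\ge\cdots$ respectively, then $b_i\le a_i$ for every $i$. This holds because $B^{2^{j}}\le A^{2^{j}}$ for all $j$, hence $\dim_{\mathbb{F}_2}B^{2^{j}}[2]\le\dim_{\mathbb{F}_2}A^{2^{j}}[2]$; since $\dim_{\mathbb{F}_2}A^{2^{j}}[2]$ is exactly the number of invariants of $A$ that are $\ge 2^{j+1}$, these inequalities for all $j$ are equivalent to $b_i\le a_i$ for all $i$. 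All of (1) will come from applying this to the two containments already in hand, $(\zeta G)^2\le N\le\zeta G$.

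For (1), apply the principle to $N\le\zeta G$: since the invariant factors of $N$ are $2^n\ge 2^m$, we get $n\le n_1$ and $m\le n_2$. Then apply it to $(\zeta G)^2\le N$: because $n_i=1$ for $i\ge 3$, we have $(\zeta G)^2=\langle z_1^2\rangle\times\langle z_2^2\rangle$ with invariant factors $2^{n_1-1}\ge 2^{n_2-1}$, so $n_1-1\le n$ and $n_2-1\le m$. Combining gives $n\le n_1\le n+1$ and $m\le n_2\le m+1$, which is (1).

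For (2), the invariant factors of $\zeta G$ are then exactly $(2^{n_1},2^{n_2},2,\dots,2)$ with $n_1\in\{n,n+1\}$ and $n_2\in\{m,m+1\}$, and this list of four possibilities is precisely $A_1,\dots,A_4$. To pin down $N$, I would set $T=\langle z_3\rangle\times\cdots\times\langle z_r\rangle$, so $\zeta G=\langle z_1\rangle\times\langle z_2\rangle\times T$, and view $N/(\zeta G)^2$ as a subspace of $\zeta G/(\zeta G)^2\cong\mathbb{F}_2^{r}$. An order count gives $\dim N/(\zeta G)^2=(n-n_1+1)+(m-n_2+1)$, equal to $2,1,1,0$ in the four cases. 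Case $A_4$ is immediate, $N=(\zeta G)^2=\langle z_1^2\rangle\times\langle z_2^2\rangle$. In the other cases I would choose a coset representative $w$ for each nontrivial class, normalize it modulo $(\zeta G)^2$ to $w=z_1^{a_1}z_2^{a_2}t$ with $a_1,a_2\in\{0,1\}$ and $t\in T$, and then use $\exp N=2^n$ together with $N\cap T=1$ from \eqref{3.2} to restrict the shapes: a representative with $a_1=a_2=0$ lies in $N\cap T=1$ and is excluded, while an odd $z_1$-exponent is admissible only when $n_1=n$ (otherwise $w$ would have order $2^{n_1}=2^{n+1}>\exp N$). This forces the added generators to be, up to a $T$-tail, the powers of $z_1,z_2$ that lift the invariants $2^{n_1-1},2^{n_2-1}$ of $(\zeta G)^2$ up to the targets $2^n,2^m$.

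The final step is to absorb the $T$-tails: the substitution $z_2\mapsto z_2t$ (and symmetrically $z_1\mapsto z_1t$) is a shear automorphism of $\zeta G$ that preserves both the direct decomposition and the orders of the generators, so after this adjustment of parameters $N$ takes exactly the stated forms $\langle z_1\rangle\times\langle z_2\rangle$, $\langle z_1^2\rangle\times\langle z_2\rangle$, $\langle z_1\rangle\times\langle z_2^2\rangle$. I expect the main obstacle to be precisely this last step: verifying carefully that every $T$-tail can be removed by a legitimate change of generators without disturbing the orders or the direct product, and then treating the degenerate situations $n=m$ (where $n_1\ge n_2$ rules out the shape $A_3$) and $n_2=1$ (where \eqref{3.2} was itself obtained only after a preliminary re-choice of the $z_i$), so that the four classes genuinely exhaust all cases with no spurious overlap.
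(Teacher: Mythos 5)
Your argument is sound and, for part (1), genuinely cleaner than the paper's. The paper proves (1) by comparing exponents ($n=\mathrm{Exp}\,N\le\mathrm{Exp}\,\zeta G=n_1$ and $n_1-1\le n$ from $(\zeta G)^2\le N$) and then disposes of $n_2\le m-1$ and $n_2\ge m+2$ by ad hoc order counts on $N^{2^{m-1}}$ and $(\zeta G)^2$, split further into the sub-cases $n_1=n+1$ and $n_1=n$; your single dominance principle for invariant factors under inclusion, applied once to $N\le\zeta G$ and once to $(\zeta G)^2\le N$, delivers all four inequalities at a stroke and is the more transparent route. For part (2) the two proofs are dual versions of the same computation: the paper fixes generators $a,b$ of $N$ and writes $z_1^2,z_2^2$ as words in $a,b$, extracting parity conditions on the exponents, whereas you go upward, presenting $N$ as $(\zeta G)^2$ together with coset representatives and normalizing those representatives modulo $(\zeta G)^2$. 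Both end with the same change-of-generators (shear) step to absorb the tails in $\langle z_3\rangle\times\cdots\times\langle z_r\rangle$.

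One place where your sketch under-constrains: in case $A_3$ (where $n_1=n$, $n_2=m+1$, forcing $n\ge m+1$) your two stated restrictions --- no class with $a_1=a_2=0$, and no odd $z_1$-exponent when $n_1=n+1$ --- do not force the unique nontrivial coset of $N/(\zeta G)^2$ to have shape $(a_1,a_2)=(1,0)$. A representative $z_2^{j}t$ with $j$ odd has order $2^{m+1}\le 2^{n}$, so it is not excluded by the exponent of $N$; one must compare the resulting decomposition $\langle z_1^2\rangle\times\langle z_2^{j}t\rangle\cong\mathbb{Z}_{2^{n-1}}\times\mathbb{Z}_{2^{m+1}}$ with $N\cong\mathbb{Z}_{2^{n}}\times\mathbb{Z}_{2^{m}}$ to see that this shape (and likewise $(1,1)$) occurs only when $n=m+1$, after which $z_1$ and $z_2$ have equal order in $\zeta G$ and the stated form of $N$ is reached only by interchanging or mixing them. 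This is exactly the adjustment the paper performs in its case (iii) (``$v$ divisible by $2$ \dots\ we have $n=m+1$ \dots\ by adjusting the parameters $z_1$ and $z_2$''), and it is not among the degenerate situations you flag: $n=m$ merely excludes $A_3$ outright, and $n_2=1$ is a separate matter. With that sub-case added, the remaining steps --- removal of the $T$-tails by shear automorphisms, bijectivity of the projection onto the $(a_1,a_2)$-coordinates in case $A_1$, and the immediate case $A_4$ --- go through as you describe.
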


\begin{proof}
(1) Since $N\leq \zeta G$, $n=\mathrm{Exp}N\leq \mathrm{Exp}(\zeta G)=n_1$. Also since $(\zeta G)^2\leq N$,
$n_1-1\leq n$. From this, we have $n\leq n_1\leq n+1$.

If $n_2\leq m-1$, then $N\leq \zeta G$ implies that
 $\mathbb{Z}_{2^{n-m+1}}\times\mathbb{Z}_{2}\cong N^{2^{m-1}}\leq (\zeta G)^{2^{m-1}}\cong \mathbb{Z}_{2^{n_1-m+1}}$,
 a contradiction. Hence $n_2\geq m$.

Suppose that $n_1=n+1$.
Note that $\mathbb{Z}_{2^{n_1-1}}\times\mathbb{Z}_{2^{n_2-1}}\cong (\zeta G)^2\leq N$.  If $n_2\geq m+2$, then
$|N|\geq 2^{n+m+1}$, a contradiction. From this, we have $m\leq n_2\leq m+1$.
 Suppose that $n_1=n$.
 If $n_2=m+2$, then $(\zeta G)^2\cong\mathbb{Z}_{2^{n-1}}\times\mathbb{Z}_{2^{m+1}}\cong N\cong\mathbb{Z}_{2^n}\times\mathbb{Z}_{2^m}$. From this, we have $n-1=m$ and $n_1=n=m+1<n_2$, a contradiction.
If $n_2\geq m+3$, then $2^{n+m+1}\leq|(\zeta G)^2|\leq |N|=2^{n+m}$, a contradiction.  In a word, (1) is true.

(2) We will next distinguish the isomorphism classes of $\zeta G$. Let $N=\langle a\rangle\times\langle b\rangle$, where $|a|=2^n$ and $|b|=2^m$.

(i) Assume that  $n_1=n$ and $n_2=m$. By \eqref{3.2}, we obtain $\zeta G=\langle a,b,z_3,\ldots,z_r\rangle$, which is the isomorphism class $A_1$ of $\zeta G$.

(ii) Assume that  $n_1=n+1$ and $n_2=m$. We may let $z_1^2=a^ib^j$ since $\langle z_1^2\rangle\leq N$.
If $n>m$, then $i$ and $2$ are coprime  since $|z_1^2|=2^n=|a|>|b|$. In this case, we have
$N=\langle a^ib^j\rangle\times\langle b\rangle= \langle z_1^2\rangle\times\langle b\rangle$ and $\zeta G=\langle z_1,b,z_3,\ldots,z_r\rangle$ by \eqref{3.2}, which is  the structure $A_2$ of  $\zeta G$.

Suppose $n=m$. Obviously,  one of $i$ and $j$ must be coprime to $2$. If $i$ is coprime to $2$, then
we have the structure $A_2$ of  $\zeta G$ which is similar to the case $n>m$.
 If $j$ is coprime to $2$, then
 $N=\langle a^ib^j\rangle\times\langle a\rangle= \langle z_1^2\rangle\times\langle a\rangle$ and $\zeta G=\langle z_1,a,z_3,\ldots,z_r\rangle$ by \eqref{3.2} which is the structure $A_2$ of  $\zeta G$.

(iii) Assume that  $n_1=n$ and $n_2=m+1$. Not to cause confusion, we may similarly  let $z_1^2=a^ib^j$ and $z_2^2=a^ub^v$.
Obviously, $i$ must be divisible by $2$ and let $i=2i_1$.

If $v$ is coprime to $2$, then $N=\langle a\rangle\times\langle z_2^2\rangle $ and $\zeta G=\langle a,z_2,z_3,\ldots,z_r\rangle$ by \eqref{3.2},
which is the isomorphism class $A_3$ of $\zeta G$.

Suppose that $v$ is divisible by $2$ and let $v=2v_1$. Obviously $z_2^{2^m}=a^{2^{m-1}u}$ is of order $2$.
Since $n=n_1\geq n_2=m+1$, we have $n>m$.
If $j$ is divisible by $2$, then
$z_1^{2^{n-1}}=a^{2^{n-1}i_1}$ of order 2 is equal to $a^{2^{m-1}u}$, a contradiction.
It follows that $(j,2)=1$. Further  we have that $n=m+1$, otherwise, $\langle z_1\rangle \bigcap \langle z_2\rangle=\langle a^{2^{m-1}u} \rangle\neq 1$,
a contradiction.
 Hence $(a^ib^j)^{2^{m-1}}$ is an element with order $2$ of $\langle z_1\rangle$.
 From this, we may obtain $N=\langle a\rangle\times\langle a^ib^j\rangle=\langle a\rangle\times\langle z_1^2\rangle$ and $\zeta G=\langle a,z_1,z_3,\ldots,z_r\rangle$, which is the isomorphism class $A_3$ of $\zeta G$ by adjusting the parameters $z_1$ and $z_2$.

(iv) Assume that $n_1=n+1$ and $n_2=m+1$. Since $(\zeta G)^2=\langle z_1^2\rangle\times\langle z_2^2\rangle$ has order $2^{n+m}$,
we have $N=(\zeta G)^2$. In this case, we may take $a=z_1^2$ and $b=z_2^{2}$, which is the isomorphism class $A_4$ of $\zeta G$.

\end{proof}

According to Lemma \ref{STG}, we know that $G$ is the central product of $E$ and $\zeta G$. Further, suppose that $E$ is the central product of $G_1,G_2,\ldots,G_k$, where the isomorphism classes of $G_i$ ($i=1,2,\ldots, k$) are  $Q_8$, $M_2(u,v)$ and $M_2(w,1,1)$, where $u,v,w\leq n+1$  as in Lemma  \ref{STG}.
Next we will determine the isomorphism classes of $G$ according to the types of $\zeta G$ in Lemma \ref{SCG}.

 \subsection{ The isomorphism type $A_1$ of $\zeta G$}

In the section, let
$$\zeta G=\langle z_1\rangle\times\langle z_2\rangle\times\cdots\times\langle z_r\rangle\cong\mathbb{Z}_{2^{n}}\times \mathbb{Z}_{2^{m}}\times\mathbb{Z}_2\times\cdots\times\mathbb{Z}_2, N=\langle z_1\rangle\times\langle z_2\rangle.$$
Obviously, $c\in N[2]=\langle z_1^{2^{n-1}}\rangle\times\langle z_2^{2^{m-1}}\rangle$. If $c=z_1^{2^{n-1}}\cdot z_2^{2^{m-1}}$,
 then we may rewrite $N=\langle z_1\rangle\times\langle z_1^{2^{n-m}}z_2\rangle$. At this time, $c=(z_1^{2^{n-m}}z_2)^{2^{m-1}}$.
Without loss of generality, we may always suppose $c\in \langle z_1\rangle$ or $c\in \langle z_2\rangle$.

For every factor $G_i$ of the central product of $E$,  we  first determine the types of $G_i\Y N$.
Note that $\zeta G_i\leq N$ in the central product $G_i\Y N$. For convenience,  the notations as $M_2(m+1,1,1)\Y \mathbb{Z}_{2^n}$,  $D_8\Y \mathbb{Z}_{2^{n}}$, $M_2(m+1,1,1)\Y M_2(n+1,1) $ and so on, imply the intersections of the factors of  the central products  are  the group $\langle c \rangle$ in the following parts.

\begin{lemma} \label{CPC1}
Suppose $n\geq m\geq 1$ and $c\in \langle z_1\rangle$.

(i) If $G_i\cong M_2(n+1,m+1)$ , then $G_i\Y N=G_i$. When $n>m$, $G_i\ncong M_2(m+1,n+1)$.

(ii) If $G_i\cong M_2(n+1,v)$, then $1\leq v\leq m+1$. When $v\leq m$,  $G_i\Y N\cong M_2(n+1,1)\times \mathbb{Z}_{2^m}$.

(iii) If $G_i\cong M_2(u,n+1)$, then $2\leq u\leq m+1$ and $n=m$.  When $u< m+1$, $G_i\Y N\cong M_2(m+1,1,1)\Y\mathbb{Z}_{2^m}$.

(iv) If $G_i\cong M_2(u,v)$, where $2\leq u<n+1$ and $1\leq v<n+1$, then $G_i\Y N\cong M_2(m+1,1,1)\Y \mathbb{Z}_{2^n}$ or $D_8\Y \mathbb{Z}_{2^{n}}\times \mathbb{Z}_{2^{m}}$.

(v) If $G_i\cong M_2(n+1,1,1)$, then $n=m$ and $G_i\Y N\cong M_2(m+1,1,1)\Y \mathbb{Z}_{2^m}$.

(vi) If $G_i\cong M_2(w,1,1)$, where $1\leq w<n+1$, then $G_i\Y N\cong M_2(m+1,1,1)\Y \mathbb{Z}_{2^n}$ or $D_8\Y \mathbb{Z}_{2^{n}}\times \mathbb{Z}_{2^{m}}$.

\end{lemma}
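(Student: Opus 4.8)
The plan is to analyse the six candidate types for $G_i$ by a common method resting on three properties of each inner abelian factor: that $G_i'=\langle c\rangle$ (forced by $G'=\langle c\rangle$), that $G_i^2\leq N$ (as recorded in the proof of Lemma~\ref{STG}), and that $c=z_1^{2^{n-1}}$ lies in $\langle z_1\rangle$, where $N=\langle z_1\rangle\times\langle z_2\rangle\cong\mathbb{Z}_{2^n}\times\mathbb{Z}_{2^m}$ with $n\geq m$. Two auxiliary facts carry most of the argument. The first is an order count: for $G_i=M_2(u,v)$ one has $\langle a_i\rangle\cap\langle b_i\rangle=1$ because $|\langle a_i\rangle|\,|\langle b_i\rangle|=2^{u+v}=|G_i|$, whence $G_i^2=\langle a_i^2\rangle\times\langle b_i^2\rangle$ is a subgroup of $N$ of order $2^{u+v-2}$; comparing with $|N|=2^{n+m}$ immediately yields the parameter bounds ($v\leq m+1$ in (ii), $u\leq m+1$ in (iii), and the ranges in (iv), (vi)). The second is a rigidity statement: \emph{if $n>m$, then every element $g\in N$ of order $2^n$ satisfies $g^{2^{n-1}}=c$.} Indeed, writing $g=z_1^sz_2^t$, order $2^n$ forces $s$ odd, and since $n-1\geq m$ we get $z_2^{t2^{n-1}}=1$ and $z_1^{s2^{n-1}}=z_1^{2^{n-1}}=c$.

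With these in hand the determinate cases (i), (ii), (iii), (v) follow by locating an element of the right order and trimming it with a central correction. In (i) the squares $a_i^2,b_i^2$ already generate $N$, so $N\leq G_i$ and $G_i\Y N=G_i$; the impossibility of $M_2(m+1,n+1)$ when $n>m$ comes from applying the rigidity fact to the order-$2^n$ element $b_i^2$, which forces $c=b_i^{2^n}\in\langle b_i\rangle$, contradicting $c\in\langle a_i\rangle$ together with $\langle a_i\rangle\cap\langle b_i\rangle=1$. In (iii) and (v) the same rigidity argument, applied to whichever generator has square of order $2^n$, drives $c$ into a cyclic group from which it is excluded, so $n=m$; then one builds the factor $M_2(m+1,1,1)$ from the order-$2^{m+1}$ generator together with an involution obtained from the other generator after multiplying (when necessary) by a central square root of its square, such a root existing in $N$ precisely because that square has order at most $2^{m-1}$, and then splits off the complementary $\mathbb{Z}_{2^m}$. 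Case (ii) with $v\leq m$ is analogous: replacing $b_i$ by $b_iw$ with $w^2=b_i^{-2}$ produces an involution $t$ satisfying $a_i^{t}=a_i^{1+2^n}$, exhibiting $M_2(n+1,1)$, after which the complementary $\mathbb{Z}_{2^m}$ of $N$ provides the direct factor.

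I expect the genuine difficulty to lie in the dichotomy of cases (iv) and (vi), where $u,v\leq n$ so $G_i^2$ is small and $N$ contributes a strictly larger cyclic part; here the isomorphism type of $G_i\Y N$ is not pinned down by orders, since both $M_2(m+1,1,1)\Y\mathbb{Z}_{2^n}$ and $D_8\Y(\mathbb{Z}_{2^n}\times\mathbb{Z}_{2^m})$ have order $2^{n+m+2}$, and one must decide which occurs. The deciding question is whether the non-commuting generator $a_i$ can be twisted by a central element $z\in N$ so that $s=a_iz$ satisfies $s^2=c$, that is, whether $z^2=c\,a_i^{-2}$ is solvable in $N$: solvability (equivalently $c\,a_i^{-2}\in N^2$) produces an order-$4$ generator with $s^2=c$ and hence a $D_8$ factor, while its failure leaves only the option of enlarging $a_i$ to an element of order $2^{m+1}$ with $c\notin\langle a_iz\rangle$, giving an $M_2(m+1,1,1)$ factor. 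Carrying this out amounts to writing $a_i^2=z_1^{\alpha}z_2^{\beta}$ and tracking the $2$-adic valuations of $\alpha,\beta$ against $c=z_1^{2^{n-1}}$; the main obstacle is precisely this bookkeeping, together with the verification that the resulting presentation really is a central product whose factor intersection is exactly $\langle c\rangle$, as demanded by the convention fixed just before the lemma.
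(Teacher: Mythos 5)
Your framework for cases (i), (ii), (iii) and (v) matches the paper's: write $x_i^2=z_1^{l}z_2^{j}$, $y_i^2=z_1^{s}z_2^{t}$, use $\zeta G_i\leq N$ for the order bounds, and derive contradictions from where $c$ must sit. Your ``rigidity'' observation (for $n>m$, every element of $N$ of order $2^n$ has $2^{n-1}$-st power equal to $c$) is a clean unification of the several ad hoc contradictions the paper runs for $M_2(m+1,n+1)$, $M_2(u,n+1)$ and $M_2(n+1,1,1)$, and the central-square-root corrections you describe are exactly the substitutions $y_i\mapsto y_iz_1^{-s_1}z_2^{-t_1}$ the paper performs. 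Up to omitted verifications (triviality of the intersections in the claimed direct/central product decompositions), that half of the proposal is sound.

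The gap is in case (iv), which you yourself identify as the crux and then decide by a criterion that is wrong. You propose that $G_i\Y N$ is of type $D_8\Y\mathbb{Z}_{2^n}\times\mathbb{Z}_{2^m}$ precisely when $z^2=c\,a_i^{-2}$ is solvable in $N$ for \emph{the} non-commuting generator $a_i$; but $M_2(u,v)$ has two non-central generators and the dichotomy depends on both. Writing $x_i^2=z_1^{2l_1}z_2^{j}$, $y_i^2=z_1^{2s_1}z_2^{t}$ (the $z_1$-exponents are automatically even since $u,v\leq n$), the $D_8$ type occurs only when $j$ and $t$ are \emph{both} even; if exactly one of them is odd, say $t$, then $y_iz_1^{-s_1}$ has order $2^{m+1}$ and cannot be corrected to order $\leq 4$ by any central twist, so the answer is $M_2(m+1,1,1)\Y\mathbb{Z}_{2^n}$ even though your condition on $a_i=x_i$ is satisfied and predicts $D_8$. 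These two groups are not isomorphic (for $m\geq 2$ they have different exponents of the nonabelian factor, and in general different involution counts), so this is a genuine misclassification, not a presentational choice. A second missing ingredient is the sub-case where $j$ and $t$ are both odd: there neither generator alone can be repaired, and the required involution is the mixed word $x_i^{j_1}y_i^{-t_1}z_1^{s_1t_1-l_1j_1+2^{n-2}}$ (with $jj_1\equiv tt_1\equiv 1\pmod{2^m}$), whose existence is what rescues the $M_2(m+1,1,1)$ type; your plan of twisting a single generator does not produce it. Case (vi) escapes these problems only because $M_2(w,1,1)$ already carries an involution, so one generator genuinely does decide the outcome there.
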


\begin{proof}
(i) Let
$$G_i=\langle x_i,y_i \ | \ x_i^{2^{n+1}}=y_i^{2^{m+1}}=1, x_i^{y_i}=x_i^{1+2^n} \rangle \cong M_2(n+1,m+1).$$
Since $\zeta G_i=\langle x_i^2\rangle\times\langle y_i^2\rangle\cong \mathbb{Z}_{2^{n}}\times\mathbb{Z}_{2^{m}}$,
we have $N=\zeta G_i$ and  $G_i\Y N=G_i$.

If
$$G_i=\langle x_i,y_i \ | \ x_i^{2^{m+1}}=y_i^{2^{n+1}}=1, x_i^{y_i}=x_i^{1+2^m} \rangle \cong M_2(m+1,n+1),$$
then $G_i'=\langle x_i^{2^m}\rangle$. Thus  $x_i^{2^m}=c=z_1^{2^{n-1}}$.
Since $\zeta G_i=\langle x_i^2\rangle\times\langle y_i^2\rangle\leq N$,
we have $N=\zeta G_i$ by comparing their orders. But, the case when $n>m$ implies that
$$\langle c\rangle=N^{2^{n-1}}=(\zeta G_i)^{2^{n-1}}=\langle y_i^{2^{n}}\rangle,$$  which is
a contradiction. Hence $G_i\ncong M_2(m+1,n+1)$.

(ii) Let
$$G_i=\langle x_i,y_i \ | \ x_i^{2^{n+1}}=y_i^{2^{v}}=1, x_i^{y_i}=x_i^{1+2^n} \rangle \cong M_2(n+1,v),$$
then $G_i'=\langle x_i^{2^n}\rangle$. Thus  $x_i^{2^n}=c=z_1^{2^{n-1}}$.
Since $\zeta G_i=\langle x_i^2\rangle\times\langle y_i^2\rangle\leq N$, we have $2^{n+v-1}\leq |N|=2^{n+m}$. It follows
$v-1\leq m$.

Suppose $v\leq m$. Let $ x_i^2=z_1^{l}z_2^{j}$ and $ y_i^2=z_1^{s}z_2^{t}$, where
$1\leq l,s\leq 2^n, 1\leq j,t\leq 2^m$. If $l$ is divisible by $2$, then $c=x_i^{2^n}=z_1^{2^{n-1}l}z_2^{2^{n-1}j}=z_2^{2^{n-1}j}$, which is a contradiction. From this, $l$ must be coprime to $2$.  Since $1=y_i^{2^v}=z_1^{2^{v-1}s}z_2^{2^{v-1}t}$, we have
$s$ and $t$ are divisible by $2^{n-v+1}$ and $2^{m-v+1}$, respectively. Let $s=2s_1$ and $t=2t_1$.
 Hence $y_iz_1^{-s_1}z_2^{-t_1}$ is of order $2$. At this time,  we may replace $y_i$ by $y_iz_1^{-s_1}z_2^{-t_1}$.
$$ G_i\Y N=\langle x_i,y_iz_1^{-s_1}z_2^{-t_1},z_2 \rangle=\langle x_i,y_iz_1^{-s_1}z_2^{-t_1}\rangle\times\langle z_2 \rangle\cong M_2(n+1,1)\times\mathbb{Z}_{2^m}.$$

(iii) Let
$$G_i=\langle x_i,y_i \ | \ x_i^{2^{u}}=y_i^{2^{n+1}}=1, x_i^{y_i}=x_i^{1+2^{u-1}} \rangle \cong M_2(u,n+1).$$
Since $\zeta G_i=\langle x_i^2\rangle\times\langle y_i^2\rangle\leq N$, we have $2^{u-1+n}\leq |N|=2^{n+m}$. It follows
$u-1\leq m$.  Let $ x_i^2=z_1^{l}z_2^{j}$ and $ y_i^2=z_1^{s}z_2^{t}$, where
$1\leq l,s\leq 2^n, 1\leq j,t\leq 2^m$.

If $n>m$,  then $1\neq y_i^{2^n}=z_1^{2^{n-1}s}z_2^{2^{n-1}t}=z_1^{2^{n-1}s}=c=x_i^{2^{u-1}}$, which is a contradiction.
Hence we obtain $n=m$.
Suppose that $u< m+1$.  Since $1=x_i^{2^u}=z_1^{2^{u-1}l}z_2^{2^{u-1}j}$, we have both
$l$ and $j$ are divisible by $2^{m-u+1}$. Let $l=2l_1$ and $j=2j_1$.
If $t$ is divisible by $2$, then $1\neq y_i^{2^n}=z_1^{2^{n-1}s}z_2^{2^{n-1}t}=z_1^{2^{n-1}s}=c$, which is a contradiction.
Thus we have $t$ is coprime to $2$. In a word,  we may replace $x_i$ by $x_iz_1^{-l_1}z_2^{-j_1}$ and
$$ G_i\Y N=\langle x_iz_1^{-l_1}z_2^{-j_1},y_i,z_1 \rangle=\langle x_iz_1^{-l_1}z_2^{-j_1},y_i\rangle\Y\langle z_1 \rangle\cong M_2(m+1,1,1)\Y\mathbb{Z}_{2^m}.$$

(iv)
Let
$$G_i=\langle x_i,y_i \ | \ x_i^{2^{u}}=y_i^{2^{v}}=1, x_i^{y_i}=x_i^{1+2^{u-1}} \rangle \cong M_2(u,v),$$
where $u,v\leq n$.
 Let $ x_i^2=z_1^{l}z_2^{j}$ and $ y_i^2=z_1^{s}z_2^{t}$, where
$1\leq l,s\leq 2^n, 1\leq j,t\leq 2^m$.
Obviously, both $l$ and $s$ are divisible by $2$. Let $l=2l_1$ and $s=2s_1$.

If $(j,2)=1=(t,2)$, then there exist $j_1$ and $t_1$ such that $jj_1\equiv 1\pmod{2^m}$ and $tt_1\equiv 1\pmod{2^m}$.
Hence $x_i^{2j_1}=z_1^{2l_1j_1}z_2$ and $y_i^{2t_1}=z_1^{2s_1t_1}z_2$.  Note that $n\geq u\geq 2$ and $c=z_1^{2^{n-1}}$. From this, we have
$(x_i^{j_1}y_i^{-t_1}z_1^{s_1t_1-l_1j_1+2^{n-2}})^2=1$. It follows that
$$G_i\Y N=\langle y_iz_1^{-s_1}, x_i^{j_1}y_i^{-t_1}z_1^{s_1t_1-l_1j_1+2^{n-2}}, z_1\rangle\cong M_2(m+1,1,1)\Y \mathbb{Z}_{2^n}.$$

Suppose that  $(j,2)=1$ and $2|t$. Let $t=2t_2$. Since the orders of $x_iz_1^{-l_1}$ and $y_iz_1^{-s_1}z_2^{-t_2}$ are $2^{m+1}$ and $2$,  respectively, we have
$$G_i\Y N=\langle x_iz_1^{-l_1}, y_iz_1^{-s_1}z_2^{-t_2},z_1\rangle\cong M_2(m+1,1,1)\Y \mathbb{Z}_{2^n}.$$
Similarly, we may obtain the same result for the case when  $2|j$ and $(t,2)=1$.

Suppose that  $2|j$ and $2|t$. Let $j=2j_2$ and $t=2t_3$. We may obtain
$$G_i\Y N=\langle x_iz_1^{-l_1}z_2^{-j_2}, y_iz_1^{-s_1}z_2^{-t_3},z_1,z_2\rangle\cong D_8\Y \mathbb{Z}_{2^n}\times \mathbb{Z}_{2^m}. $$

(v)
Let
$$G_i=\langle x_i,y_i ,c\ | \ x_i^{2^{n+1}}=y_i^2=c^2=1, [x_i,y_i]=c,[x_i,c]=[y_i,c]=1 \rangle \cong M_2(n+1,1,1).$$
 Let $ x_i^2=z_1^{l}z_2^{j}$, where
$1\leq l\leq 2^n, 1\leq j\leq 2^m$.
If $n>m$, then $x_i^{2^n}=z_1^{2^{n-1}l}z_2^{2^{n-1}j}=z_1^{2^{n-1}l}$, which is a contradiction.
Hence $n=m$. At this time,  if both $l$ and $j$ can be divisible by $2$, then the order of $z_1^{l}z_2^{j}$ is less than $2^m$, which
is impossible. If $j$ can be divisible by $2$ and $(l,2)=1$, then we have $x_i^{2^m}=z_1^{2^{m-1}l}z_2^{2^{m-1}j}=c$,
which is also impossible. From this, we have $(j,2)=1$. It follows that
$$G_i\Y N=\langle x_i, y_i,z_1\rangle\cong M_2(m+1,1,1)\Y \mathbb{Z}_{2^m}.$$

(vi)
Let
$$G_i=\langle x_i,y_i ,c\ | \ x_i^{2^{w}}=y_i^2=c^2=1, [x_i,y_i]=c,[x_i,c]=[y_i,c]=1 \rangle \cong M_2(w,1,1),$$
 where $1\leq w<n+1$. Let $ x_i^2=z_1^{l}z_2^{j}$, where
$1\leq l\leq 2^n, 1\leq j\leq 2^m$.
Since $1=x_i^{2^w}=z_1^{2^{w-1}l}z_2^{2^{w-1}j}$, we have $l$ can be divisible by $2$. Let $l=2l_1$.

If $(j,2)=1$, then $G_i\Y N=\langle x_iz_1^{-l_1}, y_i,z_1\rangle\cong M_2(m+1,1,1)\Y \mathbb{Z}_{2^n}.$

Suppose $j$ can be divisible by $2$ and let $j=2j_1$. Thus
$$G_i\Y N=\langle x_iz^{-l_1}z_2^{-j_1}, y_i,z_1,z_2\rangle\cong D_8\Y \mathbb{Z}_{2^n}\times\mathbb{Z}_{2^m}.$$

\end{proof}

\begin{lemma} \label{CPC2}
Suppose $n\geq m\geq 1$ and $c\in \langle z_2\rangle$.

(i) If $G_i\cong M_2(m+1,n+1)$ , then $G_i\Y N=G_i$. When $n>m$, $G_i\ncong M_2(n+1,m+1)$.

(ii) If $G_i\cong M_2(n+1,v)$, where $1\leq v\leq m+1$, then $n=m$. When $1\leq v\leq m$,  $G_i\Y N\cong M_2(m+1,1)\times \mathbb{Z}_{2^m}$.

(iii) If $G_i\cong M_2(u,n+1)$, where $2\leq u\leq m$, then $G_i\Y N\cong M_2(n+1,1,1)\Y\mathbb{Z}_{2^m}$.

(iv) If $G_i\cong M_2(u,v)$, where $2\leq u<n+1$ and $1\leq v<n+1$, then
$$G_i\Y N\cong\left\{ \begin{aligned}
          &D_8\times \mathbb{Z}_{2^n}\  or\  Q_8\times \mathbb{Z}_{2^{n}},\ \ m=1.\\
          &M_2(m+1,1)\times \mathbb{Z}_{2^n}\ \mathrm{or}\  D_8\Y \mathbb{Z}_{2^{m}}\times \mathbb{Z}_{2^{n}},\ \ m>1.
                                           \end{aligned} \right.$$

(v) If $G_i\cong M_2(n+1,1,1)$, then
$G_i\Y N\cong  M_2(n+1,1,1)\Y \mathbb{Z}_{2^m}.$

(vi) If $G_i\cong M_2(w,1,1)$, where $1\leq w<n+1$, then $G_i\Y N\cong M_2(m+1,1)\times \mathbb{Z}_{2^n}$ or $D_8\Y \mathbb{Z}_{2^{m}}\times \mathbb{Z}_{2^{n}}$.

\end{lemma}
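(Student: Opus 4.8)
The plan is to follow the same case-by-case strategy used in the proof of Lemma \ref{CPC1}, but now under the standing hypothesis $c\in\langle z_2\rangle$. Since $c$ has order $2$ and $|z_2|=2^m$, this means $c=z_2^{2^{m-1}}$, so the two cyclic factors $\langle z_1\rangle$ (of order $2^n$) and $\langle z_2\rangle$ (of order $2^m$) exchange roles relative to Lemma \ref{CPC1}. I would treat the six isomorphism types (i)--(vi) of $G_i$ from Lemma \ref{INA1} in turn. In each case the mechanism is identical: write down the defining presentation of $G_i$, identify the derived subgroup $G_i'$ and impose that its generator equals $c=z_2^{2^{m-1}}$, expand the squares of the generators as $x_i^2=z_1^{l}z_2^{j}$ and $y_i^2=z_1^{s}z_2^{t}$ with $1\le l,s\le 2^n$ and $1\le j,t\le 2^m$, extract divisibility and coprimality constraints on $l,j,s,t$ from the orders of $x_i,y_i$ and from the requirement that the relevant top power is exactly $c$, and finally replace $x_i,y_i$ by products with suitable central elements $z_1^{-s_1}z_2^{-t_1}$ to lower orders to involutions. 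Reading off the adjusted presentation, together with whichever of $z_1,z_2$ survives as a free central generator, yields the claimed type; whether the answer is a direct product $\times$ or a central product $\Y$ is decided by whether that surviving central factor meets $G_i$ trivially or in $\langle c\rangle$.

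The genuinely new feature, and the source of the extra hypotheses in the statement, is that $c$ now lies in the \emph{smaller} factor $\langle z_2\rangle$. Concretely, any generator of $G_i$ of order $2^{n+1}$ has its square of order $2^n$ inside $N$, which forces the $z_1$-component of that square to be a unit; hence the top power of such a generator lies in $\langle z_1^{2^{n-1}}\rangle$. Since $c=z_2^{2^{m-1}}$, these two involutions of $N$ are distinct whenever $n>m$. Thus $c$ cannot coincide with the top power of an order-$2^{n+1}$ generator unless $n=m$. This is exactly what forces $n=m$ in case (ii), where $G_i'=\langle x_i^{2^n}\rangle$ makes $c=x_i^{2^n}$, and what forbids $G_i\cong M_2(n+1,m+1)$ in (i). By contrast, in (iii), (v) and (vi) the element $c$ is the top power of the \emph{short} generator or an independent central involution, so no such collision arises and no equality $n=m$ is forced; this accounts for the absence of an $n=m$ constraint there. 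In the remaining situations, where both generators of $G_i$ have order at most $2^n$, I would run the same coprimality trichotomy on the exponents $j,t$ governing the $z_2$-components as in Lemma \ref{CPC1} (with the $z_2$-direction now decisive), and use the central adjustments to collapse $G_i\Y N$ onto one of $M_2(m+1,1)\times\mathbb{Z}_{2^n}$ or $D_8\Y\mathbb{Z}_{2^m}\times\mathbb{Z}_{2^n}$ according to which components survive.

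The main obstacle I anticipate is the boundary case $m=1$ in part (iv). When $m=1$ the smaller central factor degenerates to $\langle z_2\rangle=\langle c\rangle\cong\mathbb{Z}_2$ itself, so there is no central involution other than $c$ available to absorb into a generator. After normalizing, one finds $x_i^2=c$, and the square of the adjusted second generator reduces to $y_i^2=c^{t}$; if $t$ is even this gives $y_i^2=1$ and hence $D_8\times\mathbb{Z}_{2^n}$, whereas if $t$ is odd one is stuck with $y_i^2=c$, which is precisely the quaternion relation, yielding $Q_8\times\mathbb{Z}_{2^n}$. The key point to verify carefully is that this $c$-square is genuinely unremovable: multiplying $y_i$ by $z_2=c$ leaves the square unchanged since $z_2^2=1$, so no further change of generators can eliminate it. For $m>1$, by contrast, $z_2$ has order $2^m\ge 4$, so $(y_iz_2^{-2^{m-2}})^2=y_i^2 z_2^{-2^{m-1}}=y_i^2 c^{-1}$ can always cancel a $c$-square; this is exactly why the $Q_8$ branch disappears and the factor $\mathbb{Z}_{2^m}$ persists as a central product when $m>1$. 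Separating $D_8$ from $Q_8$ thus amounts to tracking the parity of the surviving $z_2$-exponent, and the decomposition being a direct product follows because the shared involution $c=z_2$ is disjoint from $\langle z_1\rangle$. Beyond this, the remaining work is the routine but lengthy verification in each branch that the adjusted generators have the stated orders and that the product decomposition is correct, all paralleling Lemma \ref{CPC1} step for step.
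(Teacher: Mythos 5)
Your proposal is correct and follows essentially the same route as the paper: the paper likewise reduces (i)--(iii) to the mirror arguments of Lemma \ref{CPC1} and handles (iv)--(vi) by expanding $x_i^2, y_i^2$ in $z_1, z_2$, running the parity trichotomy on the $z_2$-exponents, and absorbing central elements into the generators, with the same mechanism you identify for why the $Q_8$ branch survives only at $m=1$ (there $z_2=c$ has order $2$, so the $c$-square cannot be cancelled, whereas for $m>1$ one adjusts by $z_2^{2^{m-2}}$).
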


\begin{proof}
(i) - (iii) may be obtained similar to  (i) -(iii) of Lemma \ref{CPC1}.

(iv)
Let
$$G_i=\langle x_i,y_i \ | \ x_i^{2^{u}}=y_i^{2^{v}}=1, x_i^{y_i}=x_i^{1+2^{u-1}} \rangle \cong M_2(u,v),$$
where $u,v\leq n$.
 Let $ x_i^2=z_1^{l}z_2^{j}$ and $ y_i^2=z_1^{s}z_2^{t}$, where
$1\leq l,s\leq 2^n, 1\leq j,t\leq 2^m$.
Obviously, both $l$ and $s$ are divisible by $2$. Let $l=2l_1$ and $s=2s_1$.

If $(j,2)=1=(t,2)$, then there exist $j_1$ and $t_1$ such that $jj_1\equiv 1\pmod{2^m}$ and $tt_1\equiv 1\pmod{2^m}$.
Hence $x_i^{2j_1}=z_1^{2l_1j_1}z_2$ and $y_i^{2t_1}=z_1^{2s_1t_1}z_2$.
 Further,
$(x_i^{j_1}y_i^{-t_1}z_1^{s_1t_1-l_1j_1}z_2^{2^{m-2}})^2=1$ when $m\geq 2$. It follows that
$$G_i\Y N=\langle x_iz_1^{-l_1}, x_i^{j_1}y_i^{-t_1}z_1^{s_1t_1-l_1j_1}z_2^{2^{m-2}}, z_1\rangle\cong M_2(m+1,1)\times \mathbb{Z}_{2^n}.$$
When $m=1$, we have $$G_i\Y N=\langle x_iz_1^{-l_1}, y_iz_1^{-s_1}, z_1\rangle\cong Q_8\times \mathbb{Z}_{2^n}.$$

Suppose that  $(j,2)=1$ and $2|t$. Let $t=2t_2$. Since the orders of $x_iz_1^{-l_1}$ and $y_iz_1^{-s_1}z_2^{-t_2}$ are $2^{m+1}$ and $2$, we have
$$G_i\Y N=\langle x_iz_1^{-l_1}, y_iz_1^{-s_1}z_2^{-t_2},z_1\rangle\cong M_2(m+1,1)\times \mathbb{Z}_{2^n}.$$
Similarly, we may obtain the same result for the case $2|j$ and $(t,2)=1$.

Suppose that  $2|j$ and $2|t$. Let $j=2j_2$ and $t=2t_3$. We may obtain
$$G_i\Y N=\langle x_iz_1^{-l_1}z_2^{-j_2}, y_iz_1^{-s_1}z_2^{-t_3},z_2,z_1\rangle\cong D_8\Y \mathbb{Z}_{2^m}\times \mathbb{Z}_{2^n}. $$

(v)
Let
$$G_i=\langle x_i,y_i ,c\ | \ x_i^{2^{n+1}}=y_i^2=c^2=1, [x_i,y_i]=c,[x_i,c]=[y_i,c]=1 \rangle \cong M_2(n+1,1,1).$$
 Let $ x_i^2=z_1^{l}z_2^{j}$, where
$1\leq l\leq 2^n, 1\leq j\leq 2^m$.
If both $l$ and $j$ can be divisible by $2$, then the order of $z_1^{l}z_2^{j}$ is less than $2^{n}$, which is impossible.
If $l$ can be divisible by $2$, then $(j,2)=1$ and $x_i^{2^n}=z_1^{2^{n-1}l}z_2^{2^{n-1}j}=z_2^{2^{n-1}j}$, which is a contradiction.
Hence  $(l,2)=1$. From this, we have
$$G_i\Y N=\langle x_i, y_i,z_2\rangle\cong M_2(n+1,1,1)\Y \mathbb{Z}_{2^m}. $$

(vi)
Let
$$G_i=\langle x_i,y_i ,c\ | \ x_i^{2^{w}}=y_i^2=c^2=1, [x_i,y_i]=c,[x_i,c]=[y_i,c]=1 \rangle \cong M_2(w,1,1),$$
 where $1\leq w<n+1$. Let $ x_i^2=z_1^{l}z_2^{j}$, where
$1\leq l\leq 2^n, 1\leq j\leq 2^m$.
Since $1=x_i^{2^w}=z_1^{2^{w-1}l}z_2^{2^{w-1}j}$, we have $l$ can be divisible by $2$. Let $l=2l_1$.

If $(j,2)=1$, then $G_i\Y N=\langle x_iz_1^{-l_1}, y_i,z_1\rangle\cong M_2(m+1,1)\times \mathbb{Z}_{2^n}.$

Suppose $j$ can be divisible by $2$ and let $j=2j_1$. Thus
$$G_i\Y N=\langle x_iz^{-l_1}z_2^{-j_1}, y_i, z_2, z_1\rangle\cong D_8\Y \mathbb{Z}_{2^m}\times\mathbb{Z}_{2^n}.$$

\end{proof}

According to Lemmas \ref{STG} and \ref{CPC1}, for arbitrary  factor $G_i$ of the central product of $E$, $G_i\Y N$ has the
following isomorphism classes: $M_2(n+1,m+1)$, $M_2(n+1,1)\times \mathbb{Z}_{2^m}$, $M_2(m+1,1,1)\Y \mathbb{Z}_{2^n}$,
$D_8\Y \mathbb{Z}_{2^n}\times\mathbb{Z}_{2^m}$ and $Q_8\Y \mathbb{Z}_{2^n}\times\mathbb{Z}_{2^m}$. From this,
 the central product of arbitrary two factors $G_1$ and $G_2$ may only be considered that among each other of
$M_2(n+1,m+1)$, $M_2(n+1,1)$, $M_2(m+1,1,1)$, $D_8$ and $Q_8$. Note that $Q_8\Y Q_8\cong D_8\Y D_8$.
Hence we only consider other cases.

\begin{lemma} \label{GGC1}
Suppose $n\geq m\geq 1$ and $c\in \langle z_1\rangle$. Let $G_1$ and $G_2$ are arbitrary two factors which are isomorphic to  $M_2(n+1,m+1), M_2(n+1,1)$  or $ M_2(m+1,1,1)$.

(i) If $G_1$ and $G_2$ are isomorphic to $M_2(n+1,m+1)$, then $$G_1\Y G_2\cong\left\{ \begin{aligned}
          &M_2(2,1,1)\Y D_8\  \mbox{or}\  M_2(2,1,1)\Y Q_8,\ \ n=1.\\
          & M_2(m+1,1,1)\Y M_2(n+1,1), \ \ n>1.
                                           \end{aligned} \right.$$
(ii) If $G_1$ and $G_2$ are isomorphic to $M_2(n+1,m+1)$ and $M_2(n+1,1)$, respectively, then $G_1\Y G_2\cong M_2(n+1,m+1)\Y D_8$.

(iii) If $G_1$ and $G_2$ are isomorphic to $M_2(n+1,m+1)$ and $M_2(m+1,1,1)$, respectively, then $G_1\Y G_2\cong M_2(n+1,m+1)\Y D_8$.

(iv)  If both $G_1$ and $G_2$ are isomorphic to $M_2(m+1,1,1)$, then $G_1\Y G_2\cong M_2(m+1,1,1)\Y D_8$ or $M_2(m+1,1,1)\Y M_2(m+1,1)$.

(v) If both $G_1$ and $G_2$ are isomorphic to $M_2(n+1,1)$, then $G_1\Y G_2\cong  M_2(n+1,1)\Y D_8$ or $M_2(n+1,1)\Y M_2(m+1,1,1)$.

\end{lemma}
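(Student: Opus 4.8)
The plan is to prove (i)--(v) one at a time by exhibiting inside $G_1\Y G_2$ a new pair of mutually commuting subgroups whose generators have prescribed orders and commutators, and then reading off the isomorphism type directly from those relations. First I would fix presentations $G_1=\langle x_1,y_1\rangle$ and $G_2=\langle x_2,y_2\rangle$ of the two given types taken from Lemma \ref{INA1}. Since $c\in\langle z_1\rangle$ and $G_1'=G_2'=\langle c\rangle$, I record the common relation $[x_1,y_1]=[x_2,y_2]=c$ together with the central--product relations $[x_1,x_2]=[x_1,y_2]=[y_1,x_2]=[y_1,y_2]=1$. Crucially I would also record the square classes of the four generators, i.e.\ which of $x_i^2,y_i^2$ lie in $\langle c\rangle$ and which represent independent central involutions of $\zeta G$, since these are precisely the data that separate $M_2(\cdot,1)$ from $M_2(\cdot,1,1)$ and $D_8$ from $Q_8$.

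The engine of every case is a change of generators that ``diagonalizes'' the two commuting factors. The model computation is the following. Because $x_1$ and $x_2$ pair to the same element $c$ under commutation with $y_1,y_2$, the element $b:=x_1x_2^{-1}$ satisfies $[b,x_1]=1$ and
$$[b,\,y_1y_2^{-1}]=[x_1,y_1]\,[x_2^{-1},y_2^{-1}]=c\cdot c=1,$$
while $[y_2,b]=[y_2,x_2^{-1}]=c$; moreover $b^2=x_1^2x_2^{-2}$, so $b$ is an involution exactly when $x_1^2=x_2^2$. Using this, in case (i) with $n>1$ I would keep the factor $\langle x_1,\,y_1y_2^{-1}\rangle$, which is an $M_2(n+1,1)$ once $y_1^2=y_2^2$ makes $y_1y_2^{-1}$ an involution and since $x_1^{2^n}=c$, and split off the complementary subgroup $\langle y_2,\,b\rangle$; a direct check that it commutes with the first factor, that $y_2$ has order $2^{m+1}$ with $y_2^{2^m}\neq c$, and that $[y_2,b]=c$, identifies it as an $M_2(m+1,1,1)$. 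Cases (ii)--(v) follow the same template: one factor is retained essentially intact and a $D_8$, $Q_8$, $M_2(m+1,1)$ or $M_2(m+1,1,1)$ is peeled off by substitutions of the form $x_i\mapsto x_iz_1^{s}z_2^{t}$ and $y_i\mapsto y_iz_1^{s'}z_2^{t'}$ absorbing suitable central powers of $z_1,z_2$.

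The branchings in the statement come entirely from when a combined element collapses to minimal order. In (i) the dichotomy $n=1$ versus $n>1$ is forced by the fact that $M_2(n+1,1)\cong D_8$ exactly when $n=1$: for $n>1$ the retained metacyclic factor is rigid and a single answer results, whereas for $n=m=1$ both $x_i^2=c$, the kept factor degenerates to $D_8$, and the residual freedom in the square class of $y_1y_2^{-1}$ (whether $y_1^2=y_2^2$ or $y_1^2=cy_2^2$) produces the $D_8$ versus $Q_8$ alternative. The analogous degenerations at $m=1$, together with the two possible square--class configurations of the shared centre, account for the two listed outcomes in (iv) and (v). Where a pair of square--nontrivial factors would otherwise appear I would normalize it by the identity $Q_8\Y Q_8\cong D_8\Y D_8$ recorded above.

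The main obstacle is the square--class bookkeeping. To name the type of each new factor I must compute $u^2$ modulo $\langle c\rangle$ for every new generator $u$, decide whether it is trivial, equals $c$, or is an independent central involution, and at the same time verify that the two constructed subgroups genuinely commute and meet only in $\langle c\rangle$ (equivalently, that their orders multiply correctly). Pinning down the $D_8$/$Q_8$ distinction (an Arf-type invariant of the squaring map on $G/\zeta G$) in the small borderline cases, and checking that the leftover central elements $z_1,z_2$ can always be absorbed into the chosen generators without disturbing the commutator relations, is where the delicate, genuinely case-dependent computation resides.
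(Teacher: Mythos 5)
Your overall plan --- fix presentations, express $x_2^2$ and $y_2^2$ in terms of $N=\langle x_1^2\rangle\times\langle y_1^2\rangle$, and split $G_1\Y G_2$ into two commuting factors of recognizable type by a change of generators governed by the parities of the exponents --- is exactly the paper's. The gap lies in the mechanism you propose for manufacturing the complementary involutions. Take case (i) with $n>m$ and write $x_2^2=x_1^{2l}y_1^{2j}$, where necessarily $l$ is odd. Your candidate $b=x_1x_2^{-1}$ satisfies $b^2=x_1^{2-2l}y_1^{-2j}$, and you propose to trivialize this square by ``absorbing suitable central powers of $z_1,z_2$'', i.e.\ replacing $b$ by $bz$ with $z$ central. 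But the centre of $G_1\Y G_2$ is $N$ itself, so $z^2\in N^2=\langle x_1^4\rangle\times\langle y_1^4\rangle$, and the $\langle y_1^2\rangle$-component of $(bz)^2$ is $y_1^{-2j+4e}$, which is nontrivial whenever $j$ is odd; since $1$ and $c\in\langle x_1\rangle$ both have trivial $\langle y_1^2\rangle$-component, no central correction can make $bz$ an involution (or even square to $c$). That subcase genuinely occurs --- the paper treats $(j,2)=1$ separately --- and its resolution there is a \emph{non-central} change of generator: the element $x_1^{l}y_1^{j}x_2^{-2^{n-1}-1}$, whose square is trivial because the commutator $[y_1^{j},x_1^{l}]=c$ produced on squaring $x_1^{l}y_1^{j}$ cancels against $x_2^{-2^{n}}=c$. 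An element of the form $x_1x_2^{-1}z_1^{s}z_2^{t}$ can never equal this, because $y_1^{j}\notin N$ for odd $j$; so substitutions of the shape $x_i\mapsto x_iz_1^{s}z_2^{t}$ are structurally insufficient. The same issue recurs in case (ii) (the paper's generator $x_2^{-2^{n-1}-1}x_1^{l}y_1^{j}$ when $j$ is odd).

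Apart from this, your identification of the branch points is sound: the $n=1$ degeneration in (i), and the $D_8$/$Q_8$ alternative governed by the square class of $y_1y_2^{-1}$ (equivalently by the parity of $s$ in $y_2^2=x_1^{2s}y_1^{2t}$), agree with the paper, as do the shapes of the final decompositions and the order count once one remembers that $|M_2(m+1,1,1)|=2^{m+3}$. But until the odd-exponent subcases are handled with the correct non-central substitutions, the argument is incomplete rather than merely unpolished.
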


\begin{proof}
(i) Let
$$G_i=\langle x_i,y_i\ | \ x_i^{2^{n+1}}=y_i^{2^{m+1}}=1, x_i^{y_i}=x_i^{1+2^n} \rangle\cong M_2(n+1,m+1),$$
where  $i=1,2$.
Since $\zeta G_1=\zeta G_2=N$, we may let $x_2^2=x_1^{2l}y_1^{2j}$ and
$y_2^2=x_1^{2s}y_1^{2t}$.

First, we suppose that $n>m$. Obviously, $(l,2)=1$ and $s$ is divisible by $2$. Let $s=2s_1$.
If $t$ is divisible by $2$, then $1\neq y_2^{2^m}=x_1^{2^ms}y_1^{2^mt}=x_1^{2^ms}$, which is impossible.
Hence we obtain that $(t,2)=1$.
If $(j,2)=1$, then $x_1^ly_1^jx_2^{-2^{n-1}-1}$ is of order $2$. From this, we have
$$G_1\Y G_2=\langle y_1, x_1^ly_1^jx_2^{-2^{n-1}-1} \rangle\Y\langle x_2,y_2^{-1}x_1^{2s_1}y_1^{t} \rangle\cong M_2(m+1,1,1)\Y M_2(n+1,1) .$$
If $j$ is divisible by $2$, then  we have
$$G_1\Y G_2=\langle y_1, x_1^ly_1^jx_2^{-1} \rangle\Y\langle x_2,y_2^{-1}x_1^{2s_1}y_1^{t} \rangle\cong M_2(m+1,1,1)\Y M_2(n+1,1) .$$

We next suppose that $n=m$.  Obviously, $l$ and $j$ can not simultaneously be divisible by $2$,  nor can $s$ and $t$.
If $j$ and $2$ are  coprime, then $c= x_2^{2^m}=x_1^{2^ml}y_1^{2^mj}=c^ly_1^{2^m}$, which
 is a contradiction. From this, we have $j$ is divisible by $2$ and $(l,2)=1$. Let $j=2j_1$.
Similarly,  $t$ are coprime to $2$.

Suppose that  $s$ is coprime to $2$. If $m\geq 2$, then
 $$G_1\Y G_2=\langle y_2,x_1^{l}y_1^{2j_1}x_2^{-1}\rangle\Y\langle x_1 , x_1^{s-2^{m-1}}y_1^{t}y_2^{-1}\rangle\cong M_2(m+1,1,1)\Y M_2(m+1,1).$$
If $m=1$, then
 $$G_1\Y G_2=\langle y_2,x_1^{l}y_1^{2j_1}x_2^{-1}\rangle\Y\langle x_1 , y_1^{t}y_2^{-1}\rangle\cong M_2(2,1,1)\Y Q_8.$$

If $s$ is divisible by $2$,  then we similarly have $G_1\Y G_2\cong M_2(m+1,1,1)\Y M_2(m+1,1)$.

(ii) Let
$$G_1=\langle x_1,y_1 \ | \ x_1^{2^{n+1}}=y_1^{2^{m+1}}=1, x_1^{y_1}=x_1^{1+2^n} \rangle\cong M_2(n+1,m+1)$$
 and
$$G_2=\langle x_2,y_2\ | \ x_2^{2^{n+1}}=y_2^{2}=1, x_2^{y_2}=x_2^{1+2^n} \rangle\cong M_2(n+1,1).$$
Since $\zeta G_1=N\geq \zeta G_2=\langle x_2^2\rangle$, we may let $x_2^2=x_1^{2l}y_1^{2j}$.
 Further,  we have $(l,2)=1$ since  $c=x_2^{2^n}=x_1^{2^nl}y_1^{2^nj}=c^ly_1^{2^nj}$.

Suppose that $n>m$.  If $(j,2)=1$, then we have
$$G_1\Y G_2=\langle x_1y_2,y_1y_2 \rangle\Y\langle x_2^{-2^{n-1}-1}x_1^{l}y_1^{j},y_2 \rangle\cong M_2(n+1,m+1)\Y D_8.$$
If $j$ is divisible by $2$, then  we have
$$G_1\Y G_2=\langle x_1, y_1y_2 \rangle\Y\langle x_2^{-1}x_1^{l}y_1^{j},y_2 \rangle\cong M_2(n+1,m+1)\Y D_8.$$

If $n=m$, then $j$ is divisible by $2$. Similarly, the result is true.

(iii) Let
$$G_1=\langle x_1,y_1 \ | \ x_1^{2^{n+1}}=y_1^{2^{m+1}}=1, x_1^{y_1}=x_1^{1+2^n} \rangle\cong M_2(n+1,m+1)$$
 and
$$G_2=\langle x_2,y_2,c\ | \ x_2^{2^{m+1}}=y_2^{2}=c^2=1,[x_2,y_2]=c,[x_2,c]=[y_2,c]=1 \rangle\cong M_2(m+1,1,1).$$
Since $\zeta G_1=N\geq \zeta G_2=\langle x_2^2,c\rangle$, we may let $x_2^2=x_1^{2l}y_1^{2j}$.  If $j$ can be divisible by $2$, then $x_2^{2^m}=x_1^{2^ml}y_1^{2^mj}=x_1^{2^ml}=c$, which
 is impossible. From this, we have $(j,2)=1$.

First we suppose that $n>m\geq1$. Obviously $l$ is divisible by $2$. Let $l=2l_1$.
 It follows that
 $$G_1\Y G_2=\langle x_1y_2,y_1 \rangle\Y\langle x_2^{-1}x_1^{2l_1}y_1^{j},y_2 \rangle\cong M_2(n+1,m+1)\Y D_8.$$

We next suppose that $n=m$. For the case when $2|l$, the result is similarly obtained.
If $(l,2)=1$, then we have
$$G_1\Y G_2=\langle x_1y_2,y_1y_2 \rangle\Y\langle x_1^ly_1^{j}x_2^{-1},y_2 \rangle\cong M_2(m+1,m+1)\Y D_8.$$

(iv) Let
$$G_i=\langle x_i,y_i,c\ | \ x_i^{2^{m+1}}=y_i^{2}=c^2=1,[x_i,y_i]=c,[x_i,c]=[y_i,c]=1 \rangle\cong M_2(m+1,1,1),$$
where  $i=1,2$. Let $x_1^2=z_1^{l}z_2^{j}$. If $j$ is divisible by $2$, then $x_1^{2^m}=z_1^{2^{m-1}l}z_2^{2^{m-1}j}=z_1^{2^{m-1}l}$,
which is impossible. Hence we have $(j,2)=1$. Note that $\langle x_1^2\rangle\times\langle z_1\rangle=N=\langle x_2^2\rangle\times\langle z_1\rangle$.
Let $x_2^2=x_1^{2s}z_1^t$. Obviously, $(s,2)=1$.

If $t$ is divisible by $2$, then we may let $t=2t_1$ and  replace $x_1$ by $x_1^{s}z_1^{t_1}$. At this time,
 $ x_1^2=x_2^2$. It follows that
$$G_1\Y G_2=\langle x_1, y_1y_2 \rangle\Y\langle x_2x_1^{-1},y_2 \rangle\cong M_2(m+1,1,1)\Y D_8.$$
If (t,2)=1, then $n=m$ and
$$G_1\Y G_2=\langle x_1, y_1y_2 \rangle\Y\langle x_2x_1^{-s},y_2 \rangle\cong M_2(m+1,1,1)\Y M_2(m+1,1).$$

(v) Let
$$G_i=\langle x_i, y_i\ | \ x_i^{2^{n+1}}=y_i^{2}=1,x_i^{y_i}=x_i^{1+2^n} \rangle\cong M_2(n+1,1),$$
where $i=1,2$. Note that $\langle x_1^2\rangle\times\langle z_2\rangle=N=\langle x_2^2\rangle\times\langle z_2\rangle$.
Let $x_2^2=x_1^{2s}z_2^t$. Obviously, $(s,2)=1$.

If $t$ is divisible by $2$, then we may let $t=2t_1$ and  replace $x_1$ by $x_1^{s}z_2^{t_1}$. At this time,
 $ x_1^2=x_2^2$. It follows that
$$G_1\Y G_2=\langle x_1, y_1y_2 \rangle\Y\langle x_2x_1^{-1},y_2 \rangle\cong M_2(n+1,1)\Y D_8.$$
If (t,2)=1, then
$$G_1\Y G_2=\langle x_1, y_1y_2 \rangle\Y\langle x_2x_1^{-s},y_2 \rangle\cong M_2(n+1,1)\Y M_2(m+1,1,1).$$

\end{proof}

\begin{theorem} \label{ST1}
Suppose that $n\geq m\geq 1$ and $c\in \langle z_1\rangle$. Then the isomorphism classes of $G$ are as follows.

(i) $M_2(n+1,m+1)\Y \underbrace{D_8\Y\cdots\Y D_8}_{k-1}\times\underbrace{\mathbb{Z}_2\times\cdots\times\mathbb{Z}_2}_{r-2}$, $k\geq 1$;

(ii) $M_2(n+1,m+1)\Y Q_8\Y\underbrace{D_8\Y\cdots\Y D_8}_{k-2} \times\underbrace{\mathbb{Z}_2\times\cdots\times\mathbb{Z}_2}_{r-2}$, $k\geq 2$;

(iii) $M_2(n+1,1)\Y M_2(m+1,1,1) \Y\underbrace{D_8\Y\cdots\Y D_8}_{k-2} \times\underbrace{\mathbb{Z}_2\times\cdots\times\mathbb{Z}_2}_{r-2}$, $k\geq 2$;

(iv) $M_2(n+1,1)\Y M_2(m+1,1,1)\Y Q_8 \Y\underbrace{D_8\Y\cdots\Y D_8}_{k-3} \times\underbrace{\mathbb{Z}_2\times\cdots\times\mathbb{Z}_2}_{r-2}$, $k\geq 3$;

(v) $M_2(n+1,1)\Y \underbrace{D_8\Y\cdots\Y D_8}_{k-1}\times \mathbb{Z}_{2^{m}}\times\underbrace{\mathbb{Z}_2\times\cdots\times\mathbb{Z}_2}_{r-2}$, $k\geq 1$;

(vi) $M_2(n+1,1)\Y Q_8 \Y\underbrace{D_8\Y\cdots\Y D_8}_{k-2}\times \mathbb{Z}_{2^{m}}\times\underbrace{\mathbb{Z}_2\times\cdots\times\mathbb{Z}_2}_{r-2}$, $k\geq 2$;

(vii) $M_2(m+1,1,1)\Y\underbrace{D_8\Y\cdots\Y D_8}_{k-1}\Y \mathbb{Z}_{2^{n}}\times\underbrace{\mathbb{Z}_2\times\cdots\times\mathbb{Z}_2}_{r-2}$, $k\geq 1$;

(viii) $M_2(m+1,1,1)\Y Q_8 \Y\underbrace{D_8\Y\cdots\Y D_8}_{k-2}\Y \mathbb{Z}_{2^{n}}\times\underbrace{\mathbb{Z}_2\times\cdots\times\mathbb{Z}_2}_{r-2}$, $k\geq 2$;

(ix) $\underbrace{D_8\Y\cdots\Y D_8}_{k}\Y \mathbb{Z}_{2^{n}}\times \mathbb{Z}_{2^{m}}
\times\underbrace{\mathbb{Z}_2\times\cdots\times\mathbb{Z}_2}_{r-2}$, $k\geq 1$;

(x) $Q_8\Y\underbrace{D_8\Y\cdots\Y D_8}_{k-1}\Y \mathbb{Z}_{2^{n}}\times \mathbb{Z}_{2^{m}}
\times\underbrace{\mathbb{Z}_2\times\cdots\times\mathbb{Z}_2}_{r-2}$, $k\geq 1$.

\end{theorem}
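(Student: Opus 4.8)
The plan is to prove that, under the standing hypotheses (type $A_1$ of $\zeta G$ with $c\in\langle z_1\rangle$), the group $G$ is always a central product of a single \emph{core} that absorbs $N$, a \emph{tail} consisting of copies of $D_8$ together with at most one $Q_8$, and a split elementary abelian factor; the ten classes then arise by enumerating the finitely many possible cores against the two possibilities for the tail. First I would strip off the abelian direct factor. By Lemmas~\ref{STG} and \ref{SCG} we may write $G=E\Y\zeta G$ with $E=G_1\Y\cdots\Y G_k$ and $\zeta G=N\times\langle z_3\rangle\times\cdots\times\langle z_r\rangle$, where $N=\langle z_1\rangle\times\langle z_2\rangle\cong\mathbb{Z}_{2^n}\times\mathbb{Z}_{2^m}$. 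Since $z_3,\dots,z_r$ are central involutions whose span meets $\langle E,N\rangle$ trivially by \eqref{3.2}, the factor $\langle z_3\rangle\times\cdots\times\langle z_r\rangle\cong\mathbb{Z}_2^{(r-2)}$ splits off as a direct factor. This reduces the problem to determining $H:=E\cdot N=G_1\Y\cdots\Y G_k\Y N$, with the final $\underbrace{\mathbb{Z}_2\times\cdots\times\mathbb{Z}_2}_{r-2}$ carried along unchanged in every case.

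I would then argue by induction on $k$. For $k=1$, Lemma~\ref{CPC1} gives that $G_1\Y N$ is isomorphic to exactly one of $M_2(n+1,m+1)$, $M_2(n+1,1)\times\mathbb{Z}_{2^m}$, $M_2(m+1,1,1)\Y\mathbb{Z}_{2^n}$, $D_8\Y\mathbb{Z}_{2^n}\times\mathbb{Z}_{2^m}$ or $Q_8\Y\mathbb{Z}_{2^n}\times\mathbb{Z}_{2^m}$; reattaching the elementary abelian factor produces precisely the $k=1$ instances of (i), (v), (vii), (ix) and (x). These five possibilities also pin down the five admissible cores: $M_2(n+1,m+1)$, the compound core $M_2(n+1,1)\Y M_2(m+1,1,1)$, the core $M_2(n+1,1)$ (with abelian remnant $\mathbb{Z}_{2^m}$), the core $M_2(m+1,1,1)$ (with abelian remnant $\mathbb{Z}_{2^n}$), and the purely abelian core $\mathbb{Z}_{2^n}\times\mathbb{Z}_{2^m}$.

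For the inductive step I would assume $G_1\Y\cdots\Y G_{k-1}\Y N$ is already in one of the ten normal forms and adjoin $G_k$. Since $[G_k,G_i]=1$ for $i\neq k$ and $G_k^2\leq N$ already lies in the current core, $G_k$ adds only a ``$\mathbb{Z}_2\times\mathbb{Z}_2$ worth'' of new generators with $[x_k,y_k]=c$; adjusting $x_k,y_k$ by suitable elements of the core---exactly the square-root computations performed in the proofs of Lemmas~\ref{CPC1} and \ref{GGC1}---shows that the new block is either a $D_8$ or a $Q_8$, while the core either is unchanged or migrates among the admissible types according to Lemma~\ref{GGC1} (for instance two $M_2(n+1,m+1)$ factors collapse to $M_2(m+1,1,1)\Y M_2(n+1,1)$ when $n>1$). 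Finally the identity $Q_8\Y Q_8\cong D_8\Y D_8$ reduces the number of quaternionic contributions modulo $2$, so that at most one $Q_8$ survives; this is exactly what separates each core type into the two cases distinguished by the presence or absence of a $Q_8$. Running over the five cores against the two parities, and recording the induced lower bounds on $k$ (a surviving $Q_8$ costs one extra factor and the compound core costs two), reproduces the ten classes (i)--(x); a short converse check that each class is realized by some choice of the $G_i$ shows the list is complete and irredundant.

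The main obstacle is the bookkeeping in the inductive step: one must verify, through all the subcases of Lemmas~\ref{CPC1} and \ref{GGC1}, that adjoining an arbitrary inner abelian factor preserves the normal form ``one core $\Y$ ($D_8$'s) $\Y$ (at most one $Q_8$)'', tracking both the migration of the core and the creation or absorption of a $Q_8$. Additional care is needed in the degenerate ranges $n=1$ and $n=m$ flagged in the lemmas, where several of the inner abelian types coincide (for example $M_2(2,1)\cong D_8$) and a $Q_8$ may be forced to appear; one must confirm that each such boundary configuration still falls under one of the ten forms rather than generating a spurious eleventh class.
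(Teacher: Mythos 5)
Your proposal is correct and follows essentially the same route as the paper: both rest on the decomposition $G=(G_1\Y N)\Y\cdots\Y(G_k\Y N)\Y A_1$, the normalization of each factor via Lemma~\ref{CPC1}, the pairwise recombination via Lemma~\ref{GGC1}, and the identity $Q_8\Y Q_8\cong D_8\Y D_8$ to leave at most one quaternion block. Your induction on $k$ with the invariant ``one admissible core $\Y$ ($D_8$'s) $\Y$ (at most one $Q_8$)'' is merely a repackaging of the paper's parity count on the number of $M_2(n+1,m+1)$-type factors and its case split on $\mathrm{Exp}(E)$ and on $n=m$ versus $n>m$.
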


\begin{proof}
First, we rewrite $G$ by $(G_1\Y N)\Y (G_2\Y N)\Y\cdots\Y (G_k\Y N)\Y A_1$. According to Lemmas \ref{STG} and \ref{CPC1},
we have the isomorphism classes of $ G_i\Y N's$ are $M_2(n+1,m+1)$, $M_2(n+1,1)\times \mathbb{Z}_{2^m}$, $M_2(m+1,1,1)\Y \mathbb{Z}_{2^n}$, $D_8\Y \mathbb{Z}_{2^n} \times \mathbb{Z}_{2^m}$ and $Q_8\Y \mathbb{Z}_{2^n} \times \mathbb{Z}_{2^m}$.

First, we suppose that $n>m$.
If $\mathrm{Exp}(E)$ is equal to $2^{n+1}$, then there exists some $G_i$ such that $\mathrm{Exp}(G_i)=2^{n+1}$. Without loss of generality,
suppose $\mathrm{Exp}(G_1)=2^{n+1}$. Thus we have $G_1\Y N\cong M_2(n+1,m+1)$ or $M_2(n+1,1)\times \mathbb{Z}_{2^{m}}$ by Lemma \ref{CPC1}.
If  the number $\lambda$ of $(G_i\Y N)'s$ is odd, which  are isomorphic to $ M_2(n+1,m+1)$, then
we may obtain the isomorphism classes (i) and (ii) of $G$ by Lemma \ref{GGC1}. If $\lambda$ is a nonzero even number, then isomorphism classes of $G$ are (iii) and (iv).  Suppose $\lambda =0$. If  there exists some $G_i\Y N$ such that $G_i\Y N\cong M_2(m+1,1,1)\Y \mathbb{Z}_{2^n}$, then  $M_2(n+1,1)\Y M_2(m+1,1,1)$ will produce $M_2(n+1,m+1)$, which implies $\lambda \neq 0$. At the time, we may obtain the isomorphism classes (v) and (vi) of $G$.

Suppose that $\mathrm{Exp}(E)$ is less than $2^{n+1}$. At this case, the isomorphism classes of $(G_i\Y N)'s$ are $M_2(m+1,1,1)\Y \mathbb{Z}_{2^n}$, $D_8\Y \mathbb{Z}_{2^n} \times \mathbb{Z}_{2^m}$ and $Q_8\Y \mathbb{Z}_{2^n} \times \mathbb{Z}_{2^m}$.
If there exists some $G_i\Y N$ such that $G_i\Y N\cong M_2(m+1,1,1)\Y \mathbb{Z}_{2^n}$, then we may obtain isomorphism classes (vii)
and (viii) of $G$ according to (iv) of Lemma \ref{GGC1} avoiding repeating the above types.  Otherwise, (ix) and (x) may be obtained.

We next suppose that $n=m$. For convenience, we will distinguish the case $m>1$ from $m=1$.
 First, suppose that $m>1$. If $\mathrm{Exp}(E)$ is equal to $2^{m+1}$, then we may
suppose $\mathrm{Exp}(G_1)=2^{m+1}$. Thus we have $G_1\Y N\cong M_2(m+1,m+1)$, $M_2(m+1,1,1)\Y \mathbb{Z}_{2^{m}}$ or  $M_2(m+1,1)\times \mathbb{Z}_{2^m}$ by Lemma \ref{CPC1}.
If  the number $\mu$ of $(G_i\Y N)'s$ is odd, which  are isomorphic to $ M_2(m+1,m+1)$, then
we may obtain the isomorphism classes (i) and (ii) of $G$ by Lemma \ref{GGC1}. If $\mu$ is a nonzero even number, then isomorphism classes of $G$ are (iii) and (iv). Suppose that $\mu=0$. Obviously, $M_2(m+1,1,1)$ and $M_2(m+1,1)$ can not appear simultaneously.
When $M_2(m+1,1)$  comes into being, we may obtain the isomorphism classes (v) and (vi) of $G$. When $M_2(m+1,1,1)$  appears,
(vii) and (viii) may be obtained. Suppose that $\mathrm{Exp}(E)$ is less than $2^{m+1}$. At this case, (ix) and (x) may be obtained.

If $m=1$, then we have $G_i\Y N\cong M_2(2,2)$, $M_2(2,1,1)$, $D_8\times \mathbb{Z}_{2}$ or $Q_8\times \mathbb{Z}_{2}$ by Lemma \ref{CPC1} for arbitrary $i$. At this time, the following arbitrary two are the same classes:  (iii) and (vii), (iv) and (viii),  (v) and (ix), (vi) and (x).
Similarly, we may obtain six isomorphism classes of $G$, that is,  (i), (ii), (iii), (iv),(v) and (vi).

\end{proof}

According to Lemma \ref{CPC2}, we only consider the central products among $M_2(m+1,n+1), M_2(m+1,1), M_2(n+1,1,1), D_8$ and $ Q_8$.
Hence we have the following  lemma.

\begin{lemma} \label{GGC2}
Suppose $n> m\geq 1$ and $c\in \langle z_2\rangle$. Let $G_1$ and $G_2$ are arbitrary two factors which are isomorphic to $M_2(m+1,n+1), M_2(m+1,1)$ or $ M_2(n+1,1,1)$.

(i) If $G_1$ and $G_2$ are isomorphic to $M_2(m+1,n+1)$, then
$$G_1\Y G_2\cong\left\{ \begin{aligned}
          &M_2(n+1,1,1)\Y D_8 \ or\  M_2(n+1,1,1)\Y Q_8,\ \ m=1.\\
          & M_2(n+1,1,1)\Y M_2(m+1,1) \,\ \ m>1.
                                           \end{aligned} \right.$$

(ii) If $G_1$ and $G_2$ are isomorphic to $M_2(m+1,n+1)$ and $M_2(m+1,1)$, respectively, then $G_1\Y G_2\cong M_2(m+1,n+1)\Y D_8$.

(iii) If $G_1$ and $G_2$ are isomorphic to $M_2(m+1,n+1)$ and $M_2(n+1,1,1)$, respectively, then $G_1\Y G_2\cong M_2(m+1,n+1)\Y D_8$.

(iv)  If both $G_1$ and $G_2$ are isomorphic to $M_2(n+1,1,1)$, then $G_1\Y G_2\cong M_2(n+1,1,1)\Y D_8$ or $M_2(n+1,1,1)\Y M_2(m+1,1)$.

(v) If both $G_1$ and $G_2$ are isomorphic to $M_2(m+1,1)$, then $G_1\Y G_2\cong  M_2(m+1,1)\Y D_8$.

\end{lemma}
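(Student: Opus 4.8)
The plan is to reproduce, at the level of explicit generators, the computations of Lemma \ref{GGC1}, but now with $c=z_2^{2^{m-1}}\in\langle z_2\rangle$ and with $n>m$ \emph{strict}. The organizing principle is the near-symmetry $z_1\leftrightarrow z_2$, $n\leftrightarrow m$ between the two lemmas: in the present setting it is the second generator $y_i$ of each factor that carries the large order $2^{n+1}$, while $x_i$ carries order $2^{m+1}$ and produces the commutator via $x_i^{2^m}=c$. Because $n>m$, the two cyclic summands of $N=\langle z_1\rangle\times\langle z_2\rangle$ have different orders and are not interchangeable by an automorphism of $N$, so the arguments must be redone; nevertheless each step has an exact counterpart in Lemma \ref{GGC1}, and the admissible pairs of factors are exactly those singled out before the statement by Lemma \ref{CPC2}.

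For part (i) I would write $G_i=\langle x_i,y_i\mid x_i^{2^{m+1}}=y_i^{2^{n+1}}=1,\ x_i^{y_i}=x_i^{1+2^m}\rangle\cong M_2(m+1,n+1)$ for $i=1,2$. Since $\zeta G_1=\langle x_1^2\rangle\times\langle y_1^2\rangle$ already has order $2^{n+m}=|N|$, we have $\zeta G_1=N$, and I would set $x_2^2=x_1^{2l}y_1^{2j}$ and $y_2^2=x_1^{2s}y_1^{2t}$. Because $x_2$ has the smaller order $2^{m+1}$ and $n>m$, the $y_1$-part of $x_2^2$ must collapse, forcing $j$ even and, via $x_2^{2^m}=c=x_1^{2^m}$, forcing $l$ odd; because $y_2$ has full order $2^{n+1}$, its $y_1$-part must be primitive, forcing $t$ odd. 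The remaining parameter $s$ governs the case distinction: replacing $x_i,y_i$ by suitable products with $x_1^2$ and $y_1^2$ splits off a factor isomorphic to $M_2(n+1,1,1)$ and leaves a rank-two complement. For $m>1$ this complement is always $M_2(m+1,1)$; for $m=1$ the element $x_1^2=c$ has order $2$, so that $y_2^2=c^{s}y_1^{2t}$, and the two parities of $s$ realize the complement as $D_8$ (for $s$ even) or $Q_8$ (for $s$ odd), giving the stated alternatives.

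Parts (ii) and (iii) run exactly parallel to the corresponding parts of Lemma \ref{GGC1}: taking $G_1\cong M_2(m+1,n+1)$ with $\zeta G_1=N$ and $G_2$ the smaller factor, I would express the relevant square of $G_2$ inside $N=\langle x_1^2\rangle\times\langle y_1^2\rangle$, use $c=x_2^{2^m}\in\langle z_2\rangle$ to fix the parity of the $z_1$-contribution, and then build the standard $D_8$ generators to obtain $G_1\Y G_2\cong M_2(m+1,n+1)\Y D_8$. For (iv), with both factors $\cong M_2(n+1,1,1)$, I would write $N=\langle x_1^2\rangle\times\langle z_2\rangle$ and $x_2^2=x_1^{2s}z_2^{t}$ with $s$ odd; the two sub-cases according to the parity of $t$ yield $M_2(n+1,1,1)\Y D_8$ (when $t$ is even, after redefining $x_1$ so that $x_1^2=x_2^2$) and $M_2(n+1,1,1)\Y M_2(m+1,1)$ (when $t$ is odd). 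For (v), with both factors $\cong M_2(m+1,1)$, the analogous dichotomy collapses: here $c=x_i^{2^m}\in\langle z_2\rangle$ while $\langle x_1^2\rangle$ pairs with $\langle z_1\rangle$ in $N$, and writing $x_2^2=x_1^{2s}z_1^{t}$ the conditions $x_2^{2^m}=c=x_1^{2^m}$ and $n>m$ force $t$ to be divisible by $2^{n-m+1}$, hence even; only the $D_8$ alternative survives, giving $M_2(m+1,1)\Y D_8$.

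I expect the principal obstacle to be the $m=1$ versus $m>1$ split in part (i), where one must decide whether the reduced square data defines a $D_8$ or a $Q_8$ central factor. The identity $Q_8\Y Q_8\cong D_8\Y D_8$ recorded earlier shows that these two presentations are not always independent, so the delicate point is to certify that both alternatives genuinely occur and that no spurious third type is introduced. The remaining work is routine if attention-demanding: in each case one must verify that the proposed replacement generators satisfy the defining relations of the claimed central product and that the amalgamated subgroup is precisely the shared $\langle c\rangle$.
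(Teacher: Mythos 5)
Your proposal is correct and follows essentially the same route as the paper: for part (i) the paper makes exactly your reduction (writing $x_2^2=x_1^{2l}y_1^{2j}$, $y_2^2=x_1^{2s}y_1^{2t}$ in $\zeta G_1=N$, deducing $j$ even, $l$ and $t$ odd from the order and commutator constraints, and letting the parity of $s$ decide $D_8$ versus $Q_8$ when $m=1$, with the correction term $x_1^{\varepsilon-s}$, $\varepsilon\in\{0,2^{m-1}\}$, giving $M_2(m+1,1)$ when $m>1$), and it dispatches (ii)--(v) with a one-line reference to the computations of Lemma \ref{GGC1}, which your sketches of those parts reproduce, including the correct observation that in (v) the condition $n>m$ forces $2^{n-m+1}\mid t$ so only the $D_8$ alternative survives.
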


\begin{proof}
(i) Let
$$G_i=\langle x_i,y_i \ | \ x_i^{2^{m+1}}=y_i^{2^{n+1}}=1, x_i^{y_i}=x_i^{1+2^m} \rangle\cong M_2(m+1,n+1),$$
where  $i=1,2$.
Since $\zeta G_1=\zeta G_2=N$, we may let $x_2^2=x_1^{2l}y_1^{2j}$ and
$y_2^2=x_1^{2s}y_1^{2t}$. Obviously, $j$ is divisible by $2$ and let $j=2j_1$. If $l$ is divisible by $2$, then $c= x_2^{2^m}=x_1^{2^ml}y_1^{2^mj}=y_1^{2^mj}$, which is impossible.
Hence we obtain that $(l,2)=1$.
If $t$ is divisible by $2$, then $1\neq y_2^{2^n}=x_1^{2^ns}y_1^{2^nt}=1$ since $n>m\geq 1$, a contradiction.
Hence $t$ is coprime to $2$.

If $m=1$, then $\langle x_1,y_2y_1^{-t}\rangle $ is isomorphic to $D_8$ (when $2|s$) or $Q_8$ (when $(s,2)=1$).
It follows that $$G_1\Y G_2=\langle y_2, x_1^ly_1^{2j_1}x_2^{-1} \rangle\Y\langle x_1,y_2y_1^{-t}\rangle\cong M_2(n+1,1,1)\Y D_8 \ or \ M_2(n+1,1,1)\Y Q_8. $$

Suppose that $m>1$. We may take $\varepsilon:=2^{m-1}$ (when $(s,2)=1$) and $0$ (when $2|s$). Hence
$y_2y_1^{-t}x_1^{\varepsilon-s}$ is of order $2$ and
$$G_1\Y G_2=\langle y_2, x_1^ly_1^{2j_1}x_2^{-1}\rangle\Y\langle x_1,y_2y_1^{-t}x_1^{\varepsilon-s}\rangle\cong M_2(n+1,1,1)\Y M_2(m+1,1). $$

(ii)-(v) may be obtained similar to Lemma \ref{GGC1}.

\end{proof}

\begin{theorem} \label{ST2}
Suppose that $n>m\geq 1$ and $c\in \langle z_2\rangle$. Then the isomorphism classes of $G$ are as follows.

(i) $M_2(m+1,n+1)\Y \underbrace{D_8\Y\cdots\Y D_8}_{k-1}\times\underbrace{\mathbb{Z}_2\times\cdots\times\mathbb{Z}_2}_{r-2}$, $k\geq 1$;

(ii) $M_2(m+1,n+1)\Y Q_8\Y\underbrace{D_8\Y\cdots\Y D_8}_{k-2} \times\underbrace{\mathbb{Z}_2\times\cdots\times\mathbb{Z}_2}_{r-2}$, $k\geq 2$;

(iii) $M_2(n+1,1,1)\Y M_2(m+1,1) \Y\underbrace{D_8\Y\cdots\Y D_8}_{k-2} \times\underbrace{\mathbb{Z}_2\times\cdots\times\mathbb{Z}_2}_{r-2}$, $k\geq 2$;

(iv) $M_2(n+1,1,1)\Y M_2(m+1,1)\Y Q_8 \Y\underbrace{D_8\Y\cdots\Y D_8}_{k-3}\Y \times\underbrace{\mathbb{Z}_2\times\cdots\times\mathbb{Z}_2}_{r-2}$, $k\geq 3$;

(v) $M_2(n+1,1,1)\Y\underbrace{D_8\Y\cdots\Y D_8}_{k-1}\Y \mathbb{Z}_{2^m}\times\underbrace{\mathbb{Z}_2\times\cdots\times\mathbb{Z}_2}_{r-2}$, $k\geq 1$;

(vi) $M_2(n+1,1,1)\Y Q_8 \Y\underbrace{D_8\Y\cdots\Y D_8}_{k-2}\Y \mathbb{Z}_{2^m} \times\underbrace{\mathbb{Z}_2\times\cdots\times\mathbb{Z}_2}_{r-2}$, $k\geq 2$;

(vii) $M_2(m+1,1)\Y \underbrace{D_8\Y\cdots\Y D_8}_{k-1}\times \mathbb{Z}_{2^{n}}\times\underbrace{\mathbb{Z}_2\times\cdots\times\mathbb{Z}_2}_{r-2}$, $k\geq 1$;

(viii) $M_2(m+1,1)\Y Q_8 \Y\underbrace{D_8\Y\cdots\Y D_8}_{k-2}\times \mathbb{Z}_{2^{n}}\times\underbrace{\mathbb{Z}_2\times\cdots\times\mathbb{Z}_2}_{r-2}$, $k\geq 2$;

(ix) $\underbrace{D_8\Y\cdots\Y D_8}_{k}\Y \mathbb{Z}_{2^{m}}\times \mathbb{Z}_{2^{n}}
\times\underbrace{\mathbb{Z}_2\times\cdots\times\mathbb{Z}_2}_{r-2}$, $k\geq 1$;

(x) $Q_8\Y\underbrace{D_8\Y\cdots\Y D_8}_{k-1}\Y \mathbb{Z}_{2^{m}}\times \mathbb{Z}_{2^{n}}
\times\underbrace{\mathbb{Z}_2\times\cdots\times\mathbb{Z}_2}_{r-2}$, $k\geq 1$.

\end{theorem}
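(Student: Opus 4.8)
The plan is to run the same argument as in the proof of Theorem \ref{ST1}, but with the ``mirror'' lemmas \ref{CPC2} and \ref{GGC2} replacing \ref{CPC1} and \ref{GGC1}. First I would decompose $G=(G_1\Y N)\Y(G_2\Y N)\Y\cdots\Y(G_k\Y N)\Y A_1$ as furnished by Lemma \ref{STG}, where $A_1$ is the type-$A_1$ center and each $G_i$ is one of the inner abelian factors $Q_8$, $M_2(u,v)$, $M_2(w,1,1)$. By Lemmas \ref{STG} and \ref{CPC2}, together with the hypothesis $n>m$ (which rules out the $M_2(n+1,v)$ possibility of \ref{CPC2}(ii), since that case forces $n=m$), the only isomorphism types a factor $G_i\Y N$ can take are $M_2(m+1,n+1)$, $M_2(n+1,1,1)\Y\mathbb{Z}_{2^m}$, $M_2(m+1,1)\times\mathbb{Z}_{2^n}$, $D_8\Y\mathbb{Z}_{2^m}\times\mathbb{Z}_{2^n}$ and $Q_8\Y\mathbb{Z}_{2^m}\times\mathbb{Z}_{2^n}$.

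Next I would split on the value of $\mathrm{Exp}(E)$. When $\mathrm{Exp}(E)=2^{n+1}$, some factor has exponent $2^{n+1}$ and hence is isomorphic to $M_2(m+1,n+1)$ or to $M_2(n+1,1,1)\Y\mathbb{Z}_{2^m}$. Writing $\lambda$ for the number of factors isomorphic to $M_2(m+1,n+1)$, I would absorb these in pairs using Lemma \ref{GGC2}(i): for $m>1$ a pair collapses to $M_2(n+1,1,1)\Y M_2(m+1,1)$, while for $m=1$ it collapses to $M_2(n+1,1,1)\Y D_8$ or $M_2(n+1,1,1)\Y Q_8$. By \ref{GGC2}(ii),(iii), any single surviving $M_2(m+1,n+1)$ absorbs an $M_2(m+1,1)$ or $M_2(n+1,1,1)$ partner into a $D_8$, staying of the form $M_2(m+1,n+1)\Y D_8$. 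The parity of $\lambda$ thus decides whether exactly one $M_2(m+1,n+1)$ factor survives, yielding classes (i),(ii), or none does, yielding (iii),(iv); the residual case $\lambda=0$ with an $M_2(n+1,1,1)$ factor still present, reduced via \ref{GGC2}(iv), gives (v),(vi).

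When $\mathrm{Exp}(E)<2^{n+1}$, the only surviving factor types are $M_2(m+1,1)\times\mathbb{Z}_{2^n}$, $D_8\Y\mathbb{Z}_{2^m}\times\mathbb{Z}_{2^n}$ and $Q_8\Y\mathbb{Z}_{2^m}\times\mathbb{Z}_{2^n}$; whether an $M_2(m+1,1)$ factor occurs separates classes (vii),(viii) from (ix),(x), with a pair of $M_2(m+1,1)$ factors handled by \ref{GGC2}(v). Throughout the argument I would invoke the identity $Q_8\Y Q_8\cong D_8\Y D_8$ (noted just before Lemma \ref{GGC1}) to ensure that at most one copy of $Q_8$ appears in any normal form, which is precisely what distinguishes the even-numbered classes from the odd-numbered ones in the list.

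The main obstacle I anticipate is not any single computation but the bookkeeping needed to verify that these reductions exhaust exactly the ten listed types with no overlap, and that the central tails $\mathbb{Z}_{2^n}$, $\mathbb{Z}_{2^m}$ and $\mathbb{Z}_2^{(r-2)}$ are tracked correctly through each absorption: the large factor $M_2(m+1,n+1)$ and the pair $M_2(n+1,1,1)\Y M_2(m+1,1)$ each swallow $N$ entirely (explaining why classes (i)--(iv) carry only the $\mathbb{Z}_2^{(r-2)}$ tail), the factor $M_2(n+1,1,1)$ absorbs the $\mathbb{Z}_{2^n}$ part of $N$ but leaves $\mathbb{Z}_{2^m}$ visible, the factor $M_2(m+1,1)$ leaves $\mathbb{Z}_{2^n}$ visible, while a purely $D_8$/$Q_8$ configuration leaves both visible. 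Because $n>m$ is strict here, the delicate $n=m$ subcase that complicated Theorem \ref{ST1} does not arise, so the case division is cleaner and the only genuine care lies in the pairing-plus-parity argument for the $M_2(m+1,n+1)$ factors.
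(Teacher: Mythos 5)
Your proposal follows essentially the same route as the paper's proof: the same decomposition $G=(G_1\Y N)\Y\cdots\Y(G_k\Y N)\Y A_1$, the same list of possible factor types from Lemma \ref{CPC2}, the same case split on $\mathrm{Exp}(E)$, and the same parity argument on the number of $M_2(m+1,n+1)$ factors via Lemma \ref{GGC2}. Your extra bookkeeping on which central tails survive each absorption is a welcome clarification but does not change the argument.
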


\begin{proof}
According to Lemma \ref{STG}, we have that
$$G=E\Y A_1=G_1\Y G_2\Y\cdots\Y G_k\Y A_1=(G_1\Y N)\Y (G_2\Y N)\Y\cdots\Y (G_k\Y N)\Y A_1.$$
 According to Lemmas \ref{STG} and  \ref{CPC2},
  the isomorphism classes of $ G_i\Y N's$ are $M_2(m+1,n+1)$, $M_2(n+1,1,1)\Y \mathbb{Z}_{2^m}$, $M_2(m+1,1)\times \mathbb{Z}_{2^n}$, $D_8\Y \mathbb{Z}_{2^m} \times \mathbb{Z}_{2^n}$ and $Q_8\Y \mathbb{Z}_{2^m} \times \mathbb{Z}_{2^n}$.

If $\mathrm{Exp}(E)$ is equal to $2^{n+1}$, then there exists some $G_i$ such that $\mathrm{Exp}(G_i)=2^{n+1}$. Without loss of generality,
suppose $\mathrm{Exp}(G_1)=2^{n+1}$. Thus we have $G_1\Y N\cong M_2(m+1,n+1)$ or $M_2(n+1,1,1)\Y \mathbb{Z}_{2^{m}}$ by Lemma \ref{CPC2}.
If  the number $\mu$ of $ G_i\Y N's$ is odd, which  are isomorphic to $ M_2(m+1,n+1)$, then
we may obtain the isomorphism classes (i) and (ii) of $G$ by Lemma \ref{GGC2}. If $\lambda$ is a nonzero even number, then isomorphism classes of $G$ are (iii) and (iv). Suppose $\mu =0$. If  there exists some $G_i\Y N$ such that $G_i\Y N\cong M_2(m+1,1)\times \mathbb{Z}_{2^n}$, then  $M_2(n+1,1,1)\Y M_2(m+1,1)$ will produce $M_2(m+1,n+1)$, which implies $\mu\neq 0$. Obviously,
At the time, if  two factors $G_i$ and $G_j$ are isomorphic to $M_2(n+1,1,1)$, then $G_1\Y G_2$ is only isomorphic to $M_2(n+1,1,1)\Y D_8$ in (iv) of  Lemma \ref{GGC2}. It follows  the isomorphism classes (v) and (vi) of $G$.

Suppose that $\mathrm{Exp}(E)$ is less than $2^{n+1}$. At this case, the isomorphism classes of $(G_i\Y N)'s$ are $M_2(m+1,1)\times \mathbb{Z}_{2^n}$, $D_8\Y \mathbb{Z}_{2^m} \times \mathbb{Z}_{2^n}$ and $Q_8\Y \mathbb{Z}_{2^m} \times \mathbb{Z}_{2^n}$.
If there exists some $G_i\Y N$ such that $G_i\Y N\cong M_2(m+1,1)\times \mathbb{Z}_{2^n}$, then we may obtain isomorphism classes (vii)
and (viii) of $G$. Otherwise, (ix) and (x) may be obtained.
\end{proof}

 \subsection{ The isomorphism types $A_2$  and $A_3$ of $\zeta G$}

In the section,  we first consider
$$\zeta G=\langle z_1\rangle\times\langle z_2\rangle\times\cdots\times\langle z_r\rangle\cong\mathbb{Z}_{2^{n+1}}\times \mathbb{Z}_{2^{m}}\times\mathbb{Z}_2\times\cdots\times\mathbb{Z}_2, N=\langle z_1^2\rangle\times\langle z_2\rangle,$$
 Without loss of generality, we may always  let $c\in \langle z_1^2\rangle$ or $c\in \langle z_2\rangle$.
$E$ is the central product of $G_1,G_2,\ldots, G_k$, whose isomorphism classes  are  $Q_8$,$M_2(u,v)$ and $M_2(w,1,1)$, where $u,v,w\leq n+1$ in (2) of Lemma \ref{STG}. First we determine the types of $G_i\Y  \langle z_1,z_2\rangle$.

\begin{lemma} \label{CPC3}
Suppose $n\geq m\geq 1$ and $c\in \langle z_1\rangle$.

(i) If $G_i\cong M_2(n+1,v)$, then $1\leq v\leq m+1$ and $G_i\Y  \langle z_1,z_2\rangle\cong M_2(m+1,1,1)\Y \mathbb{Z}_{2^{n+1}}$ or $D_8\Y \mathbb{Z}_{2^{n+1}}\times \mathbb{Z}_{2^{m}}$.

(ii) If $n=m$ and $G_i\cong M_2(u, m+1)$, where $2\leq u<m+1$, then $G_i\Y  \langle z_1,z_2\rangle\cong M_2(m+1,1,1)\Y \mathbb{Z}_{2^{m+1}}$. If $n>m$, then $G_i$ is not isomorphic to $M_2(u,n+1)$, where $2\leq u\leq m+1$.

(iii) If $G_i\cong M_2(u,v)$, where $2\leq u<n+1$ and $1\leq v<n+1$, then $G_i\Y  \langle z_1,z_2\rangle\cong M_2(m+1,1,1)\Y \mathbb{Z}_{2^{n+1}}$ or $D_8\Y \mathbb{Z}_{2^{n+1}}\times \mathbb{Z}_{2^{m}}$.

(iv) If $G_i\cong M_2(w,1,1)$, then  $1\leq w<n+1$ (If $n>m$) or $1\leq w\leq m+1$ (If $n=m$).  Further, $G_i\Y  \langle z_1,z_2\rangle\cong M_2(m+1,1,1)\Y \mathbb{Z}_{2^{n+1}}$ or $D_8\Y \mathbb{Z}_{2^{n+1}}\times \mathbb{Z}_{2^{m}}$.

\end{lemma}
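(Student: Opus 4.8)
The plan is to run the argument of Lemmas~\ref{CPC1} and \ref{CPC2} almost verbatim, the only structural change being that the cyclic factor $\langle z_1\rangle$ now has order $2^{n+1}$ while its subgroup $\langle z_1^2\rangle\leq N$ plays the role that $\langle z_1\rangle$ did there; consequently every external cyclic factor $\mathbb{Z}_{2^n}$ produced in those lemmas is here replaced by $\mathbb{Z}_{2^{n+1}}$. For each admissible type of $G_i$ supplied by part~(2) of Lemma~\ref{STG} and Lemma~\ref{INA1}, I would fix the standard presentation, note that $\zeta G_i\leq N=\langle z_1^2\rangle\times\langle z_2\rangle\cong\mathbb{Z}_{2^n}\times\mathbb{Z}_{2^m}$, and write the squares of the generators inside $N$ as $x_i^2=z_1^{2l}z_2^{j}$ and (when present) $y_i^2=z_1^{2s}z_2^{t}$. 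Here the hypothesis $c\in\langle z_1\rangle$ means precisely $c=z_1^{2^n}$, the unique involution of $\langle z_1^2\rangle$, so the defining relation $G_i'=\langle c\rangle$ becomes an explicit equation between a fixed power of a generator and $z_1^{2^n}$, which drives the whole analysis.

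First I would extract the parameter restrictions by comparing orders: since $|\zeta G_i|\leq|N|=2^{n+m}$, the inequality $2^{n+v-1}\leq 2^{n+m}$ gives $v\leq m+1$ in (i), and analogous counts give the stated bounds on $w$ in (iv). The nonexistence assertion in (ii)---that $M_2(u,n+1)$ cannot occur when $n>m$---I would prove exactly as in part~(iii) of Lemma~\ref{CPC1}: writing $y_i^2=z_1^{2s}z_2^{t}$ and raising it to the power $2^{n-1}$, the case $n>m$ kills the $z_2$-component, forcing $y_i^{2^n}=z_1^{2^n s}\in\{1,c\}$; since the abstract structure of $M_2(u,n+1)$ requires $\langle x_i\rangle\cap\langle y_i\rangle=1$ and hence $c\notin\langle y_i\rangle$, this is a contradiction.

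For the isomorphism type of $G_i\Y\langle z_1,z_2\rangle$ the decisive feature is that $z_1$ has order $2^{n+1}$ and $c=z_1^{2^n}$, so any generator of order $2^{n+1}$ can be traded against $z_1$: replacing it by a product with a suitable power of $z_1$ lowers its order while preserving its commutator, and $\langle z_1\rangle\cong\mathbb{Z}_{2^{n+1}}$ splits off as a central factor meeting the rest in $\langle c\rangle$. This is why no $M_2(n+1,\cdot)$ survives and why $\mathbb{Z}_{2^{n+1}}$ always appears. The remaining dichotomy is governed by whether the involution $z_2$ can be absorbed: if some $z_2$-exponent among $j,t$ is odd, then after the adjustments one recovers an element of order $2^{m+1}$ together with an involution and the central $c$, yielding $M_2(m+1,1,1)\Y\mathbb{Z}_{2^{n+1}}$; if every $z_2$-exponent is even, then $z_2$ cannot be absorbed and $G_i$ collapses onto a copy of $D_8$, leaving $D_8\Y\mathbb{Z}_{2^{n+1}}\times\mathbb{Z}_{2^m}$. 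Cases (i), (iii), (iv) each reduce to this split; in (ii) with $n=m$ the equation $c=z_1^{2^m}$ forces the relevant $z_2$-exponent to be odd, so only the alternative $M_2(m+1,1,1)\Y\mathbb{Z}_{2^{m+1}}$ occurs.

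As in Lemmas~\ref{CPC1}--\ref{CPC2}, the routine but delicate part is the explicit bookkeeping: for each proposed generating set one must confirm that the chosen elements have the advertised orders, that the two intended factors centralize one another, and that their intersection is exactly $\langle c\rangle$, so that no unintended identification or collapse occurs. I expect the main obstacle to be organizing the parity case analysis so that the constraint $c=z_1^{2^n}$ is applied uniformly across the four types; once the correct involutions of the form $x_iz_1^{-a}z_2^{-b}$ (and the correct elements of order $2^{m+1}$) are identified, each verification is mechanical and mirrors the earlier lemmas line for line.
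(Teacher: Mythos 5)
Your proposal is correct and follows essentially the same route as the paper: the paper's proof of Lemma \ref{CPC3} is precisely the computation of Lemmas \ref{CPC1}--\ref{CPC2} rerun with $\langle z_1^2\rangle$ in the role of $\langle z_1\rangle$, writing $x_i^2=z_1^{2l}z_2^{j}$, $y_i^2=z_1^{2s}z_2^{t}$, using $c=z_1^{2^n}$ to exclude $M_2(u,n+1)$ for $n>m$ exactly as you describe, and letting the parity of the $z_2$-exponents decide between $M_2(m+1,1,1)\Y\mathbb{Z}_{2^{n+1}}$ and $D_8\Y\mathbb{Z}_{2^{n+1}}\times\mathbb{Z}_{2^m}$. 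The only detail worth noting is that the exclusion of $w=n+1$ in (iv) when $n>m$ is obtained by the same $x_i^{2^n}=c^l$ contradiction rather than by an order count, but your sketch already contains that mechanism.
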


\begin{proof}
(i) Let
$$G_i=\langle x_i,y_i \ | \ x_i^{2^{n+1}}=y_i^{2^{v}}=1, x_i^{y_i}=x_i^{1+2^n} \rangle \cong M_2(n+1,v).$$
Since $\zeta G_i=\langle x_i^2, y_i^2\rangle\leq N$, we have $2^{n+v-1}\leq |N|=2^{n+m}$ and
$v\leq m+1$. Let $ x_i^2=z_1^{2l}z_2^{j}$ and $ y_i^2=z_1^{2s}z_2^{t}$, where
$1\leq l,s\leq 2^{n}, 1\leq j,t\leq 2^m$.

First, we suppose that $v=m+1$. Obviously, $\zeta G_i=N$ by comparing their orders.
If $t$ is divisible by $2$, then  $1\neq y_i^{2^m}=z_1^{2^{m}s}z_2^{2^{m-1}t}=z_1^{2^{m}s}$, which is impossible.
Hence $t$ and $2$ are coprime.

 We will next consider two cases $(j,2)=1$ and $2|j$. When $j$ and $2$ are coprime, we may suppose that $x_i^2=z_1^{2l}z_2$ and
$y_i^2=z_1^{2s}z_2$ without loss of generality. At this time, we have
$$G_i\Y\langle z_1,z_2\rangle=\langle y_iz_1^{-s}, x_iy_i^{-1}z_1^{s-l+2^{n-1}}\rangle\Y\langle z_1\rangle\cong M_2(m+1,1,1)\Y \mathbb{Z}_{2^{n+1}}.$$
When $j$ is divisible by $2$,  let $j=2j_1$, then we have
$$G_i\Y\langle z_1,z_2\rangle=\langle y_iz_1^{-s}, x_iz_1^{-l}z_2^{-j_1} \rangle\Y\langle z_1\rangle\cong M_2(m+1,1,1)\Y \mathbb{Z}_{2^{n+1}}.$$

We next suppose $1\leq v<m+1$. Since $1=y_i^{2^v}=z_1^{2^{v}s}z_2^{2^{v-1}t}$, we have
$t$ is divisible by  $2^{m-v+1}$. Let $t=2t_1$.
If $j$ and $2$ are coprime, then $$ G_i\Y\langle z_1,z_2\rangle=\langle x_iz_1^{-l},y_iz_1^{-s}z_2^{-t_1},z_1\rangle\cong M_2(m+1,1,1)\Y\mathbb{Z}_{2^{n+1}}.$$
Suppose that $j$ is divisible by $2$ and let $j=2j_1$. Hence
 $$ G_i\Y\langle z_1,z_2\rangle=\langle x_iz_1^{-l}z_2^{-j_1},y_iz_1^{-s}z_2^{-t_1}\rangle\Y\langle z_1,z_2 \rangle\cong D_8\Y\mathbb{Z}_{2^{n+1}}\times\mathbb{Z}_{2^m}.$$

(ii)  Let $$G_i=\langle x_i,y_i \ | \ x_i^{2^{u}}=y_i^{2^{n+1}}=1, x_i^{y_i}=x_i^{1+2^{u-1}} \rangle \cong M_2(u,n+1),$$
where  $2\leq u\leq m+1$. Obviously, $\zeta G_i=\langle x_i^2,y_i^2\rangle\leq\langle z_1^2,z_2\rangle$. At this time,  let $x_i^2=z_1^{2l}z_2^{j}$ and $y_i^2=z_1^{2s}z_2^{t}$.

If $n=m$, then we only consider the case  $2\leq u< m+1$ in order to avoid repeating (i).  Since $1= x_i^{2^u}=z_1^{2^ul}z_2^{2^{u-1}j}$,
both $l$ and $j$ are divisible by $2^{m+1-u}$.  Let $j=2j_1$.
Since $1\neq y_i^{2^m}=z_1^{2^ms}z_2^{2^{m-1}t}=c^sz_2^{2^{m-1}t}$,  we have $t$ and $2$ are coprime.
Hence
$$G_i\Y\langle z_1,z_2\rangle=\langle y_iz_1^{-s}, x_iz_1^{-l}z_2^{-j_1},z_1\rangle\cong M_2(m+1,1,1)\Y \mathbb{Z}_{2^{m+1}}.$$
Suppose that  $n>m$.
Note that $1\neq y_i^{2^n}=z_1^{2^ns}z_2^{2^{n-1}t}=z_1^{2^ns}=c^s$, a contradiction.

(iii)
Let
$$G_i=\langle x_i,y_i \ | \ x_i^{2^{u}}=y_i^{2^{v}}=1, x_i^{y_i}=x_i^{1+2^{u-1}} \rangle \cong M_2(u,v),$$
where $u,v\leq n$.
 Let $ x_i^2=z_1^{2l}z_2^{j}$ and $ y_i^2=z_1^{2s}z_2^{t}$, where
$1\leq l,s\leq 2^n, 1\leq j,t\leq 2^m$.
Obviously, both $l$ and $s$ are divisible by $2$. Let $l=2l_1$ and $s=2s_1$.

If $(j,2)=1=(t,2)$, then  we may let $ x_i^2=z_1^{2l}z_2$ and $ y_i^2=z_1^{2s}z_2$ without loss of generality. At this time, we have
$$G_i\Y\langle z_1,z_2\rangle=\langle x_iz_1^{-l}, x_iy_i^{-1}z_1^{s-l+2^{n-1}}, z_1\rangle\cong M_2(m+1,1,1)\Y \mathbb{Z}_{2^{n+1}}.$$

If  $(j,2)=1$ and $2|t$. Let $t=2t_1$, then we have
$$G_i\Y\langle z_1,z_2\rangle=\langle x_iz_1^{-l}, y_iz_1^{-s}z_2^{-t_1},z_1\rangle\cong M_2(m+1,1,1)\Y \mathbb{Z}_{2^{n+1}}.$$
Similarly, we may obtain the same result for the case $2|j$ and $(t,2)=1$.

Suppose that  $2|j$ and $2|t$. Let $j=2j_2$ and $t=2t_2$. We may obtain
$$G_i\Y\langle z_1,z_2\rangle=\langle x_iz_1^{-l}z_2^{-j_2}, y_iz_1^{-s}z_2^{-t_2},z_1,z_2\rangle\cong D_8\Y \mathbb{Z}_{2^{n+1}}\times \mathbb{Z}_{2^m}. $$

(iv)
Let
$$G_i=\langle x_i,y_i ,c\ | \ x_i^{2^{w}}=y_i^2=c^2=1, [x_i,y_i]=c,[x_i,c]=[y_i,c]=1 \rangle \cong M_2(w,1,1).$$
 Since $\langle x_i^2,c\rangle\leq \langle z_1^2,z_2\rangle$, we may let $ x_i^2=z_1^{2l}z_2^{j}$, where
$1\leq l\leq 2^n, 1\leq j\leq 2^m$. Obviously, we have $1\leq w\leq n+1$. If $w=n+1$ and $n>m$, then
$1\neq x_i^{2^n}=z_1^{2^nl}z_2^{2^{n-1}j}=z_1^{2^nl}=c^l$, which is impossible. Hence $w$ satisfies the following conditions:
$1\leq w\leq n+1$ (If $ n=m$) or $1\leq w< n+1$ (If $ n>m$).

If $j$ and $2$ are coprime, then we have
 $$G_i\Y\langle z_1,z_2\rangle=\langle x_iz_1^{-l}, y_i,z_1\rangle\cong M_2(m+1,1,1)\Y \mathbb{Z}_{2^{n+1}}. $$
 Suppose that $j$ is divisible by $2$ and let $j=2j_1$. It follows that
 $$G_i\Y\langle z_1,z_2\rangle=\langle x_iz_1^{-l}z_2^{-j_1}, y_i,z_1,z_2\rangle\cong D_8\Y \mathbb{Z}_{2^{n+1}}\times \mathbb{Z}_{2^{m}}. $$

\end{proof}

By Lemmas  \ref{STG} and \ref{CPC3},  the factor $G_i$ of the central product of $E$ may only be considered as three types:
$M_2(m+1,1,1), D_8, Q_8$. Note that  $G_1\Y G_2$ is only isomorphic to $M_2(m+1,1,1)\Y D_8$ if  both $G_1$
and $G_2$ are isomorphic to $M_2(m+1,1,1)$  according to  (iv) of Lemma \ref{GGC1}.  From this, we may obtain the following theorem
similar to Theorem \ref{ST1}.

\begin{theorem} \label{ST3}
Suppose that $n\geq m\geq 1$ and $c\in \langle z_1\rangle$. Then the isomorphism classes of $G$ are as follows.

(i) $M_2(m+1,1,1)\Y\underbrace{D_8\Y\cdots\Y D_8}_{k-1}\Y \mathbb{Z}_{2^{n+1}}\times\underbrace{\mathbb{Z}_2\times\cdots\times\mathbb{Z}_2}_{r-2}$, $k\geq 1$;

(ii) $M_2(m+1,1,1)\Y Q_8 \Y\underbrace{D_8\Y\cdots\Y D_8}_{k-2}\Y \mathbb{Z}_{2^{n+1}}\times\underbrace{\mathbb{Z}_2\times\cdots\times\mathbb{Z}_2}_{r-2}$, $k\geq 2$;

(iii) $\underbrace{D_8\Y\cdots\Y D_8}_{k}\Y \mathbb{Z}_{2^{n+1}}\times \mathbb{Z}_{2^{m}}
\times\underbrace{\mathbb{Z}_2\times\cdots\times\mathbb{Z}_2}_{r-2}$, $k\geq 1$;

(iv) $Q_8\Y\underbrace{D_8\Y\cdots\Y D_8}_{k-1}\Y \mathbb{Z}_{2^{n+1}}\times \mathbb{Z}_{2^{m}}
\times\underbrace{\mathbb{Z}_2\times\cdots\times\mathbb{Z}_2}_{r-2}$, $k\geq 1$.

\end{theorem}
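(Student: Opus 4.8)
The plan is to run the argument of Theorem \ref{ST1} in the present, shorter, setting, where the reservoir of building blocks has shrunk to three. First I would invoke Lemma \ref{STG} to write
$$G=(G_1\Y\langle z_1,z_2\rangle)\Y(G_2\Y\langle z_1,z_2\rangle)\Y\cdots\Y(G_k\Y\langle z_1,z_2\rangle)\Y A_2,$$
where the common central subgroup $\langle z_1,z_2\rangle$ is amalgamated and $\langle z_1\rangle\cong\mathbb{Z}_{2^{n+1}}$, $\langle z_2\rangle\cong\mathbb{Z}_{2^m}$, while the remaining generators contribute the fixed elementary abelian part $\langle z_3\rangle\times\cdots\times\langle z_r\rangle$ through the terminal factor $A_2=\zeta G$. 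By Lemma \ref{CPC3}, and using the hypothesis $c\in\langle z_1\rangle$, each $G_i\Y\langle z_1,z_2\rangle$ is isomorphic to one of $M_2(m+1,1,1)\Y\mathbb{Z}_{2^{n+1}}$, $D_8\Y\mathbb{Z}_{2^{n+1}}\times\mathbb{Z}_{2^m}$, or $Q_8\Y\mathbb{Z}_{2^{n+1}}\times\mathbb{Z}_{2^m}$. Thus the whole problem collapses to describing the central product over $\langle c\rangle$ of the inner abelian cores $M_2(m+1,1,1)$, $D_8$, $Q_8$, amalgamated with the fixed abelian group $\langle z_1\rangle\times\langle z_2\rangle\times\langle z_3\rangle\times\cdots\times\langle z_r\rangle$.

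The reduction of the core central product rests on two collapse identities. By $Q_8\Y Q_8\cong D_8\Y D_8$ the number of $Q_8$ cores may be normalised to $0$ or $1$, and by part (iv) of Lemma \ref{GGC1} — which, as recorded in the remark preceding the theorem, yields only $M_2(m+1,1,1)\Y D_8$ in the present situation — at most one $M_2(m+1,1,1)$ core survives. The decisive point is the fate of $\langle z_2\rangle$: if some $G_i\Y\langle z_1,z_2\rangle\cong M_2(m+1,1,1)\Y\mathbb{Z}_{2^{n+1}}$, then the order-$2^{m+1}$ generator of that $M_2(m+1,1,1)$ has square lying in $z_2\langle z_1^2\rangle$ with odd $z_2$-exponent, so $\langle z_2\rangle\leq\langle z_1\rangle\cdot M_2(m+1,1,1)$ is absorbed and contributes no separate direct factor; if no core is of this type, then every core is a $D_8$ or $Q_8$, the square of each relevant generator carries an even $z_2$-exponent, and $\langle z_2\rangle\cong\mathbb{Z}_{2^m}$ persists as a direct factor.

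Combining the two observations produces exactly the four families. First I would assume that some $G_i\Y\langle z_1,z_2\rangle$ is of the $M_2(m+1,1,1)$ type: then $\langle z_2\rangle$ is absorbed, a single $M_2(m+1,1,1)$ remains, and the parity of the surviving $Q_8$ gives types (i) (no $Q_8$) and (ii) (one $Q_8$), the leftover $D_8$ count fixing the ranges $k\geq 1$ and $k\geq 2$. Otherwise every core is a $D_8$ or $Q_8$, the direct factor $\mathbb{Z}_{2^m}$ is retained, and the same parity split yields types (iii) and (iv). This organisation mirrors the proof of Theorem \ref{ST1} line for line, with the $\mathrm{Exp}(E)$-dichotomy there replaced by the presence-versus-absence of an $M_2(m+1,1,1)$ core here.

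The step I expect to be the main obstacle is the bookkeeping required for exhaustiveness and non-redundancy: one must verify that the $Q_8$-parity normalisation and the absorption of $\langle z_2\rangle$ can be performed simultaneously and in any order without spawning a fifth type, and in particular that a mixed product $M_2(m+1,1,1)\Y Q_8$ does not collapse onto $M_2(m+1,1,1)\Y D_8$, which would merge (ii) into (i). Everything else is the routine central-product manipulation already exhibited in Lemmas \ref{CPC1} and \ref{GGC1} and in the proof of Theorem \ref{ST1}.
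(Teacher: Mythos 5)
Your proposal is correct and follows essentially the same route as the paper: the paper likewise reduces via Lemmas \ref{STG} and \ref{CPC3} to central products of the three cores $M_2(m+1,1,1)$, $D_8$, $Q_8$ over $\langle c\rangle$ amalgamated with $\zeta G$, invokes $Q_8\Y Q_8\cong D_8\Y D_8$ and the fact that (iv) of Lemma \ref{GGC1} here yields only $M_2(m+1,1,1)\Y D_8$, and then splits into the four families exactly as in Theorem \ref{ST1}. Your additional remarks on the absorption of $\langle z_2\rangle$ and on non-redundancy are consistent with (and slightly more explicit than) the paper's abbreviated argument.
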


\begin{lemma} \label{CPC4}
Suppose $n\geq m\geq 1$ and $c\in \langle z_2\rangle$.

(i) If $G_i\cong M_2(u,n+1)$, then $2\leq u\leq m+1$ and $$G_i\Y  \langle z_1,z_2\rangle\cong\left\{ \begin{aligned}
          & D_8\times \mathbb{Z}_{2^{n+1}}\ or\  Q_8\times \mathbb{Z}_{2^{n+1}}, \  m=1.\\
          & M_2(m+1,1)\times \mathbb{Z}_{2^{n+1}}\  or\  D_8\Y\mathbb{Z}_{2^m}\times\mathbb{Z}_{2^{n+1}}, \  m>1.
                                           \end{aligned} \right.$$

(ii) If $n=m$ and $G_i\cong M_2(m+1, v)$, where $1\leq v<m+1$, then $G_i\Y  \langle z_1,z_2\rangle\cong M_2(m+1,1)\times \mathbb{Z}_{2^{m+1}}$. If $n>m$, then $G_i$ is not isomorphic to $M_2(n+1,v)$, where $1\leq v\leq m+1$.

(iii) If $G_i\cong M_2(u,v)$, where $2\leq u<n+1$ and $1\leq v<n+1$, then  we may obtain the same results of $(i)$.

(iv) If $G_i\cong M_2(w,1,1)$, where $1\leq w\leq n+1$, then  $G_i\Y  \langle z_1,z_2\rangle\cong M_2(m+1,1)\times\mathbb{Z}_{2^{n+1}}$ or $D_8\Y  \mathbb{Z}_{2^{m}}\times\mathbb{Z}_{2^{n+1}}$.
\end{lemma}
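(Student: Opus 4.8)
The plan is to run the same four-case analysis as in Lemma~\ref{CPC3}, the only structural change being that now $c=z_2^{2^{m-1}}$ is the involution of $\langle z_2\rangle$ rather than of $\langle z_1\rangle$; thus Lemma~\ref{CPC4} is the $c\in\langle z_2\rangle$ companion of Lemma~\ref{CPC3}, and its type-$A_1$ analogue is Lemma~\ref{CPC2}. Because $z_1$ has order $2^{n+1}$ while only $z_1^2$ lies in $N=\langle z_1^2\rangle\times\langle z_2\rangle$, I expect a direct cyclic factor $\langle z_1\rangle\cong\mathbb{Z}_{2^{n+1}}$ to survive in every case. For a fixed factor $G_i$ I would start from the presentation supplied by Lemma~\ref{INA1}, and, since $\zeta G_i\le N$, write $x_i^2=z_1^{2l}z_2^{j}$ and $y_i^2=z_1^{2s}z_2^{t}$, the $z_1$-exponents being even because $N$ contains only the even powers of $z_1$. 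Reading the defining relation $G_i'=\langle c\rangle=\langle z_2^{2^{m-1}}\rangle$ in these coordinates then yields both the admissible range of the parameter and the parity constraints on $j,s,t$ that drive the case split.

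Concretely, in (i) the identity $c=x_i^{2^{u-1}}=z_1^{2^{u-1}l}z_2^{2^{u-2}j}$ together with $c\in\langle z_2\rangle$ kills the $z_1$-component and forces $u\le m+1$. In (ii) the same computation applied to $M_2(n+1,v)$, whose order-$2^{n+1}$ generator is $x_i$ with $c=x_i^{2^n}$, shows that $x_i^2$ must have order $2^n$; when $n>m$ the $z_2$-part $z_2^j$ has order $<2^n$, so this order must come from $z_1^{2l}$, forcing $l$ odd, whereas $c=x_i^{2^n}\in\langle z_2\rangle$ forces $l$ even --- a contradiction ruling out $M_2(n+1,v)$ for $n>m$. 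When $n=m$ the order $2^n$ may instead be supplied by $z_2^j$ ($j$ odd, $l$ even), no contradiction arises, and the replacement below gives $M_2(m+1,1)\times\mathbb{Z}_{2^{m+1}}$. In (iv) the only constraint, coming from $x_i^{2^w}=1$, is $z_1^{2^w l}=1$, which merely bounds $l$ and imposes no restriction on $w$ beyond $w\le n+1$, because there $c$ is an independent generator.

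In each case I would extract the nonabelian factor by the replacement trick used before: since the $z_1$-exponents are even, one may subtract the powers $z_1^{-l},z_1^{-s}$ (legitimate as $z_1$ has order $2^{n+1}$) and suitable powers of $z_2$, so that the corrected generators acquire order $2^{m+1}$ or $2$ and obey the $M_2(m+1,1)$- or $D_8$-relations with commutator $c$, while the untouched $z_1$ splits off as a direct factor $\mathbb{Z}_{2^{n+1}}$. The dichotomy in each part is governed by the parity of the relevant $z_2$-exponent: an odd exponent yields the $M_2(m+1,1)$-factor, an even one yields a $D_8$ glued centrally over $\langle c\rangle$ to $\langle z_2\rangle\cong\mathbb{Z}_{2^m}$, i.e. $D_8\Y\mathbb{Z}_{2^m}\times\mathbb{Z}_{2^{n+1}}$. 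The genuinely separate case is $m=1$ in (i) and (iii): then $\langle z_2\rangle=\langle c\rangle$ has order $2$, there is no room for an $M_2(2,1)$-factor, and the outcome degenerates to $D_8\times\mathbb{Z}_{2^{n+1}}$ or $Q_8\times\mathbb{Z}_{2^{n+1}}$ according to whether, after adjustment, the second generator squares to $1$ or to $c$. I expect the main obstacle to be the bookkeeping in (iii), the generic $M_2(u,v)$ case, where both $x_i^2$ and $y_i^2$ carry nontrivial $z_2$-parts: one must adjust the two generators simultaneously, verify that the corrected pair still obeys the intended relations with the correct commutator $c$, and check that $z_1$ truly contributes a direct factor $\mathbb{Z}_{2^{n+1}}$ rather than merging into the nonabelian part. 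Since (iii), after absorbing $z_1$, reduces to exactly the computation behind (iv) of Lemma~\ref{CPC2} with its cyclic factor $\mathbb{Z}_{2^{n}}$ upgraded to $\mathbb{Z}_{2^{n+1}}$, the remaining verifications are routine.
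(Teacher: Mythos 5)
Your proposal is correct and follows essentially the same route as the paper: the same coordinates $x_i^2=z_1^{2l}z_2^{j}$, $y_i^2=z_1^{2s}z_2^{t}$ with even $z_1$-exponents, the same parity case-split on the $z_2$-exponents driving the $M_2(m+1,1)$ versus $D_8\Y\mathbb{Z}_{2^m}$ dichotomy (degenerating to $D_8$ or $Q_8$ when $m=1$), and the same contradiction $c=x_i^{2^n}=z_1^{2^nl}$ ruling out $M_2(n+1,v)$ when $n>m$. The only cosmetic difference is that you derive $u\le m+1$ in (i) from the coordinate expression of $c=x_i^{2^{u-1}}$, whereas the paper gets it from the order bound $|\zeta G_i|\le |N|$.
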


\begin{proof}
(i) Let
$$G_i=\langle x_i,y_i \ | \ x_i^{2^{u}}=y_i^{2^{n+1}}=1, x_i^{y_i}=x_i^{1+2^{u-1}} \rangle \cong M_2(u,n+1).$$
Since $\zeta G_i=\langle x_i^2, y_i^2\rangle\leq N$, we have $2^{n+u-1}\leq |N|=2^{n+m}$ and
$2\leq u\leq m+1$. Let $ x_i^2=z_1^{2l}z_2^{j}$ and $ y_i^2=z_1^{2s}z_2^{t}$, where
$1\leq l,s\leq 2^{n}, 1\leq j,t\leq 2^m$.

First, we suppose that $u=m+1$. Obviously, $\zeta G_i=N$ by comparing their orders.
If $j$ is divisible by $2$, then  $c= x_i^{2^m}=z_1^{2^{m}l}z_2^{2^{m-1}j}=z_1^{2^{m}l}$, which is impossible.
Hence $j$ and $2$ are coprime.

 We will next consider two cases $(t,2)=1$ and $2|t$. When $t$ is divisible by $2$,  let $t=2t_1$, then
 $$G_i\Y\langle z_1,z_2\rangle=\langle x_iz_1^{-l}, y_iz_1^{-s}z_2^{-t_1}, z_1\rangle\cong M_2(m+1,1)\times \mathbb{Z}_{2^{n+1}}.$$
  When $t$ and $2$ are coprime, we may suppose that $x_i^2=z_1^{2l}z_2$ and
$y_i^2=z_1^{2s}z_2$ without loss of generality. If $m\geq 2$, then we have
$$G_i\Y\langle z_1,z_2\rangle=\langle x_iz_1^{-l}, x_iy_i^{-1}z_1^{s-l}z_2^{2^{m-2}}, z_1\rangle\cong M_2(m+1,1)\times \mathbb{Z}_{2^{n+1}}.$$
If $m=1$, then $u=2$ and $$G_i\Y\langle z_1,z_2\rangle=\langle x_i, y_iz_1^{-s}, z_1\rangle\cong Q_8\times \mathbb{Z}_{2^{n+1}}.$$

We next suppose $2\leq u<m+1$. Since $1=x_i^{2^u}=z_1^{2^{u}l}z_2^{2^{u-1}j}$, we have
$j$ is divisible by  $2^{m-u+1}$. Let $j=2j_1$.
If $t$ and $2$ are coprime, then $$ G_i\Y\langle z_1,z_2\rangle=\langle y_iz_1^{-s}, x_iz_1^{-l}z_2^{-j_1},z_1\rangle\cong M_2(m+1,1)\times\mathbb{Z}_{2^{n+1}}.$$
Suppose that $t$ is divisible by $2$ and let $t=2t_2$. Hence
 $$ G_i\Y\langle z_1,z_2\rangle=\langle x_iz_1^{-l}z_2^{-j_1},y_iz_1^{-s}z_2^{-t_2}\rangle\Y\langle z_1,z_2 \rangle\cong D_8\Y\mathbb{Z}_{2^m}\times\mathbb{Z}_{2^{n+1}}.$$

(ii)  Let $$G_i=\langle x_i,y_i \ | \ x_i^{2^{n+1}}=y_i^{2^{v}}=1, x_i^{y_i}=x_i^{1+2^{n}} \rangle \cong M_2(n+1,v),$$
where  $1\leq v\leq m+1$. Obviously, $\zeta G_i=\langle x_i^2,y_i^2\rangle\leq N=\langle z_1^2,z_2\rangle$. At this time,  let $x_i^2=z_1^{2l}z_2^{j}$ and $y_i^2=z_1^{2s}z_2^{t}$.

If $n=m$, then we only consider the case  $1\leq v< m+1$.  Since $c= x_i^{2^m}=z_1^{2^ml}z_2^{2^{m-1}j}=z_1^{2^ml}c^j$,
$j$ and $2$ are coprime.
Since $1= y_i^{2^v}=z_1^{2^{v}s}z_2^{2^{v-1}t}$,   $t$ are divisible by $2^{m-v+1}$, that is, $2|t$. Let $t=2t_1$.
Hence
$$G_i\Y\langle z_1,z_2\rangle=\langle x_iz_1^{-l}, y_iz_1^{-s}z_2^{-t_1},z_1\rangle\cong M_2(m+1,1)\times \mathbb{Z}_{2^{m+1}}.$$
Suppose that  $n>m$.
Note that $c= x_i^{2^n}=z_1^{2^nl}z_2^{2^{n-1}j}=z_1^{2^nl}$, a contradiction.

(iii)
Let
$$G_i=\langle x_i,y_i \ | \ x_i^{2^{u}}=y_i^{2^{v}}=1, x_i^{y_i}=x_i^{1+2^{u-1}} \rangle \cong M_2(u,v),$$
where $u,v\leq n$.
 Let $ x_i^2=z_1^{2l}z_2^{j}$ and $ y_i^2=z_1^{2s}z_2^{t}$, where
$1\leq l,s\leq 2^n, 1\leq j,t\leq 2^m$.

If  $(j,2)=1$ and $2|t$. Let $t=2t_1$, then we have
$$G_i\Y\langle z_1,z_2\rangle=\langle x_iz_1^{-l}, y_iz_1^{-s}z_2^{-t_1},z_1\rangle\cong M_2(m+1,1)\times \mathbb{Z}_{2^{n+1}}.$$
Similarly, we may obtain the same result for the case $2|j$ and $(t,2)=1$.

Suppose that  $2|j$ and $2|t$. Let $j=2j_2$ and $t=2t_2$. Then
$$G_i\Y\langle z_1,z_2\rangle=\langle x_iz_1^{-l}z_2^{-j_2}, y_iz_1^{-s}z_2^{-t_2},z_1,z_2\rangle\cong D_8\Y\mathbb{Z}_{2^m}\times \mathbb{Z}_{2^{n+1}}. $$

If $(j,2)=1=(t,2)$, then  we may let $ x_i^2=z_1^{2l}z_2$ and $ y_i^2=z_1^{2s}z_2$ without loss of generality. At this time, we
will distinguish the case $m=1$ from the case $m>1$. When $m=1$, we have
$$G_i\Y\langle z_1,z_2\rangle=\langle x_iz_1^{-l},y_iz_1^{-s}, z_1\rangle\cong Q_8\times\mathbb{Z}_{2^{n+1}}. $$
When $m>1$, then
$$G_i\ast
\langle z_1,z_2\rangle=\langle x_iz_1^{-l}, x_iy_i^{-1}z_1^{s-l}z_2^{2^{m-2}}, z_1\rangle\cong M_2(m+1,1)\times \mathbb{Z}_{2^{n+1}}.$$

(iv)
Let
$$G_i=\langle x_i,y_i ,c\ | \ x_i^{2^{w}}=y_i^2=c^2=1, [x_i,y_i]=c,[x_i,c]=[y_i,c]=1 \rangle \cong M_2(w,1,1).$$
Note that $\langle x_i^2,c\rangle\leq \langle z_1^2,z_2\rangle$. Thus we may let $ x_i^2=z_1^{2l}z_2^{j}$, where
$1\leq l\leq 2^n, 1\leq j\leq 2^m$.
If $j$ and $2$ are coprime, then
 $$G_i\Y\langle z_1,z_2\rangle=\langle x_iz_1^{-l}, y_i,z_1\rangle\cong M_2(m+1,1)\times \mathbb{Z}_{2^{n+1}}. $$
 Suppose that $j$ is divisible by $2$ and let $j=2j_1$. Then
 $$G_i\ast
 \langle z_1,z_2\rangle=\langle x_iz_1^{-l}z_2^{-j_1}, y_i,z_1,z_2\rangle\cong D_8\Y \mathbb{Z}_{2^{m}}\times\mathbb{Z}_{2^{n+1}} . $$

\end{proof}
By Lemmas  \ref{STG} and \ref{CPC4},  the factor $G_i$ of the central product of $E$ may only be considered as three types:
$M_2(m+1,1), D_8, Q_8$.
Note that  $G_1\Y G_2$ is only isomorphic to $M_2(m+1,1)\Y D_8$ if  both $G_1$
and $G_2$ are isomorphic to $M_2(m+1,1)$.  From this, we may obtain the following theorem
similar to Theorem \ref{ST2}.

\begin{theorem} \label{ST4}
Suppose that $n\geq m\geq 1$ and $c\in \langle z_2\rangle$. Then the isomorphism classes of $G$ are as follows.

(i) $M_2(m+1,1)\Y\underbrace{D_8\Y\cdots\Y D_8}_{k-1}\times \mathbb{Z}_{2^{n+1}}\times\underbrace{\mathbb{Z}_2\times\cdots\times\mathbb{Z}_2}_{r-2}$, $k\geq 1$;

(ii) $M_2(m+1,1)\Y Q_8 \Y\underbrace{D_8\Y\cdots\Y D_8}_{k-2}\times \mathbb{Z}_{2^{n+1}}\times\underbrace{\mathbb{Z}_2\times\cdots\times\mathbb{Z}_2}_{r-2}$, $k\geq 2$;

(iii) $\underbrace{D_8\Y\cdots\Y D_8}_{k}\Y  \mathbb{Z}_{2^{m}}\times\mathbb{Z}_{2^{n+1}}
\times\underbrace{\mathbb{Z}_2\times\cdots\times\mathbb{Z}_2}_{r-2}$, $k\geq 1$;

(iv) $Q_8\Y\underbrace{D_8\Y\cdots\Y D_8}_{k-1}\Y  \mathbb{Z}_{2^{m}}\times\mathbb{Z}_{2^{n+1}}
\times\underbrace{\mathbb{Z}_2\times\cdots\times\mathbb{Z}_2}_{r-2}$, $k\geq 1$.

\end{theorem}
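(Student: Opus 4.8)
The plan is to run the same reduction as in Theorem~\ref{ST2}, but now the classification in Lemma~\ref{CPC4} has already cut the list of possible building blocks down to only three nonabelian types, so the argument is shorter. First I would write $G=E\Y\zeta G$ in the form
$$G=(G_1\Y\langle z_1,z_2\rangle)\Y(G_2\Y\langle z_1,z_2\rangle)\Y\cdots\Y(G_k\Y\langle z_1,z_2\rangle)\Y\langle z_3,\ldots,z_r\rangle,$$
in which every inner factor shares the central subgroup $\langle z_1,z_2\rangle\ge\langle c\rangle$ and the trailing factor $\langle z_3,\ldots,z_r\rangle\cong(\mathbb{Z}_2)^{r-2}$ splits off as a direct factor. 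By Lemma~\ref{CPC4} each $G_i\Y\langle z_1,z_2\rangle$ is isomorphic to $M_2(m+1,1)\times\mathbb{Z}_{2^{n+1}}$, to $D_8\Y\mathbb{Z}_{2^m}\times\mathbb{Z}_{2^{n+1}}$, or---only when $m=1$---to $Q_8\times\mathbb{Z}_{2^{n+1}}$. Thus, up to the abelian part $\mathbb{Z}_{2^{n+1}}\times\mathbb{Z}_{2^m}\times(\mathbb{Z}_2)^{r-2}$, the essential factors are drawn from just $\{M_2(m+1,1),D_8,Q_8\}$; note that, unlike in Theorem~\ref{ST2}, there is no exponent-$2^{n+1}$ subcase to separate off, since here that exponent is already carried centrally by $z_1$.

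Next I would amalgamate the essential factors using the identities already available: $Q_8\Y Q_8\cong D_8\Y D_8$ (recorded before Lemma~\ref{GGC1}) and $M_2(m+1,1)\Y M_2(m+1,1)\cong M_2(m+1,1)\Y D_8$ (Lemma~\ref{GGC2}(v), also noted immediately before the statement). The key point is that both identities preserve the total number of nonabelian factors, so the count $k$ is an invariant of the reduction and reappears as the index in classes (i)--(iv). The decisive dichotomy is whether an $M_2(m+1,1)$ factor is present. If it is, I would pair off every surplus $M_2(m+1,1)$ into $M_2(m+1,1)\Y D_8$, leaving one distinguished $M_2(m+1,1)$; here the central $\mathbb{Z}_{2^m}=\langle z_2\rangle$ is exactly the centre of that $M_2(m+1,1)$, hence is absorbed, so the abelian tail contracts to $\mathbb{Z}_{2^{n+1}}\times(\mathbb{Z}_2)^{r-2}$. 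Reducing the quaternion factors modulo $Q_8\Y Q_8\cong D_8\Y D_8$ then leaves either no $Q_8$ (class (i)) or a single $Q_8$ (class (ii)). If no $M_2(m+1,1)$ occurs, only $D_8$'s and $Q_8$'s remain, the central $\mathbb{Z}_{2^m}$ survives, and the same parity reduction yields class (iii) (all $D_8$) or class (iv) (one $Q_8$).

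Most of the residual work---checking that the iterated central products genuinely amalgamate over the common $\langle c\rangle\le\langle z_2\rangle$ and that the explicit generators produced in Lemma~\ref{CPC4} realize the asserted factorizations---runs exactly parallel to Theorems~\ref{ST1} and~\ref{ST2} and can be cited rather than repeated. The main obstacle I anticipate is the bookkeeping of the abelian part: one must confirm that $\langle z_2\rangle\cong\mathbb{Z}_{2^m}$ is consumed as the centre of an $M_2(m+1,1)$ summand precisely when such a summand appears, and otherwise persists as an independent direct factor, since this is exactly what separates the tails $\mathbb{Z}_{2^{n+1}}\times(\mathbb{Z}_2)^{r-2}$ of (i)--(ii) from $\mathbb{Z}_{2^m}\times\mathbb{Z}_{2^{n+1}}\times(\mathbb{Z}_2)^{r-2}$ of (iii)--(iv). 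A secondary care point is ensuring the $Q_8\Y Q_8\cong D_8\Y D_8$ collapses do not spuriously merge two of the four families, which is why the parity of the number of $Q_8$ factors must be tracked throughout.
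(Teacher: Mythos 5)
Your proposal is correct and follows essentially the same route as the paper: the paper likewise reduces via Lemma \ref{CPC4} to the three essential factors $M_2(m+1,1)$, $D_8$, $Q_8$, invokes the identities $Q_8\Y Q_8\cong D_8\Y D_8$ and $M_2(m+1,1)\Y M_2(m+1,1)\cong M_2(m+1,1)\Y D_8$, and then declares the result ``similar to Theorem \ref{ST2}.'' In fact your write-up is more explicit than the paper's two-sentence justification, in particular in tracking whether $\langle z_2\rangle\cong\mathbb{Z}_{2^m}$ is absorbed as the centre of an $M_2(m+1,1)$ factor or survives as the $\Y\,\mathbb{Z}_{2^m}$ tail of classes (iii)--(iv).
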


Suppose that
$N=\langle z_1\rangle\times\langle z_2^2\rangle.$
 Similar to the case $A_2$, we have the following the results about $A_3$.

\begin{corollary} \label{ST5}
Suppose that $n\geq m\geq 1$ and $c\in \langle z_1\rangle$. Then the isomorphism classes of $G$ are as follows.

(i) $M_2(n+1,1)\Y\underbrace{D_8\Y\cdots\Y D_8}_{k-1}\times \mathbb{Z}_{2^{m+1}}\times\underbrace{\mathbb{Z}_2\times\cdots\times\mathbb{Z}_2}_{r-2}$, $k\geq 1$;

(ii) $M_2(n+1,1)\Y Q_8 \Y\underbrace{D_8\Y\cdots\Y D_8}_{k-2}\times \mathbb{Z}_{2^{m+1}}\times\underbrace{\mathbb{Z}_2\times\cdots\times\mathbb{Z}_2}_{r-2}$, $k\geq 2$;

(iii) $\underbrace{D_8\Y\cdots\Y D_8}_{k}\Y \mathbb{Z}_{2^{n}}\times \mathbb{Z}_{2^{m+1}}
\times\underbrace{\mathbb{Z}_2\times\cdots\times\mathbb{Z}_2}_{r-2}$, $k\geq 1$;

(iv) $Q_8\Y\underbrace{D_8\Y\cdots\Y D_8}_{k-1}\Y \mathbb{Z}_{2^{n}}\times \mathbb{Z}_{2^{m+1}}
\times\underbrace{\mathbb{Z}_2\times\cdots\times\mathbb{Z}_2}_{r-2}$, $k\geq 1$.

\end{corollary}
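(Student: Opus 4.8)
The plan is to run, for the type $A_3$ with $c\in\langle z_1\rangle$, the same three-step machinery already used for type $A_2$ with $c\in\langle z_2\rangle$ (Lemma \ref{CPC4} feeding into Theorem \ref{ST4}), but with the roles of $z_1$ and $z_2$, and of $n$ and $m$, interchanged. Here the generator carrying the commutator is $z_1$, which lies in $N$ and has order $2^n$ (so $c=z_1^{2^{n-1}}$), while the ``escaping'' generator is $z_2$, of order $2^{m+1}$ with $z_2^2\in N$ but $c\notin\langle z_2\rangle$. By Lemma \ref{STG} I first write $G=E\Y\zeta G$ with $E=G_1\Y\cdots\Y G_k$, each $G_i$ inner abelian of type $Q_8$, $M_2(u,v)$ or $M_2(w,1,1)$, and then decompose
$$G=(G_1\Y\langle z_1,z_2\rangle)\Y\cdots\Y(G_k\Y\langle z_1,z_2\rangle)\Y\langle z_3,\ldots,z_r\rangle,$$
where the factors are amalgamated over the shared central subgroup $\langle z_1,z_2\rangle$ and $\langle z_3,\ldots,z_r\rangle\cong\mathbb{Z}_2^{(r-2)}$ supplies the trailing elementary abelian direct factor in every listed type.

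The first substantive step is the analogue of Lemma \ref{CPC4}: classifying each $G_i\Y\langle z_1,z_2\rangle$. Writing $x_i^2$ and $y_i^2$ in the basis $\{z_1,z_2\}$ and using the single relation $c=z_1^{2^{n-1}}$ to fix the parities of the relevant exponents, one adjusts the generators of $G_i$ by suitable powers of $z_1$ and $z_2$ so that $\langle z_2\rangle\cong\mathbb{Z}_{2^{m+1}}$ splits off as a direct factor (possible precisely because $c\notin\langle z_2\rangle$) while $\langle z_1\rangle\cong\mathbb{Z}_{2^n}$ is absorbed into the central product (because $c\in\langle z_1\rangle$). The conclusion will be that every $G_i\Y\langle z_1,z_2\rangle$ is isomorphic to one of $M_2(n+1,1)\times\mathbb{Z}_{2^{m+1}}$, $D_8\Y\mathbb{Z}_{2^n}\times\mathbb{Z}_{2^{m+1}}$, or $Q_8\Y\mathbb{Z}_{2^n}\times\mathbb{Z}_{2^{m+1}}$, the metacyclic factor occurring precisely when $\mathrm{Exp}(G_i)=2^{n+1}$. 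Because the $c$-carrying generator $z_1$ lies in $N$ while the escaping generator $z_2$ does not contain $c$, no three-generator type $M_2(\cdot,1,1)$ can appear here; this is exactly what separates the present case from Theorem \ref{ST3}, where it is the escaping generator that carries $c$.

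Next I would record the two collapsing identities for the nonabelian factors. Since the three-generator type is absent, the analogue of Lemma \ref{GGC2}(v) reads $M_2(n+1,1)\Y M_2(n+1,1)\cong M_2(n+1,1)\Y D_8$, and together with $Q_8\Y Q_8\cong D_8\Y D_8$ this forces at most one factor isomorphic to $M_2(n+1,1)$ and at most one isomorphic to $Q_8$ to survive in the total central product. Assembling exactly as in the proof of Theorem \ref{ST4}, I split on the exponent of $E$: if $\mathrm{Exp}(E)=2^{n+1}$ then some $G_i$ contributes the unique surviving $M_2(n+1,1)$ and I obtain types (i) (no surviving $Q_8$) and (ii) (with a surviving $Q_8$); if $\mathrm{Exp}(E)<2^{n+1}$ then only $D_8$ and $Q_8$ factors remain, centrally producted with $\langle z_1\rangle\cong\mathbb{Z}_{2^n}$ and direct with $\langle z_2\rangle\cong\mathbb{Z}_{2^{m+1}}$, giving types (iii) and (iv). In all four cases the $\mathbb{Z}_{2^{m+1}}$ direct factor and the $\mathbb{Z}_2^{(r-2)}$ tail are carried along unchanged.

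The main obstacle will be the exponent bookkeeping inside the analogue of Lemma \ref{CPC4}, and in particular the boundary regimes. When $n=m$ the escaping generator $z_2$ has order $2^{m+1}=2^{n+1}$, matching the largest order available, so one must verify carefully that $z_2$ still separates as a direct factor rather than fusing with an order-$2^{u}$ generator of $G_i$ to manufacture a spurious metacyclic factor of exponent $2^{n+1}$; relatedly, $M_2(n+1,1)$ degenerates to $D_8$ when $n=1$ (which forces $m=1$), so a few small cases must be checked against types (iii)--(iv) to avoid double counting. Confirming that the only surviving nonabelian types are the three listed above, that the metacyclic factor arises exactly from $\mathrm{Exp}(G_i)=2^{n+1}$, and that the collapse identities hold throughout the range $n\ge m\ge 1$ is the delicate computation; once it is established, the assembly into (i)--(iv) is formally identical to that of Theorem \ref{ST4}.
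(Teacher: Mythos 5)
Your proposal is correct and follows essentially the same route as the paper, which justifies Corollary \ref{ST5} only by the remark that the type $A_3$ analysis is ``similar to the case $A_2$'': you correctly identify that the relevant mirror is Lemma \ref{CPC4} together with Theorem \ref{ST4}, with the roles of $z_1,z_2$ and of $n,m$ interchanged, so that the $c$-carrying generator $z_1\in N$ is absorbed into an $M_2(n+1,1)$, $D_8$ or $Q_8$ factor while $\langle z_2\rangle\cong\mathbb{Z}_{2^{m+1}}$ splits off as a direct factor, and the collapse $M_2(n+1,1)\Y M_2(n+1,1)\cong M_2(n+1,1)\Y D_8$ (forced because the complement of $\langle z_1\rangle$ in $N$ is $\langle z_2^2\rangle$, a group of squares) yields exactly the four listed types. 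The one imprecision is the phrase ``the metacyclic factor occurring precisely when $\mathrm{Exp}(G_i)=2^{n+1}$'' --- when $n=m$ a factor of exponent $2^{n+1}$ can arise from a generator squaring to $z_2^{2j}$ with $j$ odd and still collapse to $D_8\Y\mathbb{Z}_{2^n}$ --- but only the ``only if'' direction is actually needed, you flag exactly this boundary case yourself, and the final list of isomorphism classes is unaffected.
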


\begin{corollary} \label{ST6}
Suppose that $n\geq m\geq 1$ and $c\in \langle z_2\rangle$. Then the isomorphism classes of $G$ are as follows.

(i) $M_2(n+1,1,1)\Y\underbrace{D_8\Y\cdots\Y D_8}_{k-1}\Y \mathbb{Z}_{2^{m+1}}\times\underbrace{\mathbb{Z}_2\times\cdots\times\mathbb{Z}_2}_{r-2}$, $k\geq 1$;

(ii) $M_2(n+1,1,1)\Y Q_8 \Y\underbrace{D_8\Y\cdots\Y D_8}_{k-2}\Y \mathbb{Z}_{2^{m+1}}\times\underbrace{\mathbb{Z}_2\times\cdots\times\mathbb{Z}_2}_{r-2}$, $k\geq 2$;

(iii) $\underbrace{D_8\Y\cdots\Y D_8}_{k}\Y  \mathbb{Z}_{2^{m+1}}\times\mathbb{Z}_{2^{n}}
\times\underbrace{\mathbb{Z}_2\times\cdots\times\mathbb{Z}_2}_{r-2}$, $k\geq 1$;

(iv) $Q_8\Y\underbrace{D_8\Y\cdots\Y D_8}_{k-1}\Y  \mathbb{Z}_{2^{m+1}}\times\mathbb{Z}_{2^{n}}
\times\underbrace{\mathbb{Z}_2\times\cdots\times\mathbb{Z}_2}_{r-2}$, $k\geq 1$.

\end{corollary}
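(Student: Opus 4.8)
The plan is to obtain the classification by transporting the already-established type $A_2$ through the evident duality between the two cyclic direct factors of $\zeta G$. Here $\zeta G=\langle z_1\rangle\times\cdots\times\langle z_r\rangle$ with $|z_1|=2^{n}$, $|z_2|=2^{m+1}$ and $N=\langle z_1\rangle\times\langle z_2^2\rangle$, so that $z_2$ is now the generator whose order has been raised and which contains $c=z_2^{2^m}$. This is precisely the mirror image of the situation in Theorem \ref{ST3}, where the raised generator containing $c$ was $z_1$ with $|z_1|=2^{n+1}$; interchanging the symbols $z_1\leftrightarrow z_2$ and the exponents $n\leftrightarrow m$ carries the hypotheses of Theorem \ref{ST3} onto those of the present corollary, and carries its normal forms (built from $M_2(m+1,1,1)$ and $\mathbb{Z}_{2^{n+1}}$) onto the forms (i)--(iv) above (built from $M_2(n+1,1,1)$ and $\mathbb{Z}_{2^{m+1}}$). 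First I would record this dictionary explicitly and confirm that the data actually used in the $A_2$ argument, namely that $c$ lies in the raised generator and that $N$ equals the corresponding Frattini-type subgroup, are symmetric under the interchange.

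Next I would run the local classification. By Lemma \ref{STG}(2) write $G=E\Y\zeta G$ with $E=G_1\Y\cdots\Y G_k$ a central product of inner abelian groups of the types in Lemma \ref{INA1}, and for each factor determine $G_i\Y\langle z_1,z_2\rangle$ by repeating the computation of Lemma \ref{CPC3} with $z_1$ and $z_2$ swapped. The outcome is that every $G_i\Y\langle z_1,z_2\rangle$ is isomorphic to one of $M_2(n+1,1,1)\Y\mathbb{Z}_{2^{m+1}}$, $D_8\Y\mathbb{Z}_{2^{m+1}}\times\mathbb{Z}_{2^n}$, or $Q_8\Y\mathbb{Z}_{2^{m+1}}\times\mathbb{Z}_{2^n}$, so that the building blocks over the common centre $\langle c\rangle$ are just $M_2(n+1,1,1)$, $D_8$ and $Q_8$, amalgamated with the fixed abelian tail $\mathbb{Z}_{2^{m+1}}\times\mathbb{Z}_{2^n}\times\mathbb{Z}_2^{(r-2)}$.

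Finally I would assemble the global form exactly as for Theorem \ref{ST3}. Rewriting $G=(G_1\Y\langle z_1,z_2\rangle)\Y\cdots\Y(G_k\Y\langle z_1,z_2\rangle)\Y\zeta G$ and applying the fusion rules of Lemma \ref{GGC1}, in particular part (iv), which in this setting forces $M_2(n+1,1,1)\Y M_2(n+1,1,1)\cong M_2(n+1,1,1)\Y D_8$ so that at most one $M_2(n+1,1,1)$ factor can survive, together with the identity $Q_8\Y Q_8\cong D_8\Y D_8$, collapses every central product of building blocks to one of the four normal forms. The split into (i)/(ii) versus (iii)/(iv) is governed by whether an $M_2(n+1,1,1)$ factor occurs, and within each pair by the parity of the number of $Q_8$ factors.

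The main obstacle is that the standing hypothesis $n\ge m$ is not symmetric under the interchange $z_1\leftrightarrow z_2$: when $n>m$ the non-raised generator $z_1$ (of order $2^n$) may exceed the raised generator $z_2$ (of order $2^{m+1}$), a configuration with no literal counterpart in the $A_2$ case. I would therefore check the local step in the regime $n\ge m+2$ separately, verifying that the extra room in $\langle z_1\rangle$ produces no inner abelian factor outside the list above; concretely, that an $M_2(u,n+1)$-type factor anchored on $z_1$ cannot occur when $c\in\langle z_2\rangle$, which is the analogue of the contradiction steps in Lemma \ref{CPC3}(ii) and Lemma \ref{CPC4}(ii). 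Once this asymmetric subcase is disposed of, the remaining computations are formally identical to those establishing Theorem \ref{ST3}, and the four isomorphism classes follow.
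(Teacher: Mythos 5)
Your overall strategy --- transport the $A_2$ analysis through the interchange $z_1\leftrightarrow z_2$, $n\leftrightarrow m$, recompute the local factors $G_i\Y\langle z_1,z_2\rangle$, and assemble via the fusion rules --- is exactly what the paper intends (its entire justification for Corollaries \ref{ST5} and \ref{ST6} is the sentence ``Similar to the case $A_2$''), and you are right that the only non-formal point is the asymmetry of the standing hypothesis $n\geq m$. However, your concrete plan for that asymmetric regime is wrong in a way that would leave the case analysis incomplete. You propose to show that an $M_2(u,n+1)$-type factor ``cannot occur when $c\in\langle z_2\rangle$,'' by analogy with the contradiction steps of Lemmas \ref{CPC3}(ii) and \ref{CPC4}(ii). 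In the present setting ($|z_1|=2^n$, $|z_2|=2^{m+1}$, $N=\langle z_1\rangle\times\langle z_2^2\rangle$, $c=z_2^{2^m}$) the factor that is excluded by contradiction when $n>m$ is $M_2(n+1,v)$: there $c=x_i^{2^n}$ and $x_i^2\in N$ forces $x_i^{2^n}\in\langle z_1^{2^{n-1}}\rangle$, which cannot equal $z_2^{2^m}$. By contrast, $M_2(u,n+1)$ with $2\leq u\leq m+1$ genuinely occurs: here $c=x_i^{2^{u-1}}$ lies in $\langle z_2^2\rangle$ with no obstruction, while $y_i^2=z_1^sz_2^{2t}$ with $s$ odd gives $y_i^{2^n}=z_1^{2^{n-1}}\neq c$, so no contradiction is available. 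For instance, with $n=2$, $m=1$, the group $M_2(2,3)\Y\mathbb{Z}_4$ (amalgamated over $\langle c\rangle$ with $x^2=z_2^2=c$) satisfies all the hypotheses of the corollary.

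The repair is a positive reduction rather than a non-existence argument: for such a factor one replaces $x_i$ by $x_iz_1^{-l_1}z_2^{-j_1}$ with $(x_iz_1^{-l_1}z_2^{-j_1})^2=1$ (possible because $x_i^2\in c\cdot\langle z_1^2,z_2^{4}\rangle$ up to the adjustment, so the central twist by $z_2^{2^{m-1}}$-type elements kills the square), obtaining $\langle x_iz_1^{-l_1}z_2^{-j_1},y_i\rangle\cong M_2(n+1,1,1)$ and hence $G_i\Y\langle z_1,z_2\rangle\cong M_2(n+1,1,1)\Y\mathbb{Z}_{2^{m+1}}$, which is on your list; in the example above one checks directly that $M_2(2,3)\Y\mathbb{Z}_4\cong M_2(3,1,1)\Y\mathbb{Z}_4$. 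A similar caveat applies to your use of Lemma \ref{GGC1}(iv) in the assembly step: that lemma allows the outcome $M_2(n+1,1,1)\Y M_2(m+1,1)$ when the cross-term $z_2^{2t}$ has $t$ odd, and one must first renormalize $x_2\mapsto x_2z_2^{-t}$ (or amalgamate with $\langle z_2\rangle$, which converts the stray $M_2(m+1,1)$ into $D_8$) before concluding that only the $\Y D_8$ outcome survives. With these two computations supplied in place of the claimed contradictions, your argument closes and yields the four stated classes.
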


\subsection{ The isomorphism type $A_4$ of $\zeta G$}

In the section,
$$\zeta G=\langle z_1\rangle\times\langle z_2\rangle\times\cdots\times\langle z_r\rangle\cong\mathbb{Z}_{2^{n+1}}\times \mathbb{Z}_{2^{m+1}}\times\mathbb{Z}_2\times\cdots\times\mathbb{Z}_2, N=\langle z_1^2\rangle\times\langle z_2^2\rangle.$$
 Without loss of generality, we may always  let $c\in \langle z_1^2\rangle$ or $c\in \langle z_2^2\rangle$.
 First we determine the types of $G_i\Y  \langle z_1,z_2\rangle$, where $i=1,2,\ldots,k$.

\begin{lemma} \label{CPC7}
Suppose $n\geq m\geq 1$ and $c\in \langle z_1\rangle$ or $\langle z_2\rangle$.

(i) If $G_i\cong M_2(u,v)$, where $2\leq u\leq n+1$ and $1\leq v\leq n+1$, then $G_i\Y  \langle z_1,z_2\rangle\cong D_8\Y (\mathbb{Z}_{2^{n+1}}\times \mathbb{Z}_{2^{m+1}})$

(ii) If $G_i\cong M_2(w,1,1)$, where $1\leq w\leq n+1$,  then $G_i\Y  \langle z_1,z_2\rangle\cong D_8\Y (\mathbb{Z}_{2^{n+1}}\times \mathbb{Z}_{2^{m+1}})$.
\end{lemma}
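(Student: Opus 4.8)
The plan is to follow the same generator-replacement strategy used in Lemmas \ref{CPC3} and \ref{CPC4}, but to exploit the feature that distinguishes type $A_4$: here $N=\langle z_1^2\rangle\times\langle z_2^2\rangle=(\langle z_1,z_2\rangle)^2$ is precisely the subgroup of squares of $\langle z_1,z_2\rangle$. Since every factor $G_i$ satisfies $\zeta G_i\leq N$ (because $G_i^2\le N$ and $G_i'=\langle c\rangle\le N$), every element of $\zeta G_i$ is a square in the abelian group $\langle z_1,z_2\rangle$. This is exactly what forces the answer to be uniformly $D_8\Y(\mathbb{Z}_{2^{n+1}}\times\mathbb{Z}_{2^{m+1}})$, with no $M_2$-type factor surviving.

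For case (i) I would write $G_i=\langle x_i,y_i\mid x_i^{2^u}=y_i^{2^v}=1,\ x_i^{y_i}=x_i^{1+2^{u-1}}\rangle$, so that $c=x_i^{2^{u-1}}$ and $\zeta G_i=\langle x_i^2,y_i^2\rangle\le N$. Writing $x_i^2=z_1^{2l}z_2^{2j}$ and $y_i^2=z_1^{2s}z_2^{2t}$, I set $x_i'=x_iz_1^{-l}z_2^{-j}$ and $y_i'=y_iz_1^{-s}z_2^{-t}$. Because $z_1,z_2$ are central, $(x_i')^2=(y_i')^2=1$ while $[x_i',y_i']=[x_i,y_i]=c$; hence $x_i',y_i'$ are two noncentral involutions whose product satisfies $(x_i'y_i')^2=c$ of order $2$, so $\langle x_i',y_i'\rangle\cong D_8$ with $\zeta\langle x_i',y_i'\rangle=\langle c\rangle$. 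Case (ii) is identical except that in $G_i=M_2(w,1,1)$ the generator $y_i$ is already an involution, so only $x_i$ need be adjusted to $x_i'=x_iz_1^{-l}z_2^{-j}$ where $x_i^2=z_1^{2l}z_2^{2j}$.

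Having produced the $D_8$, the remaining step is the central-product bookkeeping. Since $G_i\Y\langle z_1,z_2\rangle=\langle x_i',y_i',z_1,z_2\rangle$ and $\langle z_1,z_2\rangle$ is central, any element of $\langle x_i',y_i'\rangle\cap\langle z_1,z_2\rangle$ is central in $\langle x_i',y_i'\rangle\cong D_8$, hence lies in $\langle c\rangle$; as $c\in N\le\langle z_1,z_2\rangle$, the intersection is exactly $\langle c\rangle$. This yields $G_i\Y\langle z_1,z_2\rangle\cong D_8\Y(\mathbb{Z}_{2^{n+1}}\times\mathbb{Z}_{2^{m+1}})$ in both cases.

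The only point requiring genuine care is the verification that $\zeta G_i$ sits inside the squares of $\langle z_1,z_2\rangle$ — equivalently, that $x_i^2$ (and $y_i^2$) always have even exponents in $z_1,z_2$. In the earlier types $A_1,A_2,A_3$ part of $N$ (a direct factor such as $\langle z_2\rangle$) is not contained in $(\langle z_1,z_2\rangle)^2$, so one generator cannot be turned into an involution and an $M_2$-factor survives; here that obstruction disappears, which is precisely why both parts collapse to the single type $D_8\Y(\mathbb{Z}_{2^{n+1}}\times\mathbb{Z}_{2^{m+1}})$. Once this is observed the computation is routine, and I do not expect the parameter ranges ($u,v,w\le n+1$), which merely record $\exp(\zeta G_i)\le\exp N=2^n$, to cause any difficulty.
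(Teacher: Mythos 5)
Your proof is correct and takes essentially the same route as the paper's: since $N=\langle z_1^2\rangle\times\langle z_2^2\rangle$, one writes $x_i^2=z_1^{2l}z_2^{2j}$ and $y_i^2=z_1^{2s}z_2^{2t}$ and replaces the generators by $x_iz_1^{-l}z_2^{-j}$ and $y_iz_1^{-s}z_2^{-t}$ to obtain a $D_8$ meeting $\langle z_1,z_2\rangle$ exactly in $\langle c\rangle$. The paper records this in two lines; your explicit verification of the intersection and your remark on why type $A_4$ collapses uniformly are correct elaborations, not a different argument.
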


\begin{proof}
(i) Let
$$G_i=\langle x_i,y_i \ | \ x_i^{2^{u}}=y_i^{2^{v}}=1, x_i^{y_i}=x_i^{1+2^{u-1}} \rangle \cong M_2(u,v).$$
Note that $\zeta G_i=\langle x_i^2, y_i^2\rangle\leq N$. Thus
we may let $ x_i^2=z_1^{2l}z_2^{2j}$ and $ y_i^2=z_1^{2s}z_2^{2t}$, where
$1\leq l,s\leq 2^{n}, 1\leq j,t\leq 2^m$. Hence
 $$G_i\Y\langle z_1,z_2\rangle=\langle x_iz_1^{-l}z_2^{-j},y_iz_1^{-s}z_2^{-t},z_1,z_2\rangle\cong D_8\Y (\mathbb{Z}_{2^{n+1}}\times \mathbb{Z}_{2^{m+1}}).$$

(ii)
Let
$$G_i=\langle x_i,y_i ,c\ | \ x_i^{2^{w}}=y_i^2=c^2=1, [x_i,y_i]=c,[x_i,c]=[y_i,c]=1 \rangle \cong M_2(w,1,1).$$
where $1\leq w\leq n+1$.
 Let $ x_i^2=z_1^{2l}z_2^{2j}$, where
$1\leq l\leq 2^n, 1\leq j\leq 2^m$. Thus
 $$G_i\Y\langle z_1,z_2\rangle=\langle x_iz_1^{-l}z_2^{-j},y_i,z_1,z_2\rangle\cong D_8\Y (\mathbb{Z}_{2^{n+1}}\times \mathbb{Z}_{2^{m+1}}).$$

\end{proof}

By Lemmas \ref{STG} and \ref{CPC7},  it is easy to obtain the following theorem since $Q_8\Y Q_8\cong Q_8\Y D_8$.

\begin{theorem} \label{ST7}
Suppose that $n\geq m\geq 1$ and $c\in \langle z_1\rangle$ or $\langle z_2\rangle$. Then the isomorphism classes of $G$ are as follows.

(i) $\underbrace{D_8\Y\cdots\Y D_8}_{k}\Y (\mathbb{Z}_{2^{n+1}}\times \mathbb{Z}_{2^{m+1}})
\times\underbrace{\mathbb{Z}_2\times\cdots\times\mathbb{Z}_2}_{r-2}$, $k\geq 1$;

(ii) $Q_8\Y\underbrace{D_8\Y\cdots\Y D_8}_{k-1}\Y (\mathbb{Z}_{2^{n+1}}\times \mathbb{Z}_{2^{m+1}})
\times\underbrace{\mathbb{Z}_2\times\cdots\times\mathbb{Z}_2}_{r-2}$, $k\geq 1$.

\end{theorem}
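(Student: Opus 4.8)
The plan is to run the same reduction as in Theorems \ref{ST1}--\ref{ST4}, which here collapses to its shortest form because Lemma \ref{CPC7} assigns the \emph{same} core $D_8$ to every inner abelian factor of type $M_2(u,v)$ or $M_2(w,1,1)$, and does so uniformly whether $c\in\langle z_1\rangle$ or $c\in\langle z_2\rangle$, so that no subdivision into the two positions of $c$ is needed. First I would use Lemma \ref{STG} to write $G=E\Y\zeta G$ with $E=G_1\Y G_2\Y\cdots\Y G_k$ ($k\ge 1$, since $G$ is nonabelian), each $G_i$ isomorphic to $Q_8$, $M_2(u,v)$ or $M_2(w,1,1)$. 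Because $\zeta G=\langle z_1,z_2\rangle\times\langle z_3,\dots,z_r\rangle$ and each factor satisfies $G_i'=\langle c\rangle\le N$, $\zeta G_i\le N$ and $G_i^{2}\le N\le\langle z_1,z_2\rangle$, I regroup the central product, identifying the formal copies of the common central subgroup $\langle z_1,z_2\rangle$, as
\[
G=\bigl(G_1\Y\langle z_1,z_2\rangle\bigr)\Y\cdots\Y\bigl(G_k\Y\langle z_1,z_2\rangle\bigr)\Y\langle z_3,\dots,z_r\rangle .
\]
Since $\langle z_3,\dots,z_r\rangle$ is central and elementary abelian and its only possible common elements with the remaining factors lie in $N$, equation \eqref{3.2} forces that intersection to be trivial, so the tail splits off as a genuine direct factor and $G=\bigl(E\Y\langle z_1,z_2\rangle\bigr)\times\underbrace{\mathbb{Z}_2\times\cdots\times\mathbb{Z}_2}_{r-2}$.

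Next I would apply Lemma \ref{CPC7} to each factor of type $M_2(u,v)$ or $M_2(w,1,1)$, obtaining $G_i\Y\langle z_1,z_2\rangle\cong D_8\Y(\mathbb{Z}_{2^{n+1}}\times\mathbb{Z}_{2^{m+1}})$ together with an explicit $D_8$ subgroup generated by the adjusted generators of $G_i$; for each $G_i\cong Q_8$ the identity $Q_8^{2}=Z(Q_8)=\langle c\rangle\le N$ shows at once that $G_i\Y\langle z_1,z_2\rangle\cong Q_8\Y(\mathbb{Z}_{2^{n+1}}\times\mathbb{Z}_{2^{m+1}})$. Amalgamating these cores along the shared centre $\langle c\rangle$, the subgroup $H$ generated by all the resulting $D_8$'s and $Q_8$'s is extraspecial of order $2^{2k+1}$ with $H'=Z(H)=\langle c\rangle$, and $E\Y\langle z_1,z_2\rangle=H\Y(\mathbb{Z}_{2^{n+1}}\times\mathbb{Z}_{2^{m+1}})$, the central product being taken over $\langle c\rangle\le\langle z_1,z_2\rangle\cong\mathbb{Z}_{2^{n+1}}\times\mathbb{Z}_{2^{m+1}}$ (here one checks $H\cap\langle z_1,z_2\rangle=\langle c\rangle$ from the fact that $\langle z_1,z_2\rangle$ centralizes $H$ and $Z(H)=\langle c\rangle$).

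Finally I would invoke Lemma \ref{EP2}: an extraspecial $2$-group $H$ of order $2^{2k+1}$ is either a central product of $k$ copies of $D_8$ or a central product of $k-1$ copies of $D_8$ with a single $Q_8$, the surplus $Q_8$-factors being absorbed in pairs via the standard isomorphism $Q_8\Y Q_8\cong D_8\Y D_8$. Substituting these two possibilities back into $G=H\Y(\mathbb{Z}_{2^{n+1}}\times\mathbb{Z}_{2^{m+1}})\times\mathbb{Z}_2^{\,r-2}$ yields precisely classes (i) and (ii), both with $k\ge 1$. The step I expect to be the main obstacle is not any single computation but the central-product bookkeeping: one must verify that the several formal copies of $\langle z_1,z_2\rangle$ truly fuse into one shared centre (so the exponents $2^{n+1},2^{m+1}$ are not inflated), that \eqref{3.2} genuinely peels off the $\mathbb{Z}_2^{r-2}$ tail as a direct factor, and that $H$ is honestly extraspecial with centre $\langle c\rangle$ so that Lemma \ref{EP2} applies and exactly the two listed classes arise.
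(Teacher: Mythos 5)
Your proposal is correct and follows essentially the same route as the paper, which disposes of this theorem in one sentence by citing Lemmas \ref{STG} and \ref{CPC7} together with the identity $Q_8\Y Q_8\cong D_8\Y D_8$; your invocation of Lemma \ref{EP2} is just that identity packaged as the extraspecial classification, and your central-product bookkeeping (fusing the copies of $\langle z_1,z_2\rangle$, peeling off $\mathbb{Z}_2^{r-2}$, checking $H\cap\langle z_1,z_2\rangle=\langle c\rangle$) correctly supplies the details the paper leaves implicit.
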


\section{ The unitary subgroup}

Let $G$ be  a nonabelian  $2$-group given by a
central extension of the form
$$1\longrightarrow N \longrightarrow G \longrightarrow \mathbb{Z}_2\times
\cdots\times \mathbb{Z}_2 \longrightarrow 1$$ and $G'=\langle c\rangle\cong
\mathbb{Z}_2$, $N\cong \mathbb{Z}_{2^n}\times \mathbb{Z}_{2^m}$,   $n\geq m\geq 1$.

 In section 3, we see there is an elementary abelian $2$-group in  direct product factors of $G$.
  According to Lemmas \ref{UDP} and \ref{SCG},
 we only consider the case $\zeta G=\langle z_1\rangle\times\langle z_2\rangle\geq N$ for
 computing the order of $U_{*}(FG)$.

Let $\bar{G}=G/G'$ and let $\Psi: FG\rightarrow F\overline{G}$ be the natural homomorphism.
Consider the sets
$$N_{\Psi}^*:=\{x\in V(FG)\ |\ \Psi(x)\in V_*(F\overline{G})\},$$
$$\mathrm{Ker}\Psi^+:=\{1+x\ |\ x\in \mathrm{Ker}\Psi\}.$$
It is easy to verify that $N_{\Psi}^*$ forms a subgroup of $V(FG)$ and $\mathrm{Ker}\Psi^+$ forms a normal subgroup of
$V(FG)$. Let $S_{G'}:=\{ xx^*\ |\ x\in N_{\Psi}^*\}$. We have that $S_{G'}\subseteq\mathrm{Ker}\Psi^+\leq N_{\Psi}^*$.

\begin{lemma}[\cite{Balogh}]\label{S1}
(i) $\mathrm{Supp}(xx^*)\cap \Omega_1(G)=\{1\}$ for every $x\in V(FG)$.

(ii) If $1+g\widehat{G'}\in S_{G'}$ for some $g\in G$, then $g^2=c$.

(iii) If $h^2$ is not included in $G'$ for any $h\in G$, then $1+\alpha(h+h^{-1})\widehat{G'}\in S_G'$ for any $\alpha\in F$.
\end{lemma}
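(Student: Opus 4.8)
The plan is to treat the three parts essentially independently, exploiting three elementary facts about the involution $*$ over a field of characteristic $2$: that $*$ is an anti-automorphism, so every product $yy^*$ is \emph{symmetric} (fixed by $*$, since $(xx^*)^*=(x^*)^*x^*=xx^*$); that cross terms cancel in pairs because $2=0$; and that $\widehat{G'}=1+c$ is a central element satisfying $\widehat{G'}^{2}=(1+c)^{2}=1+c^{2}=0$ and $\Psi(\widehat{G'})=1+1=0$.

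For (i), I would write $x=\sum_{g}\alpha_g g$ and use that $xx^*$ is symmetric, so the coefficient of any $w$ equals the coefficient of $w^{-1}$. The coefficient of the identity is $\sum_g\alpha_g^2=(\sum_g\alpha_g)^2=1$, since $x$ is normalized, so $1$ genuinely lies in the support. For a nontrivial involution $w=w^{-1}\neq 1$ the coefficient of $w$ equals $\sum_g\alpha_g\alpha_{wg}$; the map $g\mapsto wg$ is a fixed-point-free involution of $G$ (fixed points force $w=1$), so the sum breaks into pairs $\{g,wg\}$ each contributing $2\alpha_g\alpha_{wg}=0$. Hence no nonidentity element of $\Omega_1(G)$ occurs, and the intersection is exactly $\{1\}$.

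For (ii), suppose $xx^*=1+g\widehat{G'}=1+g+gc$ for some $x\in N_\Psi^*$. If $g\in G'$ then $g\widehat{G'}=\widehat{G'}$ and $1+g\widehat{G'}=1+\widehat{G'}=c$, whose support $\{c\}$ contradicts (i); hence $g\notin G'$ and $\{1,g,gc\}$ consists of three distinct elements. Because $xx^*$ is symmetric this support is closed under inversion, and as $g\neq 1$ we get $g^{-1}\in\{g,gc\}$. The case $g^{-1}=g$ would place $g\in\Omega_1(G)\setminus\{1\}$ in the support, again contradicting (i); therefore $g^{-1}=gc$, which rearranges to $g^{2}=c$.

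For (iii), the point is to exhibit an explicit preimage. Given $h$ with $h^{2}\notin G'$, I would set $x=1+\alpha h\widehat{G'}$. Since $\widehat{G'}$ is central with $\widehat{G'}^{2}=0$, the term $\alpha h\widehat{G'}$ is nilpotent, so $x$ is a unit; the coefficients of $h\widehat{G'}=h+hc$ sum to $0$ in characteristic $2$, so the coefficient sum of $x$ is $1$ and $x\in V(FG)$; and $\Psi(\widehat{G'})=0$ gives $\Psi(x)=1\in V_*(F\overline{G})$, so $x\in N_\Psi^*$. Finally $x^*=1+\alpha h^{-1}\widehat{G'}$, and on multiplying out, the cross term acquires the factor $\widehat{G'}^{2}=0$, leaving $xx^*=1+\alpha(h+h^{-1})\widehat{G'}$, as required; the hypothesis $h^{2}\notin G'$ guarantees the four elements $h,hc,h^{-1},h^{-1}c$ are distinct and nontrivial, so the expression does not collapse to $1$. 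All the computations are short; the one genuinely creative step is the construction in (iii), where recognizing that $\widehat{G'}^{2}=0$ is exactly what kills the obstructing cross term $\alpha^{2}hh^{-1}\widehat{G'}^{2}$ is the crux, while in (ii) the main subtlety is feeding part (i) back in twice to discard the degenerate cases $g\in G'$ and $g^{2}=1$.
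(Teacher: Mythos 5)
Your proof is correct: all three computations (the pairing argument giving coefficient $2\alpha_g\alpha_{wg}=0$ for nontrivial involutions, the support analysis forcing $g^{-1}=gc$, and the explicit preimage $1+\alpha h\widehat{G'}$ killed by $\widehat{G'}^{2}=0$) are exactly the standard argument. The paper itself states this lemma as a citation from Balogh and gives no proof, so there is nothing to compare against beyond noting that your route is the expected one.
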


We denote  by $\Theta(G)$ the order  of  the group
$\langle 1+\sum\limits_{g \in \Omega_c(G)}\alpha_gg\widehat{G'}\in S_{G'}, \alpha_g\in F\rangle$. From this,  we may
obtain the following lemma.

\begin{lemma}\label{S2}
$S_{G'}$ is a subgroup of $N_{\Psi}^*$.
And $$|V_*(FG)|=\frac{1}{\Theta(G)}|F|^{\frac{|G|+|\Omega_1(G)|+|\Omega_c(G)|}{4}}\cdot|V_*(F\overline{G})|.$$
In particular, if $1+\alpha g\widehat{G'}\in S_{G'}$ for any $g\in \Omega_c(G)$ and $\alpha\in F$, then
$$|V_*(FG)|=|F|^{\frac{|G|+|\Omega_1(G)|-|\Omega_c(G)|}{4}}\cdot|V_*(F\overline{G})|.$$
\end{lemma}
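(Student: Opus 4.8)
The plan is to realize $V_*(FG)$ as the kernel of a genuine group homomorphism out of $N_{\Psi}^*$ and then to count. First I would record the structural facts about $\mathrm{Ker}\Psi$. Since $G'=\langle c\rangle$ is central of order $2$, the element $\widehat{G'}=1+c$ is central and satisfies $\widehat{G'}^{\,2}=0$ in characteristic $2$, so $\mathrm{Ker}\Psi=\widehat{G'}FG$ is a square-zero ideal with $F$-basis $\{g\widehat{G'}\}$ indexed by the cosets of $G'$. Hence $\mathrm{Ker}\Psi^+=1+\mathrm{Ker}\Psi$ is elementary abelian of order $|F|^{|G|/2}$, with $(1+a)(1+b)=1+a+b$ for $a,b\in\mathrm{Ker}\Psi$. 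Because $FG$ is local (a $2$-group algebra over a field of characteristic $2$), $\Psi$ maps $V(FG)$ onto $V(F\overline{G})$; its restriction $\Psi\colon N_{\Psi}^*\to V_*(F\overline{G})$ is therefore onto with kernel $\mathrm{Ker}\Psi^+$, giving $|N_{\Psi}^*|=|F|^{|G|/2}\,|V_*(F\overline{G})|$.

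The crux is to prove that the norm map $\nu\colon N_{\Psi}^*\to\mathrm{Ker}\Psi^+$, $\nu(x)=xx^*$, is a group homomorphism; this simultaneously establishes that $S_{G'}=\nu(N_{\Psi}^*)$ is a subgroup (the first assertion) and that $V_*(FG)=\ker\nu$. For $x\in N_{\Psi}^*$ one has $x^*=x^{-1}(xx^*)$ with $xx^*\in\mathrm{Ker}\Psi^+$, so for any $w\in\mathrm{Ker}\Psi$ the correction term lands in $(\mathrm{Ker}\Psi)^2=0$ and $xwx^*=xwx^{-1}$. Moreover conjugation by a unit acts trivially on $\mathrm{Ker}\Psi$: writing $w=w_0\widehat{G'}$, since $\overline{G}=G/G'$ is abelian one gets $\Psi(xw_0x^{-1}-w_0)=\overline{x}\,\overline{w_0}\,\overline{x}^{-1}-\overline{w_0}=0$, so $xw_0x^{-1}-w_0\in\mathrm{Ker}\Psi$ and $(xw_0x^{-1}-w_0)\widehat{G'}=0$, i.e. $xwx^{-1}=w$. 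Feeding this into $(xy)(xy)^*=x(yy^*)x^*=x(yy^*)x^{-1}(xx^*)=(yy^*)(xx^*)$ yields $\nu(xy)=\nu(x)\nu(y)$. Consequently $N_{\Psi}^*/V_*(FG)\cong S_{G'}$ and $|V_*(FG)|=|N_{\Psi}^*|/|S_{G'}|$.

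It then remains to compute $|S_{G'}|$. Each $\nu(x)=1+s$ is $*$-symmetric, so $S_{G'}\subseteq 1+\mathrm{Sym}$ with $\mathrm{Sym}=\{s\in\mathrm{Ker}\Psi:s^*=s\}$. Splitting the coset basis according to whether $g^2=1$, $g^2=c$, or $g^2\notin G'$ exhibits $\mathrm{Sym}$ via the three families $g\widehat{G'}$ $(g\in\Omega_1(G))$, $g\widehat{G'}$ $(g\in\Omega_c(G))$, and $(g+g^{-1})\widehat{G'}$ $(g^2\notin G')$, so $\dim_F\mathrm{Sym}=\tfrac14(|G|+|\Omega_1(G)|+|\Omega_c(G)|)$. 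By Lemma \ref{S1}(i) no element of $S_{G'}$ has support in $\Omega_1(G)\setminus\{1\}$, removing the first family; by Lemma \ref{S1}(iii) the subspace $Q$ spanned by the third family, of dimension $\tfrac14(|G|-|\Omega_1(G)|-|\Omega_c(G)|)$, lies in $S_{G'}$; and the intersection of $S_{G'}$ with the span of the $\Omega_c(G)$-family is by definition a group of order $\Theta(G)$. Since $S_{G'}$ is now known to be a subgroup, the modular law gives $S_{G'}=P\oplus Q$ with $|P|=\Theta(G)$, whence $|S_{G'}|=\Theta(G)\,|F|^{\frac14(|G|-|\Omega_1(G)|-|\Omega_c(G)|)}$. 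Substituting into $|V_*(FG)|=|N_{\Psi}^*|/|S_{G'}|$ produces the stated formula; under the extra hypothesis $P$ becomes the full $\Omega_c(G)$-span, so $\Theta(G)=|F|^{|\Omega_c(G)|/2}$ and the exponent collapses to $\tfrac14(|G|+|\Omega_1(G)|-|\Omega_c(G)|)$.

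The main obstacle I anticipate is precisely the homomorphism property of $\nu$: a priori $x\mapsto xx^*$ is only a map of sets, and without it neither the subgroup claim for $S_{G'}$ nor the clean index count is available. The resolution is the triviality of conjugation on the square-zero ideal $\mathrm{Ker}\Psi$, which is exactly the point where the abelianness of $G/G'$ enters; once this is secured, the remainder is the dimension bookkeeping for $\mathrm{Sym}$ together with the decomposition $S_{G'}=P\oplus Q$ furnished by Lemma \ref{S1}.
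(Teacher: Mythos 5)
Your proposal is correct and follows essentially the same route as the paper: you realize $V_*(FG)$ as the kernel of the norm map $x\mapsto xx^*$ on $N_{\Psi}^*$ with image $S_{G'}$, count $|N_{\Psi}^*|$ through the surjection onto $V_*(F\overline{G})$ with kernel $\mathrm{Ker}\Psi^+$, and determine $|S_{G'}|$ from Lemma \ref{S1} and the definition of $\Theta(G)$. The only (harmless) variation is that you justify the homomorphism property by showing conjugation is trivial on the square-zero ideal $\mathrm{Ker}\Psi$, whereas the paper verifies directly that each $1+\alpha x\widehat{G'}$ commutes with every element of $V(FG)$ using $[x,g_i]\in G'$ and $c\widehat{G'}=\widehat{G'}$; your treatment of the decomposition $|S_{G'}|=\Theta(G)\,|F|^{\frac{1}{4}(|G|-|\Omega_1(G)|-|\Omega_c(G)|)}$ is in fact slightly more careful than the paper's.
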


\begin{proof}
Note that the ideal $\mathrm{Ker}\Psi$ of $FG$ is generated by $1+c$. By Lemma  \ref{S1}, we may see
the elements of $S_{G'}$ are the following form
$$1+\sum_{g\in \Omega_c(G)}\alpha_gg\widehat{G'}+\sum_{h^2\notin G'}\beta_h(h+h^{-1})\widehat{G'}.$$
For any $1+\alpha x\widehat{G'}$,  $\sum\limits_{i=1}^{|G|}\alpha_ig_i\in V(FG)$,  where $\alpha, \alpha_i\in F$, $x, g_i\in G$, $i=1,2,\ldots, |G|$,  we have
\begin{align*}
\left(1+\alpha x\widehat{G'}\right)\left(\sum_{i=1}^{|G|}\alpha_ig_i\right) &=\sum_{i=1}^{|G|}\alpha_ig_i+\sum_{i=1}^{|G|}\alpha\alpha_ixg_i\widehat{G'}
=\sum_{i=1}^{|G|}\alpha_ig_i+\sum_{i=1}^{|G|}\alpha\alpha_ig_ix[x,g_i]\widehat{G'}\\
&=\sum_{i=1}^{|G|}\alpha_ig_i+\sum_{i=1}^{|G|}\alpha\alpha_ig_ix\widehat{G'}
=\left(\sum_{i=1}^{|G|}\alpha_ig_i\right)\left(1+\alpha x\widehat{G'}\right).
\end{align*}
Thus $S_{G'}$ is included  in  the center of $V(FG)$ and $S_{G'}$ is a subgroup of $N_{\Psi}^*$.
It is clear to see that
 $\langle 1+\sum_{h^2\notin G'}\beta_h(h+h^{-1})\widehat{G'}, \beta_h\in F\rangle$ has order $|F|^{\frac{|G|-|\Omega_1(G)|-|\Omega_c(G)|}{4}}$.
Hence $|S_{G'}|=\Theta(G)|F|^{\frac{|G|-|\Omega_1(G)|-|\Omega_c(G)|}{4}}$ according to the hypothesis.

Consider the mapping $\Phi:N_{\Psi}^*\rightarrow S_{G'}$. Obviously, $\Phi$ is an epimorphism.
Note that $\Psi(x)^{-1}=\Psi(x^{-1})=\Psi(x^*)=\Psi(x)^*$ if $x\in V_*(FG)$. Thus
$$\mathrm{Ker}(\Phi)=\{x\in N_{\Psi}^*\ |\ \Phi(x)=1\}=\{x\in V(FG)\ |\ x^*=x^{-1}\}=V_*(FG).$$
From this, we have $N_{\Psi}^*/V_*(FG)\cong S_{G'}$. Consider the restriction $\Psi|_{N_{\Psi}^*}:N_{\Psi}^*\rightarrow V_*(F\overline{G})$.
Obviously $\Psi|_{N_{\Psi}^*}$ is surjection and $\mathrm{Ker}(\Psi|_{N_{\Psi}^*})=\mathrm{Ker}\Psi^+$. It follows that
$$|V_*(FG)|=\frac{|N_{\Psi}^*|}{|S_{G'}|}=\frac{|\mathrm{Ker}\Psi^+|\cdot|V_*(F\overline{G})|}{|S_{G'}|}.$$
Since $$|\mathrm{Ker}\Psi^+|=|\mathrm{Ker}\Psi|=\frac{|FG|}{|F\overline{G}|}=F^{|G|/2},$$
we obtain $$|V_*(FG)|=\frac{|F|^{|G|/2}\cdot|V_*(F\overline{G})|}{|S_{G'}|}
=\frac{1}{\Theta(G)}|F|^{\frac{|G|+|\Omega_1(G)|+|\Omega_c(G)|}{4}}\cdot|V_*(F\overline{G})|.$$
In particular, if $1+\alpha g\widehat{G'}\in S_{G'}$ for any $g\in \Omega_c(G)$ and $\alpha\in F$, then $\Theta(G)=|F|^\frac{|\Omega_c(G)|}{2}$ and
$$|V_*(FG)|=|F|^{\frac{|G|+|\Omega_1(G)|-|\Omega_c(G)|}{4}}\cdot|V_*(F\overline{G})|.$$

\end{proof}

\begin{lemma} \label{INS}
Suppose that $h_1, h_2, h\in G$ and $\alpha \in F$. Then

(i) If  $h_1^4=h_2^4=1$, $h_1^2=h_2^2=c$ and $[h_1,h_2]=1$, then
$1+\alpha(h_1+h_2)\widehat{G'}\in S_{G'}$.

(ii) If $h_1^4= h_2^2=1$, $h_1^2=c$ and $[h_1,h_2]=1$, then
$1+\alpha h_1\widehat{G'}+\alpha^2 h_1h_2\widehat{G'}\in S_{G'}$.

(iii)  If  $h_1^2=h_2^2=1$ and $[h_1,h_2]=c$, then
$1+\alpha h_1h_2\widehat{G'}\in S_{G'}$.

(iv)  If  $h_1^4=h_2^2=1$ and $[h_1,h_2]=h_1^2=c$, then
$1+\alpha h_1\widehat{G'}\in S_{G'}$.

(v)  If  $h^8=1$ and $h^4=c$, then
$1+\alpha h^2\widehat{G'}\in S_{G'}$.

\end{lemma}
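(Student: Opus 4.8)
The plan is to prove each of the five statements by the standard norm-element technique: for a prescribed element $w$ I exhibit an explicit normalized unit $x\in N_{\Psi}^*$ with $xx^*=w$, which by the very definition $S_{G'}=\{xx^*\mid x\in N_{\Psi}^*\}$ places $w$ in $S_{G'}$. First I would record the computational facts used throughout. Since $c$ is central and $c^2=1$, the element $\widehat{G'}=1+c$ is central, satisfies $\widehat{G'}^{\,2}=0$, and has $\Psi(\widehat{G'})=0$. For $g$ with $g^2=c$ one has $g^{-1}=g^3=gc$, hence $g\widehat{G'}=g+g^{-1}$, whereas for an involution $g$ one has $g^{-1}=g$. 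A crucial simplification is that membership in $N_{\Psi}^*$ will be automatic: each target $w$ has the shape $1+(\text{an }F\text{-combination of terms }u\widehat{G'})$, so $\Psi(w)=1$; if $xx^*=w$ then $\Psi(x)\Psi(x)^*=\Psi(xx^*)=\Psi(w)=1$, i.e. $\Psi(x)^*=\Psi(x)^{-1}$, so $\Psi(x)\in V_*(F\overline{G})$. Since every element whose coefficient sum equals $1$ lies in $V(FG)$, it remains only to choose an $x$ with coefficient sum $1$ and to verify $xx^*=w$ by direct expansion.

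For the witnesses I would use, in both (i) and (ii), the single element $x=1+\alpha(h_1+h_2)$, whose coefficient sum is $1+2\alpha=1$. Expanding $xx^*=(1+\alpha h_1+\alpha h_2)(1+\alpha h_1^{-1}+\alpha h_2^{-1})$ and substituting $h_i^{-1}=h_ic$ or $h_i^{-1}=h_i$ according to whether $h_i^2=c$ or $h_i^2=1$, the relation $[h_1,h_2]=1$ makes the unwanted cross terms occur in equal pairs, which vanish in characteristic $2$, while the constant collapses to $1$. This produces $1+\alpha(h_1+h_2)\widehat{G'}$ in case (i) and $1+\alpha h_1\widehat{G'}+\alpha^2 h_1h_2\widehat{G'}$ in case (ii); the sole difference is whether $h_2$ has order $4$ or $2$. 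For (iv) I would take $x=1+\alpha(h_1+h_1h_2)$: here $[h_1,h_2]=h_1^2=c$ forces $h_1h_2$ to be an involution and $h_2h_1=ch_1h_2$, and the analogous pairwise cancellation leaves exactly $1+\alpha h_1\widehat{G'}$.

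Cases (iii) and (v) are of a slightly different flavour, since the natural witness is $*$-symmetric. For (iii), with $h_1,h_2$ involutions, I would take $x=1+\beta(h_1+h_2)$; then $x^*=x$, so $xx^*=x^2$, and expansion together with $h_2h_1=ch_1h_2$ gives $x^2=1+\beta^2(h_1h_2+h_2h_1)=1+\beta^2 h_1h_2\widehat{G'}$. Likewise for (v), with $x=1+\beta(h+h^{-1})$, one obtains $x^2=1+\beta^2(h^2+h^{-2})=1+\beta^2 h^2\widehat{G'}$, using $h^{-2}=h^2c$. To realize the stated coefficient $\alpha$ rather than $\beta^2$, I would invoke the surjectivity of the Frobenius map on the finite field $F$ of characteristic $2$: choosing $\beta\in F$ with $\beta^2=\alpha$ yields $1+\alpha h_1h_2\widehat{G'}$ and $1+\alpha h^2\widehat{G'}$ respectively.

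The expansions themselves are routine; the one point requiring genuine care, and the step I expect to be the main obstacle, is engineering each seed $x$ so that after applying the relations among the $h_i$ and $c$ every unwanted monomial appears an even number of times and hence dies in characteristic $2$, leaving precisely the single prescribed term. This is most delicate in (ii) and (iv), where one target term must survive while an asymmetric family of cross terms must cancel; the choices $x=1+\alpha(h_1+h_2)$ and $x=1+\alpha(h_1+h_1h_2)$ are exactly those for which this bookkeeping closes.
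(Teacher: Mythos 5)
Your proof is correct and uses the same norm-element technique as the paper: for (i) and (ii) your witnesses coincide with the paper's $1+\alpha h_1+\alpha h_2$, and for (iii)--(v) you choose slightly different (in (iii) and (v), $*$-symmetric) seeds and, where needed, invoke the Frobenius automorphism of $F$ to convert $\beta^2$ into $\alpha$ --- exactly as the paper itself does in case (v). Your observation that membership in $N_{\Psi}^*$ is automatic because $\Psi(\widehat{G'})=0$ correctly justifies a point the paper leaves implicit.
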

\begin{proof}
(i) follows since $(1+\alpha h_1+\alpha h_2)(1+\alpha h_1+\alpha h_2)^*=1+\alpha(h_1+h_2)\widehat{G'}$.

(ii) follows since $(1+\alpha h_1+\alpha h_2)(1+\alpha h_1+\alpha h_2)^*=1+\alpha h_1\widehat{G'}+\alpha^2 h_1h_2\widehat{G'}$.

(iii) follows since $(1+\alpha h_1h_2+\alpha h_2)(1+\alpha h_1h_2+\alpha h_2)^*=1+\alpha h_1h_2\widehat{G'}$.

(iv) follows since $(1+\alpha h_1+\alpha h_2)(1+\alpha h_1+\alpha h_2)^*=1+\alpha h_1\widehat{G'}$.

It is easy to verify $(\alpha+\alpha h^2+h)(\alpha+\alpha h^2+h)^*=1+\alpha^2h^2\widehat{G'}$. Note that the mapping:
$\phi:F\rightarrow F, \alpha\mapsto \alpha^2$ is an automorphism. From this, for any $\alpha\in F$,
we have $1+\alpha h^2\widehat{G'}\in S_{G'}$.

\end{proof}

For convenience,   we describe  the  relations of $D_8$ or $Q_8$  are as follows in the following part,
$$\langle x ,y \ | \  x^2=y^2=1, [x,y]=c\rangle\cong D_8,
 \langle x ,y  \ | \  x^4=1, x^2=y^2= [x,y]=c\rangle\cong Q_8.$$

\begin{lemma} \label{GINS}

(i)  Let $H_1=\langle a,b\ |\ a^4=b^4=1, [a,b]=a^2=c\rangle\cong \mathbb{Z}_4\rtimes \mathbb{Z}_4$.
Then $\Theta(H_1)$ is equal to $\frac{1}{2}|F|^{\frac{|\Omega_c(H_1)|}{2}}$.

(ii)  Let $H_2=\langle x,y\rangle\cong Q_8$.
Then $\Theta(H_2)$ is equal to  $\frac{1}{4}|F|^{\frac{|\Omega_c(H_2)|}{2}}$.
\end{lemma}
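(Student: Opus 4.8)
The plan is to sidestep any direct analysis of the quadratic map $x\mapsto xx^*$ and instead pin down $\Theta(H_i)$ by feeding already-known orders into the master identity of Lemma \ref{S2}. First I would record the elementary invariants. For $H_1$ one has $|H_1|=16$, and the relation $(a^ib^j)^2=c^{\,i(1+j)}b^{2j}$ immediately gives $\Omega_1(H_1)=\{1,c,b^2,cb^2\}$ and $\Omega_c(H_1)=\{a,a^3,ab^2,a^3b^2\}$, so $|\Omega_1(H_1)|=|\Omega_c(H_1)|=4$; moreover $\overline{H_1}=H_1/\langle c\rangle\cong\mathbb{Z}_2\times\mathbb{Z}_4$. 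For $H_2=Q_8$ one has $|\Omega_1(Q_8)|=2$ (only $1$ and $c$), $\Omega_c(Q_8)$ consists of all six noncentral elements, and $\overline{Q_8}\cong\mathbb{Z}_2\times\mathbb{Z}_2$. In both cases, since $g^2=c$ forces $g^{-1}=gc$ and hence $g\widehat{G'}=g^{-1}\widehat{G'}$, the element $g\widehat{G'}$ depends only on the pair $\{g,g^{-1}\}$, so the ambient group of ``form'' elements $1+\sum_{g\in\Omega_c(G)}\alpha_gg\widehat{G'}$ has order exactly $|F|^{|\Omega_c(G)|/2}$ — the quantity against which the claimed constants $\frac12$ and $\frac14$ are measured.

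The main step is to apply the general identity of Lemma \ref{S2},
$$|V_*(FG)|=\frac{1}{\Theta(G)}\,|F|^{\frac{|G|+|\Omega_1(G)|+|\Omega_c(G)|}{4}}\cdot|V_*(F\overline{G})|,$$
and simply solve for $\Theta(G)$, all three remaining quantities being known. The abelianized factor comes from Lemma \ref{AU}: for $\overline{H_1}\cong\mathbb{Z}_2\times\mathbb{Z}_4$ it gives $|V_*(F\overline{H_1})|=2|F|^{5}$, and for $\overline{Q_8}\cong\mathbb{Z}_2\times\mathbb{Z}_2$ it gives $|V_*(F\overline{Q_8})|=|F|^{3}$. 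The unitary group of $FG$ itself is also on hand: $H_1$ is precisely the group $K$ of Lemma \ref{CCU} with trivial elementary abelian factor, so $|V_*(FH_1)|=4|F|^{9}$; and $Q_8$ is the quaternion group of order $2^{3}$, so Lemma \ref{DQ}(ii) gives $|V_*(FQ_8)|=4|F|^{4}$. Substituting, $4|F|^{9}=\Theta(H_1)^{-1}|F|^{6}\cdot 2|F|^{5}$ and $4|F|^{4}=\Theta(H_2)^{-1}|F|^{4}\cdot|F|^{3}$, which yield $\Theta(H_1)=\frac12|F|^{2}=\frac12|F|^{|\Omega_c(H_1)|/2}$ and $\Theta(H_2)=\frac14|F|^{3}=\frac14|F|^{|\Omega_c(H_2)|/2}$, as claimed. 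Here I would pause to verify non-circularity, namely that Lemmas \ref{CCU} and \ref{DQ} are the cited external results and do not themselves invoke Lemma \ref{GINS}.

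As an independent check on the fractional constants I would reconstruct the lower bound explicitly from Lemma \ref{INS}. For $H_1$, part (i) with $h_1=a,\ h_2=ab^2$ gives $1+\alpha a\widehat{G'}+\alpha ab^2\widehat{G'}\in S_{G'}$, and part (ii) with $h_1=a,\ h_2=b^2$ gives $1+\alpha a\widehat{G'}+\alpha^2 ab^2\widehat{G'}\in S_{G'}$; reading these in the coordinates $(\beta_a,\beta_{ab^2})$ they span the subgroups $\{(\alpha,\alpha)\}$ and $\{(\alpha,\alpha^2)\}$, which intersect only in $\{0,1\}$, so their sum has order $\frac12|F|^{2}$ — equivalently it is the index-two subgroup cut out by $\mathrm{Tr}(\beta_a+\beta_{ab^2})=0$, the image of the Artin--Schreier map $\gamma\mapsto\gamma+\gamma^2$ being $\ker(\mathrm{Tr}\colon F\to\mathbb{F}_2)$. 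For $Q_8=\langle p,q\rangle$ the explicit witnesses are the symmetric units $1+\alpha(p+q)$, $1+\alpha(p+pq)$, $1+\alpha(q+pq)$, whose $*$-products are $1+\alpha p\widehat{G'}+\alpha q\widehat{G'}+\alpha^2 pq\widehat{G'}$ and its two cyclic analogues; a kernel computation shows the subgroup they generate has order $\frac14|F|^{3}$, namely the codimension-two locus $\mathrm{Tr}(\beta_p)=\mathrm{Tr}(\beta_q)=\mathrm{Tr}(\beta_{pq})$. The genuinely hard point in any self-contained argument would be the matching upper bound — proving that no further $\Omega_c$-coordinate vectors arise from $xx^*$ — and this is exactly what the order-formula route of the second paragraph supplies at no cost, which is why I would make that route the spine of the proof.
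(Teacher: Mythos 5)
Your proposal is correct and takes essentially the same route as the paper: both determine $\Theta(H_i)$ by substituting the known orders $|V_*(FH_1)|=4|F|^{9}$ (Lemma \ref{CCU}), $|V_*(FQ_8)|=4|F|^{4}$ (Lemma \ref{DQ}), and the abelianized values $2|F|^{5}$, $|F|^{3}$ from Lemma \ref{AU} into the identity of Lemma \ref{S2} and solving for $\Theta$. The explicit enumeration of $\Omega_1$ and $\Omega_c$ and the Lemma \ref{INS} consistency check are extras the paper omits, but the spine of the argument is identical.
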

\begin {proof}
By Lemmas \ref{DQ}, \ref{CCU} and \ref{UDP}, we know $|V_*(FH_i)|=4|F|^{\frac{|H_i|+|\Omega_1(H_i)|}{2}-1}$.
Suppose that  $\overline{H_i}=H_i/H'_i$, where $i=1,2$,  then
$$|V_*(FH_i)|=\frac{1}{\Theta(H_i)}|F|^{\frac{|H_i|+|\Omega_1(H_i)|+|\Omega_c(H_i)|}{4}}\cdot|V_*(F\overline{H_i})|.$$
according to Lemma \ref{S2}.

For the group $H_1$,  $|\Omega_1(H_1)|=4=|\Omega_c(H_1)|$ and $\overline{H_1}\cong \mathbb{Z}_2\times\mathbb{Z}_4$.
Since
$$\frac{1}{\Theta(H_1)}|F|^{\frac{16+4+4}{4}}\cdot(2|F|^5)=4|F|^{\frac{16+4}{2}-1},$$ we have $\Theta(H_1)=\frac{|F|^2}{2}=\frac{1}{2}|F|^{\frac{|\Omega_c(H_1)|}{2}}$.

For the group $H_2$,  $|\Omega_1(H_2)|=2$, $|\Omega_c(H_2)|=6$ and $\overline{H_1}\cong \mathbb{Z}_2\times\mathbb{Z}_2$.
Since
$$\frac{1}{\Theta(H_2)}|F|^{\frac{8+2+6}{4}}\cdot(|F|^3)=4|F|^{\frac{8+2}{2}-1},$$ we have $\Theta(H_2)=\frac{|F|^3}{4}=\frac{1}{4}|F|^{\frac{|\Omega_c(H_2)|}{2}}$.

\end{proof}

\begin{lemma}\label{GINS2}
Suppose that  $H=\langle x,y \ | \ x^{4}=y^{2^{v}}=1, [x,y]=x^2=c\rangle\cong M_2(2, v)$, where  $v\geq 3$.
 Then $\Theta(H)=\frac{1}{2}|F|^{\frac{|\Omega_c(H)|}{2}}$.
\end{lemma}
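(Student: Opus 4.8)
Write $H=\langle x,y\mid x^4=y^{2^v}=1,\ x^y=x^{-1}\rangle$ with $c=x^2=[x,y]$, so $H'=\langle c\rangle$ and $|H|=2^{v+2}$. First I would record the squares: for $j$ even $(x^iy^j)^2=x^{2i}y^{2j}$, and for $j$ odd $(x^iy^j)^2=y^{2j}$. From this one reads off $\Omega_1(H)=\{1,c,y^{2^{v-1}},cy^{2^{v-1}}\}$ and $\Omega_c(H)=\{x,x^3,xy^{2^{v-1}},x^3y^{2^{v-1}}\}$, both of order $4$; the set $\Omega_c(H)$ splits into the two $\langle c\rangle$-cosets $\{x,x^3\}$ and $\{xy^{2^{v-1}},x^3y^{2^{v-1}}\}$, with representatives giving $w_1:=x\widehat{H'}$ and $w_2:=xy^{2^{v-1}}\widehat{H'}$. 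Since $\widehat{H'}^2=(1+c)^2=0$ in characteristic $2$, the pure-$\Omega_c$ elements $1+\beta_1w_1+\beta_2w_2$ of $S_{G'}$ multiply by adding the pairs $(\beta_1,\beta_2)\in F^2$, so $\Theta(H)$ is the order of a subgroup $T\le (F^2,+)$, bounded above by $|F|^{|\Omega_c(H)|/2}=|F|^2$. The plan is to pin $T$ down exactly: a direct construction gives $T\supseteq T_0$ with $|T_0|=\frac{1}{2}|F|^2$, and a crossed-product analysis gives the matching upper bound $T\subseteq T_0$.

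\textbf{Lower bound.} Applying Lemma \ref{INS}(i) with $h_1=x$, $h_2=xy^{2^{v-1}}$ (both squaring to $c$ and commuting) yields $1+\alpha(w_1+w_2)\in S_{G'}$, i.e. all pairs $(\alpha,\alpha)$; applying Lemma \ref{INS}(ii) with $h_1=x$, $h_2=y^{2^{v-1}}$ (an involution commuting with $x$) yields $1+\alpha w_1+\alpha^2w_2\in S_{G'}$, i.e. all pairs $(\alpha,\alpha^2)$. In characteristic $2$ both families are additive subgroups, and the subgroup they generate is $T_0=\{(\beta_1,\beta_2):\beta_1+\beta_2\in\wp(F)\}$, where $\wp(\alpha)=\alpha^2+\alpha$ is the Artin--Schreier map whose image is the kernel of the absolute trace $\mathrm{Tr}\colon F\to\mathbb{F}_2$. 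As $\wp(F)$ has index $2$ in $(F,+)$, this gives $|T_0|=\frac{1}{2}|F|^2$, hence $\Theta(H)\ge\frac{1}{2}|F|^2$.

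\textbf{Upper bound (the main obstacle) and conclusion.} For the reverse inequality I would exploit the abelian normal subgroup $A=\langle x\rangle\times\langle y^2\rangle\cong\mathbb{Z}_4\times\mathbb{Z}_{2^{v-1}}$ of index $2$, on which $y$ acts by inverting $x$. Writing any $z\in V(FH)$ as $z=a+by$ with $a,b\in FA$ and using $y^*=y^{-1}$, a short computation shows that the $A$-component of $zz^*$ equals $aa^*+bb^*$, while $\Omega_c(H)\subseteq A$. Since $T_0$ has index $2$ in $F^2$ and $T$ is a group containing $T_0$, it suffices to produce a single $\gamma\in F$ with $\mathrm{Tr}(\gamma)=1$ for which $1+\gamma w_1\notin S_{G'}$. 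This reduces to the commutative group algebra $FA$: one must show that forcing the $y$-component of $zz^*$ to vanish, so that $zz^*$ is supported on $\{1\}\cup\Omega_c(H)$, constrains the coefficients of $x$ and of $xy^{2^{v-1}}$ in $aa^*+bb^*$ to have equal trace. Controlling the trace of these $\Omega_c$-coefficients under the purity constraint is the crux of the argument; equivalently, one may solve the system $aa^*+bb^*=1$, $a\sigma(b^*)y^{-2}+b\sigma(a^*)=0$ (with $\sigma$ the conjugation by $y$) to compute $|V_*(FH)|$ directly and then read off $\Theta(H)$ from Lemma \ref{S2}, using $\overline{H}\cong\mathbb{Z}_2\times\mathbb{Z}_{2^v}$ together with Lemma \ref{AU} to evaluate $|V_*(F\overline{H})|=2|F|^{2^v+1}$. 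Either route yields $T=T_0$, so $\Theta(H)=\frac{1}{2}|F|^2=\frac{1}{2}|F|^{|\Omega_c(H)|/2}$.
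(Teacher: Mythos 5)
Your setup and lower bound are sound and essentially agree with the paper: you correctly compute $\Omega_c(H)=\{x,x^3,xy^{2^{v-1}},x^3y^{2^{v-1}}\}$, identify $\Theta(H)$ with the order of a subgroup $T\le(F^2,+)$ via $w_1=x\widehat{H'}$, $w_2=xy^{2^{v-1}}\widehat{H'}$, and your two families $(\alpha,\alpha)$ from Lemma \ref{INS}(i) and $(\alpha,\alpha^2)$ from Lemma \ref{INS}(ii) do generate the index-two subgroup $T_0=\{(\beta_1,\beta_2):\mathrm{Tr}(\beta_1+\beta_2)=0\}$, so $\Theta(H)\ge\frac12|F|^2$. (The paper gets the same $T_0$ from the pairs $(\beta+\beta^2,0)$, produced by $(1+\beta x^2+\beta x)(1+\beta x^2+\beta x)^*$, together with INS(i); the two constructions are interchangeable.) Your reduction of the upper bound to exhibiting a single $\gamma$ of trace $1$ with $1+\gamma w_1\notin S_{H'}$ is also a valid use of the index-$2$ structure.

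The gap is that this last step — the entire content of the upper bound — is never carried out. At the decisive moment you write that ``one must show'' that purity of $zz^*$ forces the coefficients of $x$ and of $xy^{2^{v-1}}$ to have equal trace, and that this ``is the crux of the argument''; announcing the crux is not proving it, and nothing in your text rules out $T=F^2$, i.e.\ $\Theta(H)=|F|^2$. This is not a formality: it is exactly where the paper does its real work, writing $a=a_1+a_2x+a_3y+a_4xy$ with $a_i\in F\langle x^2,y^2\rangle$, expanding $aa^*$ term by term, applying the augmentation $\chi$ to extract the quadratic system $w_1+w_2+w_3+w_4=1$, $w_1w_2+w_3w_4=\alpha$, $w_1w_3+w_2w_4=0$, $w_1w_4+w_2w_3=0$, and then invoking Lemma 10 of \cite{Balogh} to conclude that only $\tfrac12|F|$ values of $\alpha$ arise, so that the subgroup $X=\{\alpha:1+\alpha w_1\in S_{H'}\}$ has order exactly $\tfrac12|F|$. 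Your alternative suggestion — solving $aa^*+bb^*=1$ together with the vanishing of the $Ay$-component over the index-$2$ abelian subgroup $A=\langle x\rangle\times\langle y^2\rangle$, or computing $|V_*(FH)|$ outright and back-solving via Lemma \ref{S2} and Lemma \ref{AU} — is a plausible different route, but it too is only described, not executed. As written, the proposal establishes only $\Theta(H)\in\{\tfrac12|F|^2,\,|F|^2\}$ and does not decide between the two.
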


\begin{proof}
It is easy to verify that $\Omega_c(H)=\{xy^ic^j \ |\ i=0, 2^{v-1}, j=0,1\}$.

Let $X:= \langle 1+\alpha x\widehat{H'}\in S_{H'}\ |\ \alpha\in F\rangle$.
First, we assert that  $X$  has order $\frac{1}{2}|F|$.
Since $(1+\beta x^2+\beta x)(1+\beta x^2+\beta x)^*=1+(\beta+\beta^2)x\widehat{G'}\in S_{G'}$ for any
$\beta\in F$,  the the order of $X$ is at less $\frac{1}{2}|F|$.
Every element of $FH$ can be written in the following form $a=a_1+a_2x+a_3y+a_4xy$, where
$a_i=a_{i1}+a_{i2}x^2$, $a_{ij}\in F\langle y^2\rangle$.
 $$\begin{aligned}
          aa^*&=a_1a_1^*+a_2a_2^*+a_3a_3^*+a_4a_4^*+(a_1^*a_2+a_3^*a_4)x+(a_1a_2^*+a_3a_4^*)x^3\\
          &+a_1a_3^*y^{-1}+a_1^*a_3y+a_1a_4^*y^{-1}x^{-1}+a_1^*a_4xy\\
          &+a_2a_3^*xy^{-1}+a_2^*a_3yx^{-1}+a_2a_4^*x^2y^{-1}+a_2^*a_4x^2y\\
          &=a_1a_1^*+a_2a_2^*+a_3a_3^*+a_4a_4^*+(a_1^*a_2+a_3^*a_4)x+(a_1a_2^*+a_3a_4^*)x^3\\
          &+b_1y^{-1}+b_2x^2y^{-1}+b_3y+b_4x^2y+b_5xy^{-1}+b_6x^3y^{-1}+b_7xy+b_8x^3y,
                                           \end{aligned}$$
where
$$\begin{aligned}
&b_1=a_{11}a_{31}^*+a_{12}a_{32}^*+a_{21}a_{42}^*+a_{22}a_{41}^*=b_3^*\\
&b_2=a_{11}a_{32}^*+a_{12}a_{31}^*+a_{21}a_{41}^*+a_{22}a_{42}^*=b_4^*\\
&b_5=a_{11}a_{41}^*+a_{12}a_{42}^*+a_{21}a_{31}^*+a_{22}a_{32}^*=b_7^*\\
&b_6=a_{11}a_{42}^*+a_{12}a_{41}^*+a_{21}a_{32}^*+a_{22}a_{31}^*=b_8^*
   \end{aligned}$$
Consider the augmentation mapping of
$FH$ to $F$, which is  denoted by $\chi$. Set $w_i=\chi(a_i)$ and $w_{ij}=\chi(a_{ij})$.
If $aa^*\in \in S_{H'}$ and $1+\alpha x\widehat{H'}$ in
$\langle 1+\sum\limits_{g \in \Omega_c(H)}\alpha_gg\widehat{H'}\in S_{H'}, \alpha_g\in F \rangle$ only  appears in the expression of $aa^*$,
then we have
$$\begin{aligned}
&\chi(a_1a_1^*+a_2a_2^*+a_3a_3^*+a_4a_4^*)=w_1^2+w_2^2+w_3^2+w_4^2=1\\
&\chi(a_1^*a_2+a_3^*a_4)=w_1w_2+w_3w_4=\alpha\\
&w_{11}w_{31}+w_{12}w_{32}+w_{21}w_{42}+w_{22}w_{41}=w_{11}w_{32}+w_{12}w_{31}+w_{21}w_{41}+w_{22}w_{42}\\
&w_{11}w_{41}+w_{12}w_{42}+w_{21}w_{31}+w_{22}w_{32}=w_{11}w_{42}+w_{12}w_{41}+w_{21}w_{32}+w_{22}w_{31}.
   \end{aligned}$$
From this,
$$w_1+w_2+w_3+w_4=1, w_1w_2+w_3w_4=\alpha, w_1w_3+w_2w_4=0, w_1w_4+w_2w_3=0.$$
According to Lemma 10 of \cite{Balogh}, the number of $\alpha$ is $\frac{1}{2}|F|$. Hence the assertion is true.

According to (i) of Lemma \ref{INS},  we have $1+\alpha(x+xy^{2^{v-1}})\widehat{G'}\in S_{G'}$ for any $\alpha \in F$.
From this, the order $\Theta(H)$ of $\langle 1+\sum\limits_{g \in \Omega_c(H)}\alpha_gg\widehat{H'}\in S_{H'}, \alpha_g\in F\rangle$
is $\frac{1}{2}|F|^2$, that is, $\Theta(H)=\frac{1}{2}|F|^{\frac{|\Omega_c(H)|}{2}}$.

\end{proof}

\begin{lemma}\label{USC}
Suppose that $G=H\Y K$, $H\bigcap K=\langle c\rangle$ and $H\cong M_2(u,v)$, where $u\geq 2$ and $v\geq 1$.
 If $(u,v)= (2,1)$ or $u>2$ ,  then $1+\alpha g\widehat{G'}\in S_{G'}$ for any $g\in \Omega_c(G)$, $\alpha\in F$.
\end{lemma}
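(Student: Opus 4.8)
The plan is to exploit three elementary facts and then reduce everything to the building blocks of Lemma~\ref{INS}. First, since $c\in G'$ we have $c\widehat{G'}=\widehat{G'}$, so $g\widehat{G'}$ depends only on the coset $gG'$; moreover $\widehat{G'}^{\,2}=(1+c)^2=0$ in characteristic $2$, whence $(1+X\widehat{G'})(1+Y\widehat{G'})=1+(X+Y)\widehat{G'}$. Combined with the fact that $S_{G'}$ is a subgroup (Lemma~\ref{S2}), this means the set of $u\in G$ for which the whole line $\{1+\alpha u\widehat{G'}:\alpha\in F\}$ lies in $S_{G'}$ is closed under the relevant additions; and because the Frobenius map $\alpha\mapsto\alpha^2$ is a bijection of the finite field $F$, it suffices in each case to produce $1+\alpha^2 g\widehat{G'}$ for all $\alpha$. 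Second, I would describe $\Omega_c(G)$ through the central product: writing $g=hk$ with $h\in H$, $k\in K$, the relation $g^2=h^2k^2=c$ together with $H\cap K=\langle c\rangle$ and $[H,K]=1$ forces $h^2,k^2\in\langle c\rangle$, so that
$$\Omega_c(G)=\bigl(\Omega_1(H)\cdot\Omega_c(K)\bigr)\cup\bigl(\Omega_c(H)\cdot\Omega_1(K)\bigr).$$

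\textbf{The case $u>2$.} Here I would introduce a single ``anchor'' $a_0:=a^{2^{u-2}}$. Since $u\ge 3$ it is central in $H$ (as $[a_0,b]=c^{2^{u-2}}=1$), satisfies $a_0^2=a^{2^{u-1}}=c$ and $a_0^4=1$, and is the square of $a^{2^{u-3}}$, an element of order $8$ with fourth power $c$; hence Lemma~\ref{INS}(v) gives $1+\beta a_0\widehat{G'}\in S_{G'}$ for every $\beta\in F$. For an arbitrary $g\in\Omega_c(G)$ I put $s:=a_0^{-1}g$. Because $a_0$ is central in $H$ and $[H,K]=1$, it commutes with $g$, so $s^2=a_0^{-2}g^2=c\cdot c=1$ and $[a_0,s]=[a_0,g]=1$; thus $s$ is an involution commuting with $a_0$. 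Lemma~\ref{INS}(ii) applied to $h_1=a_0$, $h_2=s$ yields $1+\alpha a_0\widehat{G'}+\alpha^2 g\widehat{G'}\in S_{G'}$, and multiplying by $1+\alpha a_0\widehat{G'}$ cancels the first correction and leaves $1+\alpha^2 g\widehat{G'}\in S_{G'}$. Frobenius then gives the full line, uniformly for all $g$, with no need to split the two families.

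\textbf{The case $(u,v)=(2,1)$.} Now $H=D_8=\langle a,b\mid a^4=b^2=1,\ [a,b]=a^2=c\rangle$ contains no element of order $8$, so the anchor above is unavailable; instead I would use that $b$ is an involution with $[a,b]=c$. Lemma~\ref{INS}(iv) (with $h_1=a$, $h_2=b$) first gives $1+\alpha a\widehat{G'}\in S_{G'}$. For a general $g=hk\in\Omega_c(G)$ I then split according to the two families. If $h\in\Omega_c(D_8)=\{a,ac\}$, or if $h$ is an involution with nontrivial $b$-component, then a short check produces an involution $t\in D_8$ (namely $b$ or $ab$) with $[g,t]=[h,t]=c$, and Lemma~\ref{INS}(iv) applies directly. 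The only remaining possibility is $h\in\langle c\rangle$, i.e. $g\equiv k\pmod{G'}$ with $k\in\Omega_c(K)$; there $[a,g]=1$, so $s=a^{-1}g$ is an involution commuting with $a$ (as $s^2=a^{-2}g^2=c\cdot c=1$), and the anchor argument via Lemma~\ref{INS}(ii) together with the seed $1+\alpha a\widehat{G'}$ finishes exactly as in the case $u>2$.

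\textbf{Main obstacle.} The substantive point — and the reason the hypothesis excludes $u=2$, $v\ge 2$ — is the presence in each admissible case of a ``square root of $c$'' that is either an involution-partner for $g$ or an anchor of order $8$. I expect the delicate bookkeeping to be the verification, in the case $(u,v)=(2,1)$, that every $g$ whose $H$-component lies outside $\langle c\rangle$ admits an honest involution $t\in D_8$ with $[g,t]=c$; this requires running through all residues of $h$ in $D_8$. By contrast, for $u=2$, $v\ge 2$ neither device exists: $a^2=c$ but $a$ has order $4$ (no anchor), and the parity computation underlying Lemma~\ref{GINS2} shows that no involution $t$ satisfies $[a,t]=c$. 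This is exactly why there $\Theta(H)=\frac{1}{2}|F|^{|\Omega_c(H)|/2}$ and the present conclusion must fail, confirming that the stated hypothesis is sharp.
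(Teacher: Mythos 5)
Your proof is correct and follows essentially the same route as the paper: both arguments seed $S_{G'}$ with $1+\alpha x\widehat{G'}$ via Lemma~\ref{INS}(iv) when $H\cong D_8$ and with the anchor $1+\alpha x^{2^{u-2}}\widehat{G'}$ via Lemma~\ref{INS}(v) when $u>2$, and then transfer to arbitrary $g\in\Omega_c(G)$ using the remaining parts of Lemma~\ref{INS} together with the group structure of $S_{G'}$ and the surjectivity of Frobenius. Your treatment of the case $u>2$ is in fact a clean streamlining of the paper's: where the paper splits into $g_1\in\Omega_c(G)$ versus $g_1\in\Omega_1(G)$ and invokes Lemma~\ref{INS}(i) and (ii) separately, you observe that $a_0^{-1}g$ is always an involution commuting with the central anchor $a_0=x^{2^{u-2}}$, so Lemma~\ref{INS}(ii) alone handles every $g$ uniformly; likewise your $D_8$ case, organized by the $H$-component of $g$ and leaning on Lemma~\ref{INS}(iv), checks out against the paper's enumeration of the exponent pairs $(i,j)$.
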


\begin{proof}
Let
$$H=\langle x,y \ | \ x^{2^{u}}=y^{2^{v}}=1, x^{y}=x^{1+2^{u-1}}\rangle\cong M_2(u,v).$$
For any $g\in \Omega_c(G)$, suppose that $g=x^{i}y^{j}g_1$, where $0\leq i<2^{u-1}$, $0\leq j<2^v$ and $g_1\in K$.
Further we have $x^{2i+2^{u-1}ij}y^{2j}g_1^2=c$.

First, we consider  $u=2$ and $v=1$. At this time, $H\cong D_8$.
By taking $h_1=x$ and $h_2=y$ in (iv) of Lemma \ref{INS}, we have $1+\alpha x\widehat{G'}\in S_{G'}$ for any $\alpha \in F$.

If $g_1\in\Omega_c(G)$, then $x^{2i+2ij}=1$. Hence $(i,j)=(0,0),(0,1),(1,1)$.

$1+\alpha (x+g_1)\widehat{G'}\in S_{G'}$ follows by taking $h_1=x$ and $h_2=g_1$ in (i) of Lemma \ref{INS};

$1+\alpha g_1\widehat{G'}+\alpha^2 yg_1\widehat{G'}\in S_{G'}$ follows by taking $h_1=g_1$ and $h_2=y$ in (ii) of Lemma \ref{INS};

$1+\alpha xyg_1\widehat{G'}\in S_{G'}$ follows by taking $h_1=xg_1$ and $h_2=y$ in (iii) of Lemma \ref{INS}.

According to the above results, we have $1+\alpha g\widehat{G'}\in S_{G'}$ for  three cases about $(i,j)$.

If $g_1\in \Omega_1(G)$, then $x^{2i+2ij}=c$. Hence $i=1$ and $j=0$.
Further, by taking $h_1=x$ and $h_2=xg_1$ in (i) of Lemma \ref{INS}, we have $1+\alpha xg_1\widehat{G'}\in S_{G'}$ since $1+\alpha x\widehat{G'}\in S_{G'}$ for any $\alpha \in F$.

Suppose that $u$ is more than $2$. We have $1+\alpha x^{2^{u-2}}\widehat{G'}\in S_{G'}$ by taking $h=x^{2^{u-3}}$ in (v) of Lemma \ref{INS}.

If $g_1\in\Omega_c(G)$, then $x^{2i+2^{u-1}ij}y^{2j}=1$. Hence $j=0$ or $2^{v-1}$. From this,
we have $i=0$. Otherwise, $i=2^{u-2}$ and $x^{2^{u-1}(1+2^{u-2}j)}=1$, which is impossible.
By taking $h_1=x^{2^{u-2}}$ and $h_2=y^jg_1$ in  (i) of Lemma \ref{INS}, we have $1+\alpha y^jg_1\widehat{G'}\in S_{G'}$
since $1+\alpha x^{2^{u-2}}\widehat{G'}\in S_{G'}$.

If $g_1\in \Omega_1(G)$, then $x^{2i+2^{u-1}ij}y^{2j}=c$.  Hence $j=0$ or $2^{v-1}$. From this,
we have $i=2^{u-2}$.  For any $\alpha\in F$, by taking $h_1=x^{2^{u-2}}$ and $h_2=y^jg_1$ in (ii) of Lemma \ref{INS}, we have $1+\alpha x^{2^{u-2}}y^jg_1\widehat{G'}\in S_{G'}$.

\end{proof}

\begin{lemma}\label{USZ}
Suppose that $G=H\Y K$, $H\bigcap K=\langle c\rangle$ and $H\cong \mathbb{Z}_{2^u}$, where $u\geq 3$.
Then $1+\alpha g\widehat{G'}\in S_{G'}$ for any $g\in \Omega_c(G)$ and $\alpha\in F$.
\end{lemma}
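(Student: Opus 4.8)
The plan is to reduce every $g\in\Omega_c(G)$ to a single distinguished element $z\in\Omega_c(G)$ for which the membership is already guaranteed by Lemma \ref{INS}(v), and then to recover $1+\alpha g\widehat{G'}$ by multiplying the two resulting elements inside the subgroup $S_{G'}$. The decisive structural observation is that $H$ is central in $G$, which makes the commutator hypotheses of Lemma \ref{INS} automatic and lets me treat $g$ abstractly through the single relation $g^2=c$.

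First I would fix a generator $h$ of $H\cong\mathbb{Z}_{2^u}$. Since $H\cap K=\langle c\rangle$ has order $2$ and $H$ is cyclic, $\langle c\rangle$ is the unique subgroup of order $2$ of $H$, so $c=h^{2^{u-1}}$; moreover $H$ is abelian and $[H,K]=1$, so $H\leq\zeta G$. Set $z:=h^{2^{u-2}}$, which is legitimate because $u\geq 3$. Then $z^2=h^{2^{u-1}}=c$ and $z^4=1$, so $z\in\Omega_c(G)$, and $z$ is central in $G$. Applying Lemma \ref{INS}(v) to the element $h^{2^{u-3}}$, whose eighth power is $h^{2^u}=1$ and whose fourth power is $h^{2^{u-1}}=c$, yields $1+\alpha z\widehat{G'}=1+\alpha h^{2^{u-2}}\widehat{G'}\in S_{G'}$ for every $\alpha\in F$.

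Next, for an arbitrary $g\in\Omega_c(G)$ I would invoke Lemma \ref{INS}(i) with $h_1=z$ and $h_2=g$. All hypotheses hold: $z^2=c=g^2$ and $z^4=1=g^4$ (the latter since $g^2=c$ forces $g^4=c^2=1$), while $[z,g]=1$ because $z$ is central. This produces $1+\alpha(z+g)\widehat{G'}\in S_{G'}$ for every $\alpha\in F$.

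Finally I would combine the two memberships inside $S_{G'}$, which is a subgroup of $N_\Psi^*$ by Lemma \ref{S2}. Because $c$ is central, $\widehat{G'}=1+c$ is central in $FG$, and in characteristic $2$ one has $\widehat{G'}^{\,2}=(1+c)^2=1+c^2=0$; hence $a\widehat{G'}\,b\widehat{G'}=ab\,\widehat{G'}^{\,2}=0$ for all $a,b\in FG$, and therefore $(1+a\widehat{G'})(1+b\widehat{G'})=1+(a+b)\widehat{G'}$. Taking $a=\alpha z$ and $b=\alpha(z+g)$ gives
\[
\left(1+\alpha z\widehat{G'}\right)\left(1+\alpha(z+g)\widehat{G'}\right)=1+\alpha(z+z+g)\widehat{G'}=1+\alpha g\widehat{G'},
\]
so $1+\alpha g\widehat{G'}\in S_{G'}$, as required. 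The only point demanding care is the identity $\widehat{G'}^{\,2}=0$, which linearizes the product and makes the $z$-term cancel; the rest is a direct application of Lemmas \ref{INS} and \ref{S2}, so I anticipate no serious obstacle.
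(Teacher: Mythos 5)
Your proof is correct and follows essentially the same route as the paper: apply Lemma \ref{INS}(v) to obtain $1+\alpha h^{2^{u-2}}\widehat{G'}\in S_{G'}$, then Lemma \ref{INS}(i) with that central square root of $c$ and the arbitrary $g\in\Omega_c(G)$, and multiply inside the subgroup $S_{G'}$. The only difference is that you spell out the cancellation via $\widehat{G'}^{\,2}=0$, which the paper leaves implicit.
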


\begin{proof}
Let $H=\langle z\rangle$. By the hypothesis, $z^{2^{u-1}}=c$.  $1+\alpha z^{2^{u-2}}\widehat{G'}\in S_{G'}$ follows
from (v) of Lemma \ref{INS}. For any $g\in \Omega_c(G)$, obviously $[z,g]=1$. $1+\alpha (z^{2^{u-2}}+g)\widehat{G'}\in S_{G'}$ follows from (i) of Lemma \ref{INS}, which implies $1+\alpha g\widehat{G'}\in S_{G'}$.

\end{proof}

\begin{lemma} \label{OQD}
$\Omega_1(D_8^{\Y k})=\Omega_c(Q_8\Y D_8^{\Y (k-1)})=2\gamma_1(k)$ and  $\Omega_c(D_8^{\Y k})=\Omega_1(Q_8\Y D_8^{\Y (k-1)})=2\gamma_2(k)$.
\end{lemma}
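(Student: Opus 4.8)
The plan is to realize both groups as extraspecial $2$-groups and to read off the squaring map as a quadratic form on the Frattini quotient. By Lemma \ref{EP2}, both $D_8^{\Y k}$ and $Q_8\Y D_8^{\Y(k-1)}$ are extraspecial of order $2^{2k+1}$ with center $\langle c\rangle\cong\mathbb{Z}_2$. Since $c$ is central of order $2$, we have $(gc)^2=g^2c^2=g^2$, so $g^2$ depends only on the coset $\bar g\in E/\langle c\rangle$, and each of the $4^{k}$ cosets has exactly two lifts $g,gc$. Hence $|\Omega_1|$ and $|\Omega_c|$ of each group are precisely twice the number of cosets squaring to $1$, respectively to $c$, and it suffices to count these among the $4^{k}$ cosets.

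First I would fix coordinates. Writing the $i$-th factor as $\langle x_i,y_i\rangle$, every coset is uniquely $\overline{x_1^{a_1}y_1^{b_1}\cdots x_k^{a_k}y_k^{b_k}}$ with $(a_i,b_i)\in\mathbb{F}_2^2$. Because distinct central factors commute, $g^2=\prod_{i}(x_i^{a_i}y_i^{b_i})^2$, so $g^2$ is determined by the local squares. A short direct check gives $(x_i^{a_i}y_i^{b_i})^2=c^{a_ib_i}$ in a $D_8$ factor (where $x_i^2=y_i^2=1$, $[x_i,y_i]=c$) and $(x_1^{a_1}y_1^{b_1})^2=c^{a_1+b_1+a_1b_1}$ in the $Q_8$ factor (where $x_1^2=y_1^2=[x_1,y_1]=c$). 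Thus $g^2=c^{Q(a,b)}$ with $Q=\sum_{i}a_ib_i$ for $D_8^{\Y k}$, and $Q=(a_1+b_1+a_1b_1)+\sum_{i\ge 2}a_ib_i$ for $Q_8\Y D_8^{\Y(k-1)}$.

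For $D_8^{\Y k}$ the count is now combinatorial: in each factor the local form $a_ib_i$ equals $1$ in exactly one of its four states and $0$ in the other three, so the number of coordinate tuples with exactly $i$ factors contributing a $1$ is $\binom{k}{i}3^{k-i}$. The condition $g^2=1$ reads $Q=0$, i.e.\ an even number of ones, giving $\sum_{i\text{ even}}\binom{k}{i}3^{k-i}=\gamma_1(k)$ cosets; doubling yields $|\Omega_1(D_8^{\Y k})|=2\gamma_1(k)$, and the complementary odd sum gives $|\Omega_c(D_8^{\Y k})|=2\gamma_2(k)$. For $Q_8\Y D_8^{\Y(k-1)}$ the only change is the first factor, whose anisotropic form $a_1+b_1+a_1b_1$ vanishes on exactly one state and equals $1$ on the other three, the \emph{opposite} distribution to a $D_8$ factor; this flips the parity and interchanges the roles of $\gamma_1$ and $\gamma_2$, producing $|\Omega_1(Q_8\Y D_8^{\Y(k-1)})|=2\gamma_2(k)$ and $|\Omega_c(Q_8\Y D_8^{\Y(k-1)})|=2\gamma_1(k)$.

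The main obstacle is bookkeeping this parity flip cleanly. Concretely, letting $Z$ denote the number of cosets with $Q=0$ and splitting on the value of the first factor, one obtains $Z=3\gamma_1(k-1)+\gamma_2(k-1)$ for the all-$D_8$ group and $Z=\gamma_1(k-1)+3\gamma_2(k-1)$ for the $Q_8$ case. The single computation that must be carried out is $\gamma_1(k-1)+3\gamma_2(k-1)=\gamma_2(k)$ (and, as a consistency check, $3\gamma_1(k-1)+\gamma_2(k-1)=\gamma_1(k)$), which follows at once from the closed forms $\gamma_1(k)=2^{2k-1}+2^{k-1}$ and $\gamma_2(k)=2^{2k-1}-2^{k-1}$ recorded in Section~2 together with $\gamma_1(k-1)+\gamma_2(k-1)=4^{k-1}$.
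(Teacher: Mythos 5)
Your proof is correct and is essentially the paper's own argument: both decompose an element along the central product factors, observe that the square is $c$ raised to $\sum_i a_ib_i$ (with the anisotropic correction $a_1+b_1+a_1b_1$ in the $Q_8$ factor), count tuples by the parity of the number of factors contributing a $1$ via $\binom{k}{i}3^{k-i}$, and reduce the $Q_8$ case to the identities $3\gamma_1(k-1)+\gamma_2(k-1)=\gamma_1(k)$ and $\gamma_1(k-1)+3\gamma_2(k-1)=\gamma_2(k)$. The quadratic-form-on-the-Frattini-quotient language is a clean gloss, but the underlying computation is identical to the paper's.
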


\begin{proof}
Let $$G_l=\langle x_1 ,y_1\rangle\Y\langle x_2,y_2\rangle\Y\cdots\Y\langle x_k,y_k\rangle\cong H_l\Y D_8^{\Y (k-1)} ,$$
where $\langle x_i ,y_i \rangle\cong D_8, i=2,\ldots,k,$ and $H_1\cong D_8$,  $H_2\cong Q_8$ , $l=1,2$.
For any $g\in G$, we may suppose that
$g=x_1^{i_1}y_1^{j_1}\cdots x_k^{i_k}y_k^{j_k}c^s,$
where $i_t, j_t, s=0$ or $1$,  $t=1,\ldots,k$.

For the group $G_1$,  we denote by $r_1$ the number  of set $\{(i_t,j_t)\ |\ i_t=j_t=1, t\in \{1,\ldots,k\}\}$.
If $g^2=1$, then  $r_1$ is even. At this time, we have $\Omega_1(G_1)=2\gamma_1(k)$.
If  $g^2=c$, then $r_1$ is odd. At this time, we have $\Omega_c(G_1)=2\gamma_2(k)$.

For the group $G_2$,  we denote by $r_2$ the number  of set $\{(i_t,j_t)\ |\ i_t=j_t=1, t\in \{2,\ldots,k\}\}$.
We first  suppose  $g^2=1$. If  $r_2$ is even, then $i_1=j_1=0$;  If  $r_2$ is odd, then $i_1+j_1\neq 0$.
At this time, we have
$$\Omega_1(G_2)=2\gamma_1(k-1)+6\gamma_2(k-1)=2^{2k}-2^{k}=2\gamma_2(k).$$
We next  suppose  $g^2=c$. If  $r_2$ is even, then $i_1+j_1\neq 0$;  If  $r_2$ is odd, then $i_1=j_1= 0$.
At this time, we have
$$\Omega_c(G_2)=6\gamma_1(k-1)+2\gamma_2(k-1)=2^{2k}+2^{k}=2\gamma_1(k).$$

\end{proof}

\begin{theorem} \label{USMD}
Suppose that $G\cong M_2(n+1,m+1)\Y D_8^{\Y (k-1)}$, where $n\geq m\geq 1$, $k\geq 1$. Then

(i) $|\Omega_1(G)|=|\Omega_c(G)|=2^{2k}$.

(ii) $|V_*(FG)|=\left\{ \begin{aligned}
          &2|F|^{\frac{|G|+|\Omega_1(G)|}{2}-1}, \ \mathrm{if}\   n=1\ \mathrm{and}\ k\geq 2.\\
          &4|F|^{\frac{|G|+|\Omega_1(G)|}{2}-1}, \ \mathrm{ otherwise}.
                                           \end{aligned} \right.$

\end{theorem}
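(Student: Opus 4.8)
The plan is to push everything through Lemma \ref{S2}, which writes $|V_*(FG)|$ in terms of $|G|$, $|\Omega_1(G)|$, $|\Omega_c(G)|$, the order $|V_*(F\overline{G})|$ of the unitary subgroup of the abelian quotient $\overline{G}=G/\langle c\rangle$, and the invariant $\Theta(G)$. Recording first that $|G|=2^{n+m+2k}$ (each central amalgamation over $\langle c\rangle$ halves the naive product $2^{n+m+2}\cdot 2^{3(k-1)}$), the remaining tasks are: prove (i), evaluate $|V_*(F\overline{G})|$ via Lemma \ref{AU}, and determine $\Theta(G)$. The entire dichotomy ``$n=1,k\geq2$'' versus ``otherwise'' will be localized in the last two tasks.

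For part (i) I would first compute inside $H:=M_2(n+1,m+1)$. Using $(a^{\alpha}b^{\beta})^2=a^{2\alpha+\alpha\beta 2^{n}}b^{2\beta}$ one finds $\Omega_1(H)=\{1,c,b^{2^m},cb^{2^m}\}$ and $\Omega_c(H)=\{a^{2^{n-1}},a^{3\cdot2^{n-1}},a^{2^{n-1}}b^{2^m},a^{3\cdot2^{n-1}}b^{2^m}\}$, so $|\Omega_1(H)|=|\Omega_c(H)|=4$ for every $n\geq m\geq1$; this already settles $k=1$ since then $G=H$ and $2^{2k}=4$. For $k\geq2$, note that for a central product $P\Y Q$ amalgamated over $\langle c\rangle$ the identity $(pq)^2=p^2q^2$ together with $p^2=q^{-2}\in P\cap Q=\langle c\rangle$ gives the counting relations $|\Omega_1(P\Y Q)|=\tfrac12\bigl(|\Omega_1(P)||\Omega_1(Q)|+|\Omega_c(P)||\Omega_c(Q)|\bigr)$ and $|\Omega_c(P\Y Q)|=\tfrac12\bigl(|\Omega_1(P)||\Omega_c(Q)|+|\Omega_c(P)||\Omega_1(Q)|\bigr)$. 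Taking $P=H$ and $Q=D_8^{\Y(k-1)}$, for which Lemma \ref{OQD} gives $|\Omega_1(Q)|=2\gamma_1(k-1)$ and $|\Omega_c(Q)|=2\gamma_2(k-1)$, and using $\gamma_1(k-1)+\gamma_2(k-1)=2^{2k-2}$, both quantities collapse to $2^{2k}$, proving (i).

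For the quotient, $\overline{G}\cong\mathbb{Z}_{2^{n}}\times\mathbb{Z}_{2^{m+1}}\times\mathbb{Z}_2^{2k-2}$, because $\overline{M_2(n+1,m+1)}\cong\mathbb{Z}_{2^n}\times\mathbb{Z}_{2^{m+1}}$, each $\overline{D_8}\cong\mathbb{Z}_2\times\mathbb{Z}_2$, and the amalgamating $\langle c\rangle$ is killed. Lemma \ref{AU} then gives $|V_*(F\overline{G})|=|\overline{G}^2[2]|\cdot|F|^{(|\overline{G}|+|\Omega_1(\overline{G})|)/2-1}$ with $|\overline{G}|=2^{n+m+2k-1}$ and $|\Omega_1(\overline{G})|=2^{2k}=|\Omega_1(G)|$. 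The only case-dependence here is $|\overline{G}^2[2]|$: since $\overline{G}^2\cong\mathbb{Z}_{2^{n-1}}\times\mathbb{Z}_{2^m}$, it equals $4$ when $n\geq2$ and equals $2$ when $n=1$ (which forces $m=1$).

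The crux is $\Theta(G)$, split according to whether Lemma \ref{USC} is available. If $n\geq2$, then $H=M_2(n+1,m+1)$ has first parameter $u=n+1>2$, so Lemma \ref{USC} applied to $G=H\Y(\text{rest})$ yields $1+\alpha g\widehat{G'}\in S_{G'}$ for all $g\in\Omega_c(G)$ and $\alpha\in F$, whence $\Theta(G)=|F|^{|\Omega_c(G)|/2}$. If $n=1$ but $k\geq2$, then although $M_2(2,2)$ itself fails the hypotheses of Lemma \ref{USC}, there is at least one factor $D_8=M_2(2,1)$; applying Lemma \ref{USC} with $(u,v)=(2,1)$ to this $D_8$ and the remaining central product again gives $\Theta(G)=|F|^{|\Omega_c(G)|/2}$. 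In both subcases the ``in particular'' clause of Lemma \ref{S2}, combined with $|\overline{G}|=|G|/2$ and $|\Omega_1(\overline{G})|=|\Omega_1(G)|$, reduces $|V_*(FG)|$ to $|\overline{G}^2[2]|\cdot|F|^{(|G|+|\Omega_1(G)|)/2-1}$ after a one-line exponent check, so the coefficient is exactly $|\overline{G}^2[2]|$, namely $4$ for $n\geq2$ and $2$ for $n=1,k\geq2$. The last case $n=1,k=1$ is $G=M_2(2,2)=H_1$, where no $D_8$ factor is present; here Lemma \ref{GINS}(i) supplies $\Theta(G)=\tfrac12|F|^{|\Omega_c(G)|/2}$, and the extra factor $2$ multiplies $|\overline{G}^2[2]|=2$ up to the coefficient $4$, placing it in the ``otherwise'' branch. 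I expect this $\Theta(G)$ step to be the main obstacle: the observation that a single $D_8=M_2(2,1)$ constituent suffices to trigger Lemma \ref{USC} and force $\Theta(G)$ to its maximal value is exactly what separates $n=1,k\geq2$ (coefficient $2$) from $n=1,k=1$ (coefficient $4$).
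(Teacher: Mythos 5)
Your proposal is correct and follows essentially the same route as the paper: reduce via Lemma \ref{S2}, observe that Lemma \ref{USC} applies to the $M_2(n+1,m+1)$ factor when $n\geq 2$ and to a $D_8=M_2(2,1)$ factor when $n=1$ and $k\geq 2$ (which is exactly the paper's mechanism for the dichotomy), evaluate $|V_*(F\overline{G})|$ by Lemma \ref{AU}, and handle $n=k=1$ separately (the paper cites Lemma \ref{CCU} directly, you route through Lemma \ref{GINS}(i), which is derived from it). Your central-product counting identity for part (i) is a clean repackaging of the paper's elementwise computation combined with Lemma \ref{OQD}, and all exponent checks go through.
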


\begin {proof}
(i) Let $G=\langle x_1 ,y_1\rangle\Y\langle x_2,y_2\rangle\Y\cdots\Y\langle x_k,y_k\rangle$,
where $\langle x_i ,y_i \rangle\cong D_8, i=2,\ldots,k,$ and
$$\langle x_1,y_1 \ | \ x_1^{2^{n+1}}=y_1^{2^{m+1}}=1, x_1^{y_1}=x_1^{1+2^{n}}\rangle\cong M_2(n+1,m+1).$$
For any $g\in G$, let $g=g_1g_2$, where
$g_1=x_1^{i_1}y_1^{j_1}$, $0\leq i_1< 2^n$, $0\leq j_1< 2^{m+1}$, and $g_2\in \langle x_2, y_2, \ldots, x_k, y_k\rangle$.

If $g_1^2=1$, then $x_1^{2i_1}y_1^{2j_1}c^{i_1j_1}=1$.
From this, we have $j_1=0$ or $2^{m}$ and $i_1=0$.
If $g_1^2=c$, then $x_1^{2i_1}y_1^{2j_1}c^{i_1j_1-1}=1$.
From this, we have $j_1=0$ or $2^{m}$ and $i_1=2^{n-1}$.
By Lemma \ref{OQD},  we have
$$|\Omega_1(G)|=4\gamma_1(k-1)+4\gamma_2(k-1)=2^{2k}=|\Omega_c(G)|.$$

(ii) We first consider the case $k=1$. At this time,
$G=\langle x_1 ,y_1\rangle.$
If $n=1$, then $m=1$ and $|V_{*}(FG)|=4 |F|^{\frac{|G|+|\Omega_1(G)|}{2}-1}$ from Lemma \ref{CCU}.

Suppose that $n\geq 2$.  By Lemma \ref{USC}, we may obtain
$1+\alpha g\widehat{G'}\in S_{G'}$ for any $g\in \Omega_c(G)$ and $\alpha\in F$.
By Lemma \ref{S2}, we have
$$|V_*(FG)|=|F|^{\frac{|G|+|\Omega_1(G)|-|\Omega_c(G)|}{4}}\cdot|V_*(F\overline{G})|=|F|^{2^{n+m}}\cdot|V_*(F\overline{G})|.$$
Note that $\overline{G}\cong \mathbb{Z}_{2^n}\times \mathbb{Z}_{2^{m+1}}$.
According to Lemma \ref{AU}, we have
$$|V_{*}(F\overline{G})|=|\overline{G}^2[2]|\cdot |F|^{\frac{|\overline{G}|+|\Omega_1(\overline{G})|}{2}-1}=4|F|^{2^{n+m}+1}.$$
Hence $|V_*(FG)|=4|F|^{2^{n+m+1}+1}$. Also since $\frac{|G|+|\Omega_1(G)|}{2}-1=\frac{2^{n+m+2}+4}{2}-1$ by (i),
$|V_*(FG)|=4|F|^{\frac{|G|+|\Omega_1(G)|}{2}-1}$.

We next suppose that $k\geq 2$. At this time, $D_8$ appears in the central product of $G$. By Lemma \ref{USC}, $1+\alpha g\widehat{G'}\in S_{G'}$ for any $g\in \Omega_c(G),  \alpha \in F$. Hence
$$|V_*(FG)|=|F|^{\frac{|G|+|\Omega_1(G)|-|\Omega_c(G)|}{4}}\cdot|V_*(F\overline{G})|=|F|^{2^{n+m+2k-2}}|V_*(F\overline{G})|$$
by Lemma \ref{S2}. Note that $\overline{G}\cong \mathbb{Z}_{2^{n}}\times\mathbb{Z}_{2^{m+1}}\times(\mathbb{Z}_2\times\mathbb{Z}_2)^{(k-1)} $.
Hence $ |V_*(F\overline{G})|=2|F|^{2^{n+m+2k-2}+2^{2k-1}-1}$ (If $n=1$) or $4|F|^{2^{n+m+2k-2}+2^{2k-1}-1}$ (If $n\geq 2$)  by Lemma \ref{AU}.  It follows that
$$|V_*(FG)|=\left\{ \begin{aligned}
          &2|F|^{\frac{|G|+|\Omega_1(G)|}{2}-1}, \ \mathrm{if}\   n=1\ \mathrm{and}\ k\geq 2.\\
          &4|F|^{\frac{|G|+|\Omega_1(G)|}{2}-1}, \ \mathrm{if}\   n\geq 2\ \mathrm{and}\ k\geq 2.
                                           \end{aligned} \right.$$

\end{proof}

\begin{theorem} \label{USDQ}
Suppose that $G\cong M_2(n+1,m+1)\Y Q_8\Y D_8^{\Y (k-2)}$, where $n\geq m\geq 1$, $k\geq 2$. Then

(i) $|\Omega_1(G)|=|\Omega_c(G)|=2^{2k}$.

(ii) $|V_*(FG)|=\left\{ \begin{aligned}
          &2|F|^{\frac{|G|+|\Omega_1(G)|}{2}-1}, \ \mathrm{if}\   n=1\ \mathrm{and}\ k\geq 3.\\
          &4|F|^{\frac{|G|+|\Omega_1(G)|}{2}-1}, \ \mathrm{ otherwise}.
                                           \end{aligned} \right.$

\end{theorem}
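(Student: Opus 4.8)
The plan is to follow the proof of Theorem~\ref{USMD} almost line by line, the only structural input being that $Q_8/\langle c\rangle\cong\mathbb{Z}_2\times\mathbb{Z}_2\cong D_8/\langle c\rangle$. Consequently $\overline{G}=G/G'\cong\mathbb{Z}_{2^n}\times\mathbb{Z}_{2^{m+1}}\times(\mathbb{Z}_2\times\mathbb{Z}_2)^{(k-1)}$ is exactly the same abelian group as in Theorem~\ref{USMD}, and $|G|=2^{n+m+2k}$ is unchanged, when one $D_8$ is replaced by $Q_8$; so $|V_*(F\overline{G})|$ carries over unchanged.

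For part~(i) I would write $g=g_1g_2$ with $g_1=x_1^{i_1}y_1^{j_1}$ in the factor $M_2(n+1,m+1)$ and $g_2\in Q_8\Y D_8^{\Y(k-2)}$. The analysis of $(i_1,j_1)$ is verbatim that of Theorem~\ref{USMD}(i): there are exactly two values with $g_1^2=1$ and two with $g_1^2=c$. Since $g^2=g_1^2g_2^2$, I split $\Omega_1(G)$ and $\Omega_c(G)$ according to $g_2^2\in\{1,c\}$ and insert the counts for the quaternionic factor from Lemma~\ref{OQD}, namely $|\Omega_1(Q_8\Y D_8^{\Y(k-2)})|=2\gamma_2(k-1)$ and $|\Omega_c(Q_8\Y D_8^{\Y(k-2)})|=2\gamma_1(k-1)$. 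The interchange of $\gamma_1$ and $\gamma_2$ relative to the pure-dihedral case cancels once both are summed, and since $\gamma_1(k-1)+\gamma_2(k-1)=2^{2k-2}$ I obtain $|\Omega_1(G)|=|\Omega_c(G)|=4\cdot 2^{2k-2}=2^{2k}$.

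For part~(ii) the generic situation is the one in which some factor satisfies the hypothesis of Lemma~\ref{USC}: a genuine $D_8$ factor, present as soon as $k\ge 3$, or the factor $M_2(n+1,m+1)$ with first parameter $n+1>2$, i.e. $n\ge 2$. In either case Lemma~\ref{USC} yields $1+\alpha g\widehat{G'}\in S_{G'}$ for every $g\in\Omega_c(G)$ and $\alpha\in F$, so the special form of Lemma~\ref{S2} gives $|V_*(FG)|=|F|^{(|G|+|\Omega_1(G)|-|\Omega_c(G)|)/4}\,|V_*(F\overline{G})|=|F|^{2^{n+m+2k-2}}|V_*(F\overline{G})|$. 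Feeding in Lemma~\ref{AU} together with $\overline{G}^2\cong\mathbb{Z}_{2^{n-1}}\times\mathbb{Z}_{2^m}$ gives $|\overline{G}^2[2]|=2$ when $n=1$ and $4$ when $n\ge 2$, which is precisely the factor $2$ for $n=1,\,k\ge 3$ and $4$ otherwise.

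The one residual case, and the main obstacle, is $n=m=1$ and $k=2$, i.e. $G=M_2(2,2)\Y Q_8$, for which no factor meets the hypothesis of Lemma~\ref{USC}. Here I would return to the general form of Lemma~\ref{S2}: with $|G|=64$, $|\Omega_1(G)|=|\Omega_c(G)|=16$ and $\overline{G}\cong\mathbb{Z}_2\times\mathbb{Z}_4\times\mathbb{Z}_2\times\mathbb{Z}_2$, the target value $|V_*(FG)|=4|F|^{39}$ is equivalent to the single assertion $\Theta(G)=\frac12|F|^{8}=\frac12|F|^{|\Omega_c(G)|/2}$. To prove this I would list the sixteen elements of $\Omega_c(G)$ and use Lemma~\ref{INS}(i)--(iv) to exhibit as many elements $1+\alpha g\widehat{G'}$, together with the combined elements $1+\alpha(h_1+h_2)\widehat{G'}$ and $1+\alpha h_1\widehat{G'}+\alpha^2 h_1h_2\widehat{G'}$, as possible inside $S_{G'}$; the commuting involutions and order-$4$ elements shared between the $M_2(2,2)$ and $Q_8$ generators should unlock all but one coordinate. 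The delicate point, which forces a careful case-by-case bookkeeping rather than an appeal to Lemma~\ref{USC}, is to show that exactly one quadratic constraint survives — essentially the constraint already isolated for $H_1=M_2(2,2)$ in Lemma~\ref{GINS} through the augmentation-coordinate computation of Lemma~\ref{GINS2} — so that the defect is precisely $\frac12$ and not $\frac14$; the potential extra defect of the quaternion factor (recorded by $\Theta(Q_8)=\frac14|F|^3$ in Lemma~\ref{GINS}) is absorbed by the extra mixed involutions available in the central product.
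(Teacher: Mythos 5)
Your proposal is correct and follows essentially the same route as the paper: part (i) via Lemma \ref{OQD} with the $\gamma_1/\gamma_2$ swap cancelling in the sum, the generic part of (ii) via Lemma \ref{USC} and the special form of Lemma \ref{S2} combined with Lemma \ref{AU}, and the residual case $n=m=1$, $k=2$ reduced to showing $\Theta(G)=\tfrac12|F|^{|\Omega_c(G)|/2}$. The paper settles that last delicate point exactly as you anticipate: the $|F|/2$ constraint on the $x_1$-coordinate comes from Lemma \ref{GINS}(i), and every other $g\in\Omega_c(G)$ satisfies $g^4=1$, $g^2=c$, $[x_1,g]=1$, so Lemma \ref{INS}(i) supplies $1+\alpha(x_1+g)\widehat{G'}\in S_{G'}$ and absorbs the quaternion defect.
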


\begin {proof}
 Let $G=\langle x_1 ,y_1\rangle\Y\langle x_2,y_2\rangle\Y\cdots\Y\langle x_k,y_k\rangle$,
where $k\geq 2$, $\langle x_2 ,y_2 \rangle\cong Q_8$,
$\langle x_i ,y_i \rangle\cong D_8, i=3,\ldots,k,$ and
$$\langle x_1,y_1 \ | \ x_1^{2^{n+1}}=y_1^{2^{m+1}}=1, x_1^{y_1}=x_1^{1+2^{n}}\rangle\cong M_2(n+1,m+1).$$

(i) follows similar to (i) of Theorem \ref{USMD}.

(ii)  We will use the same notations in (i) and suppose that $g^2=c$.
We first consider the case $k=2$.  At this time,  the forms of $g$ are as follows:
$$x_1^{2^{n-1}}y_1^{j_1}c^s, y_1^{j_1}x_2c^s,y_1^{j_1}y_2c^s,y_1^{j_1}x_2y_2c^s.$$
If $n=1$, then $\langle x_1,y_1\rangle\cong \mathbb{Z}_4\rtimes\mathbb{Z}_4$.
By taking $h_1:=x_1$ and $h_2:=x_1y_1^2$ in (i) of  Lemma \ref{INS}, we have  $1+\alpha(x_1+x_1y_1^2)\widehat{G'}\in S_{G'}$ for
any $\alpha\in F$.
For any $\beta\in F$, it is easy to verify  $$(1+\beta x_1+\beta c)(1+\beta x_1+\beta c)^*=1+(\beta+\beta^2)x_1\widehat{G'}\in S_{G'}.$$
Hence $1+(\beta+\beta^2)x_1\widehat{G'}+\alpha(x_1+x_1y_1^2)\widehat{G'}\in S_{G'}$ for any $\alpha, \beta\in F$.
We know that the number
of $\gamma$ in $F$ which satisfies $1+\gamma x_1\widehat{G'}\in S_{G'}$ is $|F|/2$ by (i) of Lemma \ref{GINS}.
Suppose that $g$ is any other type  except for $x_1$. At this time, $g^4=x_1^4=1$, $g^2=x_1^2=c$ and $[x_1,g]=1$.
By (i) of Lemma \ref{INS}, we have $1+\alpha(x_1+g)\widehat{G'}\in S_{G'}$ for any $\alpha\in F$. In a word,
   $\langle 1+\sum_{g\in \Omega_c(G)}\alpha_gg\widehat{G'}\in S_{G'}, \alpha_g\in F \rangle$ has order $\frac{1}{2}|F|^{\frac{|\Omega_c(G)|}{2}}$. By Lemma \ref{S2},  $$|V_*(FG)|=\frac{2}{|F|^{\frac{|\Omega_c(G)|}{2}}}|F|^{\frac{|G|+|\Omega_1(G)|+|\Omega_c(G)|}{4}}\cdot|V_*(F\overline{G})|
=2|F|^{\frac{|G|+|\Omega_1(G)|-|\Omega_c(G)|}{4}}\cdot|V_*(F\overline{G})|.$$
Note that $\overline{G}\cong \mathbb{Z}_{2}\times \mathbb{Z}_4\times\mathbb{Z}_2\times\mathbb{Z}_2$.
According to Lemma \ref{AU}, we have
$$|V_{*}(F\overline{G})|=|\overline{G}^2[2]|\cdot |F|^{\frac{|\overline{G}|+|\Omega_1(\overline{G})|}{2}-1}=2|F|^{23}.$$
Hence $|V_*(FG)|=4|F|^{39}$. Also since $\frac{|G|+|\Omega_1(G)|}{2}-1=\frac{2^{6}+2^4}{2}-1=39$ by (i),
$|V_*(FG)|=4|F|^{\frac{|G|+|\Omega_1(G)|}{2}-1}$.

If $n\geq2$, then  $1+\alpha g\widehat{C}\in S_{G'}$  for any $g\in \Omega_c(G), \alpha\in F$ by Lemma \ref{USC}. From this,
by Lemma \ref{S2},  $$|V_*(FG)|=|F|^{\frac{|G|+|\Omega_1(G)|-|\Omega_c(G)|}{4}}\cdot|V_*(F\overline{G})|=|F|^{2^{n+m+2}}|V_*(F\overline{G})|
.$$ Note that $\overline{G}\cong \mathbb{Z}_{2^n}\times \mathbb{Z}_{2^{m+1}}\times\mathbb{Z}_2\times\mathbb{Z}_2$.
Thus $|V_*(F\overline{G})|=4|F|^{2^{n+m+2}+7}$. It follows that
$|V_*(FG)|=4|F|^{2^{n+m+3}+7}=4|F|^{\frac{|G|+|\Omega_1(G)|}{2}-1}$.

We next suppose that $k\geq 3$. At this time, $D_8$ appears in the central product of $G$. By Lemma \ref{USC}, $1+\alpha g\widehat{G'}\in S_{G'}$ for any $g\in\Omega_c(G),  \alpha \in F$.
Similar to the proof of Theorem \ref{USMD}, we may obtain (ii).

\end{proof}

\begin{theorem} \label{USMD2}
Suppose that $G_l\cong M_2(m+1,n+1)\Y H_l \Y D_8^{\Y (k-2)}$, where $n> m\geq 1$,
$H_1\cong D_8$ and $H_2\cong Q_8$, $l=1,2$ and $k\geq 2$. Then

(i) $|\Omega_1(G_l)|=|\Omega_c(G_l)|=2^{2k}$.

(ii) $|V_*(FG_l)|=\left\{ \begin{aligned}
          &2|F|^{\frac{|G_l|+|\Omega_1(G_l)|}{2}-1}, \ \mathrm{if}\ l=1= m, k\geq 2\  \mathrm{or}\  m=1, l=2, k\geq 3.\\
          &4|F|^{\frac{|G_l|+|\Omega_1(G_l)|}{2}-1}, \ otherwise.
                                           \end{aligned} \right.$
\end{theorem}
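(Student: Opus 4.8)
The plan is to follow the two-part pattern of Theorems \ref{USMD} and \ref{USDQ}, treating $M_2(m+1,n+1)$ as the leading factor and the ``extraspecial tail'' $H_l\Y D_8^{\Y(k-2)}$ separately. For part (i) I would write each $g\in G_l$ as $g=g_1g_2$, with $g_1=x_1^{i_1}y_1^{j_1}$ in the $M_2(m+1,n+1)$ factor (ranging over $0\le i_1<2^m$, $0\le j_1<2^{n+1}$) and $g_2$ in the tail, so that $g^2=g_1^2g_2^2$. A short computation using $x_1^{y_1}=x_1^{1+2^m}$ gives $g_1^2=x_1^{2i_1}y_1^{2j_1}c^{i_1j_1}$; since $\langle x_1\rangle\cap\langle y_1\rangle=1$ this forces $j_1\in\{0,2^n\}$ and then $i_1=0$ when $g_1^2=1$ and $i_1=2^{m-1}$ when $g_1^2=c$, giving exactly two solutions in each case. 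Splitting $g^2\in\{1,c\}$ according to the pair $(g_1^2,g_2^2)$ and inserting the tail counts of Lemma \ref{OQD} (with $\gamma_1(k-1)+\gamma_2(k-1)=2^{2k-2}$) yields $|\Omega_1(G_l)|=|\Omega_c(G_l)|=2\bigl(|\Omega_1(\mathrm{tail})|+|\Omega_c(\mathrm{tail})|\bigr)=2^{2k}$ for both $l=1$ and $l=2$.

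For part (ii) the governing question is whether $1+\alpha g\widehat{G'}\in S_{G'}$ for every $g\in\Omega_c(G_l)$, i.e.\ whether $\Theta(G_l)=|F|^{|\Omega_c(G_l)|/2}$; when this holds, Lemma \ref{S2} reduces everything to $|V_*(F\overline{G_l})|$, and since $\overline{G_l}\cong\mathbb{Z}_{2^m}\times\mathbb{Z}_{2^{n+1}}\times(\mathbb{Z}_2\times\mathbb{Z}_2)^{(k-1)}$, Lemma \ref{AU} gives the leading constant $|\overline{G_l}^{2}[2]|$, which equals $4$ if $m\ge2$ and $2$ if $m=1$. I would invoke Lemma \ref{USC} to establish $\Theta(G_l)=|F|^{|\Omega_c(G_l)|/2}$ in two situations: taking $H=M_2(m+1,n+1)$ (whose first parameter is $m+1>2$) when $m\ge2$, and taking $H=D_8\cong M_2(2,1)$ whenever a dihedral factor is present, i.e.\ for all $k\ge2$ when $l=1$ and for $k\ge3$ when $l=2$. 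Matching the target exponent $\frac{|G_l|+|\Omega_1(G_l)|}{2}-1$ against $\frac{|G_l|}{4}+\bigl(\frac{|\overline{G_l}|+|\Omega_1(\overline{G_l})|}{2}-1\bigr)$ (using $|\Omega_1(\overline{G_l})|=2^{2k}$) then produces the constant $4$ when $m\ge2$ and the constant $2$ in the $m=1$ dihedral-present cases, which is exactly the claimed dichotomy on those ranges.

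The remaining, and hardest, case is $m=1$, $l=2$, $k=2$, where $G_2\cong M_2(2,n+1)\Y Q_8$ with $n\ge2$ contains no dihedral factor, so Lemma \ref{USC} does not apply and $\Theta(G_2)$ must be computed by hand. Here I would mirror the $n=1$ analysis of Theorem \ref{USDQ}: enumerate $\Omega_c(G_2)$ as the two families $g_1^2=c,\ g_2^2=1$ and $g_1^2=1,\ g_2^2=c$, and verify that every such $g$ commutes with the element $x_1$ of $M_2(2,n+1)$ and satisfies $g^4=1$, $g^2=c=x_1^2$; then Lemma \ref{INS}(i) gives $1+\alpha(x_1+g)\widehat{G'}\in S_{G'}$, tying every direction to the single direction $x_1$. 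The crux is that the $x_1$-direction alone is not free: by the computation underlying Lemma \ref{GINS2} (equivalently the $x_1$-bound of Lemma \ref{GINS}(i)), the group $\langle 1+\alpha x_1\widehat{G'}\in S_{G'}\rangle$ has order only $\tfrac12|F|$, the deficiency coming from the Artin--Schreier image $\{\beta+\beta^2\mid\beta\in F\}$. Combining the free pairings with this index-$2$ constraint gives $\Theta(G_2)=\tfrac12|F|^{|\Omega_c(G_2)|/2}$, so Lemma \ref{S2} contributes a leading factor $2$, which multiplied by $|\overline{G_2}^{2}[2]|=2$ yields the constant $4$. The main obstacle is precisely transferring this $\tfrac12|F|$ bound for the $x_1$-direction from $M_2(2,n+1)$ into the larger central-product algebra $FG_2$, since central products do not project onto their factors; I expect this step to require re-running the augmentation and support bookkeeping of Lemma \ref{GINS2} directly inside $FG_2$.
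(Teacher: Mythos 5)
Your proposal is correct and follows essentially the same route as the paper: part (i) via the $g=g_1g_2$ decomposition together with Lemma \ref{OQD}, and part (ii) via Lemma \ref{USC}, Lemma \ref{S2} and Lemma \ref{AU} in every case except $m=1$, $l=2$, $k=2$, where both you and the paper reduce the claim to $\Theta(G_2)=\frac{1}{2}|F|^{|\Omega_c(G_2)|/2}$ by pairing each class of $\Omega_c(G_2)$ with the $x_1$-direction through Lemma \ref{INS}(i) and invoking the $\frac{1}{2}|F|$ bound of Lemma \ref{GINS2}. If anything, you are more explicit than the paper about the one delicate step --- justifying that this $\frac{1}{2}|F|$ bound, proved for $FM_2(2,n+1)$ alone, persists in the central-product algebra $FG_2$ --- which the paper dispatches with the phrase ``similar to Theorem \ref{USDQ}''.
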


\begin {proof}
(i) Let $G_l=\langle x_1 ,y_1\rangle\Y\langle x_2,y_2\rangle\Y\cdots\Y\langle x_k,y_k\rangle$,
where $\langle x_2,y_2\rangle=H_l$, $\langle x_i ,y_i \rangle\cong D_8, i=3,\ldots,k,$ and
$$\langle x_1,y_1 \ | \ x_1^{2^{m+1}}=y_1^{2^{n+1}}=1, x_1^{y_1}=x_1^{1+2^{m}}\rangle\cong M_2(m+1,n+1).$$
For any $g\in G$, let $g=g_1g_2$, where
$g_1=x_1^{i_1}y_1^{j_1}$, $0\leq i_1< 2^m$, $0\leq j_1< 2^{n+1}$, and $g_2\in \langle x_2, y_2, \ldots, x_k, y_k\rangle$.

(i) follows similar to (i) of Theorem \ref{USMD}.

(ii)
First, we suppose that $m=1$.
 If $G_1\cong M_2(2,n+1)$. By Lemma \ref{GINS2}, we have $\Theta(G_1)=\frac{1}{2}|F|^{\frac{\Omega_c(G_1)}{2}}$.
Hence
$$|V_*(FG_1)|=2|F|^{\frac{|G_1|}{4}}\cdot|V_*(F\overline{G}_1)|=2|F|^{2^{1+n}}\cdot|V_*(F\overline{G}_1)|$$
 by Lemma \ref{S2}. Also since $\overline{G}_1\cong \mathbb{Z}_2\times\mathbb{Z}_{2^{n+1}}$,
 we have $$|V_*(FG_1)|=2|F|^{2^{1+n}}\cdot2 |F|^{2^{1+n}+1}=4 |F|^{2^{2+n}+1}=4 |F|^{\frac{|G_1|+|\Omega_1(G_1)|}{2}-1}.$$
If $G_2\cong M_2(2,n+1)\Y Q_8$, then we have $\Theta(G_2)=\frac{1}{2}|F|^{\frac{\Omega_c(G_2)}{2}}$ similar to Theorem \ref{USDQ}.
Note that $\overline{G}_2\cong \mathbb{Z}_2\times\mathbb{Z}_{2^{n+1}}\times \mathbb{Z}_2\times\mathbb{Z}_2$. Hence we have
$$|V_*(FG_2)|=2|F|^{2^{3+n}}\cdot|V_*(F\overline{G}_2)|=4|F|^{2^{4+n}+7}=4 |F|^{\frac{|G_2|+|\Omega_1(G_2)|}{2}-1}.$$

For other cases, by Lemma \ref{USC}, we may obtain
$1+\alpha g\widehat{G_l'}\in S_{G_l'}$ for any $g\in \Omega_c(G_l)$ and $\alpha\in F$.
 Hence
$|V_*(FG_l)|=|F|^{\frac{|G_l|}{4}}\cdot|V_*(F\overline{G}_l)|$
by Lemma \ref{S2}. Note that $\overline{G}_l\cong \mathbb{Z}_{2^{m}}\times\mathbb{Z}_{2^{n+1}}\times(\mathbb{Z}_2\times\mathbb{Z}_2)^{(k-1)} $.
Hence
$$|V_*(FG_l)|=\left\{ \begin{aligned}
          &2|F|^{2^{n+m+2k-1}+2^{2k-1}-1}=2|F|^{\frac{|G_l|+|\Omega_1(G_l)|}{2}-1}, \ \mathrm{if}\  m=1.\\
          &4|F|^{2^{n+m+2k-1}+2^{2k-1}-1}=4|F|^{\frac{|G_l|+|\Omega_1(G_l)|}{2}-1}, \ \mathrm{ if}\ m\geq 2.
                                           \end{aligned} \right.$$

\end{proof}

\begin{theorem} \label{UMMD}
Suppose that $G_l\cong M_2(n+1,1)\Y M_2(m+1,1,1)\Y H_l\Y D_8^{\Y (k-3)}$ ,
where $n\geq m\geq 1$, $k\geq 3$ and $l=1, 2$, $H_1\cong D_8$ and $H_2\cong Q_8$.

(i) If $n=1$, then
 $\Omega_1(G_1)=2^{2k}+2^{k+1}=\Omega_c(G_2)$
and  $\Omega_c(G_1)=2^{2k}-2^{k+1}=\Omega_1(G_2)$.
If $n\geq 2$,  then
 $\Omega_1(G_l)=2^{2k}=\Omega_c(G_l)$.

(ii) $|V_*(FG_l)|=\left\{ \begin{aligned}
          &2|F|^{\frac{|G_l|+|\Omega_1(G_l)|}{2}-1}, \ \mathrm{if}\   n=1.\\
          &4|F|^{\frac{|G_l|+|\Omega_1(G_l)|}{2}-1}, \ \mathrm{if}\   n\geq 2.
                                           \end{aligned} \right.$

\end{theorem}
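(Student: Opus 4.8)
The plan is to follow the template of Theorems \ref{USMD}--\ref{USMD2}: first compute $|\Omega_1(G_l)|$ and $|\Omega_c(G_l)|$ by a direct count, then substitute these into the reduction formula of Lemma \ref{S2}, using Lemma \ref{AU} for the abelian quotient $\overline{G_l}=G_l/\langle c\rangle$. I would write $G_l=A\Y B\Y C$ with $A\cong M_2(n+1,1)$, $B\cong M_2(m+1,1,1)$ and $C\cong H_l\Y D_8^{\Y(k-3)}$, and parametrise a general $g$ as $g=g_1g_2g_3$, where $g_1=x_1^iy_1^j$ ($0\le i<2^n$, $j\in\{0,1\}$) runs over a transversal of $\langle c\rangle$ in $A$, $g_2=x_2^iy_2^j$ runs over a transversal of $\langle c\rangle$ in $B$, and $g_3$ runs over all of $C$. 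A cardinality count shows $(g_1,g_2,g_3)\mapsto g$ is a bijection onto $G_l$, so the $\Omega$-counts need no correction factor. Since distinct central factors commute, $g^2=g_1^2g_2^2g_3^2$, and $g_3^2\in\langle c\rangle$ always.

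I would then record three local computations. In $A$, from $(x_1^iy_1^j)^2=x_1^{i(1+(1+2^n)^j)}$ one finds, for $n\ge 2$, exactly two transversal elements with $g_1^2=1$ and two with $g_1^2=c$, whereas for $n=1$ (so $A\cong D_8$) there are three with square $1$ and one with square $c$. For $B$ the decisive feature is that the central involution $c=[x_2,y_2]$ of $M_2(m+1,1,1)$ is distinct from $x_2^{2^m}$; since $(x_2^iy_2^j)^2=x_2^{2i}c^{ij}$, the condition $g_2^2\in\langle c\rangle$ forces $x_2^{2i}=1$ and hence $g_2^2=1$, so all four admissible $g_2$ have square $1$ and none has square $c$. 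For $C$ the counts come straight from Lemma \ref{OQD}, namely $|\Omega_1(C)|,|\Omega_c(C)|\in\{2\gamma_1(k-2),2\gamma_2(k-2)\}$ with the assignment swapped according to $H_l=D_8$ or $Q_8$. Since $g_2^2=1$ always, $g^2=1$ iff $g_1^2=g_3^2$ and $g^2=c$ iff $g_1^2\ne g_3^2$ (both squares lying in $\langle c\rangle$), so $|\Omega_1(G_l)|$ and $|\Omega_c(G_l)|$ are bilinear combinations of the three local counts. For $n\ge 2$ these collapse to $8\bigl(|\Omega_1(C)|+|\Omega_c(C)|\bigr)=8\cdot 2^{2k-3}=2^{2k}$ for both $|\Omega_1(G_l)|$ and $|\Omega_c(G_l)|$, giving (i); for $n=1$ the asymmetric counts $3,1$ together with the identities $\gamma_1(k-2)\pm\gamma_2(k-2)$ produce $2^{2k}\pm 2^{k+1}$, the sign being controlled by whether $H_l=D_8$ or $Q_8$, which is exactly (i).

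For part (ii) the central observation is that $A\cong M_2(n+1,1)$ has parameters $(u,v)=(n+1,1)$, which is either $(2,1)$ (when $n=1$) or satisfies $u>2$ (when $n\ge 2$); in either case Lemma \ref{USC} applies with $H=A$ and $K$ the product of the remaining central factors, giving $1+\alpha g\widehat{G'}\in S_{G'}$ for every $g\in\Omega_c(G_l)$ and $\alpha\in F$. Consequently the special case of Lemma \ref{S2} yields $|V_*(FG_l)|=|F|^{(|G_l|+|\Omega_1(G_l)|-|\Omega_c(G_l)|)/4}\cdot|V_*(F\overline{G_l})|$. I would then identify $\overline{G_l}\cong\mathbb{Z}_{2^n}\times\mathbb{Z}_{2^{m+1}}\times(\mathbb{Z}_2)^{2k-2}$ (using $D_8/\langle c\rangle\cong Q_8/\langle c\rangle\cong\mathbb{Z}_2\times\mathbb{Z}_2$), whence $|\Omega_1(\overline{G_l})|=2^{2k}$ and $|\overline{G_l}^2[2]|$ equals $4$ when $n\ge 2$ and $2$ when $n=1$; this last quantity is precisely the source of the leading coefficient $4$ versus $2$. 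Substituting into Lemma \ref{AU} and then into the displayed formula, a routine exponent computation confirms $|V_*(FG_l)|=4|F|^{(|G_l|+|\Omega_1(G_l)|)/2-1}$ for $n\ge 2$ and $2|F|^{(|G_l|+|\Omega_1(G_l)|)/2-1}$ for $n=1$, as claimed.

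I expect the main obstacle to be the bookkeeping in part (i), and in particular the $n=1$ case: one must carry the $D_8$-versus-$Q_8$ interchange of $\gamma_1(k-2)$ and $\gamma_2(k-2)$ through the asymmetric $A$-counts to land on the exact values $2^{2k}\pm 2^{k+1}$, and one must confirm that the chosen transversals really give a bijection so that no spurious factor of $2$ enters the $\Omega$-counts. The square computation for the $j=1$ generators of $A$, where the split between $n=1$ and $n\ge 2$ first arises, is the most delicate point; once these counts are secured, part (ii) is essentially a substitution into Lemmas \ref{USC}, \ref{S2} and \ref{AU}.
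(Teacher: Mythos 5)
Your proposal is correct and follows essentially the same route as the paper: the same transversal parametrisation of $G_l$ across the central factors (the paper merely merges your $g_1$ and $g_2$ into a single component), the same local square computations combined with Lemma \ref{OQD} to get the counts $2^{2k}\pm 2^{k+1}$ and $2^{2k}$, and the same chain Lemma \ref{USC} $\Rightarrow$ Lemma \ref{S2} $\Rightarrow$ Lemma \ref{AU} applied to $\overline{G_l}\cong\mathbb{Z}_{2^n}\times\mathbb{Z}_{2^{m+1}}\times\mathbb{Z}_2^{(2k-2)}$ for part (ii), with the coefficient $2$ versus $4$ coming from $|\overline{G_l}^2[2]|$ exactly as you identify.
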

\begin {proof}
(i) Let $G_l=\langle x_1 ,y_1\rangle\Y\langle x_2,y_2\rangle\Y\cdots\Y\langle x_k,y_k\rangle$,
where $\langle x_i ,y_i \rangle\cong D_8, i=4,\ldots,k$,
$\langle  x_3 ,y_3 \rangle=H_l$ and
$$\langle x_1,y_1 \ | \ x_1^{2^{n+1}}=y_1^{2}=1, x_1^{y_1}=x_1^{1+2^{n}}\rangle\cong M_2(n+1,1),$$
$$\langle x_2,y_2 \ | \ x_2^{2^{m+1}}=y_2^{2}=1, [x_2,y_2]=c\rangle\cong M_2(m+1,1,1).$$

For any $g\in G$, we may let
$g=g_1g_2$, where $g_1=x_1^{i_1}y_1^{j_1}x_2^{i_2}y_2^{j_2}$, $0\leq i_1< 2^n$, $0\leq i_2< 2^{m+1}$, $j_1, j_2=0$ or $1$ and $g_2\in \langle x_3,y_3,\ldots,x_k,y_k\rangle$.

If $g_1^2=1$, then $ x_1^{2i_1}x_2^{2i_2}c^{i_1j_1+i_2j_2}=1$. Hence  $i_2=0$ or $2^{m}$,  and $x_1^{2i_1+2^ni_1j_1}=1$. From this, we have $i_1=0$ if $j_1=0$,  $i_1=0$ or $2^{n-1}$ if $n=1$ and $j_1=1$,
$i_1=0$ if $n\geq 2$ and $j_1=1$.
If $g_1^2=c$, then $i_2=0$ or $2^{m}$,  and $x_1^{2i_1+2^n(i_1j_1-1)}=1$. From this, we have $i_1=2^{n-1}$ if $j_1=0$,  $i_1$ is non-value if $n=1$ and $j_1=1$,
$i_1=2^{n-1}$ if $n\geq 2$ and $j_1=1$.
 By Lemma \ref{OQD}, we have the following results:

(1) If $n=1$, then
we have  $\Omega_1(G_1)=24\gamma_1(k-2)+8\gamma_2(k-2)=2^{2k}+2^{k+1}=\Omega_c(G_2)$
and  $\Omega_c(G_1)=8\gamma_1(k-2)+24\gamma_2(k-2)=2^{2k}-2^{k+1}=\Omega_1(G_2)$.

(2) If $n\geq 2$,  then
we have  $\Omega_1(G_l)=16\gamma_1(k-2)+16\gamma_2(k-2)=2^{2k}=\Omega_c(G_l)$.

(ii)  According to Lemma \ref{USC}, we have  $1+\alpha g\widehat{G_l'}\in S_{G_l'}$ for any $g\in \Omega_c(G_l)$
and $\alpha \in F$. Hence $|V_*(FG_l)|=|F|^{\frac{|G_l|+|\Omega_1(G_l)|-|\Omega_c(G_l)|}{4}}\cdot|V_*(F\overline{G}_l)|$ by Lemma \ref{S2}.
 Note that $\overline{G}_l\cong \mathbb{Z}_{2^n}\times \mathbb{Z}_{2^{m+1}}\times\mathbb{Z}_2^{(2k-2)}$. From this, it follows that
$$|V_*(FG_l)|=\left\{ \begin{aligned}
          &2|F|^{2^{n+m+2k-1}+2^{2k-1}+\epsilon_l2^{k}-1}= 2|F|^{\frac{|G_l|+|\Omega_1(G_l)|}{2}-1},\ \mathrm{if}\   n=1.\\
          &4|F|^{2^{n+m+2k-1}+2^{2k-1}-1}=4|F|^{\frac{|G_l|+|\Omega_1(G_l)|}{2}-1}, \ \mathrm{if}\  n\geq 2.
                                           \end{aligned} \right.$$
where $\epsilon_1=1$ and $\epsilon_2=-1$.

\end{proof}

According to Theorem \ref{UMMD}, it is easy to obtain the following result.

\begin{corollary} \label{UMMD2}
Suppose that $G_l\cong M_2(m+1,1)\Y M_2(n+1,1,1)\Y H_l\Y D_8^{\Y (k-3)}$ ,
where $n> m\geq 1$, $k\geq 3$ and $l=1, 2$, $H_1\cong D_8$ and $H_2\cong Q_8$. Then
$|V_*(FG_l)|=\left\{ \begin{aligned}
          &2|F|^{\frac{|G_l|+|\Omega_1(G_l)|}{2}-1}, \ \mathrm{if}\   m=1.\\
          &4|F|^{\frac{|G_l|+|\Omega_1(G_l)|}{2}-1}, \ \mathrm{if}\   m\geq 2.
                                           \end{aligned} \right.$
\end{corollary}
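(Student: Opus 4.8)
The plan is to run the argument of Theorem \ref{UMMD} essentially verbatim, with the roles of $n$ and $m$ interchanged, so that the factor $M_2(m+1,1)$ now plays the part that $M_2(n+1,1)$ played there; consequently the critical threshold becomes $m=1$ versus $m\geq 2$ rather than $n=1$ versus $n\geq 2$. First I would check that Lemma \ref{USC} applies to the factor $H=M_2(m+1,1)$. Writing $G_l=H\Y K$ with $K$ the central product of the remaining factors, we have $H\cap K=\langle c\rangle$ and $H\cong M_2(u,v)$ with $u=m+1$, $v=1$; since $(u,v)=(2,1)$ when $m=1$ and $u=m+1>2$ when $m\geq 2$, the hypothesis of Lemma \ref{USC} is met in every case. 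Hence $1+\alpha g\widehat{G_l'}\in S_{G_l'}$ for all $g\in\Omega_c(G_l)$ and $\alpha\in F$, so the ``in particular'' branch of Lemma \ref{S2} gives
$$|V_*(FG_l)|=|F|^{\frac{|G_l|+|\Omega_1(G_l)|-|\Omega_c(G_l)|}{4}}\cdot|V_*(F\overline{G}_l)|.$$

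Next I would compute the two involution-type counts. Writing a general element as $g=g_1g_2$ with $g_1$ supported on the $M_2(m+1,1)\Y M_2(n+1,1,1)$ part and $g_2$ on $H_l\Y D_8^{\Y(k-3)}$, solving $g_1^2=1$ and $g_1^2=c$ exactly as in Theorem \ref{UMMD}(i) and feeding the resulting admissible coefficients into Lemma \ref{OQD} yields $|\Omega_1(G_1)|=2^{2k}+2^{k+1}=|\Omega_c(G_2)|$ and $|\Omega_c(G_1)|=2^{2k}-2^{k+1}=|\Omega_1(G_2)|$ when $m=1$, while $|\Omega_1(G_l)|=|\Omega_c(G_l)|=2^{2k}$ when $m\geq 2$. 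The point is that the extra freedom in $g_1$ responsible for the $\pm2^{k+1}$ correction arises precisely from the low-exponent factor $M_2(m+1,1)$ collapsing to $D_8$ at $m=1$.

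Then I would identify the abelianization. Since quotienting a central product with amalgamated $\langle c\rangle$ by $\langle c\rangle$ splits it as a direct product of the factor quotients, one obtains $\overline{G}_l\cong\mathbb{Z}_{2^m}\times\mathbb{Z}_{2^{n+1}}\times\mathbb{Z}_2^{(2k-2)}$, whence $\overline{G}_l^2\cong\mathbb{Z}_{2^{m-1}}\times\mathbb{Z}_{2^n}$ and $|\overline{G}_l^2[2]|=2$ if $m=1$ and $4$ if $m\geq 2$ (using $n>m\geq 1$, so $n\geq 2$). Applying Lemma \ref{AU} to $\overline{G}_l$ then supplies $|V_*(F\overline{G}_l)|$ together with exactly this coefficient, and substituting into the displayed formula gives the claimed value once the $\pm2^{k+1}$ contributions are absorbed into the exponent of $|V_*(F\overline{G}_l)|$.

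The main obstacle I anticipate is purely bookkeeping: confirming that the exponent $\frac{|G_l|+|\Omega_1(G_l)|-|\Omega_c(G_l)|}{4}$ added to the exponent coming from Lemma \ref{AU} collapses to $\frac{|G_l|+|\Omega_1(G_l)|}{2}-1$ uniformly in $l$, so that the choice $H_l\in\{D_8,Q_8\}$ only permutes $\Omega_1$ and $\Omega_c$ and never disturbs the leading coefficient $2$ or $4$. Tracking the $\epsilon_l2^{k}$ correction terms exactly as in the proof of Theorem \ref{UMMD}, with $\epsilon_1=1$ and $\epsilon_2=-1$, is what makes this cancellation transparent and confirms that the answer depends only on whether $m=1$ or $m\geq 2$.
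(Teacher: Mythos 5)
Your proposal is correct and follows exactly the route the paper intends: the paper proves this corollary simply by citing Theorem \ref{UMMD}, and your argument is precisely that theorem's proof rerun with the roles of $n$ and $m$ (hence the threshold $m=1$ versus $m\geq 2$) interchanged, with Lemma \ref{USC} applied to the $M_2(m+1,1)$ factor and Lemma \ref{AU} applied to $\overline{G}_l\cong\mathbb{Z}_{2^m}\times\mathbb{Z}_{2^{n+1}}\times\mathbb{Z}_2^{(2k-2)}$. The bookkeeping you outline (the $\pm 2^{k+1}$ corrections to $|\Omega_1|$, $|\Omega_c|$ and the coefficient $|\overline{G}_l^2[2]|\in\{2,4\}$) checks out.
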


\begin{theorem} \label{UM1D}
Suppose that $G_l\cong M_2(n+1,1)\Y H_l\Y D_8^{\Y (k-2)}\times\mathbb{Z}_{2^m}$ ,
where $n\geq m\geq 1$, $k\geq 2$ and $l=1, 2$, $H_1\cong D_8$ and $H_2\cong Q_8$. Then

(i) If $n=1$, $|\Omega_1(G_1)|=2^{2k+1}+2^{k+1}=|\Omega_c(G_2)|$ and $|\Omega_c(G_1)|=2^{2k+1}-2^{k+1}=|\Omega_1(G_2)|$;
If $n\geq 2$, $|\Omega_1(G_l)|=2^{2k+1}=|\Omega_c(G_l)|$, where $l=1,2$.

(ii) $|V_*(FG_l)|=\left\{ \begin{aligned}
          &|F|^{\frac{|G_l|+|\Omega_1(G_l)|}{2}-1}, \ \mathrm{if}\   n=m=1.\\
          &2|F|^{\frac{|G_l|+|\Omega_1(G_l)|}{2}-1}, \ \mathrm{if}\   n> m=1.\\
          &4|F|^{\frac{|G_l|+|\Omega_1(G_l)|}{2}-1},\   \mathrm{if}\   n\geq m\geq 2.
                                           \end{aligned} \right.$

\end{theorem}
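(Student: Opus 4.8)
The plan is to follow the template used in the proof of Theorem \ref{UMMD}, exploiting the direct decomposition $G_l = P_l \times \mathbb{Z}_{2^m}$, where $P_l := M_2(n+1,1)\Y H_l\Y D_8^{\Y(k-2)}$ and $\mathbb{Z}_{2^m}=\langle z\rangle$. The two parts then reduce to a counting problem solved by Lemma \ref{OQD} and a unit-order computation driven by Lemmas \ref{USC}, \ref{S2} and \ref{AU}.

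For part (i) I would first push the count from $G_l$ down to $P_l$. Writing a typical element as $g = g_0 z^e$ with $g_0 \in P_l$, one has $g^2 = g_0^2 z^{2e}$; since $c \in P_l$ and $P_l \cap \mathbb{Z}_{2^m} = 1$, both $g^2 = 1$ and $g^2 = c$ force $z^{2e}=1$, which has exactly the two solutions $e \in \{0,2^{m-1}\}$. Hence $|\Omega_1(G_l)| = 2|\Omega_1(P_l)|$ and $|\Omega_c(G_l)| = 2|\Omega_c(P_l)|$. To count inside $P_l$ I set $B_l := H_l\Y D_8^{\Y(k-2)}$ (so $B_1\cong D_8^{\Y(k-1)}$, $B_2\cong Q_8\Y D_8^{\Y(k-2)}$) and write $g_0 = x_1^{i_1}y_1^{j_1}u$ with $0\le i_1<2^n$, $j_1\in\{0,1\}$, $u\in B_l$. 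Using $[x_1,y_1]=c=x_1^{2^n}$ and $y_1^2=1$ gives $g_0^2 = x_1^{2i_1}c^{i_1 j_1}u^2$. As every $u\in B_l$ squares into $\langle c\rangle$, I split on $u^2=1$ versus $u^2=c$, whose multiplicities are $|\Omega_1(B_l)|$ and $|\Omega_c(B_l)|$ from Lemma \ref{OQD}; solving $x_1^{2i_1}\in\langle c\rangle$ restricts $i_1$ to $\{0,2^{n-1}\}$. The number of admissible $i_1$ for each pair $(j_1,u^2)$ is uniformly two per $u$ when $n\ge2$, yielding $|\Omega_1(P_l)|=|\Omega_c(P_l)|=2^{2k}$, hence $|\Omega_1(G_l)|=|\Omega_c(G_l)|=2^{2k+1}$. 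When $n=1$, however, $x_1^2=c$, so for $j_1=1$ a single $u$ can contribute two values of $i_1$; the totals become $3|\Omega_1(B_l)|+|\Omega_c(B_l)|$ and $|\Omega_1(B_l)|+3|\Omega_c(B_l)|$, which after substituting $\gamma_1,\gamma_2$ give $2^{2k+1}\pm 2^{k+1}$ and interchange $l=1$ with $l=2$.

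For part (ii) the key observation is that the distinguished factor $M_2(n+1,1)$ always satisfies the hypothesis of Lemma \ref{USC}: for $n\ge2$ one has $u=n+1>2$, while for $n=1$ the factor is $M_2(2,1)\cong D_8$, the case $(u,v)=(2,1)$. Thus $1+\alpha g\widehat{G'}\in S_{G'}$ for every $g\in\Omega_c(G_l)$ and $\alpha\in F$, and Lemma \ref{S2} yields $|V_*(FG_l)| = |F|^{(|G_l|+|\Omega_1(G_l)|-|\Omega_c(G_l)|)/4}\cdot|V_*(F\overline{G_l})|$. Factoring out $G'=\langle c\rangle$ collapses each central factor to its abelianization, so $\overline{G_l}\cong \mathbb{Z}_{2^n}\times\mathbb{Z}_{2^m}\times\mathbb{Z}_2^{2k-1}$, and Lemma \ref{AU} gives $|V_*(F\overline{G_l})| = |\overline{G_l}^2[2]|\cdot|F|^{(|\overline{G_l}|+|\Omega_1(\overline{G_l})|)/2-1}$. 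Here $\overline{G_l}^2\cong\mathbb{Z}_{2^{n-1}}\times\mathbb{Z}_{2^{m-1}}$, so the torsion coefficient $|\overline{G_l}^2[2]|$ equals $1$ when $n=m=1$, $2$ when $n>m=1$, and $4$ when $n\ge m\ge2$; this is precisely the leading constant in the three cases. It remains to add the two exponents and check equality with $\tfrac12(|G_l|+|\Omega_1(G_l)|)-1$, using $|G_l|=2^{n+m+2k}$, which is routine.

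The hard part will be the bookkeeping in part (i) for $n=1$: because $x_1^2=c$, the contributions of $\Omega_1(B_l)$ and $\Omega_c(B_l)$ enter the two counts asymmetrically, so the even/odd parity structure delivered by Lemma \ref{OQD} must be tracked exactly in order to recover the $\pm 2^{k+1}$ discrepancy between $|\Omega_1(G_l)|$ and $|\Omega_c(G_l)|$ and the corresponding swap between $l=1$ and $l=2$. Once part (i) is in hand, part (ii) is essentially mechanical given Lemmas \ref{USC}, \ref{S2} and \ref{AU}.
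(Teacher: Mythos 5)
Your proposal is correct and follows essentially the same route as the paper: the same element-by-element count of $\Omega_1$ and $\Omega_c$ via Lemma \ref{OQD} (the paper folds $z^s$ into the $M_2(n+1,1)$-part rather than factoring out $\mathbb{Z}_{2^m}$ first, but the bookkeeping is identical, including the $n=1$ asymmetry giving $2^{2k+1}\pm 2^{k+1}$), followed by Lemma \ref{USC} (covering $n=1$ through the $(u,v)=(2,1)$ case), Lemma \ref{S2}, and Lemma \ref{AU} with the constant $|\overline{G_l}^2[2]|\in\{1,2,4\}$. No gaps.
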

\begin {proof}
(i) Let $G_l=\langle x_1 ,y_1\rangle\Y\langle x_2,y_2\rangle\Y\cdots\Y\langle x_k,y_k\rangle\times\langle z\rangle$,
 where  $\langle z\rangle\cong \mathbb{Z}_{2^{m}}$ and
$$\langle x_1,y_1 \ | \ x_1^{2^{n+1}}=y_1^{2}=1, x_1^{y_1}=x_1^{1+2^{n}}\rangle\cong M_2(n+1,1),$$
$$\langle  x_2 ,y_2  \rangle=H_l,l=1,2,\ \mathrm{and}\ \langle x_i ,y_i \rangle\cong D_8, i=3,\ldots,k.
$$
For any $g\in G$, let
$g=g_1g_2$, where $g_1=x_1^{i_1}y_1^{j_1}z^s$, $0\leq i_1< 2^n$, $0\leq j_1< 2$, $0\leq s< 2^m$ and $g_2\in \langle x_2,y_2,\ldots,x_k,y_k\rangle$.

If $g_1^2=1$, that is, $x_1^{2i_1}c^{i_1j_1}z^{2s}=1$, then $s=0$ or $2^{m-1}$. Furthermore, we have $(i_1,j_1)=(0,0),(0,1),(1,1)$ when $n=1$;
 $(i_1,j_1)=(0,0),(0,1)$ when $n\geq 2$.

If $g_1^2=c$, that is, $x_1^{2i_1}c^{i_1j_1}z^{2s}=c$, then $s=0$ or $2^{m-1}$. Furthermore, we have $(i_1,j_1)=(1,0)$ when $n=1$;
 $(i_1,j_1)=(2^{n-1},0),(2^{n-1},1)$ when $n\geq 2$.
 By Lemma \ref{OQD}, we have the following results:

(1) If $n=1$, then
we have  $\Omega_1(G_1)=12\gamma_1(k-1)+4\gamma_2(k-1)=2^{2k+1}+2^{k+1}=\Omega_c(G_2)$
and  $\Omega_c(G_1)=4\gamma_1(k-1)+12\gamma_2(k-1)=2^{2k+1}-2^{k+1}=\Omega_1(G_2)$.

(2) If $n\geq 2$,  then
we have  $\Omega_1(G_l)=8\gamma_1(k-1)+8\gamma_2(k-1)=2^{2k+1}=\Omega_c(G_l)$.

(ii)  According to Lemma \ref{USC},  we have  $1+\alpha g\widehat{G_l'}\in S_{G_l'}$ for any $g\in G_l$ and $\alpha \in F$. From this, by Lemma \ref{S2}, we obtain
$$|V_*(FG_l)|=|F|^{\frac{|G_l|+|\Omega_1(G_l)|-|\Omega_c(G_l)|}{4}}\cdot|V_*(F\overline{G}_l)|=\left\{ \begin{aligned}
          &|F|^{2^{n+m+2k-2}+2^{k}\varepsilon_l}\cdot|V_*(F\overline{G}_l)| , \ \mathrm{if}\  n=1.\\
          &|F|^{2^{n+m+2k-2}}\cdot|V_*(F\overline{G}_l)| , \ \mathrm{if}\   n\geq 2.
                                           \end{aligned} \right.$$
where $\varepsilon_1=1$ and $\varepsilon_2=-1$.
Note that $\overline{G}\cong \mathbb{Z}_{2^n}\times \mathbb{Z}_{2^{m}}\times\mathbb{Z}_2^{(2k-1)}$.
According to Lemma \ref{AU}, we have
$$|V_{*}(F\overline{G})=\left\{ \begin{aligned}
          &|F|^{2^{n+m+2k-2}+2^{2k}-1}, \ \mathrm{if}\    n=m=1.\\
          &2|F|^{2^{n+m+2k-2}+2^{2k}-1}, \ \mathrm{if}\   n> m=1.\\
                    &4|F|^{2^{n+m+2k-2}+2^{2k}-1}, \ \mathrm{if}\  n\geq m\geq 2.
                                           \end{aligned} \right.$$
From this, we have

$|V_*(FG_l)|=\left\{ \begin{aligned}
          &|F|^{2^{n+m+2k-1}+2^{2k}+2^k\varepsilon_l-1} =|F|^{\frac{|G_l|+|\Omega_1(G_l)|}{2}-1}, \ \mathrm{if}\  n=m=1.\\
                   &2|F|^{2^{n+m+2k-1}+2^{2k}-1} =2|F|^{\frac{|G_l|+|\Omega_1(G_l)|}{2}-1}, \ \mathrm{if}\   n>m=1.\\
          &4|F|^{2^{n+m+2k-1}+2^{2k}-1} =4|F|^{\frac{|G_l|+|\Omega_1(G_l)|}{2}-1},\ \mathrm{if}\  n\geq m\geq 2
                                           \end{aligned} \right.$.

\end{proof}

Obviously, by Lemma \ref{UDP}, we do not consider an elementary abelian $2$-group as a direct product term of $G$. According to Theorem \ref{UM1D}, the unitary subgroups of (i) and (ii) of Corollary \ref{ST5} are as follows.

\begin{corollary}\label{UM1DC}
Suppose that $G_l\cong M_2(n+1,1)\Y H_l\Y D_8^{\Y (k-2)}\times\mathbb{Z}_{2^{m+1}}$ ,
where $n\geq m\geq 1$, $k\geq 2$ and $l=1, 2$, $H_1\cong D_8$ and $H_2\cong Q_8$. Then

$$|V_*(FG_l)|=\left\{ \begin{aligned}
          &2|F|^{\frac{|G_l|+|\Omega_1(G_l)|}{2}-1}, \ \mathrm{if}\   n=m=1.\\
          &4|F|^{\frac{|G_l|+|\Omega_1(G_l)|}{2}-1}, \ \mathrm{if}\   n\geq 2.
                                                 \end{aligned} \right.$$
\end{corollary}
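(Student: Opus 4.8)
The plan is to follow the proof of Theorem~\ref{UM1D} essentially verbatim, since the group $G_l$ in this corollary differs from the one treated there only by replacing the central direct factor $\mathbb{Z}_{2^m}$ with $\mathbb{Z}_{2^{m+1}}$. First I would write $G_l=\langle x_1,y_1\rangle\Y\langle x_2,y_2\rangle\Y\cdots\Y\langle x_k,y_k\rangle\times\langle z\rangle$, where $\langle x_1,y_1\rangle\cong M_2(n+1,1)$, $\langle x_2,y_2\rangle=H_l$, $\langle x_i,y_i\rangle\cong D_8$ for $i\geq 3$, and $\langle z\rangle\cong\mathbb{Z}_{2^{m+1}}$, and then decompose an arbitrary element as $g=g_1g_2$ with $g_1=x_1^{i_1}y_1^{j_1}z^s$ and $g_2\in\langle x_2,\ldots,y_k\rangle$, exactly as in that theorem.

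The first step is to observe that the orders $|\Omega_1(G_l)|$ and $|\Omega_c(G_l)|$ are unchanged. Indeed $g_1^2=x_1^{2i_1}c^{i_1j_1}z^{2s}$, and since $\langle z\rangle$ is a direct factor, the requirement $z^{2s}=1$ forces $s\in\{0,2^m\}$, i.e.\ exactly two admissible values of $s$, precisely as $s\in\{0,2^{m-1}\}$ gave two values in Theorem~\ref{UM1D}. The analysis of the admissible pairs $(i_1,j_1)$ is therefore identical, and combining it with Lemma~\ref{OQD} for the $g_2$-part reproduces the same counts: for $n=1$ one gets $|\Omega_1(G_1)|=2^{2k+1}+2^{k+1}=|\Omega_c(G_2)|$ and $|\Omega_c(G_1)|=2^{2k+1}-2^{k+1}=|\Omega_1(G_2)|$, while for $n\geq 2$ one gets $|\Omega_1(G_l)|=2^{2k+1}=|\Omega_c(G_l)|$.

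Next I would apply Lemma~\ref{USC} with $H=\langle x_1,y_1\rangle\cong M_2(n+1,1)$: since $(u,v)=(n+1,1)$ satisfies $(u,v)=(2,1)$ when $n=1$ and $u>2$ when $n\geq 2$, it yields $1+\alpha g\widehat{G_l'}\in S_{G_l'}$ for every $g\in\Omega_c(G_l)$ and $\alpha\in F$. Lemma~\ref{S2} then gives $|V_*(FG_l)|=|F|^{\frac{|G_l|+|\Omega_1(G_l)|-|\Omega_c(G_l)|}{4}}\cdot|V_*(F\overline{G}_l)|$, and the only genuinely new computation is the last factor. Here $\overline{G}_l\cong\mathbb{Z}_{2^n}\times\mathbb{Z}_{2^{m+1}}\times\mathbb{Z}_2^{(2k-1)}$, so $\overline{G}_l^2\cong\mathbb{Z}_{2^{n-1}}\times\mathbb{Z}_{2^m}$; because $m\geq 1$ keeps the second summand nontrivial, one finds $|\overline{G}_l^2[2]|=2$ when $n=1$ and $|\overline{G}_l^2[2]|=4$ when $n\geq 2$. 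Feeding this into Lemma~\ref{AU} and simplifying the exponents (a routine check, which collapses the total exponent to $\tfrac12(|G_l|+|\Omega_1(G_l)|)-1$), the leading coefficient comes out to be exactly $|\overline{G}_l^2[2]|$, giving $2|F|^{\frac12(|G_l|+|\Omega_1(G_l)|)-1}$ for $n=m=1$ and $4|F|^{\frac12(|G_l|+|\Omega_1(G_l)|)-1}$ for $n\geq 2$.

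There is no real obstacle here beyond careful bookkeeping; the single point that must be verified with care is the claim that enlarging $\langle z\rangle$ from order $2^m$ to $2^{m+1}$ leaves the number of square roots of $1$ and of $c$ fixed while doubling $|G_l|$. This is exactly what shifts the discriminating quantity from $|\overline{G}_l^2[2]|\in\{1,2,4\}$ in Theorem~\ref{UM1D} to $\{2,4\}$ here, and it explains why the three-case formula of the theorem collapses to the two-case formula of the corollary.
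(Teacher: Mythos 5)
Your proposal is correct and follows exactly the route the paper intends: the paper derives this corollary from Theorem~\ref{UM1D} by the same substitution $\mathbb{Z}_{2^m}\mapsto\mathbb{Z}_{2^{m+1}}$, and your verification that $|\Omega_1(G_l)|$ and $|\Omega_c(G_l)|$ are unchanged while $|\overline{G}_l^2[2]|$ jumps from $\{1,2,4\}$ to $\{2,4\}$ is precisely the point that produces the two-case formula. The exponent bookkeeping you describe checks out in all cases.
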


\begin{theorem} \label{UM1D2}
Suppose that $G_l\cong M_2(m+1,1)\Y H_l\Y D_8^{\Y (k-2)}\times\mathbb{Z}_{2^n}$ ,
where $n> m\geq 1$, $k\geq 2$ and $l=1, 2$, $H_1\cong D_8$ and $H_2\cong Q_8$. Then

(i) If $m=1$, $|\Omega_1(G_1)|=2^{2k+1}+2^{k+1}=|\Omega_c(G_2)|$ and $|\Omega_c(G_1)|=2^{2k+1}-2^{k+1}=|\Omega_1(G_2)|$;
If $m\geq 2$, $|\Omega_1(G_l)|=2^{2k+1}=|\Omega_c(G_l)|$.

(ii) $|V_*(FG_l)|=\left\{ \begin{aligned}
          &2|F|^{\frac{|G_l|+|\Omega_1(G_l)|}{2}-1}, \ \mathrm{if}\   n>m=1.\\
          &4|F|^{\frac{|G_l|+|\Omega_1(G_l)|}{2}-1}, \ \mathrm{if}\   n> m\geq 2.
                                                     \end{aligned} \right.$

\end{theorem}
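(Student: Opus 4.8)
The plan is to follow the template of Theorem~\ref{UM1D}, exploiting that $G_l$ is again a central product of an $M_2$-factor with dihedral/quaternion factors, times one cyclic direct factor --- only now the $M_2$-factor is $M_2(m+1,1)$ and the cyclic direct factor is $\mathbb{Z}_{2^n}$, with $n>m\geq 1$ forcing $n\geq 2$ throughout. First I would fix coordinates: write $G_l=\langle x_1,y_1\rangle\Y\langle x_2,y_2\rangle\Y\cdots\Y\langle x_k,y_k\rangle\times\langle z\rangle$, where $\langle x_1,y_1\rangle\cong M_2(m+1,1)$ (so $c=x_1^{2^m}$), $\langle x_2,y_2\rangle=H_l$, $\langle x_i,y_i\rangle\cong D_8$ for $i\geq 3$, and $\langle z\rangle\cong\mathbb{Z}_{2^n}$. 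Every element is uniquely $g=g_1g_2$ with $g_1=x_1^{i_1}y_1^{j_1}z^s$, $0\leq i_1<2^m$, $j_1\in\{0,1\}$, $0\leq s<2^n$, and $g_2$ ranging over the central product $\mathrm{rest}:=H_l\Y D_8^{\Y(k-2)}$ of $k-1$ factors.

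For part~(i), since the three blocks of factors pairwise commute, $g^2=g_1^2g_2^2$ with $g_2^2\in\{1,c\}$, so I would count via $|\Omega_1(G_l)|=A_1|\Omega_1(\mathrm{rest})|+A_c|\Omega_c(\mathrm{rest})|$ and the analogous expression for $\Omega_c$, where $A_1,A_c$ count the $g_1$ with $g_1^2=1$, resp.\ $g_1^2=c$. A direct computation gives $g_1^2=x_1^{2i_1+2^mi_1j_1}z^{2s}$, so $z^{2s}=1$ always forces $s\in\{0,2^{n-1}\}$ (two choices, available since $n\geq 2$), and the $x_1$-part splits on $m$: when $m=1$ the order-$4$ relation $x_1^2=c$ makes every $y_1$-involving element ($j_1=1$) square into $\langle z\rangle$, giving $A_1=6$, $A_c=2$, whereas for $m\geq 2$ one gets $A_1=A_c=4$. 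Feeding in Lemma~\ref{OQD} (applied to $\mathrm{rest}$ with $k$ replaced by $k-1$, so $|\Omega_1(\mathrm{rest})|=2\gamma_1(k-1)$, $|\Omega_c(\mathrm{rest})|=2\gamma_2(k-1)$ when $H_l=D_8$ and the reversed values when $H_l=Q_8$) and using $\gamma_1(k-1)+\gamma_2(k-1)=2^{2k-2}$ together with $\gamma_1(k-1)-\gamma_2(k-1)=2^{k-1}$ yields the stated counts; the $Q_8$ case merely interchanges $\gamma_1,\gamma_2$, which swaps $\Omega_1\leftrightarrow\Omega_c$ and explains the equalities $|\Omega_1(G_1)|=|\Omega_c(G_2)|$, $|\Omega_c(G_1)|=|\Omega_1(G_2)|$.

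For part~(ii), I would absorb $\langle z\rangle$ into the complementary factor, writing $G_l=H\Y K$ with $H=M_2(m+1,1)$ and $K=H_l\Y D_8^{\Y(k-2)}\times\langle z\rangle$, so that $H\cap K=\langle c\rangle$ and the hypotheses of Lemma~\ref{USC} hold ($(u,v)=(2,1)$ when $m=1$, and $u=m+1>2$ when $m\geq 2$). Hence $1+\alpha g\widehat{G_l'}\in S_{G_l'}$ for every $g\in\Omega_c(G_l)$ and $\alpha\in F$, and the ``in particular'' clause of Lemma~\ref{S2} gives $|V_*(FG_l)|=|F|^{(|G_l|+|\Omega_1(G_l)|-|\Omega_c(G_l)|)/4}\cdot|V_*(F\overline{G}_l)|$. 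Passing to $\overline{G}_l=G_l/\langle c\rangle$ turns the central product into a direct product, so $\overline{G}_l\cong\mathbb{Z}_{2^n}\times\mathbb{Z}_{2^m}\times\mathbb{Z}_2^{(2k-1)}$; I then apply Lemma~\ref{AU}, noting $|\overline{G}_l^{\,2}[2]|=2$ if $m=1$ (only the $\mathbb{Z}_{2^n}$-part survives squaring) and $=4$ if $m\geq 2$. Substituting the counts from part~(i) and simplifying collapses the exponent to $\tfrac12(|G_l|+|\Omega_1(G_l)|)-1$, with leading constant $2$ for $m=1$ and $4$ for $m\geq 2$; note the $l$-dependent term $\varepsilon_l2^{k}$ in the exponent cancels precisely because $|\Omega_1(G_l)|$ also depends on $l$, so the final formula is uniform in $l$.

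The routine-but-delicate heart of the argument is the case split in part~(i): the count $A_1,A_c$ genuinely depends on whether $x_1$ has order $4$ or higher, i.e.\ on $m=1$ versus $m\geq 2$, and this single dichotomy propagates all the way to the leading constant in part~(ii). The main obstacle I anticipate is bookkeeping the shared central element $c=x_1^{2^m}$ correctly: because $c$ lies in both $H$ and $\mathrm{rest}$, I must keep the transversal range $0\leq i_1<2^m$ so that representations stay unique, while still allowing $g_1^2$ to realize $c$ (via $i_1=2^{m-1}$). Everything else is parallel to Theorems~\ref{UM1D} and~\ref{USDQ}, so once the counting is pinned down the field-theoretic computation via Lemmas~\ref{S2} and~\ref{AU} is mechanical.
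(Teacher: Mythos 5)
Your proposal is correct and follows essentially the same route as the paper: part (i) by the element-squaring count that the paper inherits from Theorem \ref{UM1D} (with the roles of $n$ and $m$ swapped, so the dichotomy is on $m$), and part (ii) by applying Lemma \ref{USC} to the decomposition $G_l=M_2(m+1,1)\Y K$, then Lemma \ref{S2} and Lemma \ref{AU} to $\overline{G}_l\cong\mathbb{Z}_{2^n}\times\mathbb{Z}_{2^m}\times\mathbb{Z}_2^{(2k-1)}$. Your explicit verification of the counts $A_1,A_c$ and of the cancellation of the $\varepsilon_l 2^k$ term is consistent with the paper's (terser) computation.
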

\begin {proof}
(i) follows from (i) of Theorem \ref{UM1D}.

(ii)  According to Lemma \ref{USC},  we have  $1+\alpha g\widehat{G_l'}\in S_{G_l'}$ for any $g\in G_l$ and $\alpha \in F$. From this, by Lemma \ref{S2}, we obtain
$$|V_*(FG_l)|=|F|^{\frac{|G_l|+|\Omega_1(G_l)|-|\Omega_c(G_l)|}{4}}\cdot|V_*(F\overline{G}_l)|=\left\{ \begin{aligned}
          &|F|^{2^{n+m+2k-2}+2^{k}\varepsilon_l}\cdot|V_*(F\overline{G}_l)| , \ \mathrm{if}\  m=1.\\
          &|F|^{2^{n+m+2k-2}}\cdot|V_*(F\overline{G}_l)| , \ \mathrm{if}\   m\geq 2.
                                           \end{aligned} \right.$$
where $\varepsilon_1=1$ and $\varepsilon_2=-1$.
Note that $\overline{G}_l\cong \mathbb{Z}_{2^n}\times \mathbb{Z}_{2^{m}}\times\mathbb{Z}_2^{(2k-1)}$.
According to Lemma \ref{AU}, we have
$$|V_{*}(F\overline{G}_l)=\left\{ \begin{aligned}
          &2|F|^{2^{n+m+2k-2}+2^{2k}-1}, \ \mathrm{if}\    m=1.\\
          &4|F|^{2^{n+m+2k-2}+2^{2k}-1}, \ \mathrm{if}\   m\geq 2.
                                                              \end{aligned} \right.$$
From this, we have

$|V_*(FG_l)|=\left\{ \begin{aligned}
          &2|F|^{2^{n+m+2k-1}+2^{2k}+2^k\varepsilon_l-1} =2|F|^{\frac{|G_l|+|\Omega_1(G_l)|}{2}-1}, \ \mathrm{if}\  m=1.\\
          &4|F|^{2^{n+m+2k-1}+2^{2k}-1} =4|F|^{\frac{|G_l|+|\Omega_1(G_l)|}{2}-1},\ \mathrm{if}\  m\geq 2.
                                           \end{aligned} \right.$

\end{proof}

Obviously, by Lemma \ref{UDP}, we do not consider an elementary abelian $2$-group as a direct product term of $G$. Furthermore, the unitary subgroups of (i) and (ii) of Theorem \ref{ST4} are the special cases of Theorem \ref{UM1D2}.

\begin{theorem} \label{M11DQ}
Suppose that $G_l\cong M_2(m+1,1,1)\Y H_l\Y D_8^{\Y (k-2)}\Y\mathbb{Z}_{2^n}$ ,
where $n\geq m\geq 1$, $k\geq 2$ and $l=1, 2$, $H_1\cong D_8$ and $H_2\cong Q_8$. Then

(i) If $n=1$, $\Omega_1(G_1)=2^{2k}+2^{k+1}=\Omega_c(G_2)$
and  $\Omega_c(G_1)=2^{2k}-2^{k+1}=\Omega_1(G_2)$;
If $n\geq 2$, $\Omega_1(G_l)=2^{2k+1}=\Omega_c(G_l)$.

(ii) $|V_*(FG_1)|=\left\{ \begin{aligned}
          &2|F|^{\frac{|G_1|+|\Omega_1(G_1)|}{2}-1}, \ \mathrm{if}\    n=1\ \mathrm{or}\ n=2 \ \mathrm{and}\ k\geq 2.\\
          &4|F|^{\frac{|G_1|+|\Omega_1(G_1)|}{2}-1}, \   \mathrm{ otherwise}.
                                          \end{aligned} \right.$

$|V_*(FG_2)|=\left\{ \begin{aligned}
           &8|F|^{\frac{|G_2|+|\Omega_1(G_2)|}{2}-1}, \ \mathrm{if}\    n=1\ \mathrm{and}\ k=2.\\
          &2|F|^{\frac{|G_2|+|\Omega_1(G_2)|}{2}-1}, \ \mathrm{if}\    1\leq n\leq 2\ \mathrm{and}\ k\geq 3.\\
          &4|F|^{\frac{|G_2|+|\Omega_1(G_2)|}{2}-1}, \ \mathrm{ otherwise}.
                                           \end{aligned} \right.$
\end{theorem}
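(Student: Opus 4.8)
The plan is to derive everything from Lemma \ref{S2}, which reduces $|V_*(FG_l)|$ to the data $|G_l|$, $|\Omega_1(G_l)|$, $|\Omega_c(G_l)|$, the invariant $\Theta(G_l)$, and $|V_*(F\overline{G}_l)|$ with $\overline{G}_l=G_l/G_l'$. Killing the amalgamated $\langle c\rangle$ turns every central product into a direct product, so $\overline{G}_l\cong\mathbb{Z}_{2^{m+1}}\times\mathbb{Z}_{2^{n-1}}\times\mathbb{Z}_2^{(2k-1)}$ (with $\mathbb{Z}_{2^{0}}$ read as trivial when $n=1$), whose unitary subgroup is given by Lemma \ref{AU}. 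The decisive elementary invariant is $|\overline{G}_l^2[2]|$: it equals $4$ precisely when $n\geq 3$ (both $\mathbb{Z}_{2^{m+1}}$ and $\mathbb{Z}_{2^{n-1}}$ are big) and equals $2$ when $n\in\{1,2\}$ (only $\mathbb{Z}_{2^{m+1}}$ survives squaring). This dichotomy is the ultimate source of the coefficients $2$ versus $4$.

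For part (i) I would write $G_l=\langle x_1,y_1,z\rangle\Y\bigl(H_l\Y D_8^{\Y(k-2)}\bigr)$, where $z^{2^{n-1}}=c$ and the two factors meet in $\langle c\rangle$. Since they commute and overlap only in $\langle c\rangle$, the multiplication map is $2$-to-$1$, whence $|\Omega_1(G_l)|=\tfrac12(A_1B_1+A_cB_c)$ and $|\Omega_c(G_l)|=\tfrac12(A_1B_c+A_cB_1)$, where $A_\bullet$ counts the $g_1\in\langle x_1,y_1,z\rangle$ with $g_1^2\in\{1,c\}$ and $B_\bullet=|\Omega_\bullet(H_l\Y D_8^{\Y(k-2)})|$ is supplied by Lemma \ref{OQD}. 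Computing $g_1^2=x_1^{2i_1}z^{2s+2^{n-1}i_1j_1}$ and using $\langle x_1^2\rangle\cap\langle z\rangle=1$ gives $A_1=A_c=8$ when $n\geq 2$, but $A_1=8$, $A_c=0$ when $n=1$, since no element of $M_2(m+1,1,1)$ squares to the commutator $c$. Substituting $B_\bullet=2\gamma_1(k-1)$ or $2\gamma_2(k-1)$ and simplifying with $\gamma_1(k-1)+\gamma_2(k-1)=2^{2k-2}$, $\gamma_1(k-1)-\gamma_2(k-1)=2^{k-1}$ yields the stated counts, including the $\pm 2^{k+1}$ corrections for $n=1$.

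For part (ii) I would split on whether the generic membership $1+\alpha g\widehat{G_l'}\in S_{G_l'}$ holds for all $g\in\Omega_c(G_l)$, $\alpha\in F$, forcing $\Theta(G_l)=|F|^{|\Omega_c(G_l)|/2}$. This holds in two situations: whenever a factor $D_8\cong M_2(2,1)$ is present (so Lemma \ref{USC} applies), which is automatic for $G_1$ for all $k\geq 2$ and holds for $G_2$ once $k\geq 3$; and whenever $n\geq 3$, where Lemma \ref{USZ} applies to $\langle z\rangle$. In each such case Lemma \ref{S2} gives $|V_*(FG_l)|=|F|^{(|G_l|+|\Omega_1|-|\Omega_c|)/4}\cdot|V_*(F\overline{G}_l)|$, so the leading coefficient is exactly $|\overline{G}_l^2[2]|$, i.e. $4$ for $n\geq 3$ and $2$ for $n\in\{1,2\}$, and the exponents reconcile to $\tfrac{|G_l|+|\Omega_1|}{2}-1$ via the bookkeeping of part (i). This disposes of $G_1$ entirely and of $G_2$ for $k\geq 3$ or $n\geq 3$.

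The genuinely hard corner, and the main obstacle, is $G_2$ with $k=2$ and $n\in\{1,2\}$, where there is neither a $D_8$ factor nor a cyclic factor of order $\geq 8$: here $\Theta(G_2)$ must be computed by hand for $G_2=M_2(m+1,1,1)\Y Q_8$ (case $n=1$) and $G_2=M_2(m+1,1,1)\Y Q_8\Y\mathbb{Z}_4$ (case $n=2$). The plan is to imitate the explicit arguments of Lemma \ref{GINS} and the $n=1,k=2$ case of Theorem \ref{USDQ}: enumerate $\Omega_c(G_2)$, link its elements through Lemma \ref{INS}(i)--(ii), and isolate the obstruction coming from the index-$2$ image of $\beta\mapsto\beta^2+\beta$ on $F$ together with the intrinsic $Q_8$-deficiency of Lemma \ref{GINS}(ii). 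I expect this to give $\Theta(G_2)=\tfrac14|F|^{|\Omega_c|/2}$ when $n=1$ (no help from $z$, coefficient $8$) and $\Theta(G_2)=\tfrac12|F|^{|\Omega_c|/2}$ when $n=2$ (the order-$4$ element $z$ with $z^2=c$ recovers one factor of $2$, coefficient $4$). Establishing these two $\Theta$-values rigorously is the crux; everything else is the reduction via Lemmas \ref{S2}, \ref{AU}, \ref{USC}, \ref{USZ} and the counting of part (i).
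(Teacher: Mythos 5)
Your proposal follows essentially the same route as the paper: part (i) is the paper's element count repackaged as a convolution over the amalgamated $\langle c\rangle$ (with the same outcome $A_1=8$, $A_c=0$ or $8$, combined with Lemma \ref{OQD}), and part (ii) uses the identical reduction via Lemma \ref{S2}, invoking Lemma \ref{USC} whenever a $D_8$ factor is present and Lemma \ref{USZ} for $n\geq 3$, with the leading coefficient read off from $|\overline{G}_l^2[2]|$ exactly as in the paper. The two exceptional values you leave as expectations for $G_2$ with $k=2$, namely $\Theta(G_2)=\frac{1}{4}|F|^{|\Omega_c(G_2)|/2}$ when $n=1$ and $\frac{1}{2}|F|^{|\Omega_c(G_2)|/2}$ when $n=2$, are precisely what the paper establishes, and by the same means you describe: Lemma \ref{INS}(i)--(ii), Lemma \ref{GINS}(ii), and the index-two image of $\beta\mapsto\beta^2+\beta$ (via $(1+\beta z+\beta c)(1+\beta z+\beta c)^*$ for the order-$4$ central element when $n=2$).
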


\begin {proof}
 Let $G_l=\langle x_1 ,y_1\rangle\Y\langle x_2,y_2\rangle\Y\cdots\Y\langle x_k,y_k\rangle\Y\langle z\rangle$,
where $k\geq 2$, $\langle x_2 ,y_2 \rangle=H_l$,
$\langle x_i ,y_i \rangle\cong D_8, i=3,\ldots,k$, $\langle z\rangle\cong\mathbb{Z}_{2^n}$ and
$$\langle x_1,y_1 \ | \ x_1^{2^{m+1}}=y_1^{2}=c^2=1, [x_1,y_1]=c\rangle\cong M_2(m+1,1,1).$$

(i) For any $g\in G$, we may let
$g=g_1g_2$, where $g_1=x_1^{i_1}y_1^{j_1}z^s$, $0\leq i_1< 2^{m+1}$, $0\leq j_1< 2$, $0\leq s< 2^{n-1}$ and $g_2\in \langle x_2,y_2,\ldots,x_k,y_k\rangle$.
If $g_1^2=1$, then $x_1^{2i_1}y_1^{2j_1}z^{2s+2^{n-1}i_1j_1}=1$. Hence $i_1=0$ or $2^m$, $j_1=0$ or $1$, and
$s=0$.
If $g_1^2=c$, then $n\geq 2$ and $x_1^{2i_1}y_1^{2j_1}z^{2s+2^{n-1}(i_1j_1-1)}=1$. Hence $i_1=0$ or $2^m$, $j_1=0$ or $1$, and $s=2^{n-2}$.

(1) If $n=1$, then
we have  $\Omega_1(G_1)=8\gamma_1(k-1)=2^{2k}+2^{k+1}=\Omega_c(G_2)$
and  $\Omega_c(G_1)=8\gamma_2(k-1)=2^{2k}-2^{k+1}=\Omega_1(G_2)$.

(2) If $n\geq 2$,  then
we have  $\Omega_1(G_l)=8\gamma_1(k-1)+8\gamma_2(k-1)=2^{2k+1}=\Omega_c(G_l)$.

(ii) First, we consider the case $n=1$. If $G_1\cong M_2(2,1,1)$, then $\Omega_c(G_1)=\emptyset$.
Hence $\Theta(G_1)=1$ in Lemma \ref{S2}. Furthermore,  $$|V_*(FG_1)|=|F|^{\frac{|G_1|+|\Omega_1(G_1)|+|\Omega_c(G_1)|}{4}}\cdot|V_*(F\overline{G}_1)|.$$
Note that $\overline{G}_1\cong \mathbb{Z}_{2^{2}}\times\mathbb{Z}_{2}$.
It follows that $|V_*(FG_1)|=2|F|^{11}=2|F|^{\frac{|G_1|+|\Omega_1(G_1)|}{2}-1}$.

Suppose that $G_2\cong M_2(2,1,1)\Y Q_8$. Note that
the group $\langle 1+\sum\limits_{g \in \Omega_c(G_2)}\alpha_gg\widehat{G_2'}\in S_{G_2'}, \alpha_g\in F\rangle$
is generated by the following elements
$$1+(\alpha^2+\alpha)x_2\widehat{G_2'},1+(\alpha^2+\alpha)y_2\widehat{G_2'},1+\alpha(x_2+y_2+x_2y_2)\widehat{G_2'},$$ $$1+\alpha(x_2+x_2g)\widehat{G_2'},1+\alpha(y_2+y_2g)\widehat{G_2'},1+\alpha(x_2y_2+x_2y_2g)\widehat{G_2'},$$
where $\alpha\in F, g\in \{ x_1^{2}, y_1,  y_1x_1^{2}\}$,
according to (i) of Lemma \ref{INS} and (ii) of Lemma \ref{GINS}. Hence
$$\Theta(G_2)=\frac{1}{4}|F|^{12}=\frac{1}{4}|F|^{\frac{\Omega_c(G_2)}{2}}.$$
By Lemma \ref{S2},  $$|V_*(FG_2)|=\frac{4}{|F|^{\frac{|\Omega_c(G_2)|}{2}}}
|F|^{\frac{|G_2|+|\Omega_1(G_2)|+|\Omega_c(G_2)|}{4}}\cdot|V_*(F\overline{G}_2)|
=4|F|^{\frac{|G_2|+|\Omega_1(G_2)|-|\Omega_c(G_2)|}{4}}\cdot|V_*(F\overline{G}_2)|.$$
Note that $\overline{G}_2\cong \mathbb{Z}_{2^{2}}\times \mathbb{Z}_2\times\mathbb{Z}_2\times\mathbb{Z}_2$.
According to Lemma \ref{AU}, we have
$$|V_{*}(F\overline{G}_2)|=|\overline{G}_2^2[2]|\cdot |F|^{\frac{|\overline{G}_2|+|\Omega_1(\overline{G}_2)|}{2}-1}=2|F|^{23}.$$
Hence $|V_*(FG_2)|=8|F|^{35}=8|F|^{\frac{|G_2|+|\Omega_1(G_2)|}{2}-1}$.

For the other cases when $n=1$, $D_8$ will appear in the central product of $G_l$, where $l=1, 2$.
At this time, we have $1+\alpha g\widehat{G_l'}\in S_{G_l'}$ by Lemma \ref{USC}.
From this,
by Lemma \ref{S2},  $$|V_*(FG_l)|=|F|^{\frac{|G_l|+|\Omega_1(G_l)|-|\Omega_c(G_l)|}{4}}\cdot|V_*(F\overline{G}_l)|
.$$
Note that $\overline{G}_l\cong \mathbb{Z}_{2^{m+1}}\times \mathbb{Z}_{2}^{(2k-1)}$. Hence
$$|V_*(FG_1)|=2|F|^{2^{m+2k}+2^{2k-1}+2^k-1}=2|F|^{\frac{|G_1|+|\Omega_1(G_1)|}{2}-1}.$$
$$|V_*(FG_2)|=2|F|^{2^{m+2k}+2^{2k-1}-2^k-1}=2|F|^{\frac{|G_2|+|\Omega_1(G_2)|}{2}-1}.$$

Suppose that $n=2$ and $G_1\cong M_2(m+1,1,1)\Y\mathbb{Z}_4$ or $G_2\cong M_2(m+1,1,1)\Y Q_8\Y\mathbb{Z}_4$.

We have $|V_*(F\langle z\rangle)|=2|F|^2$ by Lemma \ref{AU}. From this,
$\Theta\langle z\rangle=\frac{1}{2}|F|$ according to Lemma \ref{S2}. Note that
$1+(\beta^2+\beta)z\widehat{G_l'}\in S_{G_l'}$
since $(1+\beta z+\beta c)(1+\beta z+\beta c)^*\in S_{G_l'}$ for any $\beta\in F$, where $l=1,2$,
which implies $\langle1+\alpha z\widehat{G_l'}\in S_{G_l'}, \alpha\in F\rangle$ has exact $\frac{1}{2}|F|$ elements.
For any $g\in \Omega_c(G_l)$,  we have $1+\alpha(z+g)\widehat{G_l'}\in S_{G_l'}$ for any $\alpha\in F$ by (i) of Lemma \ref{INS}. It follows that $\Theta(G_l)=\frac{1}{2}|F|^{\frac{\Omega_c(G_l)}{2}}$, where $l=1,2$.
By Lemma \ref{S2},  $$|V_*(FG_l)|=2|F|^{\frac{|G_l|+|\Omega_1(G_l)|-|\Omega_c(G_l)|}{4}}\cdot|V_*(F\overline{G}_l)|.$$
Note that $\overline{G}_1\cong \mathbb{Z}_{2^{m+1}}\times \mathbb{Z}_{2}^{(2)}$
and $\overline{G}_2\cong \mathbb{Z}_{2^{m+1}}\times \mathbb{Z}_{2}^{(4)}$. Hence
$$|V_*(FG_1)|=4|F|^{2^{m+3}+3}=4|F|^{\frac{|G_1|+|\Omega_1(G_1)|}{2}-1}.$$
$$|V_*(FG_2)|=4|F|^{2^{m+5}+15}=4|F|^{\frac{|G_2|+|\Omega_1(G_2)|}{2}-1}.$$
For the other cases when $n=2$, $D_8$ will appear in the central product of $G_l$, where $l=1, 2$.
At this time, we have $1+\alpha g\widehat{G_l'}\in S_{G_l'}$ for any $\alpha\in F$ and $g\in \Omega_c(G_l)$ by Lemma \ref{USC}.
From this,
by Lemma \ref{S2} and (i),  $$|V_*(FG_l)|=|F|^{\frac{|G_l|}{4}}\cdot|V_*(F\overline{G}_l)|.$$
Note that $\overline{G}_l\cong \mathbb{Z}_{2^{m+1}}\times \mathbb{Z}_{2}^{(2k)}$. Hence
$$|V_*(FG_l)|=2|F|^{2^{m+2k+1}+2^{2k}-1}=2|F|^{\frac{|G_l|+|\Omega_1(G_l)|}{2}-1}.$$

If $n\geq3$, then  $1+\alpha g\widehat{G_l'}\in S_{G_l'}$ for any $\alpha\in F$ and $g\in \Omega_c(G_l)$ by Lemma \ref{USZ}. From this,
by Lemma \ref{S2},  $$|V_*(FG_l)|=|F|^{\frac{|G_l|}{4}}\cdot|V_*(F\overline{G}_l)|.$$
Note that $\overline{G}_l\cong \mathbb{Z}_{2^{m+1}}\times\mathbb{Z}_2^{(2k-1)}\times\mathbb{Z}_{2^{n-1}}$.
It follows that
$$|V_*(FG_l)|=4|F|^{2^{n+m+2k-1}+2^{2k}-1}=4|F|^{\frac{|G_l|+|\Omega_1(G_l)|}{2}-1}.$$

\end{proof}

Obviously, by Lemma \ref{UDP}, the unitary subgroups of (i) and (ii) of Theorem \ref{ST3} are the special cases of Theorem \ref{M11DQ}. Furthermore, it is easy to obtain the following results similar to Theorem \ref{M11DQ}.

\begin{corollary} \label{M11DQ2}
Suppose that $G_l\cong M_2(n+1,1,1)\Y H_l\Y D_8^{\Y (k-2)}\Y\mathbb{Z}_{2^m}$ ,
where $n> m\geq 1$, $k\geq 2$ and $l=1, 2$, $H_1\cong D_8$ and $H_2\cong Q_8$. Then

$|V_*(FG_1)|=\left\{ \begin{aligned}
          &2|F|^{\frac{|G_1|+|\Omega_1(G_1)|}{2}-1}, \ \mathrm{if}\    m=1\ \mathrm{or}\ m=2 \ \mathrm{and}\ k\geq 2.\\
          &4|F|^{\frac{|G_1|+|\Omega_1(G_1)|}{2}-1}, \   \mathrm{ otherwise}.
                                          \end{aligned} \right.$

$|V_*(FG_2)|=\left\{ \begin{aligned}
           &8|F|^{\frac{|G_2|+|\Omega_1(G_2)|}{2}-1}, \ \mathrm{if}\    m=1\ \mathrm{and}\ k=2.\\
          &2|F|^{\frac{|G_2|+|\Omega_1(G_2)|}{2}-1}, \ \mathrm{if}\    1\leq m\leq 2\ \mathrm{and}\ k\geq 3.\\
          &4|F|^{\frac{|G_2|+|\Omega_1(G_2)|}{2}-1}, \ \mathrm{ otherwise}.
                                           \end{aligned} \right.$
\end{corollary}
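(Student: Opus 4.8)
The plan is to transcribe the proof of Theorem~\ref{M11DQ} with the roles of $n$ and $m$ interchanged. The groups in question are precisely the $c\in\langle z_2\rangle$ companions (Theorem~\ref{ST2}(v),(vi)) of the $c\in\langle z_1\rangle$ groups settled in Theorem~\ref{M11DQ} (Theorem~\ref{ST1}(vii),(viii)); consequently it is the exponent $m$ of the cyclic central factor $\mathbb{Z}_{2^m}$, rather than the exponent $n+1$ of the inner abelian factor $M_2(n+1,1,1)$, that controls the case division. First I would fix generators
$$G_l=\langle x_1,y_1\rangle\Y\langle x_2,y_2\rangle\Y\cdots\Y\langle x_k,y_k\rangle\Y\langle z\rangle,$$
with $\langle x_1,y_1\rangle\cong M_2(n+1,1,1)$, $\langle x_2,y_2\rangle=H_l$, $\langle x_i,y_i\rangle\cong D_8$ for $3\leq i\leq k$, and $\langle z\rangle\cong\mathbb{Z}_{2^m}$ satisfying $z^{2^{m-1}}=c$. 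Writing a general element as $g=x_1^{i_1}y_1^{j_1}z^{s}g_2$ with $g_2$ in the product of the $D_8$'s and $H_l$, and solving $g^2\in\{1,c\}$ exactly as in part~(i) of Theorem~\ref{M11DQ}, Lemma~\ref{OQD} yields the unbalanced values $2^{2k}\pm 2^{k+1}$ when $m=1$ (with $|\Omega_1(G_1)|=|\Omega_c(G_2)|=2^{2k}+2^{k+1}$) and the balanced value $2^{2k+1}$ when $m\geq2$.

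The substance of the proof is the evaluation of $\Theta(G_l)$, which I would organize by the size of $m$. When $m\geq3$ the element $z$ generates a cyclic central factor with $z^{2^{m-1}}=c$, so Lemma~\ref{USZ} gives $1+\alpha g\widehat{G_l'}\in S_{G_l'}$ for every $g\in\Omega_c(G_l)$ and $\alpha\in F$; hence $\Theta(G_l)=|F|^{|\Omega_c(G_l)|/2}$, and Lemma~\ref{S2} reduces the count to $|V_*(F\overline{G}_l)|$, evaluated by Lemma~\ref{AU} from $\overline{G}_l\cong\mathbb{Z}_{2^{n+1}}\times\mathbb{Z}_2^{(2k-1)}\times\mathbb{Z}_{2^{m-1}}$. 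When $m=2$ the factor $\langle z\rangle\cong\mathbb{Z}_4$ has $z^2=c$, so Lemma~\ref{USZ} is unavailable; if a $D_8$ is present (that is, $k\geq3$, or $H_l=D_8$) then Lemma~\ref{USC} applied to that $D_8$ still removes all defect, and only the $Q_8$ group $G_2$ with $k=2$ needs the direct treatment of $\langle z\rangle$, using $|V_*(F\langle z\rangle)|=2|F|^2$ and the identity $(1+\beta z+\beta c)(1+\beta z+\beta c)^*=1+(\beta^2+\beta)z\widehat{G_l'}$ to produce a defect $\frac12$ that then spreads over $\Omega_c(G_l)$ by Lemma~\ref{INS}(i). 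When $m=1$ the factor $\langle z\rangle=\langle c\rangle$ is absorbed; if a $D_8$ is present ($k\geq3$, or $H_l=D_8$) Lemma~\ref{USC} again applies, leaving exactly one exceptional group $G_2\cong M_2(n+1,1,1)\Y Q_8$.

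In every non-exceptional case the constant is read off by inserting the counts of the first step, the value of $\Theta(G_l)$, and $|V_*(F\overline{G}_l)|$ into Lemma~\ref{S2}, and checking that the exponent equals $\frac{1}{2}(|G_l|+|\Omega_1(G_l)|)-1$; the constants track the factor $|\overline{G}_l^{2}[2]|$ supplied by Lemma~\ref{AU} together with any defect in $\Theta(G_l)$. Since $G_1$ always contains the $D_8$ factor $H_1$, Lemma~\ref{USC} removes its defect uniformly, so $|V_*(FG_1)|$ is governed entirely by $|\overline{G}_1^{2}[2]|$, which is $2$ for $m\leq2$ and $4$ for $m\geq3$, matching the stated formula for $G_1$.

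The hard part will be the single exceptional group $G_2\cong M_2(n+1,1,1)\Y Q_8$ occurring at $m=1$, $k=2$, for which the claim is the constant $8$. Here $\Omega_c(M_2(n+1,1,1))=\emptyset$, so $\Omega_c(G_2)$ is carried entirely by the order-four elements of the $Q_8$ factor, and because $Q_8$ has no non-central involution the pairing identities of Lemma~\ref{INS} cannot by themselves produce $1+\alpha g\widehat{G_2'}\in S_{G_2'}$ for all $g\in\Omega_c(G_2)$. I would exhibit explicit generators of $\langle 1+\sum_{g\in\Omega_c(G_2)}\alpha_g g\widehat{G_2'}\in S_{G_2'},\ \alpha_g\in F\rangle$ via Lemma~\ref{INS}(i) together with Lemma~\ref{GINS}(ii), obtaining $\Theta(G_2)=\frac14|F|^{|\Omega_c(G_2)|/2}$ and hence the constant $8$. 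The delicate point, which is exactly what separates this from the corresponding case of Theorem~\ref{M11DQ}, is that although $M_2(n+1,1,1)$ now has exponent $2^{n+1}\geq8$ (as $n\geq2$), its cyclic generator $a$ satisfies $a^{2^n}\neq c$, so it supplies neither an element of square $c$ nor one of fourth power $c$ that could shrink the $Q_8$ defect; thus the defect remains $\frac14$ and the value $8$ persists unchanged.
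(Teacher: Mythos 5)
Your proposal is correct and follows essentially the same route as the paper, which proves this corollary only by the remark that it is ``similar to Theorem \ref{M11DQ}'': you transpose that proof with the roles of $n$ and $m$ exchanged, let the exponent $m$ of the central cyclic factor drive the case split via Lemmas \ref{USC}, \ref{USZ} and \ref{S2}, and isolate the single exceptional group $M_2(n+1,1,1)\Y Q_8$ at $m=1$, $k=2$, where the $Q_8$ defect of $\tfrac14$ from Lemma \ref{GINS}(ii) survives and yields the constant $8$. Your observation that $x_1^{2^{n}}\neq c$ in $M_2(n+1,1,1)$, so that no element of square or fourth power $c$ is available to shrink that defect, is exactly the point that keeps the computation parallel to the $M_2(2,1,1)\Y Q_8$ case of Theorem \ref{M11DQ}.
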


\begin{corollary} \label{M11DQ3}
Suppose that $G_l\cong M_2(n+1,1,1)\Y H_l\Y D_8^{\Y (k-2)}\Y\mathbb{Z}_{2^{m+1}}$ ,
where $n\geq m\geq 1$, $k\geq 2$ and $l=1, 2$, $H_1\cong D_8$ and $H_2\cong Q_8$. Then

$|V_*(FG_1)|=\left\{ \begin{aligned}
          &2|F|^{\frac{|G_1|+|\Omega_1(G_1)|}{2}-1}, \ \mathrm{if}\    m=1 \ \mathrm{and}\ k\geq 2.\\
          &4|F|^{\frac{|G_1|+|\Omega_1(G_1)|}{2}-1}, \   \mathrm{ otherwise}.
                                          \end{aligned} \right.$

$|V_*(FG_2)|=\left\{ \begin{aligned}
                     &2|F|^{\frac{|G_2|+|\Omega_1(G_2)|}{2}-1}, \ \mathrm{if}\    m=1\ \mathrm{and}\ k\geq 3.\\
          &4|F|^{\frac{|G_2|+|\Omega_1(G_2)|}{2}-1}, \ \mathrm{ otherwise}.
                                           \end{aligned} \right.$
\end{corollary}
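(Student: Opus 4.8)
The plan is to follow the method of Theorem \ref{M11DQ} almost verbatim, the only structural change being that the central cyclic factor is now $\langle z\rangle\cong\mathbb{Z}_{2^{m+1}}$ rather than $\mathbb{Z}_{2^n}$, while the inner abelian factor is $M_2(n+1,1,1)$. First I would fix the presentation $G_l=\langle x_1,y_1\rangle\Y\langle x_2,y_2\rangle\Y\cdots\Y\langle x_k,y_k\rangle\Y\langle z\rangle$, with $\langle x_1,y_1\rangle\cong M_2(n+1,1,1)$, $\langle x_2,y_2\rangle=H_l$, $\langle x_i,y_i\rangle\cong D_8$ for $3\le i\le k$ and $\langle z\rangle\cong\mathbb{Z}_{2^{m+1}}$, so that $c=z^{2^m}$ and $\overline{G}_l\cong\mathbb{Z}_{2^{n+1}}\times\mathbb{Z}_{2^m}\times\mathbb{Z}_2^{(2k-1)}$. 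Writing $g=g_1g_2$ with $g_1=x_1^{i_1}y_1^{j_1}z^s$ and $g_2\in\langle x_2,\dots,y_k\rangle$, I would compute $|\Omega_1(G_l)|$ and $|\Omega_c(G_l)|$ exactly as in part (i) of Theorem \ref{M11DQ}, feeding the count of the $g_2$-part through Lemma \ref{OQD}. The decisive simplification here is that since $m\ge1$ the factor $\langle z\rangle$ has exponent $2^{m+1}\ge4$, so the degenerate possibility $\langle z\rangle\cong\mathbb{Z}_2$, which was the source of the coefficient $8$ in Theorem \ref{M11DQ}, never occurs in this corollary.

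Next I would split into the cases that decide whether $1+\alpha g\widehat{G_l'}\in S_{G_l'}$ holds for every $g\in\Omega_c(G_l)$, i.e.\ whether the clean instance of Lemma \ref{S2} applies. If $m\ge2$, then $\langle z\rangle\cong\mathbb{Z}_{2^{m+1}}$ with $m+1\ge3$ and Lemma \ref{USZ} gives the required membership for all $g\in\Omega_c(G_l)$; if $m=1$ but a $D_8$ factor is present (that is, $l=1$, or $l=2$ and $k\ge3$), I would instead apply Lemma \ref{USC} to that $D_8\cong M_2(2,1)$ factor to obtain the same conclusion. In all of these cases Lemma \ref{S2} yields $|V_*(FG_l)|=|F|^{(|G_l|+|\Omega_1(G_l)|-|\Omega_c(G_l)|)/4}\cdot|V_*(F\overline{G}_l)|$, and I would finish by evaluating $|V_*(F\overline{G}_l)|$ through Lemma \ref{AU}: since $\overline{G}_l^2\cong\mathbb{Z}_{2^n}\times\mathbb{Z}_{2^{m-1}}$ one has $|\overline{G}_l^2[2]|=4$ when $m\ge2$ and $|\overline{G}_l^2[2]|=2$ when $m=1$, which produces the coefficients $4$ and $2$ respectively and matches the exponent $\frac{|G_l|+|\Omega_1(G_l)|}{2}-1$ after the routine arithmetic.

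The one remaining case is $l=2$, $k=2$, $m=1$, namely $G_2\cong M_2(n+1,1,1)\Y Q_8\Y\mathbb{Z}_4$, and this is where the work lies: no $D_8$ factor is available and $\langle z\rangle\cong\mathbb{Z}_4$ is too small for Lemma \ref{USZ}, so I must compute $\Theta(G_2)$ by hand exactly as in the $n=2$ subcase of Theorem \ref{M11DQ}. The key points are that $z^2=c$ forces $z\in\Omega_c(G_2)$ with $z^4=1$; that $(1+\beta z+\beta c)(1+\beta z+\beta c)^*=1+(\beta^2+\beta)z\widehat{G_2'}$ shows $\langle 1+\alpha z\widehat{G_2'}\rangle$ has only $\frac{1}{2}|F|$ elements, since $\beta\mapsto\beta^2+\beta$ is the Artin--Schreier map whose image has index $2$ in $(F,+)$; and that for every $g\in\Omega_c(G_2)$ one has $[z,g]=1$ and $g^4=1$, so Lemma \ref{INS}(i) gives $1+\alpha(z+g)\widehat{G_2'}\in S_{G_2'}$. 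Combining these generators shows the relevant subgroup has order $\frac{1}{2}|F|^{|\Omega_c(G_2)|/2}$, i.e.\ $\Theta(G_2)=\frac{1}{2}|F|^{|\Omega_c(G_2)|/2}$; plugging this into Lemma \ref{S2} introduces an extra factor of $2$ on top of the coefficient $2$ coming from $|V_*(F\overline{G}_2)|$, yielding the coefficient $4$. The main obstacle is thus precisely this exceptional triple $(l,k,m)=(2,2,1)$, where the generic divisibility arguments fail and the order of $S_{G_2'}$ must be extracted directly from the Artin--Schreier constraint; everything else reduces, parameter for parameter, to the computations already carried out in Theorem \ref{M11DQ}, Lemma \ref{GINS} and Lemma \ref{AU}.
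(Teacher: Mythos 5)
Your overall strategy is the one the paper intends (the paper itself gives no proof beyond ``similar to Theorem \ref{M11DQ}''), and most of it is sound: for $m\geq 2$ Lemma \ref{USZ} applies to $\langle z\rangle\cong\mathbb{Z}_{2^{m+1}}$; for $m=1$ with a $D_8$ factor present Lemma \ref{USC} applies; and in both situations Lemma \ref{S2}, Lemma \ref{AU} and $|\overline{G}_l^2[2]|=2$ or $4$ produce the coefficients $2$ and $4$ exactly as you describe.

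The gap is in the one case you identify as ``where the work lies'', namely $(l,k,m)=(2,2,1)$, i.e.\ $G_2\cong M_2(n+1,1,1)\Y Q_8\Y\mathbb{Z}_4$. Your claim $\Theta(G_2)=\frac{1}{2}|F|^{|\Omega_c(G_2)|/2}$ does not follow from the Artin--Schreier computation: the identity $(1+\beta z+\beta c)(1+\beta z+\beta c)^*=1+(\beta^2+\beta)z\widehat{G_2'}$ only shows that at least $\frac{1}{2}|F|$ values of $\alpha$ satisfy $1+\alpha z\widehat{G_2'}\in S_{G_2'}$; it gives no upper bound, since elements of $S_{G_2'}$ need not arise from the subalgebra $F\langle z\rangle$. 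In fact every $\alpha$ works here: because $z^2=c$, the elements $h_1=x_2$ and $h_2=y_2z$ satisfy $h_1^4=h_2^2=1$ and $[h_1,h_2]=h_1^2=c$, so $\langle x_2,y_2z\rangle\cong M_2(2,1)\cong D_8$ is a central factor meeting its complement in $\langle c\rangle$, and Lemma \ref{USC} (equivalently, Lemma \ref{INS}(iv) followed by (i)) gives $1+\alpha g\widehat{G_2'}\in S_{G_2'}$ for all $g\in\Omega_c(G_2)$ and all $\alpha\in F$. Hence $\Theta(G_2)=|F|^{|\Omega_c(G_2)|/2}$ and the coefficient is $2$, not $4$. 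This is forced in any case by the isomorphism $Q_8\Y\mathbb{Z}_4\cong D_8\Y\mathbb{Z}_4$ (replace $x_2,y_2$ by $x_2z,y_2z$), which makes $G_2\cong G_1$ when $k=2$ and $m=1$, so the two displayed formulas of the corollary cannot both hold at that parameter; your argument reproduces the printed value only through this invalid step. (The same defect is inherited from the $n=2$, $k=2$ subcase of Theorem \ref{M11DQ} that you are imitating.)
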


\begin{theorem} \label{DQZZ}
Suppose that $G_l\cong  H_l\Y D_8^{\Y (k-1)}\Y\mathbb{Z}_{2^{n_1}}\times\mathbb{Z}_{2^{m_1}}$ ,
where $n_1, m_1, k\geq 1$  and  $l=1, 2$, $H_1\cong D_8$,  $H_2\cong Q_8$. Then

(i) If $n_1=1$, $\Omega_1(G_1)=2^{2k+1}+2^{k+1}=\Omega_c(G_2)$
and  $\Omega_c(G_1)=2^{2k+1}-2^{k+1}=\Omega_1(G_2)$;
If $n_1\geq 2$, $\Omega_1(G_l)=2^{2k+2}=\Omega_c(G_l)$, where $l=1,2$.

(ii) $|V_*(FG_1)|=\left\{ \begin{aligned}
           &|F|^{\frac{|G_1|+|\Omega_1(G_1)|}{2}-1}, \ \mathrm{if}\ 1\leq n_1\leq 2,\ \ m_1=1.\\
          &4|F|^{\frac{|G_1|+|\Omega_1(G_1)|}{2}-1}, \ \mathrm{if}\  n_1\geq 3, \  m_1\geq 2.\\
          &2|F|^{\frac{|G_1|+|\Omega_1(G_1)|}{2}-1}, \ \mathrm{otherwise}.
                                           \end{aligned} \right.$

       $|V_*(FG_2)|=\left\{ \begin{aligned}
           &8|F|^{\frac{|G_2|+|\Omega_1(G_2)|}{2}-1}, \ \mathrm{if}\  n_1=k=1,\  m_1\geq 2.\\
          &4|F|^{\frac{|G_2|+|\Omega_1(G_2)|}{2}-1}, \ \begin{aligned}
           &\ \mathrm{if} \ n_1=m_1=k=1\\
          &\mathrm{or}\ n_1=2, m_1\geq 2, \  k=1\\
          &\mathrm{or}\  n_1\geq 3,\ m_1\geq 2.
                                           \end{aligned}\\
          &|F|^{\frac{|G_2|+|\Omega_1(G_2)|}{2}-1}, \ \mathrm{if}\  1\leq n_1\leq 2, m_1=1,\ k\geq 2.\\
          &2|F|^{\frac{|G_2|+|\Omega_1(G_2)|}{2}-1}, \ \mathrm{otherwise}.
                                           \end{aligned} \right.$
\end{theorem}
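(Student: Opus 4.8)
The plan is to follow the template established in Theorems \ref{USMD}--\ref{M11DQ}: realize $G_l$ as an explicit central product, count $\Omega_1$ and $\Omega_c$ by reduction to Lemma \ref{OQD}, and then feed the counts into Lemma \ref{S2} together with the abelian formula of Lemma \ref{AU} applied to $\overline{G}_l=G_l/\langle c\rangle$. First I would fix generators, writing $G_l=\langle x_1,y_1\rangle\Y\cdots\Y\langle x_k,y_k\rangle\Y\langle z\rangle\times\langle w\rangle$ with $\langle x_1,y_1\rangle=H_l$, $\langle x_i,y_i\rangle\cong D_8$ for $i\geq 2$, $\langle z\rangle\cong\mathbb{Z}_{2^{n_1}}$ satisfying $z^{2^{n_1-1}}=c$ (a central-product factor), and $\langle w\rangle\cong\mathbb{Z}_{2^{m_1}}$ a genuine direct factor meeting $\langle c\rangle$ trivially.

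For part (i), I would write a general element as $g=d\,z^sw^t$ with $d$ in the core $H_l\Y D_8^{\Y(k-1)}$. Since $w$ lies in a direct factor disjoint from $\langle c\rangle$, the requirement $g^2\in\langle c\rangle$ forces $w^{2t}=1$, that is $t\in\{0,2^{m_1-1}\}$, contributing a uniform factor $2$. When $n_1=1$ the factor $\langle z\rangle$ collapses into $\langle c\rangle$, so the counts reduce directly to Lemma \ref{OQD} and give $|\Omega_1(G_1)|=4\gamma_1(k)=2^{2k+1}+2^{k+1}$, $|\Omega_c(G_1)|=4\gamma_2(k)=2^{2k+1}-2^{k+1}$, with the two values swapped for $G_2$. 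When $n_1\geq 2$ the admissible choices $s=0$ (pairing with $d\in\Omega_1(\mathrm{core})$) and $s=2^{n_1-2}$ (pairing with $d\in\Omega_c(\mathrm{core})$) merge the two Lemma \ref{OQD} counts into $|\Omega_1(G_l)|=|\Omega_c(G_l)|=2\bigl(2\gamma_1(k)+2\gamma_2(k)\bigr)=2^{2k+2}$.

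For part (ii) the governing dichotomy is whether the hypothesis of Lemma \ref{S2}, namely $1+\alpha g\widehat{G'}\in S_{G'}$ for every $g\in\Omega_c$, is available. For $G_1$ a $D_8$ factor is always present, so Lemma \ref{USC} (case $(u,v)=(2,1)$) supplies this immediately; the same holds for $G_2$ whenever $k\geq 2$, and also for $G_2$ with $k=1$ when $n_1\geq 3$ via Lemma \ref{USZ}. In all these cases $|V_*(FG_l)|=|F|^{(|G_l|+|\Omega_1|-|\Omega_c|)/4}\cdot|V_*(F\overline{G}_l)|$, and since $\overline{G}_l\cong\mathbb{Z}_2^{2k}\times\mathbb{Z}_{2^{n_1-1}}\times\mathbb{Z}_{2^{m_1}}$, Lemma \ref{AU} shows the leading coefficient is exactly $|\overline{G}_l^2[2]|$, which equals $1,2,4$ according to how many of $n_1-1\geq 2$ and $m_1\geq 2$ hold. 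The stated tables then follow from the identity $2|\Omega_1(\overline{G})|=|\Omega_1(G)|+|\Omega_c(G)|$ (each $2$-element of $\overline{G}$ lifts to a $c$-coset inside $\Omega_1\cup\Omega_c$), which reconciles the exponent $\tfrac14(|G|+|\Omega_1|-|\Omega_c|)+\tfrac12(|\overline{G}|+|\Omega_1(\overline{G})|)-1$ with $\tfrac12(|G|+|\Omega_1|)-1$.

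The hard part is the residual $G_2$ cases with $k=1$ and small $n_1$, where no $D_8$ is present and Lemma \ref{S2} must be applied with a nontrivial $\Theta(G_2)$. For $n_1=2$ the element $z$ of order $4$ with $z^2=c$ lies in $\Omega_c(G_2)$ but only yields the Artin--Schreier constraint $(1+\beta z+\beta c)(1+\beta z+\beta c)^*=1+(\beta^2+\beta)z\widehat{G'}$, so $\langle 1+\alpha z\widehat{G'}\rangle$ has order $\tfrac12|F|$; connecting every other element of $\Omega_c$ to $z$ through Lemma \ref{INS}(i) gives $\Theta(G_2)=\tfrac12|F|^{|\Omega_c|/2}$ and the extra factor $2$, which combines with $|\overline{G}_2^2[2]|$ to give $4$ or $2$. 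For $n_1=1$ one meets the genuine obstruction $G_2=Q_8\times\mathbb{Z}_{2^{m_1}}$: here Lemma \ref{GINS}(ii) forces $\Theta(Q_8)=\tfrac14|F|^{|\Omega_c|/2}$ (the doubling of $\Omega_c$ by the involution $w^{2^{m_1-1}}$ again collapsing in pairs via Lemma \ref{INS}(i)), so $1/\Theta=4$; multiplied by $|\overline{G}_2^2[2]|=2$ when $m_1\geq 2$ this produces the anomalous coefficient $8$, and by $1$ when $m_1=1$ it gives $4$. I expect this $Q_8$-only computation, and the bookkeeping of how the factor $1/\Theta$ interacts with $|\overline{G}_2^2[2]|$, to be the principal obstacle; everything else is routine tracking against Lemmas \ref{OQD}, \ref{AU}, \ref{S2}, \ref{USC} and \ref{USZ}.
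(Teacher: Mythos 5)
Your proposal follows essentially the same route as the paper's proof: the same central-product decomposition with $z^{2^{n_1-1}}=c$, the same reduction of the $\Omega_1/\Omega_c$ counts to Lemma \ref{OQD}, the same dichotomy between the generic cases handled by Lemmas \ref{USC}/\ref{USZ} and the exceptional $k=1$, $l=2$ cases with $n_1\le 2$ where $\Theta(G_2)$ must be computed directly (yielding the factors $\frac12$ and $\frac14$), all fed into Lemma \ref{S2} and Lemma \ref{AU}. Your unified bookkeeping via the identity $2|\Omega_1(\overline{G})|=|\Omega_1(G)|+|\Omega_c(G)|$ and the coefficient $|\overline{G}_l^2[2]|$ is a mild streamlining of the paper's case-by-case exponent checks, but not a different argument.
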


\begin {proof}
 Let $G_l=\langle x_1 ,y_1\rangle\Y\langle x_2,y_2\rangle\Y\cdots\Y\langle x_k,y_k\rangle\Y\langle z_1\rangle\times\langle z_2\rangle$,
where $k\geq 1$, $\langle x_1 ,y_1 \rangle=H_l$,
$\langle x_i ,y_i \rangle\cong D_8, i=2,\ldots,k$, $\langle z_1\rangle\cong\mathbb{Z}_{2^{n_1}}$ and
$\langle z_2\rangle\cong\mathbb{Z}_{2^{m_1}}$.

(i) For any $g\in G$, we may let
$g=g_1g_2$, where $g_1=z_1^{i}z_2^{j}$, $0\leq i< 2^{n_1-1}$, $0\leq j< 2^{m_1}$, and $g_2\in \langle x_1,y_1,\ldots,x_k,y_k\rangle$.
If $g_1^2=1$, then $z_1^{2i}z_2^{2j}=1$. Hence $i=0$, $j=0$ or $2^{m_1-1}$.
If $g_1^2=c$, then $n_1\geq 2$ and $z_1^{2i}z_2^{2j}=z_1^{2^{n_1-1}}$. Hence $i=2^{n_1-2}$, $j=0$ or $2^{m_1-1}$.

(1) If $n_1=1$, then
we have  $\Omega_1(G_1)=4\gamma_1(k)=2^{2k+1}+2^{k+1}=\Omega_c(G_2)$
and  $\Omega_c(G_1)=4\gamma_2(k)=2^{2k+1}-2^{k+1}=\Omega_1(G_2)$.

(2) If $n_1\geq 2$,  then
we have  $\Omega_1(G_l)=4\gamma_1(k)+4\gamma_2(k)=2^{2k+2}=\Omega_c(G_l)$.

(ii) Suppose  $n_1=1$ and $G_2=Q_8\times\langle z_2\rangle$. Note that
the group $\langle 1+\sum\limits_{g \in \Omega_c(G_2)}\alpha_gg\widehat{G_2'}\in S_{G_2'}, \alpha_g\in F\rangle$
is generated by the following elements
$$1+(\alpha^2+\alpha)x_1\widehat{G_2'},1+(\alpha^2+\alpha)y_1\widehat{G_2'},1+\alpha(x_1+y_1+x_1y_1)\widehat{G_2'},$$ $$1+\alpha(x_1+x_1z_2^{2^{m_1-1}})\widehat{G_2'},1+\alpha(y_1+y_1z_2^{2^{m_1-1}})\widehat{G_2'},1+\alpha(x_1y_1+x_1y_1z_2^{2^{m_1-1}})\widehat{G_2'},$$
where $\alpha\in F$,
according to (i) of Lemma \ref{INS} and (ii) of Lemma \ref{GINS}. Hence
$\Theta(G_2)=\frac{1}{4}|F|^{6}=\frac{1}{4}|F|^{\frac{\Omega_c(G_2)}{2}}.$
At this time, by Lemma \ref{S2}, $|V_*(FG_2)|=4|F|^{\frac{|G_2|-2^{3}}{4}}\cdot|V_*(F\overline{G}_2)|.$
Note that $\overline{G}_2\cong \mathbb{Z}_{2}\times\mathbb{Z}_{2}\times\mathbb{Z}_{2^{m_1}}$.
Hence
$$|V_*(FG_2)|=\left\{ \begin{aligned}
          &4|F|^{2^{m_1+2}+1}=4|F|^{\frac{|G_2|+|\Omega_1(G_2)|}{2}-1}, \ \mathrm{if}\   m_1=1,\\
          &8|F|^{2^{m_1+2}+1}=8|F|^{\frac{|G_2|+|\Omega_1(G_2)|}{2}-1}, \ \mathrm{if}\   m_1\geq 2.
                                          \end{aligned} \right.$$

For the other cases when $n_1=1$,  $D_8$ will appear in the central product of $G_l$.
At this time, we have $1+\alpha g\widehat{G_l'}\in S_{G_l'}$ for any $\alpha\in F$ and $g\in \Omega_c(G_l)$ by Lemma \ref{USC}.
From this,
by Lemma \ref{S2},  $$|V_*(FG_l)|=|F|^{\frac{|G_l|+|\Omega_1(G_l)|-|\Omega_c(G_l)|}{4}}\cdot|V_*(F\overline{G}_l)|.$$
Note that $\overline{G}_l\cong \mathbb{Z}_{2}^{(2k)}\times\mathbb{Z}_{2^{m_1}}$. Hence
$$|V_*(FG_l)|=\left\{ \begin{aligned}
          &|F|^{2^{2k+m_1}+2^{2k}+2^k\epsilon_l-1}=|F|^{\frac{|G_l|+|\Omega_1(G_l)|}{2}-1}, \ \mathrm{if}\   m_1=1,\\
          &2|F|^{2^{2k+m_1}+2^{2k}+2^k\epsilon_l-1}=2|F|^{\frac{|G_l|+|\Omega_1(G_l)|}{2}-1}, \ \mathrm{if}\   m_1\geq 2,
                                          \end{aligned} \right. $$
where $\epsilon_1=1$ and $\epsilon_2=-1$.

Suppose that $n_1=2$ and $G_2\cong Q_8\Y\mathbb{Z}_4\times\mathbb{Z}_{2^{m_1}}$. Similar to (ii) of Lemma \ref{M11DQ}, we  $\Theta(G_2)=\frac{1}{2}|F|^{\frac{\Omega_c(G_2)}{2}}$.
By Lemma \ref{S2},  $|V_*(FG_2)|=2|F|^{\frac{|G_2|}{4}}\cdot|V_*(F\overline{G}_2)|.$
Note that $\overline{G}_2\cong \mathbb{Z}_{2}^{(3)}\times\mathbb{Z}_{2^{m_1}}$. Hence
$$|V_*(FG_2)|=\left\{ \begin{aligned}
          &2|F|^{2^{m_1+3}+7}=2|F|^{\frac{|G_2|+|\Omega_1(G_2)|}{2}-1}, \ \mathrm{if}\   m_1=1,\\
          &4|F|^{2^{m_1+3}+7}=4|F|^{\frac{|G_2|+|\Omega_1(G_2)|}{2}-1}, \ \mathrm{if}\   m_1\geq 2.
                                          \end{aligned} \right.$$

For the other cases when $n_1=2$, we have $1+\alpha g\widehat{G_l'}\in S_{G_l'}$ for any $\alpha\in F$ and $g\in \Omega_c(G_l)$ by Lemma \ref{USC}.
From this,
by Lemma \ref{S2},  $|V_*(FG_l)|=|F|^{\frac{|G_l|}{4}}\cdot|V_*(F\overline{G}_l)|.$
Note that $\overline{G}_l\cong \mathbb{Z}_{2}^{(2k)}\times\mathbb{Z}_2\times\mathbb{Z}_{2^{m_1}}$. Hence
$$|V_*(FG_l)|=\left\{ \begin{aligned}
          &|F|^{2^{2k+m_1+1}+2^{2k+1}-1}=|F|^{\frac{|G_l|+|\Omega_1(G_l)|}{2}-1}, \ \mathrm{if}\    m_1=1,\\
          &2|F|^{2^{2k+m_1+1}+2^{2k+1}-1}=2|F|^{\frac{|G_l|+|\Omega_1(G_l)|}{2}-1}, \ \mathrm{if}\   m_1\geq2.
                                          \end{aligned} \right.$$

Suppose that $n_1\geq 3$, we have $1+\alpha g\widehat{G_l'}\in S_{G_l'}$ for any $\alpha\in F$ and $g\in \Omega_c(G_l)$ by Lemma \ref{USC}.
From this,
we have
$$|V_*(FG_l)|=\left\{ \begin{aligned}
          &2|F|^{2^{2k+n_1+m_1-1}+2^{2k+1}-1}=2|F|^{\frac{|G_l|+|\Omega_1(G_l)|}{2}-1}, \ \mathrm{if}\    m_1=1,\\
          &4|F|^{2^{2k+n_1+m_1-1}+2^{2k+1}-1}=4|F|^{\frac{|G_l|+|\Omega_1(G_l)|}{2}-1}, \ \mathrm{if}\   m_1\geq 2.
                                          \end{aligned} \right.$$

\end{proof}
Obviously, by Lemma \ref{UDP}, the unitary subgroups of  (ix) and (x) of Theorems \ref{ST1} and \ref{ST2}, (iii) and (iv) of Theorems \ref{ST3} and \ref{ST4}, Corollaries \ref{ST5} and \ref{ST6},  Theorem \ref{ST7} are the special cases of Theorem \ref{DQZZ}.

\begin{corollary}
Let $G$ be  a nonabelian  $2$-group given by a
central extension of the form
$$1\longrightarrow \mathbb{Z}_{2^n}\times \mathbb{Z}_{2^m} \longrightarrow G \longrightarrow \mathbb{Z}_2\times
\cdots\times \mathbb{Z}_2 \longrightarrow 1$$ and $G'\cong
\mathbb{Z}_2$,  $n\geq m\geq 1$. Then the order of $V_*(FG)$ can be divisible by $|F|^{\frac{1}{2}(|G|+|\Omega_1(G)|)-1}$.
\end{corollary}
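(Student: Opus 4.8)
The plan is to derive the corollary purely by assembly: combine the complete classification carried out in Section 3 with the closed-form order computations collected in Section 4, so that every instance of $G$ is reduced to a type whose unitary subgroup has already been determined.

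First I would use Lemma \ref{UDP} to dispose of the elementary abelian direct factors. Every structure listed in Theorems \ref{ST1}--\ref{ST7} and Corollaries \ref{ST5}, \ref{ST6} has the shape $G = G_0 \times E$, where $G_0$ is a central product carrying the commutator $c$ and $E \cong \mathbb{Z}_2^{(r-2)}$ is elementary abelian. Lemma \ref{UDP} guarantees that if $|V_*(FG_0)| = l|F|^{\frac{1}{2}(|G_0|+|\Omega_1(G_0)|)-1}$ for some natural number $l$, then $|V_*(FG)| = l|F|^{\frac{1}{2}(|G|+|\Omega_1(G)|)-1}$ as well. Hence the asserted divisibility is preserved under adjoining $E$, and it suffices to establish it for the core factors $G_0$.

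Next I would run through the finite list of core types. By Lemma \ref{SCG} the centre $\zeta G$ falls into exactly the four isomorphism classes $A_1,\dots,A_4$, and for each of these the structure theorems express $G_0$ as one of the central products occurring in Theorems \ref{ST1}--\ref{ST7}. For each such product I would cite the matching order formula, namely Theorems \ref{USMD}, \ref{USDQ}, \ref{USMD2}, \ref{UMMD}, \ref{UM1D}, \ref{UM1D2}, \ref{M11DQ}, \ref{DQZZ} together with Corollaries \ref{UMMD2}, \ref{UM1DC}, \ref{M11DQ2}, \ref{M11DQ3}. In every branch the computed order has the form $l|F|^{\frac{1}{2}(|G_0|+|\Omega_1(G_0)|)-1}$ with $l \in \{1,2,4,8\}$, so $|F|^{\frac{1}{2}(|G_0|+|\Omega_1(G_0)|)-1}$ divides $|V_*(FG_0)|$. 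Combining this with the reduction of the previous paragraph yields the claim for $G$.

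The only real work here is bookkeeping: checking that the case split of Section 3 is exhaustive and that each branch is covered by exactly one of the order theorems, so that no isomorphism type is overlooked and the exponent $\frac{1}{2}(|G|+|\Omega_1(G)|)-1$ agrees throughout. I expect this matching, rather than any new estimate, to be the main point of care. The genuinely delicate analysis---showing that $1+\alpha g\widehat{G'} \in S_{G'}$ for the relevant $g$, and thereby pinning down $\Theta(G)$ and the exponent through Lemma \ref{S2}---has already been carried out case by case in the individual theorems, so the corollary itself requires no further computation.
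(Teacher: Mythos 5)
Your proposal is correct and follows essentially the same route as the paper: strip the elementary abelian direct factors via Lemma \ref{UDP}, then match each isomorphism class from the structure theorems of Section 3 against the corresponding order formula from Section 4, each of which yields $l|F|^{\frac{1}{2}(|G|+|\Omega_1(G)|)-1}$ with $l$ a natural number. This is precisely the paper's argument.
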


\begin{proof}
 According to Lemmas \ref{UDP} and \ref{SCG}, it is convenient not to consider an elementary abelian $2$-group as a direct product term of $G$.
 For the types of Theorem \ref{ST1}, the result is true by Theorems \ref{USMD}, \ref{USDQ}, \ref{UMMD}, \ref{UM1D}, \ref{M11DQ} and \ref{DQZZ}.
 For the types of Theorem \ref{ST2}, the result is true by Theorems \ref{USMD2}, \ref{UM1D2},  \ref{DQZZ} and  Corollaries \ref{UMMD2} and \ref{M11DQ2}.
 For the types of Theorem \ref{ST3}, the result is true by Theorems \ref{M11DQ} and \ref{DQZZ}.
For the types of Theorem \ref{ST4}, the result is true by Theorems \ref{UM1D2} and \ref{DQZZ}.
For the types of Corollary \ref{ST5}, the result is true by Corollary \ref{UM1DC} and Theorem \ref{DQZZ}.
For the types of Corollary \ref{ST6}, the result is true by Corollary \ref{M11DQ3} and Theorem \ref{DQZZ}.
For the types of Theorem \ref{ST7}, the result is true by  Theorem \ref{DQZZ}.

\end{proof}


\Acknowledgements{This work was supported by National Natural Science Foundation of China (Grant No. 12171142). We cordially
thank the referees for their time and helpful comments.}




\end{document}